\documentclass[11pt]{amsart}
\usepackage{amsfonts, mathabx}
\usepackage{stmaryrd} 

\usepackage[dvipsnames]{xcolor}
\colorlet{RED}{red}
\usepackage{mathrsfs}
\usepackage[alphabetic,initials]{amsrefs}
\usepackage{enumitem}

\usepackage{pdflscape}
\usepackage{chngcntr}
\usepackage{tikz-cd}

\input xy
\xyoption{all}
\CompileMatrices
\usepackage{sseq,amssymb}
\usepackage{wasysym} 
\usepackage{todonotes}

\usepackage{pdfsync}
\usepackage{tikz}
\usepackage{mathrsfs}

\usepackage{hyperref}
\hypersetup{
  colorlinks=true,
  linkcolor=blue,
  citecolor=Thistle,
  urlcolor=blue,
  filecolor=black, 
}
\usepackage[capitalise,nameinlink,noabbrev]{cleveref}
\usepackage{gjm-cyc} 

\usepackage{forest}
\forestset{
  DCR/.style={
    for tree={
      grow'=north,
      parent anchor=north,
      child anchor=south,
      align=center,
      l sep=10pt,
      s sep=7pt,
      inner sep=2pt,
      anchor=center
    }
  },
  op/.style={
    circle,
    draw,
    minimum size=3mm,
    inner sep=1pt
  },
}

\usepackage{etoolbox}

\preto\appendix{%
  \renewcommand{\thesection}{\Alph{section}}%

  \setcounter{equation}{0}%
}

\title[Cyclotomic extensions in stable homotopy theory]
 {Cyclotomic extensions in stable homotopy theory} \author{Douglas C. Ravenel}
\address{Department of Mathematics\\
University of Rochester\\
Rochester, NY 14627}

\email{dcravenel@gmail.com}
\date{\today}

\begin{document}

\begin{abstract}
  This expository paper is a companion to \cite{Rav:gjmcyc}, in which
  we discuss cyclotomic spectra.  Both papers are intended to shed
  light on the recent resolution of the telescope conjecture by Robert
  Burklund, Jeremy Hahn, Ishan Levy and Tomer Schlank (hereafter
  referred to as BHLS) in \cite{BHLS}.  Their proof involves both
  cyclotomic spectra, the subject of \cite{Rav:gjmcyc}, and cyclotomic
  extensions of spectra, the subject of this paper.  Higher cyclotomic
  extensions of commutative ring spectra are analogous to Galois
  extensions of $p$-adic number fields (or rings of integers thereof)
  obtained by adjoining roots of unity.
\end{abstract}

\maketitle

\begin{center}
\includegraphics[width=11cm]{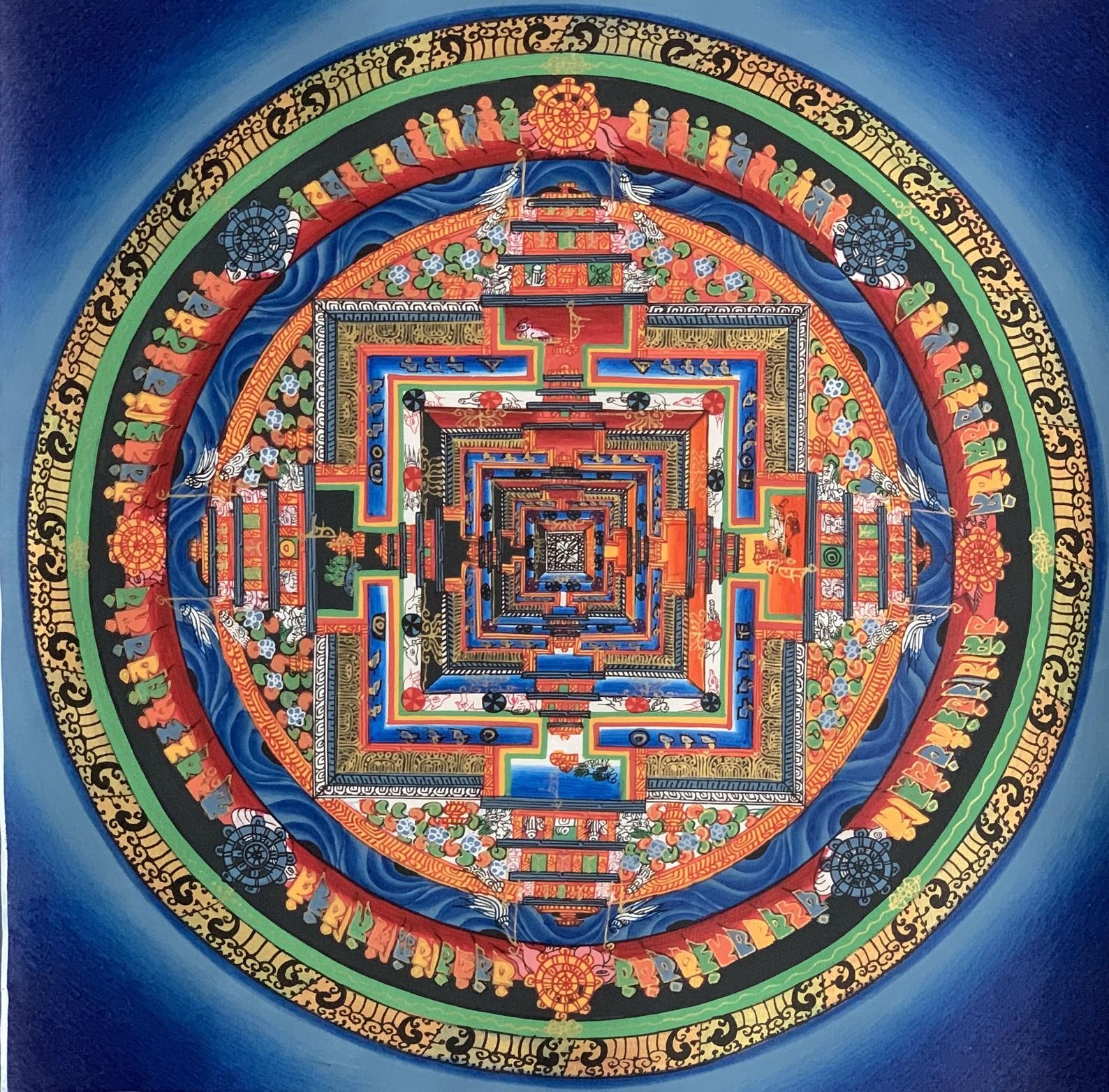}\\
Tibetan sand painting, anonymous.
\end{center}

\setcounter{tocdepth}{3}

\tableofcontents


\section{Introduction}\label{sec-ext-intro}

This expository paper is a companion to \cite{Rav:gjmcyc}, in which we
discuss cyclotomic spectra.  Both papers are intended to shed light on
the recent resolution of the telescope conjecture by Robert Burklund,
Jeremy Hahn, Ishan Levy and Tomer Schlank (hereafter referred to as
BHLS) in \cite{BHLS}.  Their proof involves both cyclotomic spectra,
the subject of \cite{Rav:gjmcyc}, and cyclotomic extensions of
spectra, the subject of this paper.

Cyclotomic spectra have an action of the circle group $S^{1}$,
which we denote by $\mT$, for torus.  The subspectra fixed by
finite subgroups of $\mT$ are required to have certain
properties. Such spectra come up in algebraic $K$-theory and its close
relatives topological Hochschild homology $\THH$ and topological
cyclic homology $\TopC$.

Higher cyclotomic extensions of commutative ring spectra are analogous
to Galois extensions of $p$-adic number fields (or rings of integers
thereof) obtained by adjoining roots of unity.  When we want to
distinguish between these two forms of cyclotomy, we will use the
terms {\bf smooth} for the spectra of \cite{Rav:gjmcyc} and {\bf
  discrete} for the extensions of this paper.

For the reader's convenience we note that
\begin{itemize}
\item [$\bullet$]   \cite[Theorem A]{BHLS} is our  \cref{thm-main-A}.

\item [$\bullet$]   \cite[Theorem B]{BHLS} is our  \cref{thm-thmB}.

\item [$\bullet$]   \cite[Theorem C]{BHLS} is our  \cref{thm-fpLQ}

\item [$\bullet$]   \cite[Theorems D and E]{BHLS} are not covered here.
\end{itemize}

\begin{remark}\label{rem-font}
  As in \cite{Rav:gjmcyc} we follow the font convention of
  \cite{BHLS} by denoting the algebraic $K$-theory of a ring spectrum
  $R$ by $\rK(R)$ (Quillen's $\rK$) to avoid confusion with its $nth$
  Morava $K$-theory $K (n)_{*} (R)$ (Jack's $K$).

  Here, as in \cite{Rav:gjmcyc} and \cite{Rav:Whatis}, we use
{\color{cyan} cyan} colored symbols to denote {\qcats} that are not
necessarily ordinary categories.
\end{remark}

In \cref{sec-players} we introduce various functors of interest
(\cref{sec-tel-loc}), describe the BHLS counterexample to the {\TC}
(\cref{sec-BHLS-counter}), review methods for showing that certain
telescopically localized coassembly maps are not equivalences
(\cref{sec-nontrivial-coassembly}), and discuss the {\LQ} property in
\cref{sec-LQ}. 

\cref{sec-Ambi} is about {\bf  semiadditivity}, which concerns the following
situation.  Suppose we have a group $G$ acting on a space, spectrum
or object in a suitable {\qcat} $\qmcC$.  This action is {\eqvt}
to a functor $F:\qBG\to \qmcC$, where $\qBG$ is the one object {\qcat}
associated with $G$.  Such a functor determines the choice of an
object $X$ in $\qmcC$ since $\qBG$ has a single object.  This functor
has a colimit $X_{hG}$, the homotopy orbit object, and a limit
$X^{hG}$, the homotopy fixed point object.

When $G$ is finite one can define a so called norm map
$\Nm_{X}:X_{hG}\to X^{hG}$.  It is known to be an equivalence when
$\qmcC=\qSp_{K(n)}$, the {\qcat} of $K(n)$-local spectra.  An {\qcat}
is said to be {\bf 1-semiadditive} (see \cref{def-semiadd}) if it has
a similar property for all functors to it from $\qBG$.  In \cite{HL}
Mike Hopkins and Jacob Lurie generalize from $BG$, a path connected
space for which $\pi_{1}$ is finite and all other homotopy groups
vanish, to an {\bf $m$-finite space} as in \cref{def-pi-finite},
meaning one with finitely many path components, each of which has
finite homotopy groups that vanish above dimension $m$.  They show
that for a $\qmcC$-valued functor from such a space one can still
define a norm map, and they say that $\qmcC$ is {\bf $m$-semiadditive}
if the norm is always an equivalence. $\qmcC$ is said to be {\bf
  $\infty$-semiadditive} if it is $m$-semiadditive for all $m$.

They prove that $\qSp_{K(n)}$ has this property.  In \cite[Theorem
A]{CSY22}, Schlank, Shachar Carmeli and Lior Yanovski show the same
for the (possibly) larger {\qcat} $\qSp_{T(n)}$. In \cite[Theorem
B]{CSY22} (our \cref{thm-CSY22-thm-B}) they show that $\qSp_{K (n)}$
and $\qSp_{T (n)}$ are respectively minimal and maximal among
1-semiadditive {\qcats}. In \cite[Theorem C]{CSY22} (our
\cref{thm-CSY22-thm-C}) they show that 1-semiadditivity is equivalent
to $\infty$-semiadditivity.

The original {\TC} asserted the equality $\qSp_{K (n)}$ and
$\qSp_{T (n)}$, both of which are $\infty$-semiadditive. This suggests
that further study of such {\qcats} could lead to insight about the
conjecture, which has indeed proven to be the case.

In \cref{sec-CCE}, which is about {\bf chromatic cyclotomic
  extensions}, we learn that between $\qSp_{K (n)}$ and $\qSp_{T (n)}$
there is an intermediate $\infty$-semiadditive {\qcat}
$\qSp_{\Cyc(n)}$, that of {\bf height $n$ cyclotomically complete
  spectra} as in \cref{def-cyc-comp}.

In order to define such extensions, we first review the cyclotomic
extensions of classical number theory in \cref{sec-classical}.  These
are Galois extensions of the field $\rats_{p} $ of $p$-adic numbers, and their
rings of integers, obtained by adjoining roots of unity.

In \cref{sec-Rognes} we review Rognes' Galois theory for commutative
ring spectra, the stable homotopy theoretic analog of classical Galois
theory.  A pivotal example of a faithful (as in
\cref{def-Rognes-3defs}) profinite Galois extension in this sense (see
\cref{prop-Rognes-5.4.9}) is the map $\mS _{K(n)} \to E_{n}(\barFFp)$
from the $K(n)$-local sphere spectrum to extended height $n$ Morava
$E$-theory.  Its Galois group is the extended Morava stabilizer group
$\GG_{n}$.  One might say that $E_{n}(\barFFp)$ is the algebraic
closure of $\mS _{K(n)}$.

It would be nice to have a similar algebraic closure of the telescopic
analog $\mS _{T(n)}$, but no telescopic analog of either $E_{n}(\barFFp)$ or
$\GG_{n}$ is known or even thought to exist.  On the other hand, by
\cite[Theorems A]{CSY24} (quoted here as \cref{thm-lifting-abelian})
we do have a telescopic analog of every {\em abelian} extension of
$\mS _{K(n)}$, including the maximal one.  These are treated in
\cref{sec-height-n}.

The abelianization of $\GG_{n}$ is known to be isomorphic to that of
the absolute Galois group of $\rats_{p}$, namely
$\Z^{\times }_{p}\times \widehat{\Z}$.  The second factor, the
profinite integers, is generated by a lift of the Frobenius element in
the absolute Galois group of $\FFp$.

It has a quotient isomorphic to $(\Z/p^{j})^{\times }$ obtained by
sending the Frobenius generator of $\widehat{Z}$ to the identity
element. This corresponds to the cyclotomic extension
$\rats_{p}\to \rats_{p}[\omega_{p^{j}}]$ obtained by adjoining a
primitive $p^{j}$th root of unity.

The analogous quotient to $\GG_{n}$ corresponds to faithful Galois
extensions
\begin{numequation}\label{eq-faithful}
\begin{split}
  \mS_{K(n)}\to \mS_{K(n)}[\omega^{(n)}_{p^{j}}]
  \qquad \aand
  \mS_{T(n)}\to \mS_{T(n)}[\omega^{(n)}_{p^{j}}].
\end{split}
\end{numequation}
 
\noindent These are the chromatic and telescopic analogs of the
classical cyclotomic extensions of \cref{sec-classical}.  Each is
obtained by adjoining a {\bf higher root of unity} as in
\cref{def-height-n-root}, which is as follows. For a commutative ring
spectrum $R$, such as $\mS_{K(n)}$ or $\mS_{T(n)}$, one has a
topological abelian group of units $R^{\times }$ underlain by the
subspace of $\Omega^{\infty}R$ comprising the path components indexed
by the invertible elements in the ring $\pi_{0}R$.  An ordinary
$p^{j}$th root of unity in $R$ is the image of a generator under a
monomorphism $\rC_{p^{j}}\to R^{\times }$. A {\bf height $n$ $p^{j}$th
  root of unity} in $R$ is a suitable map
$\rC_{p^{j}}\to \Omega^{n}R^{\times }$, which is adjoint to a map
$B^{n}\rC_{p^{j}}\to R^{\times }$.  The domain of the latter is the
 $n$-finite {\SESM} space $K(\Z/p^{j},n)$, hence the
relevance of the $n$-semiadditivity of \cref{sec-Ambi}.  Remarkably it
turns out that
\begin{displaymath}
  \mS_{T(n)}[\omega^{(n)}_{p^{j}}]
  \simeq \mS_{T(n)}\otimes K((\Z/p^{j})^{\times },n)_{+}
\end{displaymath}
 
\noindent and similarly for its $K(n)$-localization.

Taking the colimit as $j\to \infty$ in \cref{eq-faithful} yields
profinite Galois extensions as in \cref{defin-two-infinite},
\begin{displaymath}
  \mS_{K(n)}\to \mS_{K(n)}[\omega^{(n)}_{p^{\infty}}]=:R_{n}
  \qquad \aand
  \mS_{T(n)}\to \mS_{T(n)}[\omega^{(n)}_{p^{\infty}}]=:R^{\fin}_{n}.
\end{displaymath}
 
\noindent The former is known to be faithful but the latter is not.
In terms of Bousfield classes, as in \cite{AKB:BA} and
\cite[Definition 7.2.1]{Rav:NP}, we have
\begin{numequation}\label{eq-AKB-classes}
\begin{split}
  \langle \mS_{K(n)} \rangle
   = \langle R_{n} \rangle
   \leq  \langle R^{\fin}_{n} \rangle
   \leq \langle \mS_{T(n)} \rangle,
\end{split}
\end{numequation}
 
\noindent which should be compared with \cref{eq-K-cyc-T}.  The three
corresponding localization functors are denoted by $L_{n}$,
$L^{\Cyc}_{n}$ and $L^{\fin}_{n}$.  In \cref{def-cyc-comp} we say that
a $T(n)$-local spectrum is {\bf cyclotomically complete} if it is
$ R^{\fin}_{n}$\nobreakdash-local.  We do not know whether the first
inequality of \cref{eq-AKB-classes} is strict, but the counterexample
of \cite{BHLS} shows that the second one is.  Cyclotomic completion
figures prominently in their proof. 

The subject of \cref{sec-cyclo-redshift} is the charmingly named
chromatic cyclotomic redshift, ``the final key idea in giving a
counter-example to the telescope conjecture'' \cite[\S6.3]{BHLS}. The
term ``redshift'' refers to the increase of chromatic height caused by
algebraic $K$-theory or some related functor. Conversely ``blueshift''
refers to the reduction of chromatic height by the Tate construction,
which is the starting point for the semiadditivity of \cref{sec-Ambi}.
\cref{thm-redshift-B} says that for a $T(n)$-local commutative ring
spectrum $R$,
\begin{displaymath}
\rK_{T(n+1)}( R[\omega^{(n)}_{p^\infty}] )
 \simeq \rK_{T(n+1)} (R)[ \omega^{(n+1)}_{p^\infty}].
\end{displaymath}
 
In \cref{sec-unipotent} we discuss locally unipotent $\Z$-actions as
in \cref{def-loc-uni}, which are relevant to the main counterexample
of \cite{BHLS}.  The local unipotence condition on a group action can
be thought of as ``nearly trivial.''

Suppose we have a spectrum $X$ acted on by a group $G$, to which
we apply a functor $F$.
There is a {\bf  coassembly map} as in
\cite[Definition 5.6]{Rav:gjmcyc},
\begin{numequation}\label{eq-coassembly}
\begin{split}
  \epsilon:F(X^{hG})\to F(X)^{hG},
\end{split}
\end{numequation}
 
\noindent which is an equivalence if $F$ preserve
limits. \cite[Theorem 3.22]{BHLS}, quoted here as \cref{thm-failure},
says that if $R$ is a $T(n)$-local $\EE_{1}$-ring spectrum for
$n\geq 1$ for which $\rK_{T (n+1)} (R)$ is nontrivial, then the map
\begin{displaymath}
\epsilon:\rK_{T (n+1)}(R^{B\Z})\to \rK_{T (n+1)}(R)^{B\Z},
\end{displaymath}
 
\noindent the coassembly map for the trivial $\Z$-action, is {\em not}
an equivalence.

In the main counterexample of \cite{BHLS}, the ring of interest is
\begin{displaymath}
R=L_{T(n)}\BPn
\end{displaymath}
 
\noindent
equipped with a locally unipotent $\Z$-action by
certain Adams operations described in \cref{sec-Adams-BPn}.  In order
to apply \cref{thm-failure} we need to show that this nontrivial group
action is functorially related to one that is trivial.  We have two
tools at our disposal:
\begin{itemize}
\item [$\bullet$]  replace the group $\Z$ by a subgroup $p^{k}\Z$ and
\item [$\bullet$]  smash with a suitable finite spectrum. 
\end{itemize}

\noindent \cref{thm-fpLQ} says that for $k\gg 0$ we can smash with a
finite spectrum of type $n+2$ and ``trivialize'' the $\Z$-action on
$\THH(R)$ in the sense that its homotopy fixed point set behaves like
that of the trivial action.  \cref{thm-tel-asym-const} and
\cref{thm-thmB} are similar statements for $\TopC(R)$ and
$\TopC_{T(n+1)}(\BPn)$.

In \cref{sec-main-counter} we sketch the proof that the map of
\cref{eq-coassembly} is an equivalence for $F=\rK_{K(n+1)}$ but not
for $F=\rK_{T(n+1)}$, thus showing that the two functors are
different.

We need \cref{thm-thmB} as well as \cref{thm-tel-asym-const} because
\cref{thm-purity} and \cref{cor-BHLS-6.3} pertain only to {\em
  connective} ring spectra.  The latter says that for a
$T(n+1)$-acyclic, connective $\mathbb{E}_1$-algebra $R$ with a
$\Z$-action, such as $\BPn$, there is a diagram
\begin{displaymath}
\xymatrix
@C=10mm
@R=6mm
{
\rK_{T(n+1)}(L_{T(n)} R^{h(p^{k}\Z)}) 
 \ar[r]^(.5){\epsilon }
  &
\rK_{T(n+1)}(L_{T(n)} R)^{h(p^{k}\Z)} \\
\rK_{T(n+1)}( R^{h(p^{k}\Z)}) \ar[r]^(.5){\epsilon }
\ar[d]_{\simeq} \ar[u]^{\simeq}   
   &
\rK_{T(n+1)}(R)^{h(p^{k}\Z)}
   \ar[d]_{\simeq} \ar[u]^{\simeq}   \\
\TopC_{T(n+1)}(R^{h(p^{k}\Z)}) \ar[r]^(.5){\epsilon }
   &
\TopC_{T(n+1)}(R)^{h(p^{k}\Z)},
}
\end{displaymath}
 
\noindent This means we can replace the functor $\rK_{T(n+1)}L_{T(n)}$
with the more accessible $\TopC_{T(n+1)}$.

\cref{thm-BHLS-6.25} is the specialization of the above to the case
where $R=\BPn$ for $n\geq 1$ with its Adams operations.  In that case it can be
shown the the coassembly map is height $n+1$ cyclotomic completion.
Hence the coassembly map on
\begin{displaymath}
\rK_{\Cyc(n+1)}(BP\langle n \rangle^{h(p^{k}\Z)})
\end{displaymath}
 
\noindent
is an equivalence, while the middle coassembly map above is not.  Thus
the functors $L_{T(n_{1})}$ and $L_{\Cyc(n+1)}$, so the {\TC} is false.

There are two appendices, each added to clarify a needed but
complicated definition.  \cref{sec-qcat-defs} covers various {\qcatal}
notions including that of a hypersheaf (\cref{defin-hypersheaf}),
which is needed in \cref{prop-BMCSY-5.11} and \cref{thm-redshift-B}.
\cref{sec-BV-tensor} covers operads in order to describe the
multiplicative structure preserved by the Adams operations on $\BPn$
discussed in \cref{sec-Adams-BPn}.

\bigskip

It is a pleasure to acknowledge helpful conversations with Tomer
Schlank, Ishan Levy, Jeremy Hahn, Robert Burklund, Hari Rau-Murthy,
Siddharth Gurumurthy, John Rognes, Mike Mandell, Inbar Klang, Liam
Keenan, and Mike Hopkins.

\section{The players}\label{sec-players}
\subsection{The chromatic landscape}\label{sec-tel-loc}

\begin{defin}\label{def-MR99} 
{\phantom{hello}}

\begin{enumerate}[label={(\roman*)},itemindent=0em]
\item \label{def-MR99ii} \cite[Definition 1.5.3.]{Rav:NP} A $p$-local
  finite spectrum $X$ {\bf has type $n$} if\linebreak
  $K (n)_{*}X\neq 0$ and $K (m)_{*}X=0$ for all $m<n$.  We will
  sometimes denote such a spectrum by $F(n)$.

\item \label{def-MR99i} \cite[\S3]{MahoRezk99} A $p$-complete bounded
  below spectrum $Y$ {\bf has {\fp}-type $n$} if it has finite type
  and for each finite spectrum $F(n+1)$ of type $n+1$ as above,
  $F(n+1)\otimes Y$ is $\pi$-finite, meaning that is has only finitely
  many nontrivial homotopy groups, each of which is finite.
\end{enumerate}

In both cases we say the spectrum has {\bf (chromatic) height $n$}.
\end{defin}

It is known \cite[Theorem 2.11]{Rav:Loc} that for finite spectra $X$,
$K (m)_{*}X=0$ implies $K (m-1)_{*}X=0$, but this is far from true for
infinite CW-spectra.  For example there is a infinite spectrum
$\BPn$ for which $K (m)_{*}\BPn$ is trivial for $m>n$ but nontrivial
for $m\leq n$, and this spectrum has {\fp}-type $n$.  An infinite
spectrum need not have an {\fp}-type, but a finite one always has a
type.

The {\TC} is about the relation between Bousfield localizations
(originally defined by Pete Bousfield in \cite{AKB:Spectra}) with
respect to $K (n)$, the $n$th Morava $K$-theory, and ``the'' telescope
$T (n)$. The quotation marks refer to the following consequences of
the periodicity theorem of  Jeff Smith and Hopkins \cite{HS}:

\begin{itemize}

\item [$\bullet$] For each prime $p$ and positive integer $n$ (the
   chromatic height) there are $p$-local finite spectra of type
  $n$ as in \cref{def-MR99}\cref{def-MR99ii}.

\item [$\bullet$] Each type $n$ finite spectrum $F(n)$ admits a map
$v:F(n) \to \Sigma^{-d}F(n)$ for some $d>0$ inducing an isomorphism
in $K (n)$ homology.  The cofiber of $v$ has type $n+1$, so it is
possible to construct finite spectra of all types by induction on $n$.
\end{itemize}

We denote the filtered colimit obtained by iterating $v$, the {\bf
  height $n$ telescope}, by $T (n)$. For a given prime $p$ and height
$n$, neither $F(n)$ nor $v$ is unique, although for a given finite
$F(n)$, the homotopy type of its telescope $T (n)$ is known to be
independent of the choice of $v$.  Better still, the Bousfield
localization functor $L_{T (n)}$ associated with $T (n)$ is known to
be independent of the choice of $F(n)$ as well. Hence it is customary
to omit $F(n)$ and $v$ from the notation.
 
The original conjecture of \cite{Rav:Loc} was that the functors
$L_{T (n)}$ and $L_{K (n)}$ are the same, which was then known to be
the case for $n=0$ and $n=1$.  A few years later (1989) it became
apparent that the statement was likely to be false for $n>1$.  The
author made several unsuccessful attempts to disprove it, one of which
is the subject of \cite{MRS4}.  None of them were anything like the
work of BHLS.

There is an equality of Bousfield classes \cite[Definition 7.2.1 and
Theorem 7.3.2 (d)]{Rav:NP},
\begin{numequation}\label{eq-En-Bousfield-class}
\begin{split}
\langle E_{n} \rangle 
 = \langle E (n) \rangle 
 = \bigoplus_{0\leq m\leq n}\langle K (m) \rangle.
\end{split}
\end{numequation}%

\begin{defin}\label{def-Ln-fin}
{\bf The functors $L_{n}$ and $L_{n}^{\fin}$.}  The former is
Bousfield localization with respect to $E (n)$, periodic
Johnson-Wilson theory, or {\eqt}ly with respect to $E_{n}$, Morava
$E$-theory. We denote the {\qcat} of $E(n)$-local spectra by $L_{n}\qSp$.

$L_{n}^{\fin}$ is Bousfield localization with respect to 
\begin{displaymath}
\bigoplus_{0\leq m\leq n}T (m).
\end{displaymath}

\noindent This functor is independent of the choice of the $T(m)$s. We
denote  the corresponding category of local
spectra by $L^{\fin}_{n}\qSp$.
\end{defin} 

The notation $L_{n}^{\fin}$ is used because it is known that the fiber
of the map $X\to L_{n}^{\fin}X$ is the colimit of all {\em finite} $E
(n)$-acyclic spectra mapping to $X$.  See Haynes Miller's
\cite{Mil:Fin} and \cite[\S2]{BMCSY23} for more discussion.

Both $L_{n}$ and $L_{n}^{\fin}$ are known to be {\bf smashing}
\cite[Theorem 7.5.6]{Rav:NP}, meaning that for any spectrum $X$,
\begin{displaymath}
L_{n}X\simeq X\wedge L_{n}\mS
\qquad \aand 
L_{n}^{\fin}X\simeq X\wedge L_{n}^{\fin}\mS.
\end{displaymath}

In \cref{def-cyc-comp} we will define the functors $L_{n}^{\Cyc}$
(height $n$ cyclotomic completion) and $L_{\Cyc(n)}$ that interpolate
respectively between $L_{n}^{\fin}$ and $L_{n}$, and between
$L_{T(n)}$ and $L_{K(n)}$.

As in \cite{Rav:gjmcyc} we will use the following notation.

\begin{defin}\label{def-KT-KK}
{\bf Telescopic, cyclotomically complete and chromatic localizations
    of $K$-theory and $\TopC$.}  For an $\EE_{1}$-ring spectrum $R$
  and $n\geq 0$,
\begin{align*}
\rK_{T (n)} (R)
 & := L_{T (n)}\rK(R),  &
\TopC_{T (n)} (R)
 & := L_{T (n)}\TopC (R),  \\
\rK_{\Cyc(n)} (R)
 & := L_{\Cyc(n)}\rK(R),  &
\TopC_{\Cyc(n)} (R)
 & := L_{\Cyc(n)}\TopC (R),  \\
 &\mbox{where $L_{\Cyc(n)}$
   is as in \cref{def-cyc-comp} below,}\hspace{-10cm}\\ 
\rK_{K (n)} (R)
 & := L_{K (n)}\rK(R) , &
\aand \TopC_{K (n)} (R)
 & := L_{K (n)}\TopC (R). 
\end{align*}
\end{defin}

The notation $\rK_{T (n)}$ is used in \cite{BMCSY23}, but not in most
other papers in this area.

\subsection{The Burklund-Hahn-Levy-Schlank counterexample}
\label{sec-BHLS-counter}

The authors of \cite{BHLS} show that for each $n\geq 1$ and each prime
$p$, there is a $p$-local $\EE_{1}$-ring spectrum $R$ of chromatic
height $n$ such that $\TopC_{K (n+1)} (R)$ and $\TopC_{T (n+1)} (R)$
are distinct.  More precisely, for each prime $p$ and each integer
$n>0$, they consider a form of the Johnson-Wilson spectrum $\BPn$
(which has fp-type $n$), originally defined by Dave Johnson and Steve
Wilson in \cite{JW2} and revisited half a century later by Dylan
Wilson (no relation to Steve) and Hahn in \cite{Hahn-Wilson}.  In
\cite[\S 5]{BHLS} the authors define an action of the additive group of
integers $\Z$, and hence of its subgroups $p^{k}\Z$ for $k>0$, via
Adams operations, which we will discuss in \cref{sec-Adams-BPn}.  They
prove that the $T(n+1)$-local $K$-theoretic coassembly map of
\cite[Definition 5.6]{Rav:gjmcyc}

\begin{numequation}\label{eq-loc-coassembly}
\begin{split}
\epsilon:\rK_{T (n+1)}(\BPn^{h(p^{k}\Z)})
         \to  \rK_{T (n+1)}(\BPn)^{h(p^{k}\Z)}
\end{split}
\end{numequation}%

\noindent is not an equivalence, but becomes one after $K(n +
1)$-localization.  In other words, the algebraic $K$-theory functor on
ring spectra with $\Z$-action does does not commute with passage to
homotopy fixed points, even $T (n+1)$-locally, but its does so commute
$K (n+1)$-locally.  Thus, the telescope conjecture fails at heights
greater than 1.  In their words,

\begin{quote}
We do this by looking at the coassembly map from two highly divergent
perspectives, which are connected via trace theorems:
\begin{enumerate}
\item From the perspective of locally unipotent $\Z$-actions
  on ring spectra, the results of \cite[\S4]{BHLS} tell us that the
  coassembly map cannot be an isomorphism. [In
  \cref{sec-nontrivial-coassembly} we will say that it is {\bf
    nontrivial} when it is not an isomorphism.]

  \item From the perspective of cyclotomic redshift of \cite{BMCSY23},
the map 
\[ L_{T(n)}\BPn ^{h(p^k \Z)}
\;\longrightarrow\; L_{T(n)}\BPn
\] 
\noindent splits after base change to the maximal abelian extension of
the $K(n)$-local sphere [see \cref{defin-two-infinite}], and therefore
the coassembly map is a $K(n+1)$-local isomorphism.
\end{enumerate}
\end{quote}

\begin{remark}\label{remark-why-BPn} {\bf Why $\BPn$?}  While $\BPn$ has less
  structure than its relatives $E_{n}$ and $E_{n}(\barFFp)$,
  it has the advantage of being connective. (The homotopy groups of
  these three spectra are given in \cref{eq-coefficients}.) We need
  connectivity to apply \cite[Corollary 3.21]{BHLS} (quoted as
  \cite[Proposition 6.6]{Rav:gjmcyc}) to show that a certain
  $T(n+1)$-local coassembly map is not an equivalence. 
\end{remark}

In that statement the $\Z$-action on the connective ring spectrum is
assumed to be trivial, but the action here is nontrivial.  We only
know that it is locally unipotent as in \cref{def-loc-uni}.  Such
actions are sketched in \cref{sec-unipotent}.  In \cref{thm-thmB}, we
will see that our action on $\BPn$ becomes trivial after smashing with
a suitable finite complex.

\subsection{Nontrivial coassembly maps}
\label{sec-nontrivial-coassembly}
The relevant results of \cite[\S4]{BHLS} are reported in
\cite[\S6]{Rav:gjmcyc}.  They have to do with a
connective ring spectrum $R$ on which $\Z$ acts trivially, leading to
a similar actions on $\THH(R)$ and $\TopC(R) $.  It turns out that the
coassembly maps $\epsilon$ in these cases are far from being
isomorphisms.

\cite[Proposition 3.19]{BHLS}, quoted as
\cite[Proposition 6.6]{Rav:gjmcyc}, identifies the fiber of
coassembly in the $\THH$ case when $R=\mS _{p}$, the $p$-local sphere
spectrum.  It also guarantees the nontriviality of the fiber in the
$\TopC$ case for general $R$ provided that $\TopC (R) $ is nontrivial.

The example of interest is a {\em nontrivial} action of $\Z$ (via
Adams operations) on the connective ring spectrum $R=\BPn$.  This
action is locally unipotent (\cref{def-loc-uni}) after $p$-adic
completion, and such actions are discussed here in
\cref{sec-unipotent}.  \cref{thm-fpLQ} and \cref{thm-tel-asym-const}
say that if such a spectrum has certain properties (that $\BPn$ is
known to have), then smashing it with a type $n+2$ finite complex
renders the $Z$-action trivial on $\THH (R)$ and on $\TopC(R)$
respectively.  Hence we know that the $T(n+1)$-local coassembly maps
are nontrivial.

BHLS take their discrete cyclotomic extensions and apply functors such
as algebraic $K$-theory, $\TopC$ and $\THH$, which are defined in
terms of smooth cyclotomy.  Their cyclotomic completion
(\cref{def-cyc-comp}) has to do with discrete cyclotomy, while
cyclotomic boundedness (in relation to the Antieau-Nikolaus
$t$-structure of \cite{AN21}) has to do with smooth cyclotomy.  Most
of their proof takes place in the {\qcat} $\qCycSp$ of smoothly
cyclotomic spectra as in
\cite[Definition 5.24]{Rav:gjmcyc}.

\cite{Rav:gjmcyc} ends by quoting \cite[Theorem 3.22 for
$X=\mS$]{BHLS}, as \cite[Theorem 6.7]{Rav:gjmcyc}, which
says that a map similar to \cref{eq-loc-coassembly} is not an
equivalence.  In it $\BPn$ is replaced by a $T (n)$-local ring
spectrum $R$ for which $\rK_{T (n+1)} (R)$ (see \cref{def-KT-KK}) is
nontrivial, equipped with a {\em trivial} action of $\Z$.

The object of this paper is twofold:
\begin{enumerate}[label={(\roman*)},itemindent=0em]
\item To outline the proof
that the chromatic analog of \cref{eq-loc-coassembly}, the coassembly
map
\begin{displaymath}
\rK_{K (n+1)}(\BPn^{h(p^{k}\Z)}) \to  \rK_{K (n+1)}(\BPn)^{h(p^{k}\Z)},
\end{displaymath}
 
\noindent {\em is an equivalence} for a certain nontrivial action of
$\Z$ by Adams operations.  We will see in \cref{eq-K-cyc-T} that there
is an intermediate functor between $K(n)$ and $T(n)$ localizations,
height $n$ cyclotomic completion $L_{\Cyc(n)}$ as in
\cref{def-cyc-comp}.  It turns out that the map of
\cref{eq-loc-coassembly} with $T(n+1)$-localization replaced by height
$n+1$ cyclotomic completion is also an equivalence. See \cref{thm-BHLS-6.25}.

\item To show that said group action (which is locally unipotent as in
  \cref{def-loc-uni}), when restricted to the subgroup $p^{k}\Z$ for
  $k\gg 0$, is close enough to trivial to show that
  \cref{eq-loc-coassembly} {\em is not an equivalence.}
 A $\TopC $ version of this is
  given in \cref{thm-thmB}, and \cref{thm-BHLS-6.25} relates it to
  algebraic $K$-theory.

  Then we can use the fact that when $\Z$ acts trivially on a ring
  spectrum $R$ for which $\rK_{T (n+1)} (R)$ (see \cref{def-KT-KK}) is
  nontrivial, the coassembly map
\begin{displaymath}
\epsilon :\rK_{T (n+1)} (R^{B\Z}) \to \rK_{T (n+1)} (R)^{B\Z},
\end{displaymath}

\noindent is not  an equivalence by \cite[Theorem 3.22 for
$X=\mS$]{BHLS}, which is quoted as
\cite[Theorem 6.7]{Rav:gjmcyc}.
\end{enumerate}

\subsection{The {\LQ} property}\label{sec-LQ}

\begin{defin}\label{def-LQ}
\cite[Definition 4.2]{BHLS} 
Let $n\geq -1$. We say that an $\EE_{1}$-ring $R$ has the {\bf height
$n$ {\LQ} property} if $\THH(R)$ is bounded below and, for each finite
spectrum $F$ of type $n+2$, the  spectrum $F \otimes
\THH(R)$ is bounded above in the Postnikov sense.  
\end{defin}

The bounded above condition means that $F \otimes \THH(R)$ is a
colimit of spectra with finite Postnikov systems (its $m$-connected
covers), so all of its Morava $K$-theories vanish.  Since $K(m)_{*}F=0$
iff $m\leq n+1$, we must have $K(m)_{*}\THH(R)=0$ for $m\geq n+2$.

This condition on $F \otimes \THH(R)$ is also known to be {\eqt}
to \mbox{$F \otimes \THH(R)$}  being bounded above in the
Antieau-Nikolaus $t$-structure of \cite{AN21}, which is described in
\cite[\S5.11]{Rav:gjmcyc}.
Recall that the
cyclotomic spectrum $\THH (H \FFp )$, the subject of 
\cite[\S5.9]{Rav:gjmcyc},
is bounded above in the Antieau-Nikolaus $t$-structure, but its
underlying spectrum is not bounded above in the (classical) Postnikov
$t$-structure.

\begin{theorem}\label{thm-LQ-TC}
{\bf The  {\LQ} property and localized $\TopC $.}
\cite[Theorem 3.4.1]{Hahn-Wilson} For $R$ as in \cref{def-LQ} with
finite type, $\TopC(R) $ has $\fp$-type at most $n+1$. In particular,
the map
\[
\mathrm{TC}(R) \longrightarrow \mathrm{TC}_{T(n+1)}(R)
\]
is truncated, meaning that the homotopy groups  of its fiber are bounded above. 
\end{theorem}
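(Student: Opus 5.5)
We need to show two things: for a type $n+2$ finite spectrum $F$, $F\otimes \TopC(R)$ is $\pi$-finite, and the fiber of $\TopC(R)\to\TopC_{T(n+1)}(R)$ is bounded above. The plan is to get both from the fiber sequence
\begin{displaymath}
\TopC(R)\longrightarrow \mathrm{TC}^{-}(R)\xrightarrow{\ \mathrm{can}-\varphi\ } \mathrm{TP}(R)
\end{displaymath}
after $p$-completion, which is exact after smashing with the finite spectrum $F$.

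\emph{Finite type.} We first check that $\THH(R)$ is bounded below of finite type. Bounded below is part of \cref{def-LQ}. Finite type follows because $R$ has finite type and $\THH(R)$ is the geometric realization of $R^{\otimes(\bullet+1)}$, so the skeletal filtration gives finitely generated homotopy in each degree.

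\emph{Boundedness of $F\otimes\THH(R)$.} By \cref{def-LQ}, $F\otimes\THH(R)$ is bounded above, and we just saw it is bounded below of finite type. Since $F$ is $p$-torsion ($n+2\ge 1$), each homotopy group is a finite $p$-group, so $F\otimes\THH(R)$ is $\pi$-finite. We then pass to $\mathrm{TC}^{-}$ and $\mathrm{TP}$ using the homotopy fixed point and Tate spectral sequences. Because $F$ is finite, $F\otimes \THH(R)^{h\mT}\simeq (F\otimes\THH(R))^{h\mT}$, and likewise for the Tate construction, with $\mT$ acting trivially on $F$. Each of these is then a limit of $\pi$-finite pieces with a periodic pattern, so $F\otimes\mathrm{TC}^{-}$ and $F\otimes\mathrm{TP}$ are bounded below in each degree, with finite groups in each degree, but generally not bounded above.

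\emph{Main step: bounding $\TopC$ above.} This is the main obstacle. The first thing I would try is the Antieau--Nikolaus $t$-structure, using the remark after \cref{def-LQ} that $F\otimes\THH(R)$ is bounded above there. By \cite{AN21}, $\TopC$ is a corepresentable functor $\mathrm{map}_{\qCycSp}(\mS^{\mathrm{triv}},-)$, and this functor sends cyclotomic spectra that are bounded below and bounded above in the $t$-structure to bounded above spectra. Its failure to be $t$-exact is controlled by the fact that $\mS^{\mathrm{triv}}$ has cyclotomic amplitude roughly $[-1,0]$. Concretely, Antieau--Nikolaus show that a cyclotomic spectrum which is $\le d$ in their $t$-structure has $\TopC$ concentrated in degrees $\le d+1$. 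Applying this to $F\otimes\THH(R)$ shows that $F\otimes\TopC(R)$ is bounded above.

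\emph{Conclusion.} Combined with the degreewise finiteness, $F\otimes\TopC(R)$ is $\pi$-finite. Bounded below and finite type for $\TopC(R)$ itself follow from the fiber sequence, as in the work of Bökstedt--Madsen and Hahn--Wilson. This gives fp-type at most $n+1$.

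\emph{Truncatedness.} Let $C$ be the fiber of $\TopC(R)\to \TopC_{T(n+1)}(R)$, and write $\TopC(R)_p$ for its $p$-completion. Since $\TopC(R)_p$ has fp-type $\le n+1$, the theory of Mahowald--Rezk \cite{MahoRezk99} identifies $L^{\fin}_{n+1}$ of it with its $L_{T(0)}\oplus\dots\oplus L_{T(n+1)}$ pieces, and says the map $\TopC(R)_p\to L^{\fin}_{n+1}\TopC(R)_p$ has bounded above fiber. Comparing $L_{T(n+1)}$ with $L^{\fin}_{n+1}$ through the chromatic fracture square, the lower heights contribute rationally and at heights $\le n$, and these are again controlled by truncation. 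From this we conclude that $C$ is bounded above.
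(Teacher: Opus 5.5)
\paragraph{Main step.} The step that carries the weight does not go through. You get boundedness of $F\otimes\TopC(R)$ by converting Postnikov boundedness of $F\otimes\THH(R)$ into boundedness in the Antieau--Nikolaus $t$-structure, using the remark after \cref{def-LQ}. That implication is false.

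Take $X=H\FF_{p}$ with trivial $\mT$-action and zero Frobenius. It is $\pi$-finite, yet
$\TopC(X)\simeq\mathrm{fib}(\mathrm{can}\colon X^{h\mT}\to X^{t\mT})\simeq\Sigma X_{h\mT}=\Sigma H\FF_{p}\otimes\mathbb{C}P^{\infty}_{+}$
has homotopy in every odd positive degree. That cannot happen for $X\in\qCycSp_{\le d}$, because $\TopC=\mathrm{map}(\mS^{\mathrm{triv}},-)$ and $\mS^{\mathrm{triv}}$ is connective. The converse also fails: $\mS/p\otimes\THH(H\FF_{p})$ is bounded above in the $t$-structure but not in the Postnikov sense, as the paper itself notes.

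What $\pi$-finiteness of $Y=F\otimes\THH(R)$ actually gives is that $Y^{h\mT}=F\otimes\mathrm{TC}^{-}(R)$ is bounded above, contrary to what you wrote. Hence $F\otimes\TopC(R)$ is bounded above if and only if $F\otimes\mathrm{TP}(R)$ is, i.e.\ if and only if $Y_{h\mT}$ is. Nothing in your argument controls this.

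Now suppose instead you assume boundedness above in the $t$-structure, which is the hypothesis BHLS work with (compare \cref{thm-asymptotic-boundedness-THH}). Then your main step is correct and formal, with the sharper bound $\TopC(X)\le d$ for $X\in\qCycSp_{\le d}$. But under that hypothesis $F\otimes\THH(R)$ is no longer $\pi$-finite in the cases of interest: already $F\otimes\THH(\Z_{p})$ is a nonzero sum of shifts of $\THH(\Z_{p})/p^{k}$. So your degreewise finiteness argument through the homotopy fixed point and Tate spectral sequences has no input, since each $\pi_{k}$ of $\mathrm{TC}^{-}$ and $\mathrm{TP}$ then receives infinitely many contributions. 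Finite type of $\TopC(R)$ is a genuine part of the theorem, not a formal consequence of the fiber sequence.

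As it stands, you use $\pi$-finiteness for one half of the argument and $t$-structure boundedness for the other, and neither hypothesis alone supports both.

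\paragraph{Final step.} Your last paragraph is not an argument either, and it cannot be completed as planned. Mahowald--Rezk give truncatedness of $\TopC(R)_{p}\to L^{\fin}_{n+1}\TopC(R)_{p}$. But by the fracture square, the fiber of $L^{\fin}_{n+1}Y\to L_{T(n+1)}Y$ is $\mathrm{fib}(L^{\fin}_{n}Y\to L^{\fin}_{n}L_{T(n+1)}Y)$, and fp-type $\le n+1$ gives no control over it. For example, $ku_{p}$ has fp-type $1\le 2$ while $L_{T(2)}ku_{p}=0$, so $ku_{p}\to L_{T(2)}ku_{p}$ has fiber $ku_{p}$.

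In the same way, ``fp-type at most $n+1$'' allows $\TopC(R)_{p}$ to have fp-type $\le n$. In that case $\TopC_{T(n+1)}(R)=0$, and the fiber in question is all of $\TopC(R)$, which need not be bounded above. So the truncation statement does not follow from the fp-type statement. What does follow, via \cite{MahoRezk99}, is the version with $L^{\fin}_{n+1}$ in place of $L_{T(n+1)}$. The $T(n+1)$ version would need separate input showing that $L^{\fin}_{n}\TopC(R)\to L^{\fin}_{n}\TopC_{T(n+1)}(R)$ is truncated.
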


\section{Semiadditivity}\label{sec-Ambi}

Semiadditivity (along with ambidexterity, which we do not need here)
is first introduced in the unpublished masterpiece \cite{HL} of
Hopkins and Lurie.  Much of the subsequent work in this area is due to
Carmeli, Schlank and Yanovski.  Their three papers,
\cite{CSY22}, \cite{CSY21} and \cite{CSY24}, appeared as preprints in
the stated order, even though \cite{CSY21} was published before
\cite{CSY22}.  They also wrote \cite{BCSY24} with Toby Barthel and
\cite{BMCSY23} with Shay Ben-Moshe.

In \cite{CSY21} they say

\begin{quote}
The localizations $L_{K (n)}$ and $L_{T (n)}$ are known to possess
several rather special and remarkable properties. Among them are the
vanishing of the Tate construction for finite group actions
\cite{CM-short, Hovey-Sadofsky, Greenlees-Sadofsky, Kuhn-Tate}; see
\cref{thm-blueshift}. In \cite{HL}, Hopkins and Lurie reinterpret this
Tate vanishing property as $1$-semiadditivity (see
\cref{def-semiadd}), and vastly generalize it by showing that the
$\infty$-categories $\mathrm{\qSp}_{K(n)}$ are $\infty$-semiadditive;
see \cref{thm-HL002}\cref{thm-HL002i}. In turn, this is exploited to
obtain new structural results for $\mathrm{\qSp}_{K(n)}$.

In \cite[Theorem~B, our \cref{thm-CSY22-thm-B}]{CSY22}  the authors
extended the results of \cite{HL} by classifying all the higher
semiadditive localizations of $\mathrm{\qSp}$ with respect to homotopy
rings. First, for all such localizations, $1$-semiadditivity was shown
to be equivalent to $\infty$-semiadditivity. Second, the telescopic
localizations $L_{T(n)}$, for various primes $p$ and heights $n$, were
shown to be precisely the maximal examples of such localizations
(while the $L_{K(n)}$ are the minimal).

Concisely put, among localizations of spectra with respect to homotopy
rings, the higher semiadditive property singles out precisely the
monochromatic localizations, which are parameterized by the chromatic
height.
\end{quote}

I must say that I had no idea that $K (n)$-localization had such
properties when I wrote \cite{Rav:Loc}!



\bigskip

The starting point of this theory is the following result of Mark
Hovey and Hal Sadofsky, which is related to \cite[Conjecture
12]{MR}. See \cref{rem-height}.  It is stated in terms of the Tate
construction $(-)^{tG}$ for a finite group $G$, which was originally
defined by John Greenlees and Peter May in
\cite[Introduction]{GreeMay-Tate} (quoted as
\cite[(4.26)]{Rav:gjmcyc}) to be the cofiber of a map from
the homotopy orbit spectrum to the homotopy fixed point spectrum.

\begin{thm}\label{thm-blueshift}
  {\bf Algebraic blueshift.} \cite[Theorem 1.1]{Hovey-Sadofsky}
  For a finite spectrum $X$ with trivial action by a finite group $G$,
  the Bousfield class of $(L_{n}X)^{tG}$ is that of $L_{n-1}X$.
\end{thm}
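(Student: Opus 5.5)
The plan is to reduce to a generalized Tate computation for $G=\rC_{p}$ and then use the structure of $L_{n}\mS$ as a smashing localization.

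\textbf{Step 1: reduce to $X=\mS$.} Since $X$ is finite and the $G$-action is trivial, $(L_{n}X)^{tG}\simeq (L_{n}\mS)^{tG}\otimes X$. Homotopy orbits commute with smashing with anything, and homotopy fixed points commute with smashing with a finite spectrum. Also $L_{n}X\simeq L_{n}\mS\otimes X$ because $L_{n}$ is smashing. So it suffices to show $\langle (L_{n}\mS)^{tG}\rangle=\langle L_{n-1}\mS\rangle$; smashing both sides with $X$ then gives the claim, because Bousfield classes are compatible with smashing.

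\textbf{Step 2: reduce to $G=\rC_{p}$.} For general $G$, I would write the Tate construction in terms of Sylow subgroups. After $p$-localization, $Y^{tG}$ is a retract of $Y^{tP}$ for a Sylow $p$-subgroup $P$ (transfer argument), and conversely. For a $p$-group, I would filter by a central series, using that $(-)^{tP}$ is built from iterated Tate constructions of cyclic subgroups. This gives $\langle (L_{n}\mS)^{tG}\rangle=\langle (L_{n}\mS)^{t\rC_{p}}\rangle$, provided $p$ divides $|G|$; if it does not, both sides should be read as rational/trivial. I expect this bookkeeping to be routine but fiddly.

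\textbf{Step 3: the cyclic case.} Use $E_{n}$-module structure. $(L_{n}\mS)^{t\rC_{p}}$ is a ring spectrum receiving a map from $L_{n}\mS$, and its Bousfield class is controlled by $(E_{n})^{t\rC_{p}}$, since $L_{n}\mS$ is $E_{n}$-nilpotent and the Tate construction is exact. Next compute
\begin{displaymath}
(E_{n})^{t\rC_{p}}\simeq E_{n}^{B\rC_{p}}[e^{-1}],
\end{displaymath}
where $E_{n}^{*}(B\rC_{p})=E_{n}^{*}[[x]]/[p](x)$ and $e$ is the Euler class $x$. Inverting $x$ in $E_{n}^{*}[[x]]/[p](x)$ kills $\ldots$; the $p$-series $[p](x)=px+\cdots+u v_{n}x^{p^{n}}+\cdots$ forces $v_{n}$ to become divisible by lower $v_{i}$ up to units after inverting $x$. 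The result is a Landweber exact algebra that is faithful over $E_{n-1}$-type coefficients: $p,\ldots,v_{n-1}$ remain nonnilpotent, while $I_{n}$-torsion information disappears. Hence $\langle (E_{n})^{t\rC_{p}}\rangle=\bigoplus_{m\le n-1}\langle K(m)\rangle=\langle E_{n-1}\rangle$ by \cref{eq-En-Bousfield-class}.

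\textbf{Step 4: return to $L_{n}\mS$.} Compare back via the Adams-type resolution: the tower of $E_{n}$-resolution is finite after $p$-localization (nilpotence), so the Bousfield class of $(L_{n}\mS)^{t\rC_{p}}$ equals that of $(E_{n})^{t\rC_{p}}$ in both directions.

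The main obstacle is Step 3. I must show that $K(m)_{*}(E_{n})^{t\rC_{p}}\neq0$ for every $m\le n-1$ and that $K(n)_{*}$ vanishes. Vanishing would use $x$ being nilpotent modulo $I_{n}$, since $[p](x)\equiv v_{n}x^{p^{n}}$ there. Nonvanishing would use the Landweber filtration and Hovey–Sadofsky's explicit height count via the formal group's $p$-torsion.
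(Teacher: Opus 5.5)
The paper does not prove this statement; it only quotes Hovey--Sadofsky, so your proposal has to stand on its own. Step 1 is fine, and the overall shape is reasonable: pass to $E_{n}^{t\rC_{p}}=E_{n}^{h\rC_{p}}[x^{-1}]$ and read the height off the $p$-series. But the decisive step is missing. The content of the theorem is that the height drops by \emph{exactly} one, i.e.\ that $K(m)_{*}E_{n}^{t\rC_{p}}\neq 0$ for every $m\le n-1$. Step 3 only asserts this (``Landweber exact \dots\ faithful over $E_{n-1}$-type coefficients'') and then defers it to Hovey--Sadofsky's height count, which is circular.

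This cannot be read off naively either: $v_{m}$ must be inverted \emph{after} $x$-adic completion. In $\mathbb{F}_{p}[v_{n-1}^{\pm1},v_{n}^{\pm1}]((x))$ the series $[p](x)=x^{p^{n-1}}(v_{n-1}+\cdots)$ is a unit, so height $n-1$ would wrongly disappear. One way to close the gap is as follows.
\begin{itemize}
\item By Weierstrass preparation, $A=E_{n}^{0}[[x]]/\langle p\rangle(x)$ with $\langle p\rangle(x)=[p](x)/x$ is free of rank $p^{n}-1$ over $E_{n}^{0}$. Hence $\pi_{0}E_{n}^{t\rC_{p}}=A[x^{-1}]$ is flat over $E_{n}^{0}$, and multiplication by $u_{i}$ stays injective on $A[x^{-1}]/I_{i}$.
\item Modulo $I_{n}$, $\langle p\rangle(x)$ is a unit times $x^{p^{n}-1}$, so $A[x^{-1}]/I_{n}=0$.
\item Modulo $I_{n-1}$, $[p](x)=x^{p^{n-1}}\bigl(u_{n-1}\alpha+x^{p^{n}-p^{n-1}}\beta\bigr)$ with $\alpha,\beta$ units. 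So $A[x^{-1}]/I_{n-1}\neq 0$ (for $n\ge 2$ it is $\mathbb{F}_{p^{n}}((x))$).
\end{itemize}
Thus $\pi_{*}E_{n}^{t\rC_{p}}$ is Landweber exact of height exactly $n-1$. Hovey's theorem on Bousfield classes of Landweber exact spectra then gives $\langle E_{n}^{t\rC_{p}}\rangle=\langle E(n-1)\rangle$.

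Step 2 is also not right as stated: the Tate construction for a $p$-group is not an iterated Tate construction along a central series. For $N\trianglelefteq P$ the norm of $P$ factors through $(Y^{hN})_{h(P/N)}$. So $Y^{tP}$ is an extension of $(Y^{hN})^{t(P/N)}$ by $(Y^{tN})_{h(P/N)}$, and the residual $P/N$-action on $Y^{hN}$ need not be trivial even when $P$ acts trivially on $Y$ (e.g.\ for the non-split extension $\rC_{p}\to\rC_{p^{2}}\to\rC_{p}$). Your trivial-action $\rC_{p}$ computation therefore does not propagate.

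You do not need this reduction at all.
\begin{itemize}
\item Lower bound: choose $\rC_{p}\le G$ (this is where $p\mid |G|$ enters). The composite $(L_{n}\mS)^{tG}\to(L_{n}\mS)^{t\rC_{p}}\to E_{n}^{t\rC_{p}}$ is a ring map, so $\langle E_{n}^{t\rC_{p}}\rangle\le\langle (L_{n}\mS)^{tG}\rangle$.
\item Upper bound: $(L_{n}\mS)^{tG}$ is $E(n)$-local because $L_{n}$ is smashing, so $E(n)$-local spectra are closed under colimits as well as limits. It is $K(n)$-acyclic by the Greenlees--Sadofsky vanishing $K(n)^{tG}\simeq\ast$ for every finite $G$, propagated to $L_{n}\mS$ by its $E(n)$-nilpotence. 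An $E(n)$-local, $K(n)$-acyclic spectrum is $E(n-1)$-local, hence $\langle (L_{n}\mS)^{tG}\rangle\le\langle L_{n-1}\mS\rangle$.
\end{itemize}
This also makes Step 4 unnecessary.

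Finally, your aside about $p\nmid |G|$ is wrong. In that case $(L_{n}X)^{tG}\simeq\ast$, while $L_{n-1}X\not\simeq\ast$ whenever $X$ has type less than $n$. So the statement genuinely needs the hypothesis $p\mid |G|$, which the quotation in the paper omits.
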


We now know that the finiteness of $X$ and the triviality of the
$G$-action are unnecessary hypotheses.

The term {\bf blueshift} refers to the lowering of chromatic height,
and hence of ``wavelength'' or periodicity. The term {\bf redshift}
refers to the raising of chromatic height, which we will see in
\cref{sec-cyclo-redshift}.

If a finite spectrum $X$ has type $n$, meaning that $L_{n-1}X$ is
contractible but $L_{n}X$ is not, \cref{thm-blueshift} says that the
Tate spectrum of $L_{n}X=L_{K (n)}X$ is contractible, making the norm
map $\Nm_{G}:X_{hG}\to X^{hG}$ of
\cite[(4.28)]{Rav:gjmcyc}
(of which the cofiber is the Tate spectrum) an equivalence.  To put it
another way, for a type $n$ finite spectrum $X$ with trivial
$G$-action, the norm map $\Nm_{G}$, which appears in Greenlees-May's
``Tate diagram'' of \cite[(4.35)]{Rav:gjmcyc}, induces
an isomorphism in $K (n)_{*} (-)$. A related result of Greenlees and
Sadofsky \cite[Theorem 1.1]{Greenlees-Sadofsky}, from the same year as
the Hovey-Sadofsky result, says that the Tate spectrum for $K (n)$
itself (with trivial $G$-action) is contractible.

If we assume instead that $X$ is $K (n)$-local, then so is the
homotopy fixed point spectrum $X^{hG}$, although the homotopy orbit
spectrum $X_{hG}$ need not be. \cite[Theorem 0.0.1]{HL} says that for
{\em any action} of $G$ on $X$, the norm map exhibits $X^{hG}$ as a
$K (n)$-localization of $X_{hG}$.  Their goal is to generalize this
result in a way that involves the following.

\begin{defin}\label{def-pi-finite}
  {\bf $\pi $-finiteness, truncation and connectivity of spaces.}
  A space (or Kan complex) $X$ is {\bf $\pi $-finite} if it has
  finitely many path connected components, each of its homotopy groups
  $\pi_{i} (X,x)$ for any base point (or vertex) $x$ is finite, and
  for each $x$ they vanish for $i\gg 0$.  It is {\bf $m$-finite} if
  these groups vanish for $i>m$.  

  It is {\bf $m$-truncated} for $m\geq 0$ if its homotopy groups
  vanish above dimension $m$. They need not be finite in low
  dimensions.  It is {\bf $(-1)$-truncated} if it is empty or
  contractible, and {\bf $(-2)$-truncated} if it is contractible.

It is {\bf $m$-connective} for $m\geq 0$ if its homotopy groups vanish below
dimension $m$. 

For a prime $p$, a {\bf $p$-space} is one whose homotopy groups are
all finite $p$-groups.

An {\bf $m$-finite (respectively $m$-connective or $\pi $-finite) map}
$q : A \to B$ is one for which $q^{-1} (b)$ is $m$-finite
$m$-connective or($\pi $-finite) for all $b\in B$.

An {\bf $m$-finite limit or colimit} is one that is indexed by an
$m$-finite space.
\end{defin}

For an {\qcatal} generalization of the above see \cref{defin-truncated-object}.

The word {\bf truncation} here refers to the process of killing
homotopy groups above dimension $m$ by attaching $k$-cells for
$k\geq m+2$.  This process is also known as {\bf  passing to the $m$th
Postnikov section}.

The definition of $m$-finiteness above is such that the space of paths
between any pair of points in an $m$-finite space is $(m-1)$-finite,
even when $m\leq 0$.  

A map $q:A\to B$ is $(-2)$-finite if it is an equivalence, it is
$(-1)$-finite if the preimage of each component of $B$ is either empty
or is mapped {\eqt}ly to it,  it is $0$-finite if the preimage of each
component of $B$ is {\eqt} to a finite covering of it, and so on.

As before let $\qSp_{K (n)}$ be the {\qcat} of $K (n)$-local spectra.
Then a $K (n)$-local spectrum $X$ with $G$ action is a functor
$\rho :{\color{cyan}BG}\to \qSp_{K (n)}$ for ${\color{cyan}BG}$ the
one object {\qcat} associated with $G$ as in
\cite[Definition 5.1]{Rav:gjmcyc}.  Its limit and colimit are
$X^{hG}$ and $L_{K (n)}X_{hG}$.  The norm map of
\cite[(4.28)]{Rav:gjmcyc} extends uniquely to
$L_{K(n)}X_{hG}$ (since the target is $K(n)$-local), and we can ask if
this extension is an equivalence.

The space $BG$ for a finite group $G$ has 
\begin{displaymath}
\pi_{i}BG
=\mycases{    
G      &\mbox{for }i=1\\
0      &\mbox{otherwise.}
}\end{displaymath}

\noindent {\em The key insight of \cite{HL} is that similar statements
  hold not just for functors ${\color{cyan}BG}\to \qSp_{K (n)}$, but
  for functors ${\color{cyan}X}\to \qSp_{K (n)}$ where $X$ 
  is $\pi$-finite as in \cref{def-pi-finite}.}  The case
$X=B^{m}\rC_{p^{j}}$ for $m,j>0$ is of particular interest.

\begin{theorem}\label{thm-HL002} 
{\bf The Hopkins-Lurie norm.}  
Let $X$ be a $\pi$-finite space or Kan complex.

\begin{enumerate}[label={(\roman*)},itemindent=0em]

\item \label{thm-HL002i}\cite[Theorem 0.0.2]{HL} For any functor $\rho
:{\color{cyan}X} \to \qSp_{K (n)}$, there is a
canonical equivalence
\begin{displaymath}
\xymatrix
@R=4mm
@C=10mm
{
{\Nm_{X} :\colim{\color{cyan}X}\rho }
    \ar[r]^(.62){\simeq }
  &{\lim{\color{cyan}X}\rho.}
}
\end{displaymath}

\item \label{thm-HL002ii}\cite[Theorem A]{CSY22}
The same holds for any functor ${\color{cyan}X} \to \qSp_{T (n)}$.
\end{enumerate}
\end{theorem}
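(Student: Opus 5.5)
The natural strategy is induction on the truncation level $m$ of the $\pi$-finite space $X$, following Hopkins--Lurie. The norm $\Nm_{X}$ is built inductively. For $m=-2$ the space $X$ is contractible and $\Nm_{X}$ is the identity. For $m=-1$ the space is empty or contractible, and both colimit and limit are the zero object or $\rho$ evaluated at a point; the norm is the evident map, using that $\qSp_{K(n)}$ (respectively $\qSp_{T(n)}$) is pointed. Now suppose that for every $(m-1)$-finite map $q:A\to B$ we have constructed a norm $\Nm_{q}:q_{!}\to q_{*}$ and shown it is an equivalence. For an $m$-finite map $q$, the diagonal $\delta:A\to A\times_{B}A$ is $(m-1)$-finite. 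Its inverse norm gives a map $\delta_{*}\delta^{*}\to\delta_{!}\delta^{*}$. Composing this with the units and counits of the adjunctions $q^{*}\dashv q_{*}$ and $q_{!}\dashv q^{*}$, together with base change for the pullback square defining $A\times_{B}A$, yields a natural transformation $q^{*}q_{!}\to \mathrm{id}\to q^{*}q_{*}$. Its adjoint is $\Nm_{q}$. The statement for $X$ is the case $q:X\to \mathrm{pt}$.

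To show $\Nm_{q}$ is an equivalence, I reduce to a single Eilenberg--MacLane layer. Norms compose along composites of maps: a Postnikov tower factors $X\to\mathrm{pt}$ through maps whose fibers are $K(\pi,k)$'s, and base change works fiberwise. So it suffices to treat $X=K(\pi,k)$ with $\pi$ finite. Passing to Sylow subgroups via transfer arguments, and noting that in $\qSp_{K(n)}$ and $\qSp_{T(n)}$ the integer $|\pi|$ is invertible when prime to $p$, I can further reduce to $\pi$ a $p$-group. Filtering by central series, and using fibrations once more, I reduce to $\pi=\rC_{p}$, that is, to $X=B^{k}\rC_{p}$ for $1\le k\le m$.

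For $k=1$ the claim is that the Tate construction $\rho^{t\rC_{p}}$ vanishes after localization. For trivial action on finite spectra this is \cref{thm-blueshift}. For general actions I would argue that $(-)^{t\rC_{p}}$ is a module over $\mS^{t\rC_{p}}$, which is $L_{n-1}$-local-type (blue-shifted), hence $K(n)$-acyclic and $T(n)$-acyclic. For higher $k$, the obstacle is that the inductive hypothesis only gives a norm, not a criterion for invertibility. Following \cite{HL}, I would show that $\Nm_{B^{k}\rC_{p}}$ is invertible if and only if the cardinality $|B^{k-1}\rC_{p}|$, defined as the composite of the unit with the inverse of the previous norm applied to a counit, is invertible on $\mathbb{1}$. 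For $\qSp_{K(n)}$ this can be checked with Morava $E$-theory: $E_{n}^{\,*}(B^{k}\rC_{p})$ is free of rank $p^{\binom{n}{k}}$ by Ravenel--Wilson, which I expect gives $|B^{k}\rC_{p}|=p^{\binom{n}{k}}$ up to a unit. This is nonzero and invertible $K(n)$-locally (it is $1$ for $k>n$), which proves (i).

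For (ii) the main obstacle is that no $E$-theory computation is available $T(n)$-locally. My plan is to show that $1$-semiadditivity (the case $k=1$ above) already forces $\infty$-semiadditivity for stable presentable categories in which the relevant cardinalities act. Concretely, I would prove that $|B^{k}\rC_{p}|$ is invertible by relating it through higher semiadditive integrals to a $p$-derivation $\delta$ on $\pi_{0}$ of the unit, and showing that the invertibility of $p$-power cardinalities propagates up in $k$ while $p$ stays non-nilpotent. This is the hard technical core of \cite{CSY22}. I would first try to transport the $K(n)$-local computation via the map $\mS_{T(n)}\to\mS_{K(n)}$, which detects nilpotence (Hopkins--Smith). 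That map shows the cardinality is not nilpotent, but upgrading non-nilpotence to invertibility is precisely where the $\delta$-ring argument is needed.
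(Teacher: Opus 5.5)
Your construction of $\Nm_{q}$ (inverting the norm of the $(m-1)$-finite diagonal and using base change) and your reduction to $X=B^{k}\rC_{p}$ follow Hopkins--Lurie. The step for $k\ge 2$, however, has a genuine gap.

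\textbf{The criterion is false.} You propose that $\Nm_{B^{k}\rC_{p}}$ is invertible if and only if $|B^{k-1}\rC_{p}|$ is invertible. Already for $k=1$ this would say that $B\rC_{p}$ is $K(n)$-ambidextrous only if $|\rC_{p}|=p$ is a unit, which contradicts the Tate vanishing you just invoked. Invertibility of the cardinality of the loop space is at best a sufficient condition.

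\textbf{The cardinalities are not invertible where you need them.} For $n\ge 1$, $p$ is not a unit in $\pi_{0}\mS_{K(n)}$: otherwise $\mS_{K(n)}/p=L_{K(n)}(\mS/p)$ would vanish, but $K(n)_{*}(\mS/p)\ne 0$. So no positive power of $p$ is invertible there. Your value $p^{\binom nk}$ is the rank of $K(n)^{*}B^{k}\rC_{p}$, not the cardinality. The cardinality is the homotopy cardinality of $\mathrm{Map}(B\Z_{p}^{n},B^{k}\rC_{p})$, namely $p^{\binom{n-1}{k}}$ (for example $|B\rC_{p}|=p^{n-1}$), and this is a unit only for $k\ge n$.

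Consequently the cardinality argument can only carry the induction from level $n$ upward (the ``bootstrap above the height''). It says nothing about $B^{k}\rC_{p}$ for $2\le k\le n$, which is where the content of (i) lies. The missing input is a direct verification, as the paper indicates, that for $X=B^{k}\rC_{p}$ with constant value $K(n)$ the norm induces an isomorphism $K(n)_{*}X\to K(n)^{-*}X$. Equivalently, the corresponding bilinear form on $K(n)_{*}B^{k}\rC_{p}$ must be nondegenerate. This uses the explicit structure of these Hopf algebras (Ravenel--Wilson, or the Lubin--Tate description in \cite{HL}); equality of ranks alone does not suffice.

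There is also a smaller slip at $k=1$. By the Segal conjecture for $\rC_{p}$, $\mS^{t\rC_{p}}$ is the $p$-complete sphere, so it is not $K(n)$-acyclic. You need the module structure over $(\mS_{K(n)})^{t\rC_{p}}$, resp.\ $(\mS_{T(n)})^{t\rC_{p}}$. Its acyclicity is the Greenlees--Sadofsky/Hovey--Sadofsky, resp.\ Kuhn, blueshift theorem.

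For (ii) the same issue is fatal. You aim to prove that $|B^{k}\rC_{p}|$ is invertible in $\pi_{0}\mS_{T(n)}$ for every $k$. But $L_{K(n)}\colon\qSp_{T(n)}\to\qSp_{K(n)}$ preserves colimits and is symmetric monoidal, so once both categories are $k$-semiadditive it preserves norms and cardinalities. An inverse would therefore invert $p^{\binom{n-1}{k}}$ in $\pi_{0}\mS_{K(n)}$, which is impossible for $k<n$. No $\delta$-ring argument can upgrade non-nilpotence to invertibility here, because the statement you want is false.

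Whatever carries \cite{CSY22} from Kuhn's $T(n)$-local Tate vanishing through the levels $2\le k\le n$ must be a genuinely different, chromatic input about $B^{k}\rC_{p^{j}}$ in the $T(n)$-local world. The paper mentions, for instance, the comparison of $T(n)_{*}B^{k}\rC_{p^{j}}$ with $K(n)_{*}B^{k}\rC_{p^{j}}$ in \cite[Lemma 5.1.7]{CSY22}. Invertible cardinalities only take over from height $n$ on.
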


This norm map is studied more generally by Lurie in
\cite[6.1.6]{Lurie:HA}.  For a suitable {\qcat} $\qmcC$ and a map of
Kan complexes $f:X\to Y$, one has a functor
$f^{*}:\qmcC^{Y}\to \qmcC^{X}$ (sometimes called a pullback functor)
with right and left adjoints $f_{*}, f_{!}:\qmcC^{X}\to \qmcC^{Y}$
(as in \cref{defin-space-indexed}), sometimes called the pushforward
and extraordinary pushforward.  When $Y=\bdelt^{0}$,
$f^{*}:\qmcC \to \qmcC^{X}$ is the diagonal functor, and its adjoints
are the homotopy colimit and homotopy limit functors.  When in
addition $X=BG$ for a finite group $G$, they are the homotopy orbit
and homotopy fixed point functors.

The definition of this map requires some ``intricate categorical
constructions'' given in \cite[\S4]{HL}. An easier way to do it is
given by Yonatan Harpaz in \cite{Harpaz}.  He defines an {\qcat}
$\qSpan_{m}$ (\cref{def-Harpaz-cat}) of spans of $m$-finite spaces and
shows that it has a certain universal property with respect to
$m$-semiadditive {\qcats}, as in \cref{def-pi-finite}.

Here is Harpaz' definition.  In \cite[Theorem 4.1]{Harpaz} he shows
that this category has a universal property that is convenient for the
study of $m$-semiadditive {\qcats} as in \cref{def-semiadd}.

\begin{defin}\label{def-Harpaz-cat}
{\bf The $m$th Harpaz {\qcat}.} \cite[Definition 2.12]{Harpaz} 
The symmetric monoidal {\qcat} $(\qSpan_{m}, \times ,\pt)$ of
spans of $m$-finite spaces has $m$-finite spaces as in
\cref{def-pi-finite} as objects. A morphism from $X$ to $Y$ is a
diagram (called a {\bf span}) $X\leftarrow W\rightarrow Y$ (as is a
morphism from $Y$ to $X$) in which $W$ is also $m$-finite.  Its
composite with $Y\leftarrow W'\rightarrow Z$ is
$X\leftarrow W''\rightarrow Z$, which is derived from the diagram
\begin{displaymath}
\xymatrix
@R=2mm
@C=3mm
{  &  &{W''}\ar[dr]^(.5){}\ar[dl]^(.5){}\\
   &{W}\ar[dr]^(.5){}\ar[dl]^(.5){}
      &   &{W'}\ar[dr]^(.5){}\ar[dl]^(.5){}\\
{X}&  &{Y}&  &{Z}
}
\end{displaymath}

\noindent in which the square is a pullback diagram.  
\end{defin}

\begin{defin}\label{def-semiadd}
An {\qcat} $\qmcC$ is {\bf $m$-semiadditive} if for every
functor\linebreak $\rho :{\color{cyan}X} \to \qmcC$ from an $m$-finite
space $X$ as in \cref{def-pi-finite}, there is an equivalence
$\Nm_{X}$ as in \cref{thm-HL002}.  It is {\bf $\infty $-semiadditive}
if it is $m$-semiadditive for all $m>0$.
\end{defin}

Infinite semiadditivity for $\qmcC$ does not imply that there is an
equivalence $\Nm_{X}$ for a functor $\rho :{\color{cyan}X} \to \qmcC$
for {\em any} space $X$, but only for spaces $X$ with finite
Postnikov towers and finite homotopy groups.

\begin{ex}
{\bf The meaning of $m$-semiadditivity for small $m$.}
\begin{itemize}

\item [$\bullet$] $(-2)$-{\SAy} means that the limit and colimit of a
functor from the trivial category (the one with a single object and a
single morphism) are {\eqt}.  Such a functor is simply the choice of
an object in $\qmcC$, with both the limit and colimit being the object
itself.  Hence {\em every {\qcat} is $(-2)$-{\SA}. }

\item [$\bullet$] $(-1)$-{\SAy} means that in addition the limit and
colimit are {\eqt} for any functor from the empty category.  There is
only one such functor, and its limit and colimit are terminal and
initial objects in $\qmcC$. Hence {\em $\qmcC$ is $(-1)$-semiadditive iff
it is pointed.}

\item [$\bullet$] $0$-{\SAy} means in addition that the limit and
colimit are {\eqt} for any functor from a finite discrete
category. This means that finite products and coproducts are the same,
{\em which is the case in any stable {\qcat}}.

\item [$\bullet$] $1$-{\SAy} means that {\em for any finite group $G$},
the Greenlees-May norm map $\Nm_{G}^{X}:X_{hG}\to X^{hG}$ of
\cite[(4.28)]{Rav:gjmcyc} is an equivalence for any
$X$-valued functor from ${\color{cyan}BG}$, which defines a
$G$-action on $X$.  Here ${\color{cyan}BG}$ denotes the nerve of the
one object category $\mathcal{B}G$ associated with $G$.  This Kan
complex has a single vertex, so a functor from the corresponding
${\color{cyan}BG}$ (meaning a map of simplicial sets) identifies a
single object $X$ in $\qmcC$ and defines a $G$-action on it. Hence the
Tate object $X^{tG}$ (the cofiber of $\Nm_{G}^{X}$) is contractible.

\item [$\bullet$] Higher {\SAy} means that one can replace
${\color{cyan}BG}$ for a finite group $G$ as above by an $m$-finite space
and still have contractible Tate objects.
\end{itemize}
\end{ex}

Thus \cref{thm-HL002} says that $\qSp_{K (n)}$ and $\qSp_{T (n)}$ are
$\infty $-semiadditive.  It turns out that certain other localizations of
$\qSp$ also have this property.

\begin{theorem}\label{thm-CSY22-thm-B}
  \cite[Theorem B]{CSY22}
  Let $R$ be a non-zero $p$-local homotopy
  ring spectrum.  The $\infty$-category $\qSp_R$ is $1$-semiadditive
  if and only if there exists a (necessarily unique) integer $n \ge 0$
  such that
  \begin{numequation}\label{eq-KRT}
\begin{split}
  \qSp_{K(n)} \subseteq \qSp_R \subseteq \qSp_{T(n)} \, .
\end{split}
\end{numequation}
\end{theorem}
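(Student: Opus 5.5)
The plan is to prove the two directions separately. The "if" direction is the easier one. Suppose $\qSp_{K(n)}\subseteq\qSp_R\subseteq\qSp_{T(n)}$. For a finite group $G$ and an $R$-local spectrum $X$ with $G$-action, the homotopy fixed points $X^{hG}$ are again $R$-local, and the relevant colimit in $\qSp_R$ is $L_R X_{hG}$. By \cref{thm-HL002}\cref{thm-HL002ii} the norm $L_{T(n)}X_{hG}\to X^{hG}$ is an equivalence, so the Tate construction computed in $\qSp_{T(n)}$ vanishes. Since $\qSp_R\subseteq\qSp_{T(n)}$, we have $L_R=L_RL_{T(n)}$ on $T(n)$-local objects. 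Also, $\qSp_R$ is a full subcategory closed under limits, so the norm map in $\qSp_R$ is obtained from the $T(n)$-local one by applying $L_R$. Hence it is an equivalence. One checks that the norm maps in $\qSp_R$ and $\qSp_{T(n)}$ are compatible through the localization $L_R$; this is formal from the construction of the norm via the diagonal of $BG$.

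For the "only if" direction, assume $\qSp_R$ is $1$-semiadditive with $R\neq 0$ a $p$-local homotopy ring. The first step is to pin down the height. Since $R\neq 0$ is a homotopy ring, some $K(m)_*R\neq 0$ or $R$ is rational/$T(m)$-visible. I would use the fact that $1$-semiadditivity forces the Tate construction $(L_R\mS)^{t\rC_p}$ to vanish, and in particular forces $|\rC_p|=p$ to be invertible "up to a height shift". More concretely, I would use the cardinality operation $|B\rC_p|$ on the unit $L_R\mS$. The argument that $p$ acts nilpotently unless the height is $0$, together with the Nilpotence Theorem (via $K(m)\otimes R$), should show that exactly one $m=n$ has $K(n)\otimes R\neq 0$. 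Two distinct heights would give a $\rC_p$-Tate construction that is nonzero by algebraic blueshift, \cref{thm-blueshift}. Then $K(n)$ is $R$-local, because $K(n)$ is a field and $K(n)\otimes R\neq0$ makes $K(n)$ a retract of $K(n)\otimes R$. This gives $\qSp_{K(n)}\subseteq\qSp_R$.

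The main obstacle is the upper inclusion $\qSp_R\subseteq\qSp_{T(n)}$, i.e. showing that $R$-local spectra are $L_n^{\fin}$-local and killed by $T(m)$ for $m<n$. I would try to show that for a type $n+1$ finite complex $F(n+1)$, $L_R F(n+1)=0$. The idea is to use higher semiadditivity: the cardinality of $B^{k}\rC_p$ acting on $L_R\mS$ gives a ``higher'' version of multiplication by $p$. Here I would use the already-known equivalence of $1$- and $\infty$-semiadditivity in the presentable stable setting, or prove it directly for this case. On a $T(m)$-local object, $|B^{k}\rC_p|$ is invertible for $k\le m$ and zero for $k>m$ (the semiadditive height). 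This semiadditive height bounds from both sides which telescopes can see $R$. Combining these with the thick subcategory theorem would show $L_RF(n+1)=0$ and $L_R(T(m))=0$ for $m<n$, and hence that every $R$-local spectrum is $T(n)$-local. Finally, uniqueness of $n$ follows because $\qSp_{K(n)}\cap\qSp_{K(n')}=0$ for $n\neq n'$.
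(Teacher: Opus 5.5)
Your ``if'' direction is fine. The genuine gap is in the ``only if'' direction, at the step you yourself flag: the upper inclusion $\qSp_R\subseteq\qSp_{T(n)}$. The semiadditive-height route you sketch fails for three reasons.

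First, the facts you quote are reversed. In $\qSp_{T(m)}$ the cardinality $|B^k\rC_p|$ is invertible for $k\ge m$. For $k<m$ it is not invertible, and $T(m)$-local objects are merely complete with respect to it. For example, $|\rC_p|=p$ is neither invertible nor zero on $\mS_{T(m)}$ for $m\ge1$.

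Second, the route needs higher semiadditivity of $\qSp_R$. For $\qSp_R$ the equivalence of $1$- and $\infty$-semiadditivity is \cref{thm-CSY22-thm-C}, which \cite{CSY22} obtain from the very sandwich you are trying to prove, combined with \cref{thm-HL002}\cref{thm-HL002ii}. Invoking it here is circular. With $1$-semiadditivity alone you only have cardinalities of $1$-finite spaces such as $|\rC_p|$ and $|B\rC_p|$. Both are non-invertible at every height $\ge2$, so they cannot pin down $n$.

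Third, ``combining with the thick subcategory theorem'' does not supply the missing argument.

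The missing idea is that this step needs no semiadditivity at all. It is the Nilpotence Theorem in the form: a $p$-local homotopy ring spectrum $A$ vanishes iff $K(m)\otimes A=0$ for all $0\le m\le\infty$, where $K(\infty)=H\mathbb{F}_p$.
\begin{itemize}
\item Apply it to the ring $R\otimes F(n+1)\otimes DF(n+1)$. Given $R\otimes H\mathbb{F}_p=0$ and $R\otimes K(m)=0$ for $m>n$, this yields $R\otimes F(n+1)=0$; this is \cref{thm-CSY22E}.
\item For $m<n$, apply it to the ring $R\otimes v^{-1}\bigl(F(m)\otimes DF(m)\bigr)$. Its only possibly nonzero Morava $K$-theory is $K(m)_*$, which vanishes because $R\otimes K(m)=0$. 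So $R\otimes T(m)=0$.
\end{itemize}
Hence $R\simeq R\otimes L_n^{\fin}\mS$, and
$\langle R\rangle=\langle R\otimes(T(0)\oplus\cdots\oplus T(n))\rangle=\langle R\otimes T(n)\rangle\le\langle T(n)\rangle$.

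This is exactly the reduction the paper records right after the statement (see also \cref{thm-CSY22-thm-D}). Via Nilpotence, the sandwich is equivalent to $R\otimes H\mathbb{F}_p=0$ together with $R\otimes K(m)\neq0$ for exactly one $m$. $1$-semiadditivity is used only to force that support condition.

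Consequently you must also prove $R\otimes H\mathbb{F}_p=0$, which your proposal never does. Nilpotence only produces some $K(m)\otimes R\neq0$ with $0\le m\le\infty$, so your dichotomy ``some $K(m)_*R\neq0$ or $R$ is rational/$T(m)$-visible'' is not the right one. The vanishing follows from a retract argument. If $R\otimes H\mathbb{F}_p\neq0$, then every $H\mathbb{F}_p$-module is $R$-local. So the $R$-local Tate construction of $H\mathbb{F}_p$ with trivial $\rC_p$-action is the classical $H\mathbb{F}_p^{t\rC_p}\neq0$, contradicting $1$-semiadditivity.

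Some smaller repairs are also needed.
\begin{itemize}
\item Lower inclusion: the retract argument does not show merely that ``$K(n)$ is $R$-local''. It shows $\langle K(n)\rangle\le\langle R\rangle$: if $R\otimes Z=0$, then $K(n)\otimes Z$ is a retract of a suspension of $K(n)\otimes R\otimes Z=0$. That inequality is what gives $\qSp_{K(n)}\subseteq\qSp_R$.
\item At most one height: you need an explicit $R$-local test object, e.g.\ $E_{m'}$ with trivial $\rC_p$-action. Its $R$-local Tate construction is $L_R(E_{m'}^{t\rC_p})$, so you must check $K(m)_*(E_{m'}^{t\rC_p})\neq0$ for every $m<m'$, not just $m=m'-1$. \cref{thm-blueshift} as stated concerns $L_nX$ for finite $X$, which need not be $R$-local.
\item Uniqueness of $n$: argue from $\qSp_{K(n)}\subseteq\qSp_R\subseteq\qSp_{T(n')}$ together with $K(n)\otimes T(n')=0$ for $n\neq n'$, not from $\qSp_{K(n)}\cap\qSp_{K(n')}=0$.
\end{itemize}
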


As noted in \cite[paragraph after Theorem B and proof of Theorem
5.4.7]{CSY22}, the Nilpotence Theorem of \cite[Theorem 1]{DHS} (also
treated in \cite[]{Rav:NP}) implies that $\qSp_R$ is $1$-semiadditive
if and only if $R \otimes H\mathbb{F}_p = 0$ and there is exactly one
integer $n \geq 0$ for which $R \otimes K(n) \neq 0$. Namely, $\qSp_R$
is $1$-semiadditive if and only if $R$ is supported at a unique
(finite) chromatic height.  In \cref{eq-Rfn} we will see an example of
such an $R$, namely a certain infinite Galois extension of
$\mS_{T(n)}$.

This means that $\qSp_{K (n)}$ and $\qSp_{T (n)}$ are respectively
minimal and maximal among 1-semiadditive {\qcats} of $R$-local
spectra.  {\em Now that we know these two are distinct for $n\geq 2$,
  we would like to know the structure of the poset of such {\qcats}
  $\qSp_{R}$.}  

\begin{theorem}\label{thm-CSY22-thm-C}
\cite[Theorem C]{CSY22} Let $R \in \qSp$ be a homotopy ring
spectrum. The $\infty$-category $\qSp_R$ of $R$-local spectra is
$1$-semiadditive if and only if it is $\infty$-semiadditive.
\end{theorem}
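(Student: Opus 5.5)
The easy direction is immediate from \cref{def-semiadd}: an $\infty$-semiadditive {\qcat} is $m$-semiadditive for every $m$, in particular for $m=1$. So all the work is in showing that $1$-semiadditivity of $\qSp_R$ forces $m$-semiadditivity for every $m$.

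I would not attempt a direct induction on $m$ inside an arbitrary $\qSp_R$. Instead I would reduce to the telescopic case, where \cref{thm-HL002}\cref{thm-HL002ii} already gives $\infty$-semiadditivity, by using the sandwich of \cref{thm-CSY22-thm-B}. There are two cases.

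\emph{The case $R=0$, or $R$ not $p$-local.} If $R=0$ then $\qSp_R$ is trivial, and the trivial category is $m$-semiadditive for every $m$. More generally, if $R$ is not $p$-local, I would first check what $1$-semiadditivity forces. Taking $G=\rC_p$ acting trivially on the unit $L_R\mS$ shows that $|\rC_p|$ must act invertibly on the Tate object and that the norm is an equivalence. Arguing one prime at a time, using arithmetic fracture squares, one should get that $\qSp_R$ splits as a product of its $p$-local parts together with a rational part. In the rational part all finite group orders are invertible, so it is $\infty$-semiadditive by a standard averaging argument. A product of $\infty$-semiadditive {\qcats} is again $\infty$-semiadditive, so it suffices to treat a single nonzero $p$-local $R$.

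\emph{The case of nonzero $p$-local $R$.} Here \cref{thm-CSY22-thm-B} supplies an $n$ with $\qSp_{K(n)}\subseteq\qSp_R\subseteq\qSp_{T(n)}$, where $\qSp_R$ sits as a full subcategory closed under limits. Now let $\rho$ be a functor from an $m$-finite space $X$ to $\qSp_R$. The limit of $\rho$ computed in $\qSp_{T(n)}$ already lies in $\qSp_R$. The colimit in $\qSp_R$ is $L_R$ applied to the colimit in $\qSp_{T(n)}$. By \cref{thm-HL002}\cref{thm-HL002ii}, the colimit in $\qSp_{T(n)}$ is equivalent via the norm to the limit, which is $R$-local. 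Hence that colimit is already $R$-local, and $L_R$ does nothing to it.

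The main obstacle is compatibility of the norm maps. I must show that the Hopkins–Lurie norm constructed intrinsically in $\qSp_R$ agrees with the restriction of the norm in $\qSp_{T(n)}$. The norm is built inductively on $m$ from diagonals and lower norms, and $L_R$ is an exact localization that preserves finite limits and the relevant colimits. So I would verify by induction on $m$ that the canonical comparison commutes with $L_R$; the base cases $m\le 1$ are given, since $1$-semiadditivity of $\qSp_R$ is the hypothesis. Harpaz's universal property of $\qSpan_m$ (\cref{def-Harpaz-cat}) packages this cleanly: the inclusion $\qSp_R\to\qSp_{T(n)}$ preserves $m$-finite limits and colimits, so it respects the canonical span actions.
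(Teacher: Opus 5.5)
Your argument for a nonzero $p$-local $R$ is sound. It is the natural deduction of Theorem C from Theorems A and B. \cref{thm-CSY22-thm-B} makes $\qSp_R$ a reflective subcategory of $\qSp_{T(n)}$. The norm of \cref{thm-HL002}\cref{thm-HL002ii} then shows that $m$-finite colimits of $R$-local diagrams, formed in $\qSp_{T(n)}$, are already $R$-local. Hence the inclusion commutes with $q^{*}$, $q_{!}$, $q_{*}$ and, by induction, with the norms. The paper only cites \cite{CSY22}; the $p$-local case also appears as (iv)$\Leftrightarrow$(v) of \cref{thm-CSY22-thm-D}.

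One correction: the induction never uses the $1$-semiadditivity of $\qSp_R$. Any reflective subcategory of an $\infty$-semiadditive presentable $\infty$-category is $\infty$-semiadditive. The hypothesis enters only through \cref{thm-CSY22-thm-B}, not as a ``base case''.

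The genuine gap is the reduction to the $p$-local case, which the statement explicitly requires. As written it does not work, for three reasons.
\begin{enumerate}
\item In a $1$-semiadditive category the Tate object of the trivial $\rC_p$-action on $L_R\mS$ is zero, so ``$p$ acts invertibly on it'' says nothing.
\item Arithmetic fracture squares are pullbacks, not products. Whether they split is exactly what must be extracted from $1$-semiadditivity. For $R=H\mathbb{Q}\oplus K(1)$ the square for $L_R\mS$ does not split, and that $\qSp_R$ is indeed not $1$-semiadditive.
\item ``$p$-local parts together with a rational part'' double counts, since every $\qSp_{R_{(p)}}$ contains the rational part. For $R=H\mathbb{Q}$, each $\qSp_{R_{(p)}}$ equals $\qSp_{H\mathbb{Q}}$.
\end{enumerate}
So the product decomposition is asserted, not proved.

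The missing step can be supplied with your own tools, without any splitting theorem.
\begin{itemize}
\item For each prime $p$, the $p$-local objects of $\qSp_R$ are exactly $\qSp_{R_{(p)}}$. This is a reflective subcategory of $\qSp_R$ with reflector $X\mapsto L_R(X_{(p)})$, since $L_R$ preserves $p$-locality. By your localization argument (for $m\le 1$) it is $1$-semiadditive.
\item Hence \cref{thm-CSY22-thm-B} gives $n_p$ with $\qSp_{R_{(p)}}\subseteq\qSp_{T(n_p)}$ whenever $R_{(p)}\neq 0$.
\item If some $n_p=0$, then $R\otimes H\mathbb{Q}\neq 0$. This excludes $n_q\geq 1$ for every $q$, because $T(n_q)\otimes H\mathbb{Q}=0$ would force $R_{(q)}\otimes H\mathbb{Q}=0$. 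In this case set $T=H\mathbb{Q}$.
\item Otherwise all $n_p\geq 1$; set $T=\bigoplus_p T(n_p)$ over the primes with $R_{(p)}\neq 0$.
\item In both cases $\langle R_{(p)}\rangle\leq\langle T\rangle$ for every $p$. So if $T\otimes W=0$, then $(R\otimes W)_{(p)}=R_{(p)}\otimes W=0$ for all $p$, hence $R\otimes W=0$. That is, $\qSp_R\subseteq\qSp_T$.
\item In the second case, $T(n_p)\otimes\mS[1/p]=0$ for $n_p\geq 1$, so $\qSp_T\simeq\prod_p\qSp_{T(n_p)}$. In the first case $\qSp_T=\qSp_{T(0)}$. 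Either way $\qSp_T$ is $\infty$-semiadditive by \cref{thm-HL002}\cref{thm-HL002ii}.
\end{itemize}
Your localization argument then finishes the proof verbatim.
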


There is no requirement above that $R$ be local at any prime. When it
is, we can say much more.

\begin{theorem}\label{thm-CSY22-thm-D}
\cite[Theorem D]{CSY22} 

Let $R$ be a non-zero $p$-local homotopy ring spectrum. The following
are equivalent:
\begin{enumerate}[label={(\roman*)},itemindent=0em]
    \item $R \otimes H\mathbb{F}_p = \pt$ and there is exactly one
integer $n \geq 0$ for which $R \otimes K(n) \neq \pt$.

    \item There exists a (necessarily unique) integer $n \geq 0$ such
that \cref{eq-KRT} holds.

    \item Either $\qSp_R = \qSp_{H\mathbb{Q}}$, or the functor
$\Omega^{\infty } \colon \qSp_R \to \qmcS_{*}$ admits a retract,
meaning a Bousfield-Kuhn functor as in \cite{AKB:Unique} and
\cite{Kuhn-Omega}.

    \item $\qSp_R$ is $1$-semiadditive.

    \item $\qSp_R$ is $\infty$-semiadditive.
\end{enumerate}
\end{theorem}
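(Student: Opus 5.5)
The plan is to prove the equivalences as a cycle, using \cref{thm-CSY22-thm-B} and \cref{thm-CSY22-thm-C} for the semiadditive clauses and the Nilpotence Theorem of \cite{DHS} for the rest.

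\textbf{The easy implications.} (ii)$\Leftrightarrow$(iv) is exactly \cref{thm-CSY22-thm-B}, and (iv)$\Leftrightarrow$(v) is \cref{thm-CSY22-thm-C}, which does not require $p$-locality.

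\textbf{(i)$\Leftrightarrow$(ii).} For (ii)$\Rightarrow$(i), note that $\qSp_{K(n)}\subseteq\qSp_R$ means $R$-acyclics are $K(n)$-acyclic, so $R\otimes K(n)\neq\pt$ because $R\neq\pt$ and $R$ is a ring. The inclusion $\qSp_R\subseteq\qSp_{T(n)}$ means every $T(n)$-acyclic is $R$-acyclic. Both $H\FFp$ and $K(m)$ for $m\neq n$ are $T(n)$-acyclic, so $R\otimes H\FFp=\pt$ and $R\otimes K(m)=\pt$ for $m\neq n$.

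For (i)$\Rightarrow$(ii), use the Nilpotence Theorem in its form for homotopy ring spectra. A $p$-local homotopy ring spectrum $E$ is zero iff $E\otimes K(m)=\pt$ for all $0\le m\le\infty$, with $K(\infty)=H\FFp$. Applying this to $R\otimes F$ for finite $F$, together with the thick subcategory theorem, pins down the Bousfield class. This reproduces the argument of \cite[proof of Theorem 5.4.7]{CSY22}:
\begin{itemize}
\item Since $R\otimes K(m)=\pt$ for $m<n$, the ring $R\otimes F(n)$ controls $R$, so $R$-local spectra are $T(n)$-local, i.e.\ $L_n^{\fin}$-local and killed by type $n+1$ complexes.
\item Since $R\otimes K(n)\neq\pt$, $K(n)$-local spectra are $R$-local.
\end{itemize}

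\textbf{(iii).} For (iii)$\Rightarrow$(i), recall that a Bousfield--Kuhn retraction forces $\qSp_R$ to consist of objects detected on $\Omega^\infty$, i.e.\ on homotopy groups of spaces. This excludes $H\FFp$-support and support at two heights, since a retraction yields $L_R$ as a localization of the form $\Phi\circ\Omega^\infty$, which is known to force finite monochromatic support, as in \cite{CSY22}. The case $\qSp_R=\qSp_{H\rats}$ is height $0$.

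For (ii)$\Rightarrow$(iii), if $n=0$ then $\qSp_R=\qSp_{H\rats}$. Otherwise compose the $T(n)$ Bousfield--Kuhn functor $\Phi_n$ with $L_R$, using $L_R\Phi_n\Omega^\infty\simeq L_R L_{T(n)}\simeq L_R$ on $\qSp_R$.

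\textbf{Main obstacle.} I expect the hard step to be (iii)$\Rightarrow$(i)/(iv): showing that the existence of a retraction of $\Omega^\infty$ forces semiadditivity. I would first try to transport semiadditivity from the Kuhn-style argument that $\Omega^\infty$ preserves $\pi$-finite limits and that spaces admit norms after applying $\Phi$. Failing that, I would cite \cite[Theorem D]{CSY22} directly.
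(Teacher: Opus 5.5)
The paper gives no proof of this statement: it quotes \cite[Theorem D]{CSY22}. The only argument it sketches is the remark after \cref{thm-CSY22-thm-B} that the Nilpotence Theorem identifies condition (i) with $1$-semiadditivity. Your handling of (i), (ii), (iv), (v) follows that route (\cref{thm-CSY22-thm-B}, \cref{thm-CSY22-thm-C}, nilpotence), and your (ii)$\Rightarrow$(iii) via $L_R\circ\Phi_n$ is fine. Two small repairs:
\begin{itemize}
\item In (ii)$\Rightarrow$(i), $R\otimes K(n)\neq\pt$ holds because otherwise $K(n)$ would be $R$-acyclic, hence $K(n)$-acyclic. It does not follow from $R$ being a nonzero ring.
\item In (i)$\Rightarrow$(ii), the nilpotence theorem must be applied to genuine homotopy rings. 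For example, use $R\otimes\End(F(n+1))$ to get $R\otimes F(n+1)=\pt$, and $R\otimes T(m)$ for $m<n$ (with $T(m)$ a ring telescope). Together with $\langle L^{\fin}_n\mS\rangle=\langle T(0)\oplus\cdots\oplus T(n)\rangle$ this gives $\langle R\rangle=\langle R\otimes T(n)\rangle\le\langle T(n)\rangle$. The phrase ``$R\otimes F(n)$ controls $R$'' is not yet an argument.
\end{itemize}

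The genuine gap is the implication out of (iii). Your argument for (iii)$\Rightarrow$(i) is the phrase ``known to force finite monochromatic support, as in \cite{CSY22}'', and your fallback is to cite \cite[Theorem D]{CSY22} itself. Both are circular, so the cycle never closes: you only show that (iii) follows from the other conditions.

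Part of the missing step is easy. A retraction $\Phi$ with $\Phi\circ\Omega^\infty\simeq\mathrm{id}$ makes $\Omega^\infty$ conservative on $\qSp_R$. If $R\otimes H\FFp\neq\pt$, it is a nonzero free $H\FFp$-module, so $H\FFp$-local spectra are $R$-local. Then $\Sigma^{-1}H\FFp$ is a nonzero $R$-local spectrum with $\Omega^\infty\Sigma^{-1}H\FFp\simeq\pt$, a contradiction. The same argument with $H\rats$ explains why (iii) has to single out $\qSp_{H\rats}$.

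This trick says nothing about support at two finite heights $1\le m<m'$. Nonzero $K(m)$- and $K(m')$-local spectra are never bounded above, and ``detected on $\Omega^\infty$'' does not by itself separate $\qSp_{K(m)\oplus K(m')}$ from $\qSp_{K(m)}$. Ruling out that case is the real content of (iii)$\Rightarrow$(iv). You need an actual argument that a retraction of $\Omega^\infty$ forces the $R$-local Tate constructions $X^{tG}$ for finite $G$ to vanish, i.e.\ that $\qSp_R$ is $1$-semiadditive. Only then does \cref{thm-CSY22-thm-B} pin down a single height; two-height categories such as $\qSp_{K(m)\oplus K(m')}$ are exactly what that theorem shows are not $1$-semiadditive. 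Your ``main obstacle'' paragraph correctly locates this step but supplies no argument for it.
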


\begin{theorem}\label{thm-CSY22E}
\cite[Theorem E]{CSY22} Let $R$ be a non-zero $p$-local homotopy ring
spectrum, and let $n\geq 0$. Then the following are equivalent:
\begin{enumerate}[label={(\roman*)},itemindent=0em]

\item \label{thm-CSY22Ei} $R \otimes K (m)\simeq \pt$ for all $m>n$.

\item \label{thm-CSY22Eii} $R\otimes F(n+1) \simeq \pt$ for a finite
$p$-local spectrum $F(n+1)$ of type $n+1$ as in
\cref{def-MR99}\cref{def-MR99ii}.

\item \label{thm-CSY22Eiii} $R\otimes \Sigma^{\infty }A \simeq \pt$ for
every $n$-connected $\pi $-finite space $A$.
\end{enumerate}
\end{theorem}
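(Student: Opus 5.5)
We will prove the cycle of implications (i)$\Rightarrow$(ii)$\Rightarrow$(iii)$\Rightarrow$(i). The recurring tool is that acyclicity conditions transfer along thick subcategories: the class of finite spectra $F$ with $R\otimes F\simeq\pt$ is thick. Two facts about thick subcategories will be used, the thick subcategory theorem of Hopkins–Smith \cite{HS} and the Nilpotence Theorem \cite{DHS} in its Bousfield class form. The latter says that for a homotopy ring spectrum $R$, $R\simeq\pt$ if and only if $R\otimes K(m)\simeq\pt$ for all $0\le m\le\infty$, where $K(\infty)=H\FFp$.

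\emph{(i)$\Rightarrow$(ii).} Fix a finite spectrum $F(n+1)$ of type $n+1$ and set $S=R\otimes F(n+1)\otimes DF(n+1)$, which is a homotopy ring spectrum. For $m\le n$ we have $K(m)_*F(n+1)=0$, so $S\otimes K(m)\simeq\pt$. For $m>n$, including $m=\infty$, $S\otimes K(m)$ is a retract-related module over $R\otimes K(m)\simeq\pt$. We must take care with $m=\infty$, where the claim $R\otimes H\FFp\simeq\pt$ follows from (i) through the standard fact that $K(m)$-acyclicity for all large $m$ forces $H\FFp$-acyclicity for a ring. Concretely, $H\FFp$ is a $K(m)$-local-ish limit, and one uses that $R\otimes H\FFp$ is a ring whose $K(m)$-homology vanishes, so by nilpotence applied to $R\otimes H\FFp$ it is contractible. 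Nilpotence then gives $S\simeq\pt$. Since $R\otimes F(n+1)$ is a retract of $S$ (the unit $\mS\to F\otimes DF$ splits after tensoring with $F$), we conclude $R\otimes F(n+1)\simeq\pt$.

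\emph{(ii)$\Rightarrow$(iii).} By the thick subcategory theorem, (ii) implies $R\otimes F\simeq\pt$ for every $p$-local finite spectrum of type $\ge n+1$. If $A$ is an $n$-connected $\pi$-finite space, then $\Sigma^\infty A$ is not finite, so we argue by Postnikov induction. First, $\widetilde{\Sigma^\infty}$ of $K(\Z/p,k)$ for $k\ge n+1$ is $T(m)$-acyclic for $m\le n$; this is the Ravenel–Wilson / Bousfield–Kuhn computation that $K(m)_*K(\Z/p,k)$ is trivial for $k>m$, combined with the telescopic version coming from the $\infty$-semiadditivity of \cref{thm-HL002}\cref{thm-HL002ii}. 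Second, $\Sigma^\infty A$ is built from the $F(n+1)$-cellular (type $\ge n+1$) localizing subcategory. To see this, note that an $n$-connected $p$-space has suspension spectrum $L^{\fin}_n$-acyclic, because \cref{thm-CSY22E} applied to $T(m)$ and fiber-sequence induction reduce everything to Eilenberg–MacLane spaces. Hence $\Sigma^\infty A$ lies in the localizing subcategory generated by $F(n+1)$, and $R\otimes-$ kills it. Since (iii) in the statement refers to the unreduced suspension, we interpret $\pt$ modulo the basepoint summand, i.e.\ reduced; otherwise we use connectedness.

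\emph{(iii)$\Rightarrow$(i).} Take $A=K(\Z/p,k)$ for $k>n$. For $m>n$, choose $k=m$: then $K(m)_*K(\Z/p,m)$ is nontrivial, and in fact, by Ravenel–Wilson, $K(m)\otimes\Sigma^\infty K(\Z/p,m)$ contains $K(m)$ as a nonzero ring. Its tensor with $R$ vanishes, and a unit argument forces $R\otimes K(m)\simeq\pt$.

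The main obstacle is (ii)$\Rightarrow$(iii): showing that suspension spectra of $n$-connected $\pi$-finite spaces lie in the localizing subcategory generated by type $n+1$ complexes. This is where the telescopic (rather than merely $K(n)$-local) vanishing for Eilenberg–MacLane spaces is needed, and we would try to extract it from \cref{thm-HL002}\cref{thm-HL002ii}.
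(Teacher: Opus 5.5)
There are two genuine gaps. The first is already a problem with the statement as quoted.

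\emph{The $H\mathbb{F}_p$ step in (i)$\Rightarrow$(ii) fails.} You cannot extract $R\otimes H\mathbb{F}_p\simeq 0$ from $K(m)$-acyclicity for finite $m>n$. For \emph{any} $R$, the ring $R\otimes H\mathbb{F}_p$ is automatically $K(m)$-acyclic for all $0\le m<\infty$. The nilpotence criterion you quote also requires $m=\infty$, so ``nilpotence applied to $R\otimes H\mathbb{F}_p$'' is circular; taken literally it would prove $H\mathbb{F}_p\otimes H\mathbb{F}_p\simeq 0$. Concretely, $R=H\mathbb{F}_p$ satisfies (i) as literally written. It satisfies neither (ii), since a nonzero $p$-local finite spectrum has nonzero mod $p$ homology, nor (iii). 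So no proof is possible with $m$ ranging only over finite integers. Condition (i) must be read as $R\otimes K(m)\simeq 0$ for $n<m\le\infty$ with $K(\infty)=H\mathbb{F}_p$, which is what the remark after the theorem about $H\mathbb{F}_p$ presupposes. With that reading your nilpotence argument works, with one correction: $R\otimes F$ is a retract of $S\otimes F$, not of $S$, via $F\to F\otimes DF\otimes F\to F$. Your (iii)$\Rightarrow$(i) must then also produce $R\otimes H\mathbb{F}_p\simeq 0$. This follows from $A=K(\Z/p,n+1)$, because $H\mathbb{F}_p\otimes\Sigma^\infty A$ is a nonzero free $H\mathbb{F}_p$-module.

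\emph{The key input for (ii)$\Rightarrow$(iii) is unsupported.} The formal part of your reduction is sound: the thick subcategory theorem, passage to the localizing subcategory, identifying it with the $L_n^{\fin}$-acyclics, and a Postnikov induction down to $K(\Z/p,k)$ with $k>n$. But this reduces everything to $L_{T(m)}\Sigma^\infty K(\Z/p,k)\simeq 0$ for $m\le n<k$, and none of your justifications supplies it.
\begin{itemize}
\item Ravenel--Wilson gives only $K(m)$-acyclicity. Upgrading $K(m)$-local vanishing to $T(m)$-local vanishing is exactly the telescope-type difficulty.
\item As the quotation in \cref{rem-height} records, before this theorem one knew only that $T(m)$ has \emph{some} finite categorical height. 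The value $m$ is deduced \emph{from} \cref{thm-CSY22E}, so invoking \cref{thm-CSY22E} for $T(m)$ is circular.
\item The $\infty$-semiadditivity of $\qSp_{T(m)}$ in \cref{thm-HL002} does not by itself say which $\pi$-finite spaces become invisible.
\end{itemize}
The missing idea is the Carmeli--Schlank--Yanovski theory of semiadditive height. First one shows $\qSp_{T(m)}$ has semiadditive height $\le m$. For instance, split off the height $>m$ part of $\mathbb{S}_{T(m)}$. By nilpotence, a nonzero $T(m)$-local homotopy ring has nonzero $K(m)$-localization. That localization would then be a nonzero object of $\qSp_{K(m)}$ of height $>m$. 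This is impossible, since the Morava $K$-theory computations give $\qSp_{K(m)}$ height $\le m$. Second, in height $\le m$ the higher semiadditive structure trivializes on $m$-connected $\pi$-finite $p$-spaces $A$, so $L_{T(m)}\Sigma^\infty_+A\simeq\mathbb{S}_{T(m)}$. The paper itself gives no argument beyond citing \cite{CSY22}, and this height argument is the real content behind (ii)$\Rightarrow$(iii).
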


Not all nonzero $p$-local homotopy ring spectra have the properties of
\cref{thm-CSY22E}, $H\mathbb{F}_p$ itself being an obvious exception.
Such a spectrum with vanishing mod $p$ and rational homology cannot be
connective.

Returning to $\pi $-finite spaces, each one has a finite Postnikov
tower. This means we need to understand (among others) the cases
$X=BG$ for a finite group $G$, and
\begin{displaymath}
X=B^{m}\rC_{p} = K (\Z/p,m).
\end{displaymath}

\noindent We know that $K (n)_{*}BG$ has finite rank \cite{Rav:KNBG},
and a formula for its Euler characteristic is given in \cite{HKR}.  It
is concentrated in even dimensions when $G$ is abelian, but there are
counterexamples for nonabelian $G$ due to Igor Kriz and Kevin Lee
\cite{KrizLee}.

The Morava $K$-theory of each {\SESM} space with finite abelian
homotopy group is computed in \cite{RW:CF}; see \cref{sec-SESM-surprise}.
In each case it is concentrated in even dimensions for $n>0$.  We also
know by \cite{HRW} that the Morava $K$-theory of a finite Postnikov
tower is that of the corresponding product of {\SESM} spaces.  In
other words, the $k$-invariants of such a tower are invisible to
Morava $K$-theory.

In any case, Hopkins and Lurie have formal methods to reduce showing
their norm map is an equivalence in the case where $X=B^{m}\rC_{p}$
and the functor $\rho $ of \cref{def-semiadd} is constant with value
$K (n)$.  This amounts to showing that the map induces an isomorphism
$K (n)_{i}X\to K (n)^{-i}X$ for each $i$.

\begin{remark}\label{rem-height}
{\bf Three notions of height.}  A finite $p$-local spectrum $X$ of
type $n$ has $K (m)_{*}X=0$ iff $m<n$ since it is known that the
nontriviality of $K (m)_{*}X$ implies that of $K (m+1)_{*}X$ for any
finite $X$.  The situation for the ring spectra of \cref{thm-CSY22E}
is in a sense the opposite: they are $K (m)$-acyclic for all $m>n$
instead of for $m<n$.  The authors of \cite{CSY22} observe

\begin{quote}
We obtain an equivalence of three different notions of {\bf height $\leq
n$} for a homotopy ring:
\begin{itemize}
\item []
\begin{enumerate}[label={(\roman*)},itemindent=0em]
\item\label{rem-heighti}
the {\bf algebraic} one using Morava $K$-theories,
\item\label{rem-heightii}  the {\bf geometric} one using finite complexes, and
\item\label{rem-heightiii} the {\bf categorical} one using
$\pi$-finite spaces.
\end{enumerate}
\end{itemize}

\noindent The categorical height of a spectrum (i.e. the minimal $d$
for which condition \cref{thm-CSY22E}\cref{thm-CSY22Eiii} holds) was
considered, using different terminology, by Bousfield in
\cite{AKB:HEHLS}. The most prominent example of such $R$ is $K (n)$,
which by \cite{RW:CF}, has categorical height $n$. Bousfield's work
also implies that for all $n\geq 0$, the spectrum $T (n)$ has some
finite categorical height, but determining its precise value had been
an open question. This can be now settled using \cref{thm-CSY22E}; as
the algebraic and geometric heights of $T (n)$ are known to be equal
to $n$, the categorical height must be $n$ as well.
\end{quote}

The blueshift of \cref{thm-blueshift} is algebraic in the sense of
\cref{rem-heighti} above.  One could also speak of geometric and
categorical blueshift.  The former is the subject of \cite[Conjecture
12]{MR}.
\end{remark}


\section{Chromatic cyclotomic extensions}\label{sec-CCE}

A theorem of Ethan Devinatz and Hopkins \cite[Theorem 1]{DH:HtyFixed}
says that $\mS_{K (n)}:=L_{K (n)}\mS$, the localization of
the sphere spectrum at height $n$ Morava $K$-theory, is the homotopy
fixed point set $E_{n} (\barFFp )^{h\GG_{n}}$.  Here
$E_{n} (\barFFp )$ is the Lubin-Tate spectrum constructed by Paul
Goerss, Hopkins and Haynes Miller, (see \cite{Goerss-Hopkins} and
\cite{Rezk}) and $\GG_{n}$ is the extended height $n$ Morava
stabilizer group of \cite{Morava}.

For future reference recall that 
\begin{numequation}\label{eq-coefficients}
\begin{split}
\pi_{*}BP 
 & = \Zloc [v_{1}, v_{2},\dotsc ]\qquad \mbox{with }|v_{i}|= 2 (p^{i}-1) \\
\pi_{*}\BPn
 & = \Zloc [v_{1}, v_{2},\dotsc, v_{n} ]\\
\pi_{*}E (n)
 & = v_{n}^{-1}\pi_{*}\BPn\\
\pi_{*}E_{n}
 & = W (\FF_{p^{n}})\pow{u_{1},\dotsc ,u_{n-1}}[u^{\pm 1}]\\
  &\qquad \mbox{with $|u|=2$, $|u_{i}|=0$, $v_{n} =u^{p^{n}-1}$ 
and $v_{i}=u^{p^{i}-1}u_{i}$}; \\
 & \qquad \mbox{see \cref{rem-periodicity}.}\\
\pi_{*}E_{n}(\barFFp )
 & = W (\barFFp )\pow{u_{1},\dotsc ,u_{n-1}}[u^{\pm 1}]
\end{split}
\end{numequation}%

Using ideas of John Rognes \cite{Rognes:Galois}, and Andy Baker and
Birgit Richter \cite{Baker-Richter}, the Morava spectrum
$E_{n} (\barFFp )$ can be viewed as a Galois extension of
$\mS_{K (n)}$ with Galois group $\GG_{n}$, the subject of
\cref{prop-Rognes-5.4.9}.  This means that for each normal subgroup
$N\subseteq \GG_{n}$ there is a Galois extension of $L_{K (n)}\mS$
with Galois group $G=\GG_{n}/N$.  We are interested in the cases where
$G$ is abelian.

We will begin with a review of abelian extensions of the $p$-adic
numbers in \cref{sec-classical}.  In \cref{sec-Rognes} we will review
Rognes' Galois theory for commutative ring spectra, the subject of
\cite{Rognes:Galois}.  In \cref{sec-height-n} we will discuss {\em
  higher} cyclotomic extensions.  \cref{sec-SESM-surprise} and
\cref{sec-cyclo-redshift} will cover the role of {\SESM} spaces and
cyclotomic redshift respectively.

\subsection{Some classical number theory}\label{sec-classical} 
Recall the classical description of the abelian extensions of the
$p$-adic numbers $\rats_{p}$.  A concise summary of the relevant
material on local fields (meaning finite extensions of the $p$-adic
numbers $\rats_{p}$) can be found in Xavier Caruso's
\cite[\S1.1]{Caruso19}.  The definitive introduction is Jean-Pierre
Serre's \cite{Serre-Local}.

\subsubsection{Ramified cyclotomic extensions}\label{sec-ramified-cyc-ext}
We know that every finite abelian extension is contained in some
cyclotomic extension, that is the extension obtained by adjoining a
primitive $m$th root of unity for some integer $m>2$.  The most
interesting case for us is $m=p^{j}$ for some $j>0$, and we denote
such a primitive root of unity by $\omega_{p^{j}}$.  This extension
has degree $(p-1)p^{j-1}$ and Galois group $(\Z/p^{j})^{\times }$.
Consider the diagram
\begin{numequation}\label{eq-cyclotomic-tower}
\begin{split}
\xymatrix
@R=1mm
@C=5mm
{
  & & &{\omega_{p^{j}}}\ar@{|->}[r]^(.5){}
        &{\omega_{p^{j+1}}^{p}}&{}
          &{}\\
{\rats_{p}}\ar[r]^(.5){}
  &{\rats_{p}[\omega_{p}]}\ar[r]^(.5){}
    &{\dotsb }\ar[r]^(.5){}
      &{\rats_{p}[\omega_{p^{j}}]}\ar[r]^(.5){}
        &{\rats_{p}[\omega_{p^{j+1}}]}\ar[r]^(.5){}
          &{\dotsb }\ar[r]^(.5){}
            &{\rats_{p}[\omega_{p^{\infty }}]}\\
{}\\
{\Z_{p}}\ar[r]^(.5){}\ar[uu]^(.5){}
  &{\Z_{p}[\omega_{p}]}\ar[r]^(.5){}\ar[uu]^(.5){}
    &{\dotsb }\ar[r]^(.5){}
      &{\Z_{p}[\omega_{p^{j}}]}\ar[r]^(.5){}\ar[uu]^(.5){}
        &{\Z_{p}[\omega_{p^{j+1}}]}\ar[r]^(.5){}\ar[uu]^(.5){}
          &{\dotsb }\ar[r]^(.5){}
            &{Z_{p}[\omega_{p^{\infty }}]}\ar[uu]^(.5){}\\
{}\\
{\Z_{p}}\ar[r]^(.5){}\ar[uu]^(.5){}
  &{\Z_{p}[\rC_{p}]}\ar[r]^(.5){}\ar[uu]^(.5){}
    &{\dotsb }\ar[r]^(.5){}
      &{\Z_{p}[\rC_{p^{j}}]}\ar[r]^(.5){}\ar[uu]^(.5){}
        &{\Z_{p}[\rC_{p^{j+1}}]}\ar[r]^(.5){}\ar[uu]^(.5){}
          &{\dotsb }\ar[r]^(.5){}
            &{Z_{p}[\rC_{p^{\infty }}]}\ar[uu]^(.5){}
}
\end{split}
\end{numequation}%

\noindent in which the top row is the {\bf ramified cyclotomic tower}
of field extensions, the second row shows their rings of integers, and
the third is the evident diagram of $p$-adic group rings with a
generator of the cyclic group $\rC_{p^{j}}$ of order $p^{j}$ mapping
to $\omega_{p^{j}}$.  We denote the colimits of the three rows by
$\rats_{p}[\omega_{p^{\infty }}]$, $\Z_{p}[\omega_{p^{\infty }}]$ and
$\Z_{p}[\rC_{p^{\infty }}]$, where $\rC_{p^{\infty }}\subseteq \mT$ is
the Pr\"uffer group of \cite[Definition 5.23 (ii)]{Rav:gjmcyc}.  The
Galois group of $\rats_{p}[\omega_{p^{\infty }}]$ over $\rats_{p}$ is
$\Z_{p}^{\times }$, the multiplicative group of $p$-adic units.

There is a split short exact sequence
\begin{numequation}\label{eq-SES}
\begin{split}
\xymatrix
@R=4mm
@C=10mm
{
{1}\ar[r]^(.5){}
  &{\Z_{p}}\ar[r]^(.5){e}
    &{\Z_{p}^{\times }}\ar[r]^(.5){}
      &{T_{p}}\ar[r]^(.5){}
        &{1,}
}
\end{split}
\end{numequation}
 
\noindent where $T_{p}\subset \Z_{p}^{\times } $ is the torsion
subgroup, which consists of roots of unity, namely
$\mu_{2}\cong \left\{\pm 1 \right\} $ for $p=2$, and
$\mu_{p-1}\cong \rC_{p-1} $, the group of $(p-1)$th roots of unity,
for $p$ odd.  The map $e$ is the $p$-adically convergent exponential
function given by
\begin{displaymath}
e(x)=\mycases{    
  \exp(4x)=
\displaystyle{\sum _{k\geq 0}\frac{(4x)^{k}}{k!}=1+4x+8x^{2}+\cdots }
  &\mbox{for }p=2\\
  \exp(px)=
  \displaystyle{\sum _{k\geq 0}\frac{(px)^{k}}{k!}=1+px+
    \frac{p^{2}x^{2}}{2}+\cdots }
  &\mbox{for }p>2.
}
\end{displaymath}
 
\noindent Note that the power series $\exp(2x)$ does not converge
2-adically.

For future reference we note that $\Z_{p}^{\times }$ has
a dense finitely generated subgroup
\begin{numequation}\label{eq-dense-discrete}
\begin{split}
  \Z_{p}^{\times, \fg}
  &:= \mycases{
\langle e(1) ,\,-1 \rangle
  &\mbox{for }p=2\\
\langle e(1),\, \zeta_{p-1} \rangle
       &\mbox{for }p>2
         }\\
  &\qquad \mbox{for a primitive $(p-1)$th root of unity $\zeta_{p-1}$}
  \\ 
  &\phantom{:} \cong\Z _{p}\times T_p.
\end{split}
\end{numequation}

\noindent 
We will see this group in \cref{prop-BCSY24-6.19}.

Let 
\begin{displaymath}
\varpi_{j}:= \omega_{p^{j}}-1 \in \Z_{p}[\omega_{p^{j}}].
\end{displaymath}

\noindent Then we find that $\varpi _{j}^{(p-1)p^{j-1}}$ is a unit multiple
of $p$, so
\begin{displaymath}
(p)= (\varpi_{j})^{(p-1)p^{j-1}}.
\end{displaymath}

\noindent The maximal ideal $(p)\subseteq \Z_{p}$ becomes a power of
the maximal ideal $(\varpi_{j})$ when we pass to
$\Z_{p}[\omega_{p^{j}}]$. This is called {\bf ramification}. The
$p$-adic integers $\Z_{p}$ and the $p$-adic numbers $\rats_{p}$ have
discrete valuations $||-||$ in which\linebreak
$||p^{i}||=i$.  They extend to valuations on $\Z_{p}[\omega_{p^{j}}]$
and $\rats_{p}[\omega_{p^{j}}]$ in which
$||\varpi_{j}||=1/ (p-1)p^{j-1}$.

The lower vertical maps in \cref{eq-cyclotomic-tower} are induced by
certain group monomorphisms
$\rC_{p^{j}}\to \Z_{p}[\omega_{p^{j}}]^{\times }$.  Such a map, along
with a choice of generator of $\rC_{p^{j}}$, determines a $p^{j}$th
root of unity in $\Z_{p}[\omega_{p^{j}}]$.  These maps are known to
split after inverting $p$, as explained in \cite[page~3514]{CSY24}.  For
any ring $R$ in which $p$ is invertible, $R[\rC_{p^{j}}]$ is
isomorphic as a ring to $R[\rC_{p^{j-1}}]\times R[\omega_{p^{j}}]$.

\subsubsection{Unramified extensions}\label{sec-unram}
For each $d>1$, the $p$-adic numbers also has an
{\bf unramified} (meaning that the ideal $(p)$ remains maximal)
abelian extension whose ring of integers is the Witt ring $\mW
(\FF_{p^{d}})$. Its Galois group is $\rC_{d}$. It is also the
cyclotomic extension $\rats_{p}[\omega_{p^{d}-1}]$ obtained by
adjoining $(p^{d}-1)$th roots of unity.  The degree of the
corresponding extension of $\rats $ is the Euler totient $\varphi
(p^{d}-1)$, which larger than $d$.  The cyclotomic polynomial
$\Phi_{p^{d}-1} (x)$ is irreducible over $\rats $ but has an
irreducible factor of degree $d$ over $\rats_{p}$.  For example when
$(p,d)= (3,2)$, we have
\begin{displaymath}
\Phi_{8} (x)=x^{4}+1
 =  \left(x^2-\sqrt{-2} x-1\right) \left(x^2+ \sqrt{-2} x-1\right),
\end{displaymath}

\noindent and the reader can verify that $\sqrt{-2}$ can be regarded a
3-adic integer, as can the square root of any integer congruent to 1
mod 3.

The algebraic closure $\barFFp$, which is the colimit of the finite
fields $\FF_{p^{d}}$, has Galois group $\widehat{\Z}$, the profinite
integers.  We know that the Galois group of the maximal abelian
extension of $\rats_{p}$ is $\Z_{p}^{\times }\times \widehat{\Z}$.

\subsection{Rognes' Galois theory for commutative ring spectra}
\label{sec-Rognes}

Rognes' definitions are stated in terms of spectra localized at a
homology theory $E$ such as Morava $K$-theory.  His spectra are
$\mS$-algebras in the sense of \cite{EKMM} but we are treating them in
terms of orthogonal spectra.  We are not aware of a published
definition in the language of {\qcats}, but one is hinted at
by Akhil Mathew, Niko Naumann and  Justin Noel in \cite{MNN17}.

\begin{defin}\label{def-Rognes-Galois}
{\bf Galois extensions of  $E$-local commutative ring spectra.}
\cite[Definition 4.1.3]{Rognes:Galois} 

\begin{itemize}
\item [$\bullet$] Let $A\to B$ be a map of $E$-local commutative ring
spectra, making $B$ a commutative $A$-algebra, and let $G$ be an
$E$-locally stably dualizable group (for example a finite group)
acting continuously on $B$ from the left through commutative
$A$-algebra maps.

\item [$\bullet$] 
Let $i \colon A \to B^{hG}$
be the map to the homotopy fixed point spectrum\linebreak  $B^{hG} =
F(EG_+, B)^G$ that is right adjoint to the composite
map
\begin{displaymath}
A \wedge EG_+ \to A \to B,
\end{displaymath}

\noindent that collapses the contractible free $G$-space $EG$ to a
point.

\item [$\bullet$] Let $h \colon B \wedge_A B \to F(G_+, B)$ be the
canonical map to the product (cotensor) $F(G_+, B)$ that is right
adjoint to the composite map
\[
B \wedge_A B \wedge G_+ \to B \wedge_A B \to B,
\]
induced by the action $B \wedge G_+ = G_+ \wedge B \to B$ of $G$ on $B$,
followed by the $A$-algebra multiplication $B \wedge_A B \to B$ in $B$.
\end{itemize}

A map of $E$-local commutative ring spectra $A \to B$ is an {\bf
$E$-local $G$-Galois extension} if $i$ and $h$ are both $E$-local
equivalences.  Such an extension is {\bf finite}, {\bf abelian} or
{\bf finite abelian} if $G$ is.
\end{defin}

\begin{defin}\label{def-Rognes-3defs}
{\bf Properties of Galois extensions.}

\begin{enumerate}[label={(\roman*)},itemindent=0em]

\item \label{def-Rognes=3defsi} \cite[Definition 4.3.1]{Rognes:Galois}
A map of commutative ring spectra $A\to B$ is {\bf faithful} if for
every $A$-module $N$, the condition $N \wedge_A B \simeq *$ implies
that $N \simeq *$.

\item \label{def-Rognes=3defsii} \cite[Definition
9.1.1]{Rognes:Galois} The map $A \to B$ is {\bf separable} if the
multiplication map $\mu \colon B \wedge_A B \to B$ admits
a bimodule section up to homotopy.

\item \label{def-Rognes=3defsiii} \cite[Definition
10.2.1]{Rognes:Galois} A commutative ring spectrum $B$ is {\bf
connected}, in the sense of algebraic geometry, if its space of
idempotents $\mathcal{E}(B)$ is weakly equivalent to the two-point
space $\{0,1\}$.
\end{enumerate}
\end{defin}

Then his main theorem is 

\begin{theorem}\label{thm-Rognes-main}
{\bf The stable homotopy theoretic Galois correspondence.}  
\cite[Theorem 1.2]{Rognes:Galois} 

Let $A \to B$ be a faithful $E$-local $G$-Galois extension of
commutative ring spectra.
\begin{enumerate}[label={(\roman*)},itemindent=0em]
  \item\label{thm-Rognes-maini}
For each subgroup $H \subset G$, the map $C = B^{hH}
\to B$ is a faithful $E$-local $H$-Galois extension, and
the map $A \to C$ is separable.

  \item\label{thm-Rognes-mainii} For each normal subgroup $H \subset
G$, the map $A \to C = B^{hH}$ is a faithful $E$-local $G/H$-Galois
extension.
\end{enumerate}

\noindent If $B$ is connected, then
\begin{enumerate}[label={(\roman*)},itemindent=0em]
  \item[(iii)]\label{thm-Rognes-mainiii}
The Galois group
$G$ is weakly equivalent to the mapping space\linebreak  $\mathrm{CAlg}_A(B,
B)$ of commutative $A$-algebra self-maps of $B$.

  \item[(ov)]\label{thm-Rognes-mainiv}
For each factorization $A \to C
\to B $ of the $G$-Galois extension, with $A \to C$
separable and $C \to B$ faithful, there exists a subgroup $H \subset
G$ such that $C \simeq B^{hH}$ as an $A$-algebra over $B$.
\end{enumerate}
\end{theorem}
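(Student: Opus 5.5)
Since \cref{thm-Rognes-main} is quoted from \cite[Theorem 1.2]{Rognes:Galois}, I would mostly retrace Rognes' argument. The guiding principle is that faithful Galois extensions are descent-like: after base change along $A\to B$, every statement becomes one about the split extension $B\to F(G_+,B)$, where it can be checked directly. The first step is to record two base-change facts. (a) If $A\to B$ is $G$-Galois, then $B\to B\wedge_A B\simeq F(G_+,B)$ (via $h$) is a $G$-Galois extension that is split. (b) For faithful $A\to B$, a map of $A$-modules is an $E$-local equivalence if and only if it becomes one after applying $B\wedge_A(-)$. For dualizable $G$ one also needs that $B\wedge_A(-)$ commutes with the homotopy fixed points $(-)^{hH}$. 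Here I would use that $F(EG_+,-)$ is $E$-locally a finite limit in the dualizable sense, as Rognes does, so that $B\wedge_A B^{hH}\simeq (B\wedge_A B)^{hH}\simeq F(G_+,B)^{hH}\simeq F(G/H_+,B)$.

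For (i), set $C=B^{hH}$. To show $C\to B$ is $H$-Galois, I would check that $i\colon C\to B^{hH}$ is the identity and that $h\colon B\wedge_C B\to F(H_+,B)$ is an equivalence. The latter I would verify after base change along the faithful map $A\to B$. By the computation above, $B\wedge_A C\simeq F(G/H_+,B)$, and then $B\wedge_A(B\wedge_C B)\simeq F(G_+,B)\wedge_{F(G/H_+,B)}F(G_+,B)$, which splits over the points of $G/H$ into copies of $F(H_+,B)$. Faithfulness of $C\to B$ follows from faithfulness of $A\to B$ together with the fact that $C$ is a retract of a dualizable module, so that $C$-modules killed by $B$ are killed by $F(G/H_+,B)$. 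For separability of $A\to C$, I would write down the section of $C\wedge_A C\to C$ explicitly after base change, namely the idempotent supported on the diagonal of $G/H\times G/H$, and descend it using the $G$-action.

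For (ii), with $H$ normal, the group $G/H$ acts on $C=B^{hH}$. Then $i\colon A\to C^{h(G/H)}\simeq B^{hG}$ is an equivalence by iterating homotopy fixed points, and $h$ is checked after base change to $B$ as before, using $F(G/H_+,B)\wedge_B F(G/H_+,B)\simeq F(G/H\times G/H_+,B)$. For (iii) and (iv) I would use connectedness. The mapping space $\mathrm{CAlg}_A(B,B)$ is identified, after base change, with $\mathrm{CAlg}_B(B\wedge_AB,B)\simeq\mathrm{CAlg}_B(F(G_+,B),B)$, and the latter is the space of idempotent decompositions. Since $\mathcal E(B)\simeq\{0,1\}$, this is the discrete set $G$. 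For (iv), given a separable $A\to C$, the base change $B\to B\wedge_AC$ is separable and faithful-étale, hence of the form $F(S_+,B)$ for a finite $G$-set $S$ (again using connectedness). The map $C\to B$ picks out a point of $S$, and $H$ is its stabilizer.

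\textbf{The main obstacle} is this last identification: showing that a separable commutative $B$-algebra admitting a faithful map back to a connected $B$ is a finite product of copies of $B$ indexed by idempotents. This is where separability has to be used homotopically, producing genuine idempotents from the bimodule section, and I would follow Rognes' \S\S9--11 rather than reprove it.
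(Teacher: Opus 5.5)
The paper gives no proof of this statement; it quotes \cite[Theorem 1.2]{Rognes:Galois}, so the comparison is with Rognes' proof. Your overall strategy is his: detect everything after the faithful base change along $A\to B$, where the extension becomes the split one $B\to F(G_+,B)$. Your outlines of (i)--(iii) are essentially right, but the reason you give for $B\wedge_A B^{hH}\simeq (B\wedge_A B)^{hH}$, on which all of (i) and (ii) rest, is wrong. The functor $(-)^{hH}=F(EG_+,-)^H$ is a totalization, not a finite limit, and stable dualizability is a condition on $G_+$, not on $EG_+$. For a general commutative $A$-algebra $B$ the map $B\wedge_A X^{hH}\to (B\wedge_A X)^{hH}$ need not be an equivalence. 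What makes it one here is that the two Galois conditions force $B$ to be a dualizable $A$-module (this is where the hypothesis on $G$ enters). Then $B\wedge_A(-)\simeq F_A(D_AB,-)$ commutes with all homotopy limits. In (ii) you should also record that $A\to C$ is faithful; this is immediate from faithfulness of the composite $A\to C\to B$.

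The genuine gap is in (iv), at exactly the step you call the main obstacle. The implication ``$B\to B\wedge_AC$ is separable and faithful, hence $B\wedge_AC\simeq F(S_+,B)$ because $B$ is connected'' is false. For example, $KO\to KU$ is a faithful $C_2$-Galois extension of the connected ring $KO$, hence separable and faithful (indeed \'etale), yet $KU$ is not a product of copies of $KO$. Your final reformulation does not rescue this:
\begin{itemize}
\item if a separable $B$-algebra $D$ admits a faithful $B$-algebra map $D\to B$, the separability idempotent splits $D\simeq B\times D'$ with that map as the projection, and faithfulness kills $D'$, so $D\simeq B$;
\item the multiplication $B\wedge_AC\to B$ is not faithful in general; for $C=B^{hH}$ it is the projection of $F(G/H_+,B)$ onto one factor.
\end{itemize}
The splitting has to be imported from $B\wedge_AB\simeq F(G_+,B)$, which your argument for (iv) never uses. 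Base changing $\iota\colon C\to B$ gives a faithful $G$-equivariant map $B\wedge_AC\to B\wedge_AB\simeq\prod_{g\in G}B$. Its components $b\otimes c\mapsto b\cdot g\iota(c)$ are $B$-algebra maps to $B$. Separability turns each one into an idempotent splitting off a copy of $B$. Connectedness of $B$ makes any two of these idempotents equal or orthogonal, and faithfulness kills the complementary factor. This gives $B\wedge_AC\simeq F(G/H_+,B)$ with $H=\{g : g\iota\simeq\iota\}$. Galois descent, $C\simeq(B\wedge_AC)^{hG}\simeq F(G/H_+,B)^{hG}\simeq B^{hH}$, then finishes the proof. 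Equivalently, the separability idempotent in $\pi_0(C\wedge_AC)$ exhibits $B\wedge_CB$ as an idempotent factor of $B\wedge_AB\simeq\prod_{g\in G}B$, hence as $\prod_{g\in H}B$; faithfulness of $C\to B$ then identifies $C$ with $B^{hH}$.
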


In other words, for a faithful $E$-local $G$-Galois extension $A \to
B$ with $B$ connected, there is a bijective contravariant Galois
correspondence\linebreak $H \longleftrightarrow C \simeq B^{hH}$
between the subgroups $H \subset G$ and the weak equivalence classes
of separable $A$-algebras $C$ mapping faithfully to $B$.

The inverse correspondence takes $C$ to $H = \pi_0 \mathrm{CAlg}_C(B, B)$,
the group of connected components of the mapping space of commutative
$C$-algebra self-maps of $B$.

\begin{prop}\label{prop-Rognes-5.4.9}
  {\bf A faithful profinite Galois extension.}
  \cite[Proposition 5.4.9]{Rognes:Galois} The map
  $\mS _{K(n)} \to E_{n}(\barFFp)$ is a $K(n)$\nobreakdash-local
  pro-Galois extension with Galois group $\mathbb{G}_n$, and both the
  extension and the associated $\GG _{n}$–action are faithful. We have
  a short exact sequence
  \begin{numequation}\label{eq-SGZ}
\begin{split}
 1 \longrightarrow \mS_{n}
  \longrightarrow \GG_{n}
  \longrightarrow \mathrm{Gal}(\barFFp/\mathbb{F}_p)
  \longrightarrow 1,
\end{split}
\end{numequation}
where $\mS_{n}$ has a pro-$p$ open subgroup of index $p^{n}-1$.
\end{prop}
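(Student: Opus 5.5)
We treat the statement in three parts: the Galois property, faithfulness, and the short exact sequence \cref{eq-SGZ}. The extension is pro-Galois, so we write $E_{n}(\barFFp)$ as a colimit of finite $K(n)$-local Galois extensions and check \cref{def-Rognes-Galois} for each. By Goerss--Hopkins--Miller, $\GG_{n}$ acts on $E_{n}(\barFFp)$ through $\EE_{\infty}$ maps. For an open normal subgroup $U\subseteq \GG_{n}$, Devinatz--Hopkins \cite{DH:HtyFixed} provide continuous homotopy fixed points $E^{hU}$. We then take $E_{n}(\barFFp)$ to be the $K(n)$-localized colimit of the $E^{hU}$. It then suffices to show three things for each $U$:
\begin{itemize}
\item[$\bullet$] the unit map $\mS_{K(n)}\to (E^{hU})^{h(\GG_{n}/U)}$ is an equivalence;
\item[$\bullet$] the map $h\colon E^{hU}\wedge E^{hU}\to F((\GG_{n}/U)_{+},E^{hU})$ is a $K(n)$-local equivalence;
\item[$\bullet$] these statements pass to the colimit.
\end{itemize}

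The condition on $i$ follows from transitivity of fixed points together with \cite[Theorem 1]{DH:HtyFixed}, which gives $\mS_{K(n)}\simeq E^{h\GG_{n}}$. The condition on $h$ is the main obstacle, and here we plan to use the Morava change-of-rings computation
\begin{displaymath}
\pi_{*}L_{K(n)}(E_{n}\wedge E_{n})\cong C(\GG_{n},\pi_{*}E_{n}),
\end{displaymath}
the ring of continuous functions. Descending along $U$ gives $\pi_{*}L_{K(n)}(E\wedge E^{hU})\cong C(\GG_{n}/U,\pi_{*}E)$. To get the version with $E^{hU}\wedge E^{hU}$, we smash over $E^{hU}$ and use the Devinatz--Hopkins identification $L_{K(n)}(E\wedge E^{hU})\simeq C(\GG_{n}/U,E)$. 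Base change along the map $E^{hU}\to E$ then shows that $h$ becomes an equivalence after tensoring with $E$. This is sufficient, because $E$-equivalences and $K(n)$-equivalences agree on these objects by \cref{eq-En-Bousfield-class} restricted to $K(n)$-local spectra. The delicate points are continuity and the $K(n)$-local colimit, so we will work with $K(n)_{*}$ and profinite pro-objects throughout. This follows \cite{Rognes:Galois} and \cite{Baker-Richter}.

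For faithfulness, suppose $N$ is a $K(n)$-local $\mS_{K(n)}$-module with $N\wedge E\simeq *$ (localized). Then $K(n)_{*}N=0$, because $K(n)$ is an $E$-module and $K(n)\wedge N$ is a retract of $K(n)\wedge E\wedge N$. Since $N$ is $K(n)$-local, this forces $N\simeq *$. Faithfulness of the $\GG_{n}$-action follows in the same way, since the Galois extension is faithful.

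For \cref{eq-SGZ}, we recall that $\GG_{n}=\mS_{n}\rtimes\mathrm{Gal}(\barFFp/\FF_{p})$. Here $\mS_{n}$ is the group of automorphisms of the height $n$ formal group over $\barFFp$, which is the unit group of the maximal order $\mathcal{O}_{D}$ in the division algebra $D$ of invariant $1/n$ over $\rats_{p}$. The quotient map sends a pair $(g,\sigma)$ to $\sigma$. For the last claim, the reduction map $\mathcal{O}_{D}^{\times}\to\FF_{p^{n}}^{\times}$ has a pro-$p$ kernel $1+S\mathcal{O}_{D}$, where $S$ is a uniformizer. This kernel is open of index $p^{n}-1$.
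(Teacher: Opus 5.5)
The paper gives no argument of its own; it cites \cite[Proposition 5.4.9]{Rognes:Galois}, which rests on Devinatz--Hopkins \cite{DH:HtyFixed}. Your overall route follows that cited argument. Your faithfulness argument (the retraction of $K(n)\wedge N$ off $K(n)\wedge E\wedge N$) and your group-theoretic paragraph are fine.

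\textbf{The gap: the Galois condition on $h$.} What you identify with the Devinatz--Hopkins equivalence $L_{K(n)}(E\wedge E^{hU})\simeq F((\GG_{n}/U)_{+},E)$ is the \emph{relative} base change $E\wedge_{E^{hU}}h$. To conclude that $h$ itself is a $K(n)$-local equivalence, you need $E\wedge_{E^{hU}}(-)$ to be conservative on $K(n)$-local $E^{hU}$-modules, i.e.\ that $E^{hU}\to E$ is faithful. You have not established that. Your remark that $E$-equivalences and $K(n)$-equivalences agree between $K(n)$-local spectra concerns the absolute functor $E\wedge(-)$, so it does not supply it.

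The repair is to reverse the order of your argument: first prove that $\mS_{K(n)}\to E$ is faithful, then compute $E\wedge h$. Using the Devinatz--Hopkins identification twice, both $L_{K(n)}(E\wedge E^{hU}\wedge E^{hU})$ and $L_{K(n)}\bigl(E\wedge F((\GG_{n}/U)_{+},E^{hU})\bigr)$ become $F((\GG_{n}/U\times \GG_{n}/U)_{+},E)$. Under these identifications $E\wedge h$ is precomposition with a shear bijection such as $(k,g)\mapsto (g,gk)$, hence an equivalence. Faithfulness of $\mS_{K(n)}\to E$ then shows that $h$ is a $K(n)$-local equivalence.

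\textbf{Two smaller points.}

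First, $E\simeq L_{K(n)}\operatorname{colim}_{U}E^{hU}$ is a theorem of Devinatz--Hopkins, not a definition you may adopt. Their theory is set up for $E_{n}$ with the \emph{finite} Galois quotient $\mathrm{Gal}(\FF_{p^{n}}/\FF_{p})$. For $\GG_{n}=\mS_{n}\rtimes\widehat{\Z}$ you should note that every open $U$ contains $nm\widehat{\Z}$ for some $m$. This subgroup is central, because the $n$th power of Frobenius acts trivially on $\mS_{n}$. So $E^{hU}$ can be obtained from the Devinatz--Hopkins theory for $E_{n}(\FF_{p^{nm}})$ and the finite-index group $\mS_{n}\rtimes\Z/nm$. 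Alternatively, splice in the unramified $\widehat{\Z}$-extension.

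Second, ``the action is faithful since the extension is'' is not an argument. Faithfulness of $A\to B$ says nothing about which group elements act trivially. If faithfulness of the action means that no $g\neq e$ in $\GG_{n}/U$ acts trivially on $E^{hU}$, it follows instead from $h$ being an equivalence with $E^{hU}\not\simeq *$. A trivially acting $g$ would make the $g$- and $e$-components of $h$ coincide, so $h$ could not be surjective on homotopy.
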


Recall that $\mathrm{Gal}(\barFFp/\mathbb{F}_p)$ and $\mS_{n}$ are
isomorphic respectively to the profinite integers $\widehat{\Z}$ and
 the group of units $\mathcal{O}_D^{\times}$ in the ring of integers
$\mathcal{O}_D$ of the division algebra $D$ over $\mathbb{Q}_p$ with
Hasse invariant $1/n$.

\subsection{Higher cyclotomic extensions and cyclotomic
  completion}\label{sec-height-n}

We do not have an analog of \cref{prop-Rognes-5.4.9} for
$\mS _{T(n)}$. However by a theorem of Carmeli, Schlank and Yanovski
we do know that for each {\em abelian} Galois extension of
$\mS_{K (n)}$, there is an analogous extension of $\mS_{T (n)}$.

We also know that the abelianization of the absolute Galois group of
$\rats_{p}$, which controls the abelian extensions of that field
reviewed in \cref{sec-classical}, is isomorphic to
$\Z_{p}^{\times }\times \widehat{\Z}$.  The same is true of the
abelianization $\GG_{n}^{\rm ab}$ of the extended Morava stabilizer
group $\GG_{n}$, while the abelianization of $\mS_{n} $ is
$\Z_{p}^{\times }$.

In the former case the projections onto $\Z_{p}^{\times }$ and
$\widehat{\Z}$ correspond respectively to the maximal totally ramified
and unramified abelian extensions of $\rats_{p}$.

In the latter case this means that the abelian extensions of
$\mS_{K (n)}$ can also be thought of as ``chromatic cyclotomic
extensions,'' hence the title of \cite{CSY24}.  We will be interested
in the chromatic analogs of the ramified extensions of
\cref{eq-cyclotomic-tower} rather than the unramified ones of
\cref{sec-unram}.

\begin{theorem}\label{thm-lifting-abelian}
{\bf Lifting abelian extensions of $\mS_{K(n)}$ to $\mS_{T(n)}$.}
\cite[Theorems A and 5.31]{CSY24}
For every $K(n)$-local finite
abelian extension\linebreak  $\mS_{K(n)} \to R$ in \(
\mathrm{\qSp}_{K(n)} \), there exists a $T(n)$-local finite abelian
extension $\mS_{T(n)} \to R^{\fin}$ in \(
\mathrm{\qSp}_{T(n)} \) (having the same Galois group), such that
\begin{displaymath}
L_{K(n)} R^{\fin} \simeq R.
\end{displaymath}

\noindent When $G=(\Z/p^{j})^{\times }$ for $j>0$, we denote this extension
by $\SS_{T (n)}[\omega^{(n)}_{p^{j}}]$.  For
$X \in \qSp_{T(n)} $ we define
\begin{numequation}\label{eq-Xnj}
\begin{split}
X[\omega^{(n)}_{p^{j}}]:=X\otimes \SS_{T (n)}[\omega^{(n)}_{p^{j}}].
\end{split}
\end{numequation}
\end{theorem}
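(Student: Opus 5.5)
The plan is to reduce the statement to two basic cases, an unramified one and a totally ramified cyclotomic one, and then assemble them. By \cref{prop-Rognes-5.4.9} and the Galois correspondence \cref{thm-Rognes-main}, a finite abelian extension $\mS_{K(n)}\to R$ (taken connected and faithful, and otherwise split into products of such) corresponds to a finite abelian quotient $G$ of $\GG_{n}$. Any such quotient factors through
\begin{displaymath}
\GG_{n}^{\rm ab}\cong \Z_{p}^{\times}\times\widehat{\Z}.
\end{displaymath}
Hence $G$ is a quotient of $(\Z/p^{j})^{\times}\times \rC_{d}$ for some $j,d$, and $R$ is the fixed points, under a subgroup $H$, of the tensor product of the two corresponding basic extensions. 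It therefore suffices to do three things: lift the two basic extensions to $\qSp_{T(n)}$, observe that tensor products of $T(n)$-local Galois extensions are Galois, and check that $L_{K(n)}$ commutes with $(-)^{hH}$ for finite $H$. The last point holds because $L_{K(n)}$ is exact and $(-)^{hH}$ is a finite limit; on $T(n)$-local objects it is also a finite colimit, by \cref{thm-HL002}\cref{thm-HL002ii}.

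The unramified case is the easy one. I would use the spherical Witt vectors $\mS_{\mW(\FF_{p^{d}})}$, the \'etale lift of $\mW(\FF_{p^{d}})$ over $\mS_{p}$. It is a $\rC_{d}$-Galois extension of $\mS_{p}$ in the sense of \cref{def-Rognes-Galois}, and this property survives every Bousfield localization because the maps $i$ and $h$ are already equivalences before localizing. Its $K(n)$-localization is the unramified degree $d$ extension inside $E_{n}(\barFFp)$.

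The ramified case is the heart of the argument, and I would model it on the classical splitting $R[\rC_{p^{j}}]\cong R[\rC_{p^{j-1}}]\times R[\omega_{p^{j}}]$ of \cref{sec-ramified-cyc-ext}, with $B^{n}\rC_{p^{j}}$ in place of $\rC_{p^{j}}$. Consider the commutative algebra
\begin{displaymath}
A_{j}:=\mS_{T(n)}\otimes \Sigma^{\infty}_{+}B^{n}\rC_{p^{j}}.
\end{displaymath}
By $n$-semiadditivity (\cref{thm-HL002}\cref{thm-HL002ii}), the norm identifies $A_{j}$ with the cotensor $\mS_{T(n)}^{B^{n}\rC_{p^{j}}}$. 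The semiadditive cardinality $|B^{n}\rC_{p}|$ should play the role that invertibility of $p$ plays classically: the $T(n)$-local analog of ``$p$ is invertible'' is that $|B^{n}\rC_{p}|$ is a unit in $\pi_{0}\mS_{T(n)}$. Given this, the map $B^{n}\rC_{p^{j}}\to B^{n}\rC_{p^{j-1}}$ produces an idempotent $\varepsilon\in\pi_{0}A_{j}$, namely pushforward followed by pullback, rescaled by $|B^{n}\rC_{p}|^{-1}$. I would then define
\begin{displaymath}
\mS_{T(n)}[\omega^{(n)}_{p^{j}}]:=(1-\varepsilon)A_{j},
\end{displaymath}
with $(\Z/p^{j})^{\times}=\mathrm{Aut}(\rC_{p^{j}})$ acting through its action on $B^{n}\rC_{p^{j}}$. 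To verify that the maps $i$ and $h$ of \cref{def-Rognes-Galois} are equivalences, I would run a higher Fourier transform argument: compute
\begin{displaymath}
A_{j}\otimes_{\mS_{T(n)}}A_{j}\simeq \mS_{T(n)}\otimes\Sigma^{\infty}_{+}\bigl(B^{n}\rC_{p^{j}}\bigr)^{2},
\end{displaymath}
and decompose it by idempotents indexed by the subgroups of $\rC_{p^{j}}^{2}$. This is a purely formal consequence of semiadditivity, exactly as in the discrete character theory of finite abelian groups.

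Finally, one must show that $L_{K(n)}$ of this construction is the cyclotomic $(\Z/p^{j})^{\times}$-extension of $\mS_{K(n)}$. I expect this identification, together with the unit statement about $|B^{n}\rC_{p}|$, to be the main obstacle. For the identification I would base change to $E_{n}(\barFFp)$ and use the Ravenel--Wilson computation (see \cref{sec-SESM-surprise}), which makes $E_{n}\otimes B^{n}\rC_{p^{j}}$ the functions on the $n$th exterior power of the $p^{j}$-torsion of the Lubin--Tate formal group. That exterior power has rank one, so the $\GG_{n}$-action on its primitive generators factors through the determinant $\GG_{n}\to\Z_{p}^{\times}$. This is exactly the cyclotomic quotient, so the $K(n)$-localization is the extension cut out by $\det$, mod $p^{j}$. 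For the unit statement I would first try detection: by \cref{thm-CSY22-thm-B} and the nilpotence theorem, whether an element of $\pi_{0}\mS_{T(n)}$ is a unit can be checked after tensoring with $K(n)$ and with a type $n$ complex, where $K(n)_{*}$ of $B^{n}\rC_{p}$ is known to be free of rank $p$.
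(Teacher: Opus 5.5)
The paper gives no argument of its own here. It quotes \cite{CSY24} and remarks, in the subsection on Eilenberg--Mac Lane spaces, that the ramified extensions are summands of $\mathbb{S}_{T(n)}[B^{n}C_{p^{j}}]$. Your overall architecture matches the one in \cite{CSY24}: the unramified part from spherical Witt vectors, the ramified part as the $(1-\varepsilon)$-summand of $L_{T(n)}\Sigma^{\infty}_{+}B^{n}C_{p^{j}}$ using invertibility of $|B^{n}C_{p}|$, and the $K(n)$-localization identified via Hopkins--Lurie/Ravenel--Wilson and the determinant.

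\textbf{The gap.} You claim the Galois conditions for $(1-\varepsilon)A_{j}$ are a purely formal consequence of semiadditivity. For the map $i$ this cannot be right. $\infty$-semiadditivity and invertibility of $|B^{n}C_{p}|$ also hold in $\mathrm{Sp}_{T(m)}$ for every $m<n$. There $B^{n}C_{p^{j}}$ is acyclic (\cref{thm-CSY22E}), so $\varepsilon=1$ and the same construction produces $0$; then $h$ holds trivially while $i$ fails.

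Your idempotent decomposition of $A_{j}\otimes A_{j}$ only addresses $h$ anyway. Even for $h$ it leaves summands indexed by subgroups $K\subseteq C_{p^{j}}^{2}$ that meet both factors trivially but have noncyclic quotient. For $j=1$ this is the summand for $K=0$, on which all $p+1$ idempotents $1-e_{L}$ act as the identity; you must show it vanishes. Classically it dies because every nonzero element of $C_{p}^{2}$ lies on exactly one line, which gives $\sum_{L}e_{L}=1+p\,e_{C_{p}^{2}}$. For $B^{n}(C_{p}^{2})$ there are no elements to count. The corresponding relation among transfers along $B^{n}L\to B^{n}(C_{p}^{2})$ is a height-dependent fact that needs proof, for instance by computing in $E_{n}$-theory, not discrete character theory.

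\textbf{How to repair it.} Reverse the order of your last two steps.
\begin{enumerate}
\item First identify $L_{K(n)}\bigl((1-\varepsilon)A_{j}\bigr)$ with $E_{n}(\overline{\mathbb{F}}_{p})^{hU_{j}}$ using your determinant argument. Here $U_{j}$ is the kernel of the determinant $\mathbb{G}_{n}\to\mathbb{Z}_{p}^{\times}$ of \cref{ex-Westerland} followed by reduction mod $p^{j}$. This makes $L_{K(n)}\bigl((1-\varepsilon)A_{j}\bigr)$ a $K(n)$-local Galois extension.
\item Observe that $i$ and $h$ are maps between dualizable objects of $\mathrm{Sp}_{T(n)}$. $L_{K(n)}$ carries them to the corresponding maps for $L_{K(n)}B$, since semiadditivity gives $L_{K(n)}(B^{hG})\simeq(L_{K(n)}B)^{hG}$.
\item $L_{K(n)}$ detects vanishing of dualizable $T(n)$-local objects. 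If $C$ is such an object with $L_{K(n)}C=0$, then $\mathrm{End}(C)$ is a $T(n)$-local ring with trivial $K(n)$-homology. Your own nilpotence argument for the unit (tensor with a type $n$ finite ring spectrum) then kills it.
\end{enumerate}
This nil-conservativity of $L_{K(n)}$ is essentially how \cite{CSY24} get the Galois property.

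\textbf{Smaller points.}
\begin{itemize}
\item The reduction to quotients of $\mathbb{G}_{n}^{\mathrm{ab}}$ needs the Baker--Richter/Mathew theorem that every finite Galois extension of $\mathbb{S}_{K(n)}$ is $E_{n}(\overline{\mathbb{F}}_{p})^{hU}$ for some open $U$. Rognes' correspondence alone does not give this.
\item $(-)^{hH}$ is not a finite limit. The correct reason $L_{K(n)}$ commutes with it is that it agrees with $(-)_{hH}$ in both categories and $L_{K(n)}$ preserves colimits.
\item Passing to $H$-fixed points uses faithfulness. This holds automatically here, because $A\simeq B^{hG}\simeq B_{hG}$ lies in the localizing subcategory generated by $B$.
\end{itemize}
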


\begin{defin}\label{defin-two-infinite}
{\bf Two infinite abelian extensions of $\SS_{K(n)} $ and $\SS_{T(n)} $.}
Let
\begin{align*}
  R_{n}
  =\SS_{K (n)}[\omega^{(n)}_{p^{\infty}}]
 & :=\colim{j}\SS_{K (n)}[\omega^{(n)}_{p^{j}}],\\
\SS_{K(n)}^{\ab}   
 & := \mS \WW(\barFFp)\otimes \SS_{J (n)}[\omega^{(n)}_{p^{\infty}}],\\
  R^{\fin}_{n}
  =\SS_{T (n)}[\omega^{(n)}_{p^{\infty}}]
 & :=\colim{j}\SS_{T (n)}[\omega^{(n)}_{p^{j}}]\\
  \aand
\SS_{T(n)}^{\ab}   
 & := \mS \WW(\barFFp)\otimes \SS_{T (n)}[\omega^{(n)}_{p^{\infty}}],
\end{align*}

\noindent where $\mS \WW(\barFFp)$ denotes the evident Moore spectrum.
$\SS_{K(n)}^{\ab}$ and $\SS_{T(n)}^{\ab}$ are the {\bf maximal abelian
  extensions} of $\SS_{K (n)}$ and $\SS_{T (n)}$.
\end{defin}

\begin{remark}\label{rem-max-ab-faithful}
  $\mS_{K(n)}[\omega^{(n)}_{p^\infty}]$ and $\SS_{K(n)}^{\ab}$ are
  known to be a faithful Galois extensions of $\mS _{K(n)}$.
\end{remark}

\begin{ex}\label{ex-Westerland}
{\bf Westerland's spectrum.}
The spectrum $\mS_{K(n)}[\omega^{(n)}_{p^\infty}]$ for $n>0$ is the
ring $R_n$ studied by Craig Westerland in \cite[Theorem
1.2]{Westerland}. Namely, it is the (continuous) homotopy fixed points
of $E_n$ for the action  the kernel $S\GG_{n}^{\pm }$ of
\[
  {\det}  _{\pm } : \GG_{n} \longrightarrow \Z_p^\times,
\]

\noindent whence it is a $\Z_p^\times$–Galois extension of
$\mS_{K(n)}$.  This map is the extension of the determinant
homomorphism on the subgroup $\mS_{n}$ (as in \cref{eq-SGZ}) to
$\GG_{n}$ given by sending the Frobenius generator $F$ of
$\mathrm{Gal}(\barFFp/\mathbb{F}_p)$ to $(-1)^{n-1}$.  This means the
homomorphism is trivial on the subgroup
$\mathrm{Gal}(\barFFp/\mathbb{F}_{p^{n}})$ topologically generated by
$F^{n}$.

He describes $R_{n}$ as a close relative of
$L_{K(n)}\Sigma^{\infty}K(\Z_{p},n+1)_{+}$.  Note that the fiber
sequence
\begin{displaymath}
  K(\rats_{p},n )
  \to K(\rats_{p}/Z_{p},n )\to K(\Z_{p},n+1)
\end{displaymath}
 
\noindent leads to an equivalence
\begin{numequation}\label{eq-K(n)ZC}
\begin{split}
  L_{K(n)}\Sigma^{\infty}K(\rC _{p^{\infty},n})_{+}
       \to L_{K(n)}\Sigma^{\infty}K(\Z_{p},n+1)_{+}.
\end{split}
\end{numequation}

\noindent since  $K(\rats_{p},n )$ is $K(n)$-acyclic for $n>0$.

We will say more about {\SESM} spaces in \cref{sec-SESM-surprise}.
\end{ex}

For each finite $j$, the map 
\begin{displaymath}
\SS_{T (n)}\to 
\SS_{T (n)}[\omega^{(n)}_{p^{j}}]^{h(\Z/p^{j})^{\times}}
\end{displaymath}

\noindent is an equivalence, meaning that the corresponding Galois
extension is faithful.  This {\em does not} imply that the Galois extension
\begin{numequation}\label{eq-Rfn}
\begin{split}
\SS_{T (n)}\to 
\SS_{T (n)}[\omega^{(n)}_{p^{\infty }}]=:R^{\fin}_{n}
\end{split}
\end{numequation}

\noindent is faithful.

\begin{defin}\label{def-cyc-comp}
  {\bf Height $n$ cyclotomic completion.}
  We denote localization with respect to $R^{\fin}_{n}$ by
  $L_{n}^{\Cyc}$ and the corresponding category by $L^{\Cyc}_{n}\qSp$.
  A $T (n)$-local spectrum is {\bf cyclotomically complete} if it is
  $ R^{\fin}_{n}$\nobreakdash-local (see \cref{eq-Rfn}).

 The
analog of $L_{K(n)}$ is
\begin{displaymath}
L_{\Cyc(n)}:=L_{T(n)}L_{n}^{\Cyc},
\end{displaymath}

\noindent and we denote the corresponding category by $\qSp_{\Cyc(n)}$. 
\end{defin}

$\qSp_{\Cyc(n)}$ is denoted by $\widehat{\qSp}_{T (n)}$ in
\cite{BCSY24} and by $(\qSp_{T(n)})^{\wedge }_{{\rm cyc}}$ in
\cite[page~4]{BHLS}.

\begin{prop}\label{prop-BCSY24-6.19} {\bf The height $n$
    cyclotomically complete sphere.} \cite[Proposition~6.19]{BCSY24}
  The restriction of the functor $L^{\Cyc}_{n}$ to $\qSp_{T(n)}$ is
  smashing with the unit object 
  \begin{displaymath}
    (R^{\fin}_{n})^{h\Z_{p}^{\times, \fg}}
  \in \CAlg(\qSp_{\Cyc(n)})
  \qquad \mbox{for $\Z_{p}^{\times, \fg}$ as in \cref{eq-dense-discrete}} .
\end{displaymath}
\end{prop}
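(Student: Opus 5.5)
We want to show that for $X\in\qSp_{T(n)}$, $L^{\Cyc}_{n}X\simeq X\otimes A$, where $A:=(R^{\fin}_{n})^{h\Z_{p}^{\times,\fg}}$ and $\otimes$ is the $T(n)$-local tensor product. The natural candidate for the map is the unit $\eta:\SS_{T(n)}\to A$, induced by $\SS_{T(n)}\to R^{\fin}_{n}$, which is $\Z_{p}^{\times,\fg}$-invariant. For $X\otimes\eta$ to be the $L^{\Cyc}_{n}$-localization, three things are needed.
\begin{enumerate}[label={(\roman*)},itemindent=0em]
\item $X\otimes A$ is $R^{\fin}_{n}$-local.
\item The fiber $F$ of $\eta$ is $R^{\fin}_{n}$-acyclic, i.e.\ $F\otimes R^{\fin}_{n}\simeq\pt$.
\item $A$ is a commutative algebra, so that the localization is smashing and $A$ is its unit.
\end{enumerate}
Point (iii) is formal, since $A$ is a homotopy fixed point object of a commutative algebra.

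For (ii), compute $R^{\fin}_{n}\otimes A$. Homotopy fixed points for $\Z_{p}^{\times,\fg}\cong\Z_{p}\times T_{p}$ are a finite limit: $T_{p}$ is finite, and $(-)^{h\Z}$ is a fiber of $1-g$. Here ``$\Z_p$'' should be read as the discrete group $\Z$ generated by $e(1)$, dense in $\Z_p$. Hence $R^{\fin}_{n}\otimes-$ commutes with these fixed points; for the finite group $T_{p}$ this also uses $\infty$-semiadditivity of $\qSp_{T(n)}$ (\cref{thm-HL002}\cref{thm-HL002ii}). So
\[
R^{\fin}_{n}\otimes A\simeq (R^{\fin}_{n}\otimes R^{\fin}_{n})^{h\Z_{p}^{\times,\fg}}.
\]
By the Galois property at each finite level $j$ (\cref{thm-lifting-abelian} together with the map $h$ of \cref{def-Rognes-Galois}), passing to the colimit gives
\[
R^{\fin}_{n}\otimes R^{\fin}_{n}\simeq \colim{j}\,\mathrm{Map}\bigl((\Z/p^{j})^{\times},R^{\fin}_{n}\bigr)=C_{\rm cts}(\Z_{p}^{\times},R^{\fin}_{n}),
\]
with $\Z_{p}^{\times,\fg}$ acting by translation. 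Taking fixed points of the dense subgroup on continuous functions returns $R^{\fin}_{n}$. Level by level this holds because $\Z_{p}^{\times,\fg}$ surjects onto each $(\Z/p^{j})^{\times}$, so $\mathrm{Map}((\Z/p^{j})^{\times},R)^{h\Z_{p}^{\times,\fg}}\simeq R^{hK_{j}}$ with $K_{j}\cong\Z$ the kernel (times a finite part). The higher cohomology $\lim^{1}$-type terms from the $\Z$-factor have to cancel in the colimit over $j$, since the transition maps act on the $K_{j}$-fixed points through multiplication by the index $p$. This shows $R^{\fin}_{n}\otimes\eta$ is an equivalence, giving (ii).

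For (i), note that $R^{\fin}_{n}$ and every $X\otimes R^{\fin}_{n}$ are $R^{\fin}_{n}$-local, being $R^{\fin}_{n}$-modules. $R^{\fin}_{n}$-local objects are closed under limits, and $X\otimes A\simeq(X\otimes R^{\fin}_{n})^{h\Z_{p}^{\times,\fg}}$ again because the fixed points are a finite limit. So $X\otimes A$ is local. Combining (i) and (ii), $X\to X\otimes A$ is an $R^{\fin}_{n}$-equivalence to a local object, hence it is $L^{\Cyc}_{n}X$.

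The main obstacle is the fixed point computation in (ii). The naive $\Z$-fixed points on $\colim{j}$ of $R^{hK_{j}}$ produce an extra $\Sigma^{-1}$ term, and one must show that it vanishes after the colimit. I would first try to show that the transition maps on this term are multiplication by $p$, or by a transfer in the semiadditive sense. I would then argue that $p$ is killed in the colimit: via the idempotent splitting of $\SS_{T(n)}[\rC_{p^{j}}]$ coming from $1$-semiadditivity, the lifted version of $R[\rC_{p^{j}}]\cong R[\rC_{p^{j-1}}]\times R[\omega_{p^{j}}]$ from \cref{sec-ramified-cyc-ext}.
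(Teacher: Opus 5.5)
The paper gives no argument of its own here; it simply quotes \cite[Proposition~6.19]{BCSY24}. So I am judging your proposal on its own terms. Its skeleton is sound, and the following steps are all correct:
\begin{itemize}
\item your steps (i)--(iii);
\item the fact that $(-)^{h\Z_p^{\times,\fg}}$, for $\Z_p^{\times,\fg}\cong\Z\times T_p$, commutes with $R^{\fin}_n\otimes-$ and with filtered colimits (it is $\mathrm{fib}(1-g)$ followed by $T_p$-fixed points, and semiadditivity handles $T_p$);
\item the Galois identification $R^{\fin}_n\otimes R^{\fin}_n\simeq\mathrm{colim}_j\,\mathrm{Map}((\Z/p^j)^\times,R^{\fin}_n)$, with translation action;
\item the Shapiro reduction to $\mathrm{colim}_j\,(R^{\fin}_n)^{BK_j}$, where $K_j\cong\Z$ acts trivially.
\end{itemize}
The gap is in the one step you flag as the obstacle, and the mechanism you propose for it is the wrong one.

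Since $K_{j+1}=pK_j$, the transition map $(R^{\fin}_n)^{BK_j}\to(R^{\fin}_n)^{BK_{j+1}}$ is restriction along the degree-$p$ self-map of $B\Z\simeq S^1$. On $R^{\fin}_n\oplus\Sigma^{-1}R^{\fin}_n$ it is $1\oplus p$; it is a restriction, not a transfer. A colimit along multiplication by $p$ does not kill $p$, it inverts it. So the surviving extra term is $L_{T(n)}\bigl(\Sigma^{-1}R^{\fin}_n[1/p]\bigr)$.

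What you need is that this term is zero. The reason is simply that $T(n)\otimes\mS[1/p]\simeq 0$ for $n\geq 1$, because $T(n)$ is a telescope on a $p$-power torsion finite complex. Hence $p$-inverted spectra are $T(n)$-acyclic.

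The idempotent splitting you invoke cannot supply this. The splitting $R[\rC_{p^j}]\cong R[\rC_{p^{j-1}}]\times R[\omega_{p^j}]$ needs $p$ to be invertible, which fails in $\qSp_{T(n)}$ for $n\geq1$. What is invertible there is the cardinality of $B^n\rC_p$. That splits $\SS_{T(n)}[B^n\rC_{p^j}]$ and produces $\omega^{(n)}_{p^j}$ in the first place, but says nothing about this $\Sigma^{-1}$-term.

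Relatedly, your argument as written never uses $n\geq 1$, yet it must. Take $n=0$, where $\qSp_{T(0)}=\qSp_{\mathbb{Q}}$ and $R^{\fin}_0=\mathbb{Q}(\omega_{p^\infty})$. The same computation gives $\pi_{-1}(R^{\fin}_0)^{h\Z_p^{\times,\fg}}\cong\mathbb{Q}\neq 0$, while $L^{\Cyc}_0$ is the identity on rational spectra. So the statement itself requires $n\geq1$.

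To complete the proof, replace your last paragraph by the observation that $\mathrm{colim}\bigl(\Sigma^{-1}R^{\fin}_n\xrightarrow{p}\Sigma^{-1}R^{\fin}_n\xrightarrow{p}\cdots\bigr)\simeq 0$ in $\qSp_{T(n)}$ for $n\geq1$.
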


We do not know if the functor $L^{\Cyc}_{n}$ is smashing in the larger
category $\qSp$ like $L_{n}$ and $L^{\fin}_{n}$ are known to be.

\begin{remark}\label{remark-why-fg-subgroup} {\bf Why the finitely generated
    subgroup?}
  Note that the subgroup
  $\Z_{p}^{\times, \fg} \subseteq \Z^{\times }_{p}$ is not closed.
  Devinatz-Hopkins theory \cite[Theorem 1]{DH:HtyFixed} concerns
  closed subgroups of $\GG_{n}$ and the category of $K(n)$-local
  spectra, but we are now in the category of $T(n)$-local spectra.  In
  that world we need to replace $\Z^{\times }_{p}$ by a dense finitely
  generated subgroup in the description of the unit object.
\end{remark}

It is stated without explicit proof in \cite[page 94]{BCSY24} that
$K (n)$-local spectra are height $n$ cyclotomically complete.  By
\cref{thm-CSY22-thm-B} (originally \cite[Theorem B]{CSY22}) this is
the case because $K(m)_{*}R^{\fin}_{n}=0$ iff $m>n$ and
$R^{\fin}_{n}\otimes H\mathbb{F}_p = 0$.

Thus, using the notation of \cref{def-Ln-fin,def-cyc-comp}, we have
categorical inclusions and natural transformations of restricted
functors (see \cref{eq-AKB-classes})
\begin{numequation}\label{eq-K-cyc-T}
  \begin{split}
    \xymatrix
@R=1mm
@C=8mm    
{
{L_{n}}    
  &{L_{n}^{\Cyc}}\ar@{=>}[l]^(.5){}
    &{L_{n}^{\fin}}\ar@{=>}[l]^(.5){}\\
{L_{n}\qSp}\ar@{^{(}->}[r]^(.5){}
  &{L^{\Cyc}_{n}\qSp}\ar@{^{(}->}[r]^(.5){}
    &{L^{\fin}_{n}\qSp}\\
    {}\\
{\qSp_{K (n)}}\ar@{^{(}->}[r]^(.5){}\ar@{^{(}->}[uu]^(.5){}
  &{\qSp_{\Cyc(n)}}\ar@{^{(}->}[r]^(.5){}\ar@{^{(}->}[uu]^(.5){}
    &{\qSp_{T (n)}}\ar@{^{(}->}[uu]^(.5){}\\
{L_{K(n)}}   
  &{L_{\Cyc(n)}}\ar@{=>}[l]^(.5){}
    &{L_{T(n)}.}\ar@{=>}[l]^(.5){}
      }
  \end{split}
\end{numequation}%

\noindent For $n=1$, the three categories in the second row are the
same (as are the ones in the third row) since the {\TC} is known to be
true for $n=0$ and $n=1$.  For $n\geq 2$, $T (n)$-local spectra are
now known not to be cyclotomically complete by \cite[Theorem A]{BHLS},
quoted here as \cref{thm-main-A}.

\begin{thm}[{\cite[Proposition~4.17]{BMCSY23} and
    \cite[Theorem 6.18]{BHLS}}]\label{thm:6.18}
  For $n \ge 0$, let $R \in \CAlg(\qSp_{T(n)})$.  The map
\[
  \rK_{T(n+1)} (R)
    \;\longrightarrow\;
    \rK_{T(n+1)} \!\left( \mS_{T(n)}[\omega^{(n)}_{p^\infty}]^{hT_p}
       \otimes R \right)^{h\Z},
\]

\noindent for $T_{p}, \Z\subseteq \Z_{p}^{\times }$ as in
\cref{eq-dense-discrete}, exhibits the target as the height $n+1$
cyclotomic completion (as in \cref{def-cyc-comp}) of the source, where
the tensor products are taken in $\CAlg(\qSp_{T(n)})$.
\end{thm}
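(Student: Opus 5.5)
Writing $A:=\mS_{T(n)}[\omega^{(n)}_{p^\infty}]^{hT_p}$, so that $A\otimes R=R[\omega^{(n)}_{p^\infty}]^{hT_p}$ is a $\Z\cong\Z_p$-pro-Galois extension of $R$ obtained by adjoining the $p$-power roots of unity beyond the torsion part, the plan is to run a descent argument in three steps.

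First, write $A$ as a colimit of the finite Galois extensions $A_j:=\mS_{T(n)}[\omega^{(n)}_{p^{j}}]^{hT_p}$ over $\mS_{T(n)}$, with Galois groups $\Z/p^{j-1}$ (quotients of $\Z\subseteq\Z_p^{\times,\fg}$). Since these are faithful finite Galois extensions in $\qSp_{T(n)}$ (\cref{thm-lifting-abelian}), Galois descent gives $R\simeq (A_j\otimes R)^{h\Z/p^{j-1}}$. Algebraic $K$-theory, followed by $L_{T(n+1)}$, commutes with finite homotopy fixed points along finite Galois extensions of this type: this is the $T(n+1)$-local Galois descent for $\rK$ of \cite{CSY24}/\cite{BMCSY23}, which uses $1$-semiadditivity of $\qSp_{T(n+1)}$ (\cref{thm-HL002}) to turn the Tate construction into zero. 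One then obtains $\rK_{T(n+1)}(R)\simeq \rK_{T(n+1)}(A_j\otimes R)^{h\Z/p^{j-1}}$ for every $j$.

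Second, pass to the limit over $j$. The target $\rK_{T(n+1)}(A\otimes R)^{h\Z}$ receives a comparison from $\lim_j \rK_{T(n+1)}(A_j\otimes R)^{h\Z/p^{j-1}}\simeq \rK_{T(n+1)}(R)$ via continuity in $j$. The task is to identify this map with a localization. Using that $\rK$ commutes with filtered colimits, $\rK_{T(n+1)}(A\otimes R)$ is the $T(n+1)$-localized colimit of the $\rK_{T(n+1)}(A_j\otimes R)$.

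Third, invoke cyclotomic redshift (\cref{thm-redshift-B}): $\rK_{T(n+1)}(A\otimes R)\simeq \rK_{T(n+1)}(R)\otimes \mS_{T(n+1)}[\omega^{(n+1)}_{p^\infty}]^{hT_p}$, equivariantly for $\Z$. The target therefore becomes $\big(\rK_{T(n+1)}(R)\otimes R^{\fin}_{n+1}\big)^{h(T_p\times\Z)}$. By \cref{prop-BCSY24-6.19}, with $\Z_p^{\times,\fg}\cong \Z\times T_p$ and using that $\rK_{T(n+1)}(R)$ is dualizable enough to pull the fixed points out, this equals $\rK_{T(n+1)}(R)\otimes (R^{\fin}_{n+1})^{h\Z_p^{\times,\fg}}=L^{\Cyc}_{n+1}\rK_{T(n+1)}(R)$. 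It remains to check that the map is the unit of localization.

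The main obstacle is the equivariant form of redshift, namely matching the $\Z_p^\times$-action on height $n$ roots of unity with the one on height $n+1$ roots. Pulling the $h\Z$ (an infinite discrete group) outside the tensor product is also delicate. For the latter I would try writing $(-)^{h\Z}$ as a fiber of $1-\sigma$, which commutes with tensor products, so this part should be formal.
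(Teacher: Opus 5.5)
The paper gives no proof of this statement; it only cites BMCSY23 and BHLS. Measured against the results it does quote, your Step~3 is the right argument and is sound in outline. You move $(-)^{hT_p}$ outside $\rK_{T(n+1)}$, then use the $\Z_p^{\times}$-equivariant redshift of \cref{thm-redshift-B} to rewrite the target as $\bigl(\rK_{T(n+1)}(R)\otimes R^{\fin}_{n+1}\bigr)^{h(T_p\times\Z)}$. You then pull the fixed points out and conclude with \cref{prop-BCSY24-6.19}. The identification of the map with the localization unit follows from the compatibility of the redshift equivalence with the maps out of $\rK_{T(n+1)}(R)$.

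Three corrections to the presentation. First, dualizability of $\rK_{T(n+1)}(R)$ is neither available in general nor needed. $(-)^{h\Z}$ is the fiber of $\sigma-1$, and in the $1$-semiadditive category $\qSp_{T(n+1)}$ the functor $(-)^{hT_p}$ agrees with the colimit $(-)_{hT_p}$. Both therefore commute with $\rK_{T(n+1)}(R)\otimes(-)$ when that factor carries the trivial action. Second, the equivariance you call the main obstacle is already part of the cited \cref{thm-redshift-B}. Third, Steps~1 and~2 can be dropped, since Step~3 never uses them. Step~2 also has the variance wrong: the finite-level terms, each equivalent to $\rK_{T(n+1)}(R)$, map to one another covariantly. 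The real content is that $\mathrm{colim}_j$ fails to commute with $(-)^{h\Z}$, and Step~3 measures that directly.

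The step you leave unjustified is the first move in Step~3. \cref{thm-redshift-B} concerns $R[\omega^{(n)}_{p^\infty}]$, not $A\otimes R=R[\omega^{(n)}_{p^\infty}]^{hT_p}$. So you need $\rK_{T(n+1)}(A\otimes R)\simeq \rK_{T(n+1)}\bigl(R[\omega^{(n)}_{p^\infty}]\bigr)^{hT_p}$, i.e.\ descent along the $T_p$-Galois extension $A\otimes R\to R[\omega^{(n)}_{p^\infty}]$.

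For $p=2$, $T_2=\{\pm1\}$ is a $2$-group and \cref{cor-} gives this. For odd $p$, $|T_p|=p-1$ is prime to $p$, so the $p$-group descent results behind Step~1 (CMNN, \cref{thm-BMCSY-ThmA}) do not apply. Prime-to-$p$ descent is not formal either: it fails for trivial actions. For example, with $p$ odd and $C_2$ acting trivially on $\mathrm{Perf}(R)$, the fixed-point category splits as two copies of $\mathrm{Perf}(R)$. So the Galois property must be used.

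One way to do this: since $\Z_p^\times=T_p\times(1+p\Z_p)$, Galois theory gives $A\otimes\mS_{T(n)}[\omega^{(n)}_p]\simeq\mS_{T(n)}[\omega^{(n)}_{p^\infty}]$. The finite-level (level $p$) form of cyclotomic redshift from BMCSY23, applied to the ring $A\otimes R$, then identifies your map with a base change of the faithful $T_p$-Galois extension $\mS_{T(n+1)}\to\mS_{T(n+1)}[\omega^{(n+1)}_p]$. Descent follows.

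Relatedly, the mechanism you give in Step~1 is not why descent holds. Tate vanishing in $\qSp_{T(n+1)}$ only identifies homotopy orbits with homotopy fixed points of $\rK_{T(n+1)}$ of the extension. It says nothing about how either compares with $\rK_{T(n+1)}$ of the base. The descent results you need are due to CMNN and BMCSY23, not CSY24.
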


\begin{corollary}\label{cor-6.19}
  \cite[Corollary 6.19]{BHLS} 
For $n \ge 0$, the map
\[
  \mS_{T(n)} \longrightarrow \mS_{T(n)}[\omega^{(n)}_{p^\infty}]
\]
is a $\rK_{\Cyc(n+1)}$-cover (for $\rK_{\Cyc(n+1)}$ as in
\cref{def-KT-KK}) in the sense of \cref{def-F-descent}.
\end{corollary}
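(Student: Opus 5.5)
The plan is to reduce the descent statement to \cref{thm:6.18} by identifying the Amitsur (cobar) complex of $A:=\mS_{T(n)}[\omega^{(n)}_{p^\infty}]$ with a complex computing continuous $\Z_p^\times$-homotopy fixed points. I take the definition of a $\rK_{\Cyc(n+1)}$-cover (\cref{def-F-descent}) to mean the following: for every $R\in\CAlg(\qSp_{T(n)})$ the map
\[
\rK_{\Cyc(n+1)}(R)\to \lim_{\Delta}\rK_{\Cyc(n+1)}\bigl(A^{\otimes(\bullet+1)}\otimes R\bigr)
\]
is an equivalence. If that definition is instead phrased via a single test object, the argument below specializes.

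\emph{Step 1: Galois structure of the cobar terms.} For finite $j$, $\mS_{T(n)}\to \mS_{T(n)}[\omega^{(n)}_{p^j}]$ is a $(\Z/p^j)^\times$-Galois extension by \cref{thm-lifting-abelian}. So the map $h$ of \cref{def-Rognes-Galois} gives
\[
A_j^{\otimes(k+1)}\simeq \prod_{((\Z/p^j)^\times)^k}A_j .
\]
Passing to the colimit over $j$ (tensor products commute with filtered colimits in $\qSp_{T(n)}$), $A^{\otimes(k+1)}\otimes R$ becomes the ring of locally constant functions $C(\Z_p^{\times k},A\otimes R)$. The same holds after tensoring with $R$.

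\emph{Step 2: Apply $\rK$.} Algebraic $K$-theory commutes with finite products and filtered colimits of rings, and $L_{T(n+1)}$ preserves colimits as a functor into $\qSp_{T(n+1)}$. Hence the cosimplicial object $\rK_{T(n+1)}(A^{\otimes(\bullet+1)}\otimes R)$ is the standard cobar construction for the continuous action of $\Z_p^\times$ on $\rK_{T(n+1)}(A\otimes R)$, i.e. $\colim_j$ of the finite-level cobar complexes. At finite level $j$, the totalization is $\rK_{T(n+1)}(A_j\otimes R)^{h(\Z/p^j)^\times}$.

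\emph{Step 3: Split off the torsion part.} Using \cref{eq-SES}, write $\Z_p^\times\cong T_p\times\Z_p$. The $T_p$-part is a finite group, so descent for it is the finite Galois descent from Step 1. It is harmless, and it identifies the $T_p$-fixed part with $\rK_{T(n+1)}(A^{hT_p}\otimes R)$ after completion.

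\emph{Step 4 (main obstacle): the $\Z_p$-part.} One must show that the totalization of the remaining cobar complex, a continuous $\Z_p$-fixed-point construction built as a colimit of finite $\Z/p^{j}$-fixed points, agrees after $L_{\Cyc(n+1)}$ with the discrete homotopy fixed points $(-)^{h\Z}$ for the dense subgroup $\Z\subset\Z_p$ of \cref{eq-dense-discrete}. My first approach is to compare both sides with the target in \cref{thm:6.18}. That theorem says $\rK_{T(n+1)}(A^{hT_p}\otimes R)^{h\Z}$ is the height $n+1$ cyclotomic completion of $\rK_{T(n+1)}(R)$, so it equals $\rK_{\Cyc(n+1)}(R)$.

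Thus it suffices to check two things. First, the cobar totalization receives a map from the $h\Z$-fixed points, compatible with the augmentation. Second, that map becomes an equivalence after cyclotomic completion. For the second point I would use \cref{prop-BCSY24-6.19}: on $\qSp_{T(n+1)}$, $L^{\Cyc}_{n+1}$ is smashing with $(R^{\fin}_{n+1})^{h\Z_p^{\times,\fg}}$, and this converts continuous $\Z_p$-fixed points into discrete $\Z$-fixed points. Commuting the smashing localization past the totalization should be valid because each cobar term is already cyclotomically complete; \cref{thm:6.18} applied to $A^{\otimes k}\otimes R$ shows this.

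Combining Steps 1–4 yields the equivalence $\rK_{\Cyc(n+1)}(R)\simeq\lim_\Delta \rK_{\Cyc(n+1)}(A^{\otimes(\bullet+1)}\otimes R)$, which is the cover property.
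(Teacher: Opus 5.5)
The paper gives no argument of its own here. It quotes the statement from \cite{BHLS} as a consequence of \cref{thm:6.18}, which you also use.

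Your Step~4, which you rightly flag as the crux, has a genuine gap. The two objects you want to compare are both already cyclotomically complete: $\rK_{T(n+1)}(A^{hT_p}\otimes R)^{h\Z}$ by \cref{thm:6.18}, and the totalization of $\rK_{\Cyc(n+1)}(A^{\otimes(\bullet+1)}\otimes R)$ because local objects are closed under limits. So ``the map becomes an equivalence after cyclotomic completion'' merely restates the claim. Smashing with $(R^{\fin}_{n+1})^{h\Z_p^{\times,\fg}}$ acts as the identity on both sides, so commuting it past the totalization is vacuous. It cannot convert continuous $\Z_p$-cochains into discrete $\Z$-fixed points. That conversion, identifying the totalization of $C((\Z_p^{\times})^{\bullet},N)$ with $N^{h\Z_p^{\times,\fg}}$, is the entire content of the corollary, and it is not formal. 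Already for a discrete module $M$ with trivial action, continuous $H^1(\Z_p;M)$ is $M[p^\infty]$ while $H^1(\Z;M)=M$.

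There are also smaller issues.
\begin{enumerate}
\item Applying \cref{thm:6.18} to $A^{\otimes k}\otimes R$ computes the completion of a cobar term; it does not show the term is complete. Completeness does hold, since \cref{thm-redshift-B} makes $\rK_{T(n+1)}(A\otimes R')$ a module over $R^{\fin}_{n+1}$.
\item Step~1 gives no $\rK$-theoretic descent for $T_p$. That needs an actual finite Galois descent theorem: \cite{CMNN2} or \cref{cor-} for $p=2$, or a transfer argument for odd $p$.
\item ``Splitting off'' $T_p$ requires factoring $\mS_{T(n)}\to A^{hT_p}\to A$ and composing covers.
\item Commuting $\rK_{T(n+1)}$ with filtered colimits in $\CAlg(\qSp_{T(n)})$ uses \cref{thm-purity}, not just the fact that $L_{T(n+1)}$ preserves colimits.
\end{enumerate}

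The missing idea is to apply \cref{thm:6.18} to every term of the \v{C}ech nerve, not only to the augmentation. Put $B=A^{hT_p}$. By naturality, for any map $R\to S$ in $\CAlg(\qSp_{T(n)})$ there is an equivalence of augmented cosimplicial objects
\[
\rK_{\Cyc(n+1)}\bigl(S^{\otimes_R(\bullet+1)}\bigr)\simeq \rK_{T(n+1)}\bigl(B\otimes S^{\otimes_R(\bullet+1)}\bigr)^{h\Z}.
\]
Since $(-)^{h\Z}$ commutes with limits, $R\to S$ is of $\rK_{\Cyc(n+1)}$-descent as soon as $B\otimes R\to B\otimes S$ is of $\rK_{T(n+1)}$-descent. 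There are then two cases.
\begin{itemize}
\item For $S=R\otimes B$, the map $B\otimes R\to B\otimes R\otimes B$ has a retraction (multiply the two copies of $B$). Its \v{C}ech nerve is therefore split, and every functor sends it to a limit diagram. Hence $\mS_{T(n)}\to B$ is a $\rK_{\Cyc(n+1)}$-cover.
\item For a $B$-algebra $R$ and $S=R\otimes_B A$, the map $B\otimes R\to B\otimes S$ is a $T_p$-Galois extension. Finite Galois descent for $\rK_{T(n+1)}$ then shows that $B\to A$ is a $\rK_{\Cyc(n+1)}$-cover.
\end{itemize}
Composing covers gives the corollary. In this route you never compare continuous and discrete fixed points by hand; that comparison is exactly what \cref{thm:6.18} already packages.
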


\subsection{The surprising appearance of {\SESM} spaces}
\label{sec-SESM-surprise}

\begin{defin}\label{defin-space-indexed}
{\bf Space indexed products and coproducts.}
\cite[\S1.4(3)]{CSY21} Given an {\qcat} $\qmcC$ and a space $A$, the
map $A\to \pt$ induces a functor
$A^{*}:\qmcC \to \qmcC^{A}:=\qFun (\qA, \qmcC)$ sending an object $X$
in $\qmcC$ to the constant $X$-valued functor on $\qA$, the {\qcat}
associated with $A$ as in
\cite[Definition 5.1(ii)]{Rav:gjmcyc}.
For suitable $\qmcC$, $A^{*}$ has left and right adjoints $A_{!}$ and
$A_{*}$, the colimit and limit of the functor $\qA\to \qmcC$.  We
define
\begin{numequation}\label{eq-X[A]}
\begin{split}
X[A]:=A_{!}A^{*}X
\qquad \aand 
X^{A}:=A_{*}A^{*}X,
\end{split}
\end{numequation}

\noindent which are respectively covariant and contravariant in $A$.
They come equipped with a counit or fold map $\nabla:X[A]\to X$ and a
unit or diagonal map $\Delta :X\to X^{A}$.
\end{defin}

\begin{ex}\label{ex-finite-coprod-prod}
  When $A$ is a finite set with $k$ elements, then $X[A]$ and $X^{A}$
  are the $k$-fold coproduct and product of $X$.  When $A=BG$ for a
  topological group $G$, then $X[A]$ and $X^{A}$ are the homotopy orbit
  space $X_{hG}$ and the homotopy fixed point set $X^{hG}$.
\end{ex}

It turns out that in the $K (n)$-local and $T (n)$-local worlds for
$n>0$, the height $n$ analogs of $\Z_{p}[\rC_{p^{j}}]$ in
\cref{eq-cyclotomic-tower} are the spectra
\begin{align*}
\SS_{K (n)} [B^{n}\rC_{p^{j}}]
 & = L_{K (n)}\Sigma^{\infty}B^{n}\rC_{p^{j}}\\
\aand
 \SS_{T (n)} [B^{n}\rC_{p^{j}}]
 & = L_{T (n)}\Sigma^{\infty}B^{n}\rC_{p^{j}}.
\end{align*}
The analogs of $\Z_{p}[\omega_{p^{j}}]$ are summands of these.

Here the space $B^{n}\rC_{p^{j}}$ is the $n$th iterated classifying
  space of the cyclic group of order $p^{j}$, more traditionally known
  as the {\SESM} space $K (\Z/p^{j}, n)$, first introduced in
  \cite{SESM2}.  We saw the colimit as $j\to \infty$ in
  \cref{ex-Westerland}.

To see why these spaces arise, suppose we replace the ring
$\Z_{p}[\omega_{p^{\infty }}]$ in \cref{eq-cyclotomic-tower} by a
commutative ring $R$ in a symmetric monoidal {\qcat} $\qmcC$, for
example an $\EE_{\infty }$-ring spectrum. Its multiplicative group of
units $R^{\times }$ can have nontrivial iterated loop spaces, unlike the
totally disconnected space $\Z_{p}[\omega_{p^{\infty }}]^{\times }$.

\begin{defin}\label{def-height-n-root}
  \cite[Definition 4.2]{CSY24}
A {\bf $p^{j}$th root of unity of height $n$},
$\omega^{(n)}_{p^{j}}\in R$, is the image of a generator of
$\rC_{p^{j}}$ under a group homomorphism $\rC_{p^{j}}\to \Omega
^{n}R^{\times }$.
\end{defin}

Such a map is adjoint to a map $B^{n}\rC_{p^{j}}\to R^{\times }$.
{\em This notion of height differs from that in the theory of formal
  group laws.}  See \cref{rem-height}.

Higher roots of unity are discussed in \cite[\S4.1]{CSY24}.  They are
also used in \cite{BCSY24} to construct higher height analogues of the
discrete Fourier transform and Kummer theory. In \cite[Theorems~4.26
and 4.28]{BMCSY23} respectively the authors show that these constructions
play nicely with the functor \( \rK_{T(n+1)} \).

The Morava $K$-theories of the spaces $B^{n}\rC_{p^{j}}$, for all
heights (in the formal group law sense) and all values of $n$ and $j$,
were computed long ago by Steve Wilson and the author in \cite{RW:CF}
for odd primes, and by Johnson and Wilson in \cite[Appendix]{JW:EPG}
for $p=2$.  We also did it for the $K(n)$-equivalent spaces
$B^{n+1}\Z$ and $B^{n}\rC_{p^{\infty}}$; see \cref{eq-K(n)ZC}.  The
alternative computation of Hopkins and Jacob Lurie in \cite[\S2]{HL},
which works for all primes, is explained by Sanath Devalapurkar in
\cite{Devalapurkar18}.  An interesting arithmetic interpretation of
our result is given by Victor Buchstaber and Andrey Lazarev in
\cite{Buchstaber-Lazerev}.

In particular we know that $K (n)_{*}B^{n}\rC_{p^{j}}$ has rank
$p^{j}$ (as a module over $K (n)_{*}$) and is concentrated in
dimensions divisible by
\begin{displaymath}
|v_{n}|/ (p-1)=2 (1+p+\dotsb +p^{n-1}).
\end{displaymath}

\noindent This means that in the $K (n)$-local world,
$\Sigma^{\infty }_{+}B^{n}\rC_{p^{j}}$ ``looks like'' a CW-complex
with $p^{j}$ cells evenly distributed in such dimensions, with an
extra one in dimension 0.  This generalizes that fact (the case $n=0$)
that $\Sigma^{\infty }_{+}\rC_{p^{j}}$ (and hence its rationalization)
is a wedge of $p^{j}$ copies of the (rationalized) sphere spectrum.

{\em When we wrote \cite{RW:CF}, we had no idea that these spaces
would play such a role in chromatic homotopy theory!}

\begin{remark}\label{rem-periodicity}
{\bf Periodicity in Morava $K$-theory.}  In the above paragraph we are
using the original definition of $K (n)$, for which
\begin{displaymath}
\pi_{*}K (n)=\FFp [v_{n}^{\pm 1}]
\qquad \mbox{with }|v_{n}|=2 (p^{n}-1). 
\end{displaymath}

\noindent This spectrum is $|v_{n}|$-periodic.  One can formally
adjoin a $(p^{n}-1)$th root $u$ of $v_{n}$ (with $|u|=2$) to obtain a
2-periodic spectrum $\widehat{K} (n)$. It follows that 
\begin{align*}
\widehat{K} (n)_{0} (X)
 & = \bigoplus_{0\leq i<p^{n}-1}u^{-i}K (n)_{2i}X  \\
\aand 
\widehat{K} (n)_{1} (X)
 & = \bigoplus_{0\leq i<p^{n}-1}u^{-i}K (n)_{2i+1}X. 
\end{align*}

\noindent The functors $L_{K (n)}$ and $L_{\widehat{K} (n)}$ are the
same, as are the {\qcats} $\qSp_{K (n)}$ and $\qSp_{\widehat{K} (n)}$.

In the rest of the paper $K (n)$ will abusively denote the 2-periodic
spectrum $\widehat{K} (n)$.
\end{remark}

The equality of the ranks of $T (n)_{*}B^{m}\rC_{p^{j}}$ and $K
(n)_{*}B^{m}\rC_{p^{j}}$ is the subject of \cite[Lemma 5.1.7]{CSY22}.

\subsection{Chromatic cyclotomic redshift}\label{sec-cyclo-redshift}

Here we recall some results of the insightful work of Ben-Moshe,
Carmeli, Schlank and Yanovski, \cite{BMCSY23}.

We first look at {\qsheaves} (\cref{defin-sheaves}) on the
categories $\Fin _{\Z_{p}}$ and $\Fin _{\Z^{\times }_{p}}$ (see
\cref{defin-TG}) of finite sets acted on by the groups of $p$-adic
integers and $p$-adic units.  The general theory of such sheaves for
profinite groups $G$ is reviewed in \cref{sec-finite-profinite}.

\begin{defin}\label{def-cyclo-sheaf}\cite[Definition 5.10]{BMCSY23}
  For $X \in \qSp_{T(n)} $, the {\bf height $n$ cyclotomic sheaf} is
  the contravariant functor on $\Fin _{\Z^{\times }_{p}}$ given by
  \begin{displaymath}
    (\Z/p^{j})^{\times }\mapsto X[\omega^{(n)}_{p^{j}}]
\end{displaymath}
 
\noindent as in
  \cref{eq-Xnj}.  Its stalk (see \cref{eq-stalk}) is
  $X[\omega^{(n)}_{p^{\infty}}]$.
\end{defin}

\begin{prop}\label{prop-BMCSY-5.11}
  \cite[Proposition 5.11]{BMCSY23}
The cyclotomic sheaf $\mS_T^{(n)}[\omega_{p^{(-)}}^{(n)}]$ is a continuous
$\Z_p^{\times}$--Galois extension, and there is a symmetric monoidal
equivalence
\[
  \qSp_{T(n)}
    \;\xrightarrow{\;\sim\;}\;
  \Mod_{\mS_T^{(n)}[\omega_{p^{(-)}}^{(n)}]}
    \bigl(\Shv(\Fin_{\Z _{p}};\,\qSp_{T(n)})\bigr),
  \qquad
  X \longmapsto X[\omega_{p^{(-)}}^{(n)}].
\]

\noindent Moreover, $X \in \qSp_{T(n)}$ is cyclotomically complete
if and only if the height $n$ cyclotomic sheaf $X[\omega_{p^{(-)}}^{(n)}]$ is
a hypersheaf as in \cref{defin-hypersheaf}.
\end{prop}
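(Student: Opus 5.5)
The plan has three parts: identify the cyclotomic sheaf as a continuous Galois extension, obtain the module equivalence by a Galois descent argument, and then characterize cyclotomic completeness as hyperdescent.

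First I would check the Galois property level by level. For each $j$, \cref{thm-lifting-abelian} gives a faithful $T(n)$-local $(\Z/p^{j})^{\times}$-Galois extension $\mS_{T(n)}\to\mS_{T(n)}[\omega^{(n)}_{p^{j}}]$. The transition maps in $j$ are compatible with the quotients $(\Z/p^{j+1})^{\times}\to(\Z/p^{j})^{\times}$, because the higher roots of unity of \cref{def-height-n-root} are compatible under $p$th powers, exactly as in \cref{eq-cyclotomic-tower}. I would then translate "continuous Galois extension" into the sheaf language of \cref{sec-finite-profinite}. The sheaf condition on $\Fin_{\Z_p^\times}$ reduces to two facts: the value on the orbit $(\Z/p^{j})^{\times}$ is the $(\Z/p^j)^\times$-invariants of the value at deeper levels, and finite disjoint unions go to products. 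Both facts follow from the Galois conditions $i$ and $h$ of \cref{def-Rognes-Galois} at each finite level.

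Second, I would prove the equivalence $X\mapsto X[\omega^{(n)}_{p^{(-)}}]$. The functor is symmetric monoidal because $X[\omega^{(n)}_{p^{j}}]=X\otimes\mS_{T(n)}[\omega^{(n)}_{p^{j}}]$ by \cref{eq-Xnj}. Its right adjoint takes a module sheaf $\mathcal F$ to its global sections, that is, its value on the terminal object $\Z_p^\times/\Z_p^\times$. The unit $X\to\Gamma(X[\omega^{(n)}_{p^{(-)}}])$ is an equivalence because the extension at level $j=1$ (or any fixed level) is faithful and Galois, so $X\simeq X[\omega^{(n)}_{p^j}]^{h(\Z/p^j)^\times}$ by $\infty$-semiadditivity and the descendability of finite faithful Galois extensions. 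For the counit, I would use that a sheaf on $\Fin_{\Z_p^\times}$ is determined by its values on the finite quotients $(\Z/p^j)^\times$. At each finite level, Galois descent for modules identifies $\mS[\omega_{p^j}]$-modules with $(\Z/p^j)^\times$-semilinear data over $\mS_{T(n)}$, using condition $h$ to identify $\mS[\omega]\otimes\mS[\omega]$ with the cotensor. A compatible system of such descent data then glues to a single $X$. Only finitely generated levels appear, so no completion issue arises here, which is why ordinary sheaves and not hypersheaves appear in this equivalence.

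Third, I would characterize cyclotomic completeness. For a sheaf on $\Fin_{\Z_p^\times}$ (equivalently on $\Fin_{\Z_p}$ after splitting off the finite torsion $T_p$ from \cref{eq-SES}), hypercompleteness is equivalent to being local with respect to stalk-equivalences, and the stalk of $X[\omega^{(n)}_{p^{(-)}}]$ is $X[\omega^{(n)}_{p^\infty}]=X\otimes R^{\fin}_n$. So a map of sheaves in the image of the equivalence is $\infty$-connective, in the hypercompletion sense, iff it is an $R^{\fin}_n$-equivalence. Hence $X$ is $R^{\fin}_n$-local, i.e. cyclotomically complete as in \cref{def-cyc-comp}, iff its sheaf is local against $\infty$-connective maps, which is iff it is a hypersheaf. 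To make this rigorous I would consider the hypercompletion of $X[\omega^{(n)}_{p^{(-)}}]$, which corresponds to $(X\otimes R^{\fin}_n)^{h\Z_p^{\times,\fg}}$ by \cref{prop-BCSY24-6.19}. Comparing its global sections with $L^{\Cyc}_nX$ then shows that $X\to L^{\Cyc}_nX$ is an equivalence iff the sheaf is already hypercomplete.

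I expect this last identification to be the main obstacle. It requires showing that hypercompletion of sheaves on the pro-$p$-ish site $\Fin_{\Z_p}$ is computed by continuous cohomology of the dense subgroup $\Z\subset\Z_p$. That comparison rests on $\Z_p$ having finite virtual cohomological dimension, and on the fact that homotopy fixed points for $\Z$ commute with the relevant filtered colimits in $\qSp_{T(n)}$.
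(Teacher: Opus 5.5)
Your first two steps follow the route the paper relies on. The paper gives no argument of its own and simply cites \cite{BMCSY23}, but the ingredients it collects in the appendix are \cref{prop-BMCSY-5.3} (levelwise faithful Galois extensions assemble into a continuous Galois extension, which is your sheaf-condition check) and \cref{prop-BMCSY-5.4} (the module equivalence, which your unit/counit argument via finite Galois descent reproves). The gap is in your third step.

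You assert that a $\qSp_{T(n)}$-valued sheaf on $\Fin_{\Z_p^\times}$ is a hypersheaf iff it is local against stalkwise equivalences. This is not a formal consequence of Deligne completeness (\cref{prop-BMCSY-5.1}), which is a statement about the space-valued topos. By contrast, $\qSp_{T(n)}$-valued hypersheaves are defined by tensoring (\cref{defin-hypersheaf}), and their stalks are colimits in $\qSp_{T(n)}$, i.e.\ $L_{T(n)}$ of colimits in $\qSp$. You need two facts: these $T(n)$-local stalks detect equivalences of hypersheaves, and hypersheafification inverts exactly the maps that are equivalences on them. Together these are the whole content of the ``moreover'' clause, so your sentence ``a map in the image is $\infty$-connective iff it is an $R^{\fin}_n$-equivalence'' assumes what must be proved. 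Your proposed repair does not supply it. \cref{prop-BCSY24-6.19} identifies $L^{\Cyc}_n$ on $\qSp_{T(n)}$ with $-\otimes (R^{\fin}_n)^{h\Z_p^{\times,\fg}}$, but it says nothing about hypersheafification. Using it to compute the hypercompletion of $X[\omega^{(n)}_{p^{(-)}}]$ therefore presupposes the identification (hypersheafification $=$ $R^{\fin}_n$-localization) that you are trying to establish. The comparison with continuous cohomology of the dense $\Z\subset\Z_p$ is left undone, as you concede.

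The missing ingredient is the quoted \cite[Proposition~5.6]{BMCSY23}. It applies to a profinite group of finite virtual $p$-cohomological dimension (here $\Z_p^\times$, which is virtually $\Z_p$) and a $p$-local $\qmcC$. For every module $M$ over a continuous Galois extension $\mcR$ with stalk $R$, it says that the objectwise localization $L_RM$ is a hypersheaf and that $M\to L_RM$ is the hypersheafification. Take $R=R^{\fin}_n$. Each $\mS_{T(n)}[\omega^{(n)}_{p^j}]$ is dualizable, so $L_R\bigl(X[\omega^{(n)}_{p^j}]\bigr)\simeq (L^{\Cyc}_nX)[\omega^{(n)}_{p^j}]$. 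Hence $X[\omega^{(n)}_{p^{(-)}}]$ is a hypersheaf iff $X[\omega^{(n)}_{p^{(-)}}]\to (L^{\Cyc}_nX)[\omega^{(n)}_{p^{(-)}}]$ is an equivalence. By your step two this holds iff $X\to L^{\Cyc}_nX$ is an equivalence, i.e.\ iff $X$ is cyclotomically complete as in \cref{def-cyc-comp}. This is exactly where finite virtual cohomological dimension and $p$-locality are used; your outline only names them as hoped-for inputs to a computation you have not carried out.
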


\begin{defin}\label{defin-Lnf-local} \cite[Definition 2.10]{BMCSY23} 
   An {\bf  $L_{n}^{\fin}$-local {\qcat}}
is an idempotent complete stable {\qcat} in which all mapping
spectra are $L_{n}^{\fin}$-local as in \cref{def-Ln-fin}.
\end{defin}

The first theorem involves such an {\qcat} acted on by a $\pi $-finite
$p$-group $G$ as in \cref{def-pi-finite}; an ordinary finite $p$-group
is a special case.  This means there is a functor ${\qBG}\to {\qCatw}$
(the latter {\qcat} is the subject of \cite[Chapter 3]{Lurie:HTT})
sending the single object of the domain to an {\qcat} $\qmcC$, for
which the limit is ${\qmcC}^{hG}$ and the colimit is ${\qmcC}_{hG}$.

\begin{theorem}\label{thm-BMCSY-ThmA}
  {\bf Higher descent.} \cite[Theorem A]{BMCSY23}
  Let $\qmcC$ be a $L_{n}^{\fin}$-local {\qcat} (and hence a
  simplicial set) acted on by a $\pi $-finite $p$-group $G$ as in
  \cref{def-pi-finite}, and let $\rK_{T (n+1)}$ be the functor of
  \cref{def-KT-KK}.  Then the coassembly map
\begin{displaymath}
\epsilon :\rK_{T (n+1)} ({\qmcC}^{hG})\to \rK_{T (n+1)} ({\qmcC})^{hG}
\end{displaymath}

\noindent and the assembly map
\begin{displaymath}
\eta :\rK_{T (n+1)} ({\qmcC})_{hG}\to \rK_{T (n+1)} ({\qmcC}_{hG})
\end{displaymath}

\noindent of \cite[Definition 5.6]{Rav:gjmcyc} are
isomorphisms.
\end{theorem}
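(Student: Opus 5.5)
The plan is to reduce to the case where $G$ is a connected $\pi$-finite $p$-space with a single nontrivial homotopy group, and to prove the two statements, coassembly and assembly, together. The reduction uses the Postnikov tower of $G$ (equivalently of $BG$). Homotopy fixed points and orbits for an extension $N\to G\to G/N$ factor as iterated limits and colimits, $\qmcC^{hG}\simeq(\qmcC^{hN})^{h(G/N)}$. Both $\qmcC^{hN}$ and $\qmcC_{hN}$ are again $L_{n}^{\fin}$-local, since limits and colimits of $L_{n}^{\fin}$-local categories along $\pi$-finite diagrams have $L_{n}^{\fin}$-local mapping spectra; for colimits this uses the semiadditivity of \cref{thm-HL002}\cref{thm-HL002ii} at heights $\leq n$. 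It therefore suffices to treat the cases $G=\rC_{p}$ acting in degree $m\geq 0$, i.e.\ $BG=B^{m+1}\rC_{p}$. Finite products of such pieces are handled by the same iteration.

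The second step is to exploit the higher semiadditivity of the \emph{target}. Since $\qSp_{T(n+1)}$ is $\infty$-semiadditive by \cref{thm-HL002}\cref{thm-HL002ii}, the norm $\Nm:\rK_{T(n+1)}(\qmcC)_{hG}\to \rK_{T(n+1)}(\qmcC)^{hG}$ is an equivalence. The composite $\epsilon\circ\rK_{T(n+1)}(\Nm_{\qmcC})\circ\eta$ is compared with this norm, where $\Nm_{\qmcC}:\qmcC_{hG}\to\qmcC^{hG}$ is the categorical norm. Because $\qmcC$ is $L_n^{\fin}$-local, hence of height $\leq n$, the category $\mathrm{Cat}^{\mathrm{perf}}_{L_n^{\fin}}$ is itself $p$-typically $\infty$-semiadditive (categorical semiadditivity one level up), so $\Nm_{\qmcC}$ is an equivalence. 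Hence $\epsilon\circ\eta$ is identified with an equivalence. It then suffices to show that \emph{one} of $\epsilon,\eta$ is an equivalence, or that $\eta$ admits an inverse up to a unit. The idea is to use the $\rK_{T(n+1)}$-version of the norm on $\pi$-finite $p$-spaces, which is a natural transformation of $\rK_{T(n+1)}$ commuting with colimits.

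The third step is to prove that $\eta$ is an equivalence. $\rK$ preserves filtered colimits and, for the colimit $\qmcC_{hG}$ in $\mathrm{Cat}^{\mathrm{perf}}$, the assembly is an equivalence after $T(n+1)$-localization provided $\rK_{T(n+1)}$ commutes with $G$-shaped colimits. The approach is to show that $\rK_{T(n+1)}$ restricted to $L_n^{\fin}$-local categories is a \emph{semiadditive functor of height $\leq n+1$}. Concretely, the relevant cardinalities $|\Omega^{m}B^{m+1}\rC_p|$ acting on $\rK_{T(n+1)}$ must be invertible exactly as they are in $\qSp_{T(n+1)}$. For a semiadditive functor between higher semiadditive categories, preservation of $m$-finite colimits follows formally from preservation of limits, or conversely. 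The key computational input is the cases $m\leq n$ versus $m=n+1$. For $m\leq n$ one uses blueshift (\cref{thm-blueshift}) and the fact that $T(n+1)$-locally $B^{m}\rC_p$ for $m\leq n$ behaves like a finite set. The case $m=n+1$ is the genuinely redshifted one, where the cardinality of $B^{n+1}\rC_p$ must be computed via higher roots of unity and \cref{cor-6.19}.

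I expect the main obstacle to be this last step: verifying that $\rK_{T(n+1)}$ preserves the $\pi$-finite $p$-space colimits of height exactly $n+1$. There I would first try the transfer/cardinality argument, showing that $|B^{n+1}\rC_p|$ acts invertibly on $\rK_{T(n+1)}(\qmcC)$ using a higher cyclotomic extension. Failing that, I would reduce to the universal case $\qmcC=\Mod_{\mS_{T(n)}}$ with its $G$-action and use the Galois descent of \cref{prop-BMCSY-5.11}.
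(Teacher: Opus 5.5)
The paper gives no proof of this statement; it only quotes \cite{BMCSY23}. Your argument therefore has to stand on its own, and it breaks at the second step. The Postnikov reduction to $B^{m+1}\rC_p$ is fine. Incidentally, $L_n^{\fin}$-local categories are closed under colimits simply because $L_n^{\fin}$ is smashing; semiadditivity plays no role there.

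The problem is that $\mathrm{Cat}^{\mathrm{perf}}_{L_n^{\fin}}$ is not even $1$-semiadditive for $n\geq 1$. Take the trivial $\rC_p$-action on $\qmcC=\mathrm{Perf}(L_n^{\fin}\mS)$. Then $\qmcC_{h\rC_p}=\mathrm{Perf}(L_n^{\fin}\mS[\rC_p])$, $\qmcC^{h\rC_p}=\mathrm{Fun}(B\rC_p,\qmcC)$, and the norm is the evident fully faithful inclusion. The trivial representation lies in its image only if $(-)^{h\rC_p}$ commutes with colimits of $L_n^{\fin}$-local spectra with $\rC_p$-action, that is, only if all Tate constructions vanish there. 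But $(L_n^{\fin}\mS)^{t\rC_p}\neq 0$: it is a ring over which $(L_n\mS)^{t\rC_p}$ is a module, and the latter is nonzero by \cref{thm-blueshift}. Semiadditivity of the kind you want holds for presentable categories, but $\rK$ lives on small ones, and there compactness is exactly the obstruction.

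So $\Nm_{\qmcC}$ is not an equivalence. For $m\geq 1$ the categorical norm for $B^{m+1}\rC_p$ is not even defined, because its construction needs the norms along the diagonal to be invertible, i.e.\ $m$-semiadditivity of the source. What does hold for a finite group is weaker. The norm is fully faithful, and its Verdier quotient has Tate constructions as mapping spectra, so it is $L^{\fin}_{n-1}$-local by blueshift. Since $T(n+1)$-local $K$-theory vanishes on $L^{\fin}_{n-1}$-local categories (a consequence of the purity theorem of \cite{LMMT}), the norm is a $\rK_{T(n+1)}$-equivalence. Combined with the semiadditivity of $\qSp_{T(n+1)}$, this gives only that $\epsilon$ is split surjective and $\eta$ split injective, and only for $m=0$.

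Your third step does not supply the missing half. Saying that $\rK_{T(n+1)}$ is a ``semiadditive functor of height $\leq n+1$'' on $L_n^{\fin}$-local categories restates the theorem. The formal principle you invoke is that, for functors between $m$-semiadditive categories, preserving $m$-finite colimits is equivalent to preserving $m$-finite limits. It requires the \emph{source} to be $m$-semiadditive, which fails here; for $m\geq 1$ the cardinalities $|B^m\rC_p|$ you want to compare are not even defined in the source.

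The inputs you propose for the critical case are also unavailable. \cref{cor-6.19} rests on \cref{thm:6.18} and on cyclotomic redshift (\cref{thm-redshift-B}), which in \cite{BMCSY23} depend on this very descent theorem, so invoking them is circular. \cref{prop-BMCSY-5.11} is a statement about $\qSp_{T(n)}$ and says nothing about how $\rK$ interacts with limits or colimits.

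What is missing is a genuinely $K$-theoretic mechanism, beyond the semiadditivity of the target, that forces $\eta$ or $\epsilon$ to be an equivalence. Supplying it is the real content of \cite{CMNN2} for finite $p$-groups and of \cite{BMCSY23} for $\pi$-finite $p$-spaces.
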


\begin{cor}\label{cor-}\cite[Corollary
  1.4]{BMCSY23}
Let $R \to S$ be a $T(n)$-local $G$-Galois extension
where $G$ is an $n$-finite $p$-group. Then
\[
\rK_{T(n+1)} (R) \longrightarrow \rK_{T(n+1)} (S)
\]
is a $T(n+1)$-local $G$-Galois extension.
\end{cor}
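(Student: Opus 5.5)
We derive the corollary from the higher descent theorem, \cref{thm-BMCSY-ThmA}, applied to module categories. Write $G$-Galois in the sense of \cref{def-Rognes-Galois}, so that $R\simeq S^{hG}$ and $S\otimes_R S\simeq S^{G}$, the cotensor $\lim_{G}S$ over the $n$-finite space $G$, computed in $\qSp_{T(n)}$. Set $\qmcC:=\Mod_S(\qSp_{T(n)})^{\omega}$, the compact (perfect) $S$-modules; this is idempotent complete and stable, and its mapping spectra are $T(n)$-local, hence $L_n^{\fin}$-local. So $\qmcC$ is an $L^{\fin}_{n}$-local {\qcat} as in \cref{defin-Lnf-local}. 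The $G$-action on $S$ by $R$-algebra maps induces an action of $G$ on $\qmcC$, and $\rK(\qmcC)=\rK(S)$.

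The first step is Galois descent for module categories: the natural functor $\Mod_R(\qSp_{T(n)})^{\omega}\to (\qmcC)^{hG}$ is an equivalence. For a finite group this is the usual argument: base change along $R\to S$ is comonadic because $S$ is a descendable (faithful) $R$-module. In the $T(n)$-local setting, faithfulness comes from $\infty$-semiadditivity (\cref{thm-HL002}\cref{thm-HL002ii}): the norm identifies $S_{hG}\simeq S^{hG}=R$, which exhibits $R$ as a retract built from $S$, so $S$ is descendable. The condition $S\otimes_R S\simeq S^{G}$ then identifies the descent comonad with the $G$-action. For higher $n$-finite $G$, I would run the same argument in the $\infty$-semiadditive form, using $G$-indexed limits and colimits as in \cref{defin-space-indexed}. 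I expect this step to be the main obstacle, since it requires making higher-group actions on categories precise and checking that compact objects are preserved on both sides.

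Next, \cref{thm-BMCSY-ThmA} gives $\rK_{T(n+1)}(R)\simeq \rK_{T(n+1)}(\qmcC^{hG})\xrightarrow{\epsilon,\ \simeq}\rK_{T(n+1)}(S)^{hG}$. This is the fixed-point condition ($i$ is an equivalence) for $\rK_{T(n+1)}(R)\to \rK_{T(n+1)}(S)$.

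For the second condition ($h$ an equivalence), apply the same argument to the Galois extension $S\to S\otimes_R S\simeq S^{G}$, the base change of $R\to S$ along $R\to S$. Since $\rK$ is lax symmetric monoidal, we get a comparison map
\[
\rK_{T(n+1)}(S)\otimes_{\rK_{T(n+1)}(R)}\rK_{T(n+1)}(S)\to \rK_{T(n+1)}(S\otimes_R S)\simeq \rK_{T(n+1)}(S^{G}).
\]
There are two things to show about it.

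\begin{enumerate}
\item The right-hand side is $\rK_{T(n+1)}(S)^{G}$. This uses the assembly and coassembly equivalences of \cref{thm-BMCSY-ThmA}, applied to the trivial $G$-action on $\Mod_S$, where $\Mod_{S^{G}}\simeq (\Mod_S)^{G}$ is a limit over the space $G$.
\item The comparison map itself is an equivalence. To see this, use that $\qSp_{T(n+1)}$ is semiadditive, so $\rK_{T(n+1)}(S)^{hG}$ commutes with the relative tensor product over $\rK_{T(n+1)}(R)$. This reduces the question to the already established descent equivalence.
\end{enumerate}

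Both conditions of \cref{def-Rognes-Galois} then hold $T(n+1)$-locally, which proves the corollary.
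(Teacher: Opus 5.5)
The paper gives no argument of its own here. It only records the statement as the Galois-theoretic consequence of \cref{thm-BMCSY-ThmA}, so applying higher descent to module categories is the intended strategy. The genuine gap is in your step (2). Commuting $(-)^{hG}$ (for the action on the right-hand factor) past $\rK_{T(n+1)}(S)\otimes_{\rK_{T(n+1)}(R)}(-)$ shows only that the comparison map $\phi\colon \rK_{T(n+1)}(S)\otimes_{\rK_{T(n+1)}(R)}\rK_{T(n+1)}(S)\to \rK_{T(n+1)}(S)^{G}$ becomes an equivalence \emph{after} taking homotopy fixed points, where both sides reduce to $\rK_{T(n+1)}(S)$. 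To conclude that $\phi$ itself is an equivalence, you need $(-)^{hG}\colon \Fun(BG,\qSp_{T(n+1)})\to \qSp_{T(n+1)}$ to be conservative. Semiadditivity does not give this: rational spectra are $1$-semiadditive, yet a nontrivial character of $C_p$ is a nonzero object with vanishing fixed points. The missing input is affineness. Since $G$ is an $n$-finite $p$-group, $BG$ is an $(n+1)$-truncated $\pi$-finite $p$-space, and such spaces are affine for $\qSp_{T(n+1)}$. That is, $X\mapsto X^{hG}$ identifies $\Fun(BG,\qSp_{T(n+1)})$ with modules over $\mS_{T(n+1)}^{BG}$, so it detects equivalences. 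Without this you would have to know in advance that $\rK_{T(n+1)}(R)\to\rK_{T(n+1)}(S)$ satisfies Galois descent, which is essentially what is being proved. Similarly, the identification $\Mod_{S^{G}}\simeq(\Mod_S)^{G}$ you invoke in (1) is not formal for non-discrete $G$: it is affineness of the $n$-truncated space $G$ at height $n$. These affineness facts are where the $n$-finiteness of $G$ and the height $n+1\geq 1$ of the target actually do work; your sketch leaves them implicit and unproved.

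Two setup points also need repair. First, $\qmcC=\Mod_S(\qSp_{T(n)})^{\omega}$ is the wrong category. $S$ is in general not compact there (already $\mS_{T(n)}$ is not compact in $\qSp_{T(n)}$ for $n\geq 1$), so $\rK(\qmcC)\neq \rK(S)$. Use instead $\mathrm{Perf}(S)$, the thick subcategory of $\Mod_S(\qSp)$ generated by $S$. Its mapping spectra are $T(n)$-local because $S$ is, so it is $L^{\fin}_n$-local, and the descent statement you need is $\mathrm{Perf}(R)\simeq \mathrm{Perf}(S)^{hG}$. Second, your descendability sketch does not work as stated. The norm identifies $R$ with $S_{hG}$, but that is an infinite colimit and does not by itself place $R$ in the thick $\otimes$-ideal generated by $S$. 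For discrete $G$ one needs the nilpotence argument: the Tate construction on the unit is a ring, so its vanishing is already witnessed at a finite stage of the skeletal filtration. For higher $G$ one also needs the module-level descent, which again uses affineness.
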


A similar theorem and corollary for a discrete finite $p$-group $G$
are proved by Mathew, Naumann, Noel and Dustin Clausen as
\cite[Theorem C and Corollary 4.1]{CMNN2}.

The following says that when $X$ is a $T(n)$-local ring, the functor
$\rK _{T(n+1)}$ converts the height $n$ cyclotomic sheaf of
\cref{def-cyclo-sheaf} on $R$ to the height $n+1$ one on
$\rK _{T(n+1)}R$.  For $n=0$ this is proved by Clausen, Mathew and
Bhargav Bhatt in \cite{BCM20}.

\begin{theorem}\label{thm-redshift-B}
{\bf Cyclotomic redshift and hyperdescent.} 
\cite[Theorems B and C]{BMCSY23} Let \( R \) be a \( T(n) \)-local
ring spectrum.  Then there is a \(\Z_p^\times \)-equivariant
isomorphism:
\[
\rK_{T(n+1)}( R[\omega^{(n)}_{p^\infty}] )
 \simeq \rK_{T(n+1)} (R)[ \omega^{(n+1)}_{p^\infty}],
\]

\noindent and the sheaf on $\Fin_{\Z^{\times }_{p}}$
given by
\begin{displaymath}
(\Z/p^{j})^{\times }\mapsto \rK_{T(n+1)} (R)[ \omega^{(n+1)}_{p^{j}}]
\end{displaymath}
 
\noindent is a hypersheaf as in \cref{defin-hypersheaf}.
\end{theorem}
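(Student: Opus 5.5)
The plan is to reduce the statement to the finite levels $j$, where \cref{cor-} and \cref{thm-BMCSY-ThmA} apply, and then pass to the colimit. By \cref{eq-Xnj}, $R[\omega^{(n)}_{p^{j}}]=R\otimes \mS_{T(n)}[\omega^{(n)}_{p^{j}}]$ is a $T(n)$-local $(\Z/p^{j})^{\times}$-Galois extension of $R$. Its $p$-Sylow part $\Z/p^{j-1}$ (or the $2$-part for $p=2$) is an ordinary finite $p$-group, so \cref{cor-} shows that $\rK_{T(n+1)}$ carries it to a $T(n+1)$-local Galois extension of $\rK_{T(n+1)}(R)$. The prime-to-$p$ torsion $T_p$ is handled by ordinary Galois descent, since $|T_p|$ is invertible.

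The key step is to identify this extension with the higher cyclotomic extension $\rK_{T(n+1)}(R)[\omega^{(n+1)}_{p^{j}}]$. I would use the semiadditive description from \cref{sec-SESM-surprise}: the height $n$ root of unity is a map $B^{n}\rC_{p^{j}}\to R^{\times}$, and $\mS_{T(n)}[\omega^{(n)}_{p^{j}}]$ is a summand of $\mS_{T(n)}[B^{n}\rC_{p^{j}}]$, cut out by an idempotent given by the higher discrete Fourier transform of \cite{BCSY24}. I then apply $\rK$ to the module category $\Mod_R$ acted on by the $\pi$-finite $p$-group $B^{n}\rC_{p^{j}}$. By \cref{thm-BMCSY-ThmA}, taking colimits over $B^{n}\rC_{p^{j}}$ commutes with $\rK_{T(n+1)}$. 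The effect of this, via the delooping $\rK(\Mod_R[B^n\rC_{p^j}])\leftarrow \rK(R)[B^{n+1}\rC_{p^j}]$, is that a height $n$ root of unity in $R$ yields a height $n+1$ root of unity in $\rK_{T(n+1)}(R)$: the $K$-theory of the units deloops once. This produces a $(\Z/p^{j})^{\times}$-equivariant map
\[
\rK_{T(n+1)}(R)[\omega^{(n+1)}_{p^{j}}]\to \rK_{T(n+1)}(R[\omega^{(n)}_{p^{j}}]).
\]
Both sides are $(\Z/p^{j})^{\times}$-Galois extensions of $\rK_{T(n+1)}(R)$, so the map is an equivalence because a map of Galois extensions with the same group is one. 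These maps are compatible in $j$, so they assemble into a map of cyclotomic sheaves on $\Fin_{\Z_p^{\times}}$.

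To pass to the stalk, I use that $\rK$ commutes with filtered colimits of ring spectra, and that $L_{T(n+1)}$ commutes with filtered colimits because it is smashing on $L_{n+1}^{\fin}$-local objects. Together these give the $\Z_p^{\times}$-equivariant equivalence at $j=\infty$.

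Next comes hyperdescent, which I expect to be the main obstacle. Via \cref{prop-BMCSY-5.11}, the sheaf is a hypersheaf exactly when $\rK_{T(n+1)}(R)$ is cyclotomically complete. I would prove this completeness through \cref{thm:6.18}, or \cref{cor-6.19}, at height $n+1$. The route is to write $\rK_{T(n+1)}(R)$ as a limit of $\rK_{T(n+1)}$ of the Galois extensions, and to show that the relevant limit is already complete. Concretely, I would verify that the descent spectral sequence for the $\Z_p$-part has a horizontal vanishing line. This is where the real content lies, coming from the $T(n+1)$-local finiteness of $\rK$ of $L_n^{\fin}$-local categories. Since \cref{thm:6.18} is stated in the paper, I may simply invoke it, which would make this step a direct consequence.
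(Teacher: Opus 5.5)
\textbf{The hyperdescent step fails, and no argument can close it as stated.} \cref{thm:6.18} says that the map out of $\rK_{T(n+1)}(R)$ exhibits its target as the height $n+1$ cyclotomic \emph{completion} of $\rK_{T(n+1)}(R)$. It does not say that this map is an equivalence, so invoking it proves nothing about completeness. By \cref{prop-BMCSY-5.11}, applied at height $n+1$, the hypersheaf assertion is equivalent to $\rK_{T(n+1)}(R)$ being cyclotomically complete, and for $n\geq 1$ this is false in general. Take the $T(n)$-local ring $L_{T(n)}\BPn^{h(p^k\Z)}$. By \cref{thm-BHLS-6.25}, its coassembly map exhibits the target as the cyclotomic completion of $\rK_{T(n+1)}(L_{T(n)}\BPn^{h(p^k\Z)})$ and is \emph{not} an equivalence. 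That is exactly the counterexample to the telescope conjecture, and it is why completeness criteria such as \cref{lem-} and \cref{prop-BHLS6.24} are needed.

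So no vanishing-line argument can work. The second assertion, read literally for $\rK_{T(n+1)}$, is false for $n\geq 1$; for $n=0$ it holds because $L_{T(1)}=L_{K(1)}$. The hyperdescent result of \cite{BMCSY23} is the $K(n+1)$-localized statement, and it follows formally from the first assertion:
\begin{itemize}
\item $K(n+1)$-local spectra are cyclotomically complete (\cref{eq-K-cyc-T}), so by \cref{prop-BMCSY-5.11} the cyclotomic sheaf of $L_{K(n+1)}\rK(R)$ is a hypersheaf;
\item by the finite-level redshift equivalences, that sheaf is $(\Z/p^j)^\times\mapsto L_{K(n+1)}\rK(R[\omega^{(n)}_{p^j}])$.
\end{itemize}

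\textbf{The first part also has a gap.} A $(\Z/p^j)^\times$-equivariant map out of $\rK_{T(n+1)}(R)[\omega^{(n+1)}_{p^j}]$ requires a \emph{primitive} height $n+1$ root of unity in $\rK_{T(n+1)}(R[\omega^{(n)}_{p^j}])$, since the cyclotomic extension corepresents primitive roots. A bare delooped root only gives a map out of $\rK_{T(n+1)}(R)[B^{n+1}\rC_{p^j}]$. You never check primitivity, which is the real content of redshift, so your Galois-rigidity step has no map to act on.

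Primitivity does follow from the assembly map you already wrote down:
\begin{itemize}
\item Apply \cref{thm-BMCSY-ThmA} to the trivial action of the $\pi$-finite $p$-group $B^n\rC_{p^j}$ (classifying space $B^{n+1}\rC_{p^j}$) on perfect $R$-modules. This gives an equivalence $\rK_{T(n+1)}(R)[B^{n+1}\rC_{p^j}]\simeq\rK_{T(n+1)}(R[B^n\rC_{p^j}])$, natural in $\rC_{p^j}$.
\item By \cite{CSY24}, $\mS_{T(m)}[B^m\rC_{p^j}]\simeq\mS_{T(m)}[B^m\rC_{p^{j-1}}]\times\mS_{T(m)}[\omega^{(m)}_{p^j}]$ for $m=n,n+1$. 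The projection to the first factor is induced by $\rC_{p^j}\to\rC_{p^{j-1}}$.
\item Compare the assembly equivalences for $\rC_{p^j}$ and $\rC_{p^{j-1}}$ and pass to fibers of these projections. This gives $\rK_{T(n+1)}(R)[\omega^{(n+1)}_{p^j}]\simeq\rK_{T(n+1)}(R[\omega^{(n)}_{p^j}])$, equivariantly for $\mathrm{Aut}(\rC_{p^j})=(\Z/p^j)^\times$.
\end{itemize}
This route needs no separate primitivity check, no Galois comparison, and no special treatment of $T_p$.

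\textbf{The passage to $j=\infty$ uses a false claim.} $L_{T(n+1)}$ is neither smashing nor compatible with filtered colimits as an endofunctor of $\qSp$. What you need instead is:
\begin{itemize}
\item $L_{T(n+1)}\colon\qSp\to\qSp_{T(n+1)}$ is a left adjoint, so it carries colimits to colimits;
\item purity \cite{LMMT}: $\rK_{T(n+1)}(A)\simeq\rK_{T(n+1)}(L_{T(n)}A)$ for $L_n^{\fin}$-local $A$. This lets you form $R[\omega^{(n)}_{p^\infty}]$, and the group algebras above, in $\qSp$ rather than in $\qSp_{T(n)}$.
\end{itemize}
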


\section{Locally unipotent $\Z$-actions}\label{sec-unipotent}

\begin{defin}\label{def-loc-uni}
{\bf Local unipotence.}  For an action of $\Z$ on a topological abelian group
$A$, let $\Psi :A\to A$ be the automorphism induced by a generator of
$\Z$. Then the action is {\bf unipotent} if $\Psi -1$ is a
topologically nilpotent endomorphism of $A$.

An action of $\Z$ on a spectrum $R$ is {\bf
locally unipotent} if the induced action on each homotopy group is
unipotent.  We denote by $\qSp^{B\Z,u}$ the {\qcat} of spectra $X$
equipped with an action of $\Z$ that is locally unipotent on
$\pi_{*}X$.
\end{defin}

If $A$ is $p$-adically complete, topological nilpotence means that
$\Psi -1$ is nilpotent on $A/p^{i}$ for each $i$, which is implied by
nilpotence on $A/p$.

The word ``local'' above refers to the fact that there need not be
a power of the endomorphism that kills all homotopy groups, but each
such group is killed by some power of it.

As explained in the introduction, the counterexample to the {\TC}
involves a nontrivial but locally unipotent action of $\Z$ on \linebreak 
$R=L_{T (n)}\BPn$ that is induced by one on $\BPn$ itself.

\subsection{Trivializing a locally unipotent $\Z$-action}
\label{sec-triv-loc-unip}

\begin{theorem}\label{thm-failure}
{\bf The $T (n+1)$-local $K$-theory coassembly map for the trivial
$\Z$-action.}  \cite[Theorem 3.22]{BHLS} Let $R$ be a $T
(n)$-local $\EE_{1}$-ring spectrum for $n\geq 1$ and let $X$ be a
spectrum.  If $L_{T (n+1)} (X\otimes \rK(R))$ (see \cref{def-KT-KK}) is
nontrivial, then the coassembly map of
(\cite[Definition 5.6]{Rav:gjmcyc}) for the trivial action
of $\Z$ on $R$,
\begin{displaymath}
  \epsilon :L_{T (n+1)} (X\otimes \rK(R^{B\Z}))
   \to L_{T (n+1)} (X\otimes \rK(R))^{B\Z},
\end{displaymath}

\noindent is {\em not} an equivalence.  In particular the map
\begin{displaymath}
  \epsilon :\rK_{T (n+1)} (R^{B\Z})
   \to \rK_{T (n+1)} (R)^{B\Z},
\end{displaymath}

\noindent is not an equivalence when $\rK_{T (n+1)} (R)$ is
nontrivial. \end{theorem}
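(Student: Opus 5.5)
The plan is to reduce the statement to a comparison of two natural transformations out of the same source, and then to show that one is split injective in a way the other cannot be. Both $\rK(R^{B\Z})$ and $\rK(R)^{B\Z}$ receive a map from $\rK(R)$. The first comes from the constant map $R\to R^{B\Z}$, and the second is the diagonal. Because the action is trivial, $R^{B\Z}\simeq R^{S^1}\simeq R\oplus\Sigma^{-1}R$ as a square-zero type extension: it is the cotensor with the circle. Similarly
\[
\rK(R)^{B\Z}\simeq \rK(R)\oplus\Sigma^{-1}\rK(R).
\]
After applying $L_{T(n+1)}(X\otimes -)$, the target of $\epsilon$ therefore has a "fundamental class" summand $\Sigma^{-1}L_{T(n+1)}(X\otimes\rK(R))$. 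That summand is nonzero by hypothesis. So it suffices to show that the composite of $\epsilon$ with projection onto this summand is not surjective. Equivalently, I will show that the fiber of $\epsilon$ is nonzero.

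To analyze the source, I will use the splitting $R^{B\Z}=R^{S^1}\to R$ with fiber $\Sigma^{-1}R$. I will then identify $\rK(R^{S^1})$ via the Dundas–Goodwillie–McCarthy theorem, or its localized version in \cite[Proposition 6.6]{Rav:gjmcyc}. This theorem applies to the nilpotent (square-zero) extension $R^{S^1}\to R$. It says that the relative $\rK$-theory agrees with relative $\TopC$. Relative $\TopC$ of a trivial square-zero extension by $\Sigma^{-1}R$ is computed by the cyclic bar construction. It is a sum over $k\geq1$ of $(\THH(R;\Sigma^{-1}R)^{\otimes_{\THH(R)} k})_{h\rC_k}$-type terms, suitably shifted, with the $k=1$ term contributing exactly the expected $\Sigma^{-1}\rK(R)$-like piece under the trace.

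The key step is then to show that, $T(n+1)$-locally, the coassembly map on the $k=1$ piece loses information. The point is that $\epsilon$ factors through the $S^1$-homotopy fixed points inherent in $\TopC$. The fundamental class of $\rK(R)^{S^1}$ is hit only through a class that becomes divisible by $p$-power transfers (the Frobenius/Verschiebung structure in the $\rC_{p^k}$ terms). Since $R$ is $T(n)$-local, $\TopC(R)$ and $\THH(R)$ are $T(n)$-local-ish objects whose $T(n+1)$-localization is controlled by the Tate construction. Blueshift (\cref{thm-blueshift}) together with $\infty$-semiadditivity (\cref{thm-HL002}\cref{thm-HL002ii}) implies that norm maps for $p$-groups are invertible in $\qSp_{T(n+1)}$. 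I plan to use this to show that the induced map on the $\Sigma^{-1}$ summand is multiplication by an element that is zero or non-invertible. Concretely, I expect it to be the cardinality $|B\rC_p|$ of \cite{CSY22}, which is nonzero but not invertible unless at height $\geq$ the categorical height. Then $\mathrm{cofib}(\epsilon)$ contains $L_{T(n+1)}(X\otimes \rK(R))/|\cdot|$, and I must show this is nonzero. For that I will use that $L_{T(n+1)}(X\otimes\rK(R))$ is a nonzero module over $\rK_{T(n+1)}(\mS_{T(n)})$, in which this semiadditive cardinality is known to be non-invertible but not nilpotent.

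The main obstacle is this middle step: pinning down precisely which map the coassembly induces on the $\Sigma^{-1}$ summand after $T(n+1)$-localization, and showing it is not an equivalence for every $X$ with $L_{T(n+1)}(X\otimes\rK(R))\neq0$. My first attempt is to reduce to the universal case $R=\mS_{T(n)}$, $X=\mS$, using naturality in the $\EE_1$-ring $R$ and in $X$. I will compute there using the identification of the fiber of coassembly for $\THH(\mS_p)$ quoted from \cite[Proposition 3.19]{BHLS}, and then base change along $\rK(\mS_{T(n)})\to\rK(R)$.
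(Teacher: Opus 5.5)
There is a genuine gap at the first substantive step. You apply Dundas--Goodwillie--McCarthy to $R^{S^1}\to R$, but that theorem needs connective $\EE_1$-rings and a $\pi_0$-surjection with nilpotent kernel.

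Here $R$ is $T(n)$-local with $n\geq 1$, and a nonzero $T(n)$-local spectrum is never bounded below. It is $L_n^{\fin}$-local, so $R\otimes H\mathbb{F}_p=0$, which would force a bounded below $R$ to be rational and hence zero. Moreover $R^{S^1}=R\oplus\Sigma^{-1}R$ is not connective even when $R$ is.

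The conclusion of that theorem genuinely fails for such extensions. For $R=\mathbb{F}_p$, Koszul duality identifies perfect $\mathbb{F}_p^{S^1}$-modules with unipotent $\Z$-representations. D\'evissage then gives $\rK(\mathbb{F}_p^{S^1})\simeq\rK(\mathbb{F}_p)$, whereas the relative $\TopC$ is nonzero. So the source of $\epsilon$ cannot be computed as relative $\TopC$ of a cyclic bar construction. Also, \cite[Proposition 6.6]{Rav:gjmcyc} is not a localized Dundas--Goodwillie--McCarthy theorem; it is the fiber computation \cite[Proposition 3.19]{BHLS}.

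The hypothesis that $R$ is $T(n)$-local is there so that the purity theorem (\cref{thm-purity}) can be used instead. The maps from the connective covers of $R$ and of $R^{B\Z}$ have bounded above cofibers. Hence they are $T(n)\oplus T(n+1)$-equivalences, so $\rK_{T(n+1)}$ is unchanged and can then be traded for $\TopC_{T(n+1)}$ of connective rings.

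Your key step is a guess rather than an argument, and the tools you cite do not reach it. $B\Z\simeq S^1$ is not $\pi$-finite, so $\infty$-semiadditivity and blueshift (\cref{thm-HL002}, \cref{thm-blueshift}) say nothing about $(-)^{B\Z}$ or this coassembly map. Nothing in the setup produces a cardinality $|B\rC_p|$. The proposed reduction to $R=\mS_{T(n)}$ by base change along $\rK(\mS_{T(n)})\to\rK(R)$ also fails, because $\rK$ is not linear. Non-invertibility of $\epsilon$ for one ring does not propagate to another ring, or to $X\otimes-$.

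The linearity you want exists one level down. For the trivial action $R^{B\Z}\simeq R\otimes\mS^{B\Z}$, and $\THH$ is symmetric monoidal. So the $\THH$ coassembly map for $R$ is $\THH(R)$ tensored, in cyclotomic spectra, with the one for $\mS$. The universal fiber is computed for $\mS_p$ in \cite[Proposition 3.19]{BHLS}, and it is the cyclotomic structure of $\THH(\mS_p^{B\Z})$, not any semiadditive cardinality, that controls the answer. Passing to $\TopC$, the fiber survives $L_{T(n+1)}(X\otimes-)$ for connective $R$ whenever $L_{T(n+1)}(X\otimes\TopC(R))\neq 0$ \cite[Corollary 3.21]{BHLS}. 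That, together with the purity reduction above, is the route the paper points to. Your outline contains neither ingredient.
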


\begin{theorem}\label{thm-fpLQ}
  {\bf  Asymptotic constancy for $\THH$.}
  \cite[Theorem C]{BHLS} Let $\EE\AA_{2}$ be the operad of
  \cref{ex-E1A2}, and suppose that $R$ is an $\EE\AA_{2}$-ring
  spectrum (such as $\BPn$) that is connective, $p$-complete and of
  {\fp}-type $n$ as in \cref{def-MR99}.  Suppose also that it has a
  locally unipotent action of $\Z$.

  If $R$ has the height $n$ {\LQ} property of \cref{def-LQ}, then for
  $k\gg0$ so does $R^{h(p^{k}\Z)}$, and for a finite spectrum $F$ of
  type $n+2$, there is a commutative diagram of cyclotomic spectra
\begin{displaymath}
\xymatrix
@R=6mm
@C=10mm
{
{F\otimes \THH (R^{h(p^{k}\Z)})}
  \ar[r]^(.5){\epsilon }\ar[d]_(.4){\cong }
  &{F\otimes \THH (R)^{h(p^{k}\Z)}}\ar[d]^(.4){\cong }\\
 {F\otimes \THH (R^{B(p^{k}\Z)})}
  \ar[r]^(.5){\epsilon }
  &{F\otimes \THH (R)^{B(p^{k}\Z)}}
}
\end{displaymath}

\noindent where the horizontal maps are the coassembly maps of
\cite[Definition 5.6]{Rav:gjmcyc}. 
\end{theorem}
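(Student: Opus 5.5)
The plan is to reduce everything to the observation that, after smashing with a type $n+2$ complex $F$, the spectra in question are bounded (Postnikov truncated), and a locally unipotent $\Z$-action on a bounded $p$-complete spectrum of finite type becomes trivial, as an action, once restricted to $p^{k}\Z$ with $k\gg0$. We proceed in three steps.

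First we handle $R^{h(p^{k}\Z)}$ itself. Since $R$ is $p$-complete of fp-type $n$, we have $F(n+1)\otimes R$ $\pi$-finite. On a finite $p$-group $A$ with unipotent automorphism $\Psi$, $\Psi^{p^{k}}-1=(1+(\Psi-1))^{p^{k}}-1$ is a sum of terms $\binom{p^{k}}{i}(\Psi-1)^{i}$. It therefore vanishes for $k\gg0$, because binomial coefficients pick up powers of $p$ while $(\Psi-1)$ is nilpotent on the finite group. Passing up the finitely many Postnikov stages of $F(n+1)\otimes R$, we aim to show that the action of $p^{k}\Z$ is homotopic, as an $\EE\AA_{2}$-algebra action, to the trivial one. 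The obstructions live in finitely many groups built from $\pi$-finite data, each killed by further enlarging $k$. This gives $F\otimes R^{h(p^{k}\Z)}\simeq F\otimes R^{B(p^{k}\Z)}$, and it makes $R^{h(p^{k}\Z)}$ connective (up to a shift by one that we must control), $p$-complete and of fp-type $n$.

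Second we treat $\THH$. We use the \LQ\ hypothesis: $F\otimes\THH(R)$ is bounded above and bounded below, hence a finite Postnikov tower. Its homotopy groups are finite, because $R$ has finite type and $\THH$ preserves finite type for connective finite-type input. So $F\otimes \THH(R)$ is $\pi$-finite with a locally unipotent $\Z$-action induced functorially from that on $R$. The same binomial argument trivializes the $p^{k}\Z$-action on $F\otimes\THH(R)$ in $\qCycSp$, and this identifies the right vertical map $F\otimes\THH(R)^{h(p^{k}\Z)}\simeq F\otimes\THH(R)^{B(p^{k}\Z)}$. For the left vertical map we use that $\THH$ commutes with the relevant limits after smashing with $F$ in the bounded range. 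Concretely, $\THH$ is computed by a cyclic bar construction whose terms are $R^{\otimes m}$, and the fiber of $R^{h(p^k\Z)}\to R^{B(p^k\Z)}$ becomes highly connected mod $F$ as $k$ grows. With both identifications chosen functorially, the square commutes because the coassembly map is natural in the action.

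Third we deduce the \LQ\ property for $R^{h(p^{k}\Z)}$. We identify $F\otimes \THH(R^{h(p^k\Z)})$ with $F\otimes\THH(R^{B(p^k\Z)}) = F\otimes \THH(R)\otimes\THH(\mS^{B\Z})$-type expressions, via the trivial-action case. These are bounded above, since $\mS^{B\Z}=\mS\oplus\Sigma^{-1}\mS$ is finite, which gives the \LQ\ property.

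The main obstacle is the coherence in the first two steps: upgrading "$\Psi^{p^{k}}=1$ on each homotopy group" to an equivalence of \emph{structured} actions, namely $\EE\AA_{2}$-actions on $R$ and cyclotomic actions on $\THH$. A $\Z$-action is just one automorphism, so the data is only a homotopy $\Psi^{p^{k}}\simeq\mathrm{id}$ in the relevant mapping space. We would first try an obstruction-theory argument up the Postnikov tower of the $\pi$-finite spectrum $F\otimes\THH(R)$, working in the functor category. At each stage the obstruction is an element of a finite $p$-group on which $p^{k}$ acts, and it is killed by enlarging $k$. Finiteness of the tower bounds the total enlargement needed.
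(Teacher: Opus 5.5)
There is a genuine gap in the left column of the square, namely the isomorphism $F\otimes\THH(R^{h(p^{k}\Z)})\simeq F\otimes\THH(R^{B(p^{k}\Z)})$. Your step 3 (the Lichtenbaum--Quillen property of $R^{h(p^{k}\Z)}$) rests on it as well.

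Trivializing the action on the $\pi$-finite spectrum $F\otimes R$ gives at best an equivalence of \emph{spectra} $F\otimes R^{h(p^{k}\Z)}\simeq F\otimes R^{B(p^{k}\Z)}$. There is no $\EE\AA_{2}$-structure on $F\otimes R$ to trivialize. The action on $R$ itself never becomes trivial: for $\BPn$, the generator of $p^{k}\Z$ acts on $\pi_{2i}$, $i>0$, by $\ell^{p^{k}i}\neq 1$. So there is no natural ring map $R^{h(p^{k}\Z)}\to R^{B(p^{k}\Z)}$ whose fiber you could make highly connected mod $F$. An equivalence of spectra after tensoring with $F$ says nothing about $F\otimes\THH$, which depends on the multiplication of $R^{h(p^{k}\Z)}$.

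Your fallback, that $\THH$ commutes with the relevant limits after tensoring with $F$, is false, and the bottom row of the very square shows it. For the trivial action, $F\otimes\THH(R^{B\Z})\simeq F\otimes\THH(R)\otimes\THH(\mS^{B\Z})$, while $F\otimes\THH(R)^{B\Z}\simeq F\otimes\THH(R)\otimes\mS^{B\Z}$. Moreover $\THH(\mS^{B\Z})$ has countably many $0$-cells rather than one. This non-equivalence is exactly what \cref{thm-failure} later exploits. Your step 3 makes the same conflation of $\THH(\mS^{B\Z})$ with the finite spectrum $\mS^{B\Z}$. Boundedness survives, since $\THH(\mS^{B\Z})$ is concentrated in degrees $-1,0$, but it is applied to the unproved identification. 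Likewise, the square cannot commute ``by naturality of coassembly in the action''. The two vertical maps are not induced by a single equivalence of rings with $\Z$-action.

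Some genuinely new input is needed to compare $\THH$ of the fixed-point ring with $\THH$ of the constant one. This is where the paper, quoting \cite{BHLS}, takes a different route. Instead of trivializing the action on $F\otimes R$, one works in $\UAlg(\qSp)$ with pairs $(R,V)$, where $V$ is a dualizable \emph{algebra} and $R\otimes V$ is trivialized as an algebra with $\Z$-action. The Dehn twist of \cref{thm-BHLS-4.11} then gives a natural comparison
$\mW(C^{0}(\Z_p))\otimes V\otimes\THH(R)^{h\Z}\to V\otimes\THH(R^{h\Z})$.
It is an equivalence on underlying spectra, and an equivalence of Frobenius maps whenever the Tate coassembly map is one. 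Note that $V$ sits outside $\THH$, which is what lets finite coefficients enter.

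This feeds into \cref{thm-asymptotic-boundedness-THH}: for $k\gg0$, $F\otimes\THH(R^{h(p^{k}\Z)})\simeq F\otimes W_{k}\otimes\THH(R)$ as cyclotomic $W_{k}$-modules, bounded in $[c-1,b+3]$. Since $W_{k}\otimes\THH(R)\simeq\THH(R\otimes\mS^{B(p^{k}\Z)})=\THH(R^{B(p^{k}\Z)})$, this gives both the left vertical isomorphism and the bound needed for the Lichtenbaum--Quillen property of $R^{h(p^{k}\Z)}$. The right vertical isomorphism is the asymptotic triviality of the action on the $\pi$-finite cyclotomic spectrum $F\otimes\THH(R)$. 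That part of your plan does match, though it still needs the structured trivialization you flag.
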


Hence tensoring with $F$ ``trivializes'' the $\Z$-action on $\THH(R)$
in the sense its homotopy fixed point set behaves like that of the
trivial action.  This implies a similar statement for $\TopC(R)$.

Following \cite[Notations and Conventions]{BHLS}, we denote the
{\qcat} of dualizable spectra by $\qSp^{\Diamond}$ (which includes all
finite complexes) and that of associative algebras in a symmetric
monoidal {\qcat} $\qmcC$ by $\Alg (\qmcC)$.

\begin{defin}\label{def-UAlg}\cite[Definition 4.10]{BHLS}
  We let $\UAlg (\qSp)$ be the presentably symmetric monoidal {\qcat}
  defined by the pullback square
\begin{displaymath}
\xymatrix
@R=8mm
@C=8mm
{
{\UAlg (\qSp)}\ar[r]^(.5){}\ar[d]^(.5){}
              \ar@{}[dr]|-(.15){\pb}
  &{\Alg(\qSp)^{B\Z,u} \times \Alg(\qSp^\Diamond) }
         \ar[d]^(.5){}
    &{(R,V)}\ar@{|->}[d]^(.5){}\\
{\Alg (\qSp)}\ar[r]_(.45){\triv}
  &{\Alg(\qSp)^{B\Z,u}.}
    &{R\otimes V}
}
\end{displaymath}
\end{defin}

Thus an object in this {\qcat} is a pair $(R,V)$, where $R$ is a ring
spectrum with a unipotent $\Z$-action and $V$ is a dualizable ring
spectrum on which $\Z$ acts trivially, such that the diagonal action
on $R\otimes V$ is trivial.

For the next result we need some notation.  Let $C^{0}(\Z_{p})$ denote
the ring of continuous (meaning locally constant) $\FF_{p}$-valued
functions on the $p$-adic integers.  Any $a \in \Z_{p}$ can be written
uniquely as
\begin{displaymath}
  a=\sum_{k\geq 0}a_{k}p^{k}\qquad \mbox{with }a_{k}^{p}=a_{k}. 
\end{displaymath}
 
\noindent Hence each coefficient $a_{k}$ is either zero or a $(p-1)$th
root of unity.  Thus its mod $p$ reduction, which we alos denote (abusively)
by $a_{k}$, is a continuous $\FF_{p}$-valued function.  It turns out
that the ring of all such functions is
\begin{displaymath}
  C^{0}(\Z_{p})= \FF_{p}[a_{k}:k\geq 0]/(a_{k}^{p}-a_{k}).
\end{displaymath}
 
\noindent See \cite[Example 6.5]{Rav:gjmcyc} for more discussion.

Let $\mW (C^{0}(\Z_{p}))$ denote the commutative ring spectrum of
spherical Witt vectors as in \cite[Example 5.2.7]{Lurie:Ell2}. It is a
countable coproduct of $p$-adic sphere spectra.

The proof of the following occupies seven pages of \cite[\S4.2]{BHLS}
and makes use of the {\em Dehn twist} of \cite[\S4.2.3]{BHLS}.

\begin{theorem}\label{thm-BHLS-4.11}
  \cite[Theorem 4.11]{BHLS}
There is a natural transformation $\theta :M_{1}\implies M_{2}$
(denoted by $\eta $ in \cite[Theorem 4.11]{BHLS}) of lax symmetric
monoidal functors
\begin{displaymath}
M_{1},M_{2}:\UAlg (\qSp) \to \qSp^{\Delta^1}
\end{displaymath}

\noindent given by
\begin{align*}
M_{1} (R,V)
 & := \mW(C^{0}(\Z_p)) \otimes V \otimes 
          \mathrm{res}_\varphi (\THH(R)^{h\Z})  \\
\aand 
M_{2} (R,V)
 & := V \otimes \mathrm{res}_\varphi \THH(R^{h\Z}),
\end{align*}

\noindent 
such that:
\begin{enumerate}

\item
$\theta$ becomes an isomorphism after composing with pullback along \linebreak 
$i_0 \colon \Delta^{0} \to \Delta^1$.

\item $\theta$ becomes an isomorphism upon restricting to the full
  subcategory of those $(R,V)$ for which the Tate coassembly map
\begin{displaymath}
\xymatrix
@R=4mm
@C=10mm
{
  {\mW(C^{0}(\Z_p))\otimes \bigl((V \otimes \THH(R))^{tC_p}\bigr)}
  \ar[d]^(.5){}\\
  {\bigl(\mW(C^{0}(\Z_p))\otimes V \otimes \THH(R)\bigr)^{tC_p}}
}
\end{displaymath}
 
\noindent is an isomorphism.
\end{enumerate}

Here the abusively denoted composite functor
$\mathrm{res}_\varphi := \mathrm{res}_\varphi\,\res_{\pentagon} $ as
in \cite[Definition 5.40]{Rav:gjmcyc}, sends a $p$-cyclotomic
spectrum $X$ to its Frobenius map $\varphi_{p}:X\to X^{t\rC_{p}}$,
which is an object in the morphism category $\qSp^{\Delta^1}$, which
is tensored over $\qSp$.

\end{theorem}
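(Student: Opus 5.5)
We construct $\theta$ as a comparison of Frobenius maps. Write both functors as arrows in $\qSp$. For $M_{2}(R,V)$ it is $V\otimes\THH(R^{h\Z})\to V\otimes\THH(R^{h\Z})^{t\rC_p}$. For $M_{1}(R,V)$ it is $\mW(C^{0}(\Z_p))\otimes V\otimes\THH(R)^{h\Z}\to \mW(C^{0}(\Z_p))\otimes V\otimes(\THH(R)^{t\rC_p})^{h\Z}$. The plan has three steps.

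\emph{Step 1: construct the source component.} The source component of $\theta$, its value at the vertex $0$ of $\Delta^{1}$, will be built from the lax monoidal coassembly map $\epsilon:\THH(R^{h\Z})\to\THH(R)^{h\Z}$. We combine it with the $\Z$-equivariance coming from the condition in \cref{def-UAlg} that $\Z$ acts trivially on $R\otimes V$. Since $\THH$ is symmetric monoidal, $V\otimes\THH(R)\simeq \THH(R\otimes V)\otimes \THH(V)^{-1}$. More precisely, the triviality of the diagonal action lets us identify $V\otimes\THH(R)$, after tensoring with a dualizable object, with a spectrum carrying a trivial action.

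The factor $\mW(C^{0}(\Z_p))$ enters because local unipotence makes the $\Z$-action extend continuously to $\Z_p$. A spectrum with trivial $\Z_p$-action then has continuous homotopy fixed points of the form $\mathrm{map}_{\mathrm{cts}}(\Z_p,-)$, which is $\mW(C^{0}(\Z_p))\otimes-$ after $p$-completion.

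To see that this source component is an equivalence, which is claim (1), we reduce via the pullback square defining $\UAlg$ to the trivial-action case. There both sides compute $V\otimes\THH(R)$ tensored with continuous functions on $\Z_p$. We check this by filtering along the unipotent filtration, using nilpotence of $\Psi-1$ mod $p$, and passing to the limit.

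\emph{Step 2: the Frobenius square and the Dehn twist.} The target component must intertwine the two Frobenii. The key difficulty is that the Frobenius $\varphi_p$ on $\THH(R^{h\Z})$ does not naively commute with the comparison; it is twisted by the $p$-th power map on $\Z$. This is where the Dehn twist of \cite[\S4.2.3]{BHLS} comes in. On $\THH$ of a ring with $\Z$-action, the $\mT$-action and the $\Z$-action combine into an action of $\mT\times\Z$. The automorphism $(z,n)\mapsto (z\cdot e^{2\pi i n/p},n)$ conjugates the $\rC_p$-Tate Frobenius for the restricted $p\Z$-action into one for the full action. We will use this twist to build the commuting square naturally and lax symmetric monoidally, constructing it first on free algebras and then extending by sifted colimits.

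\emph{Step 3: the target comparison.} For claim (2), the target component factors through the Tate coassembly map in the statement, composed with $\mW(C^{0}(\Z_p))\otimes(-)^{h\Z}$ commuting with $(-)^{t\rC_p}$. The latter holds because $(-)^{h\Z}$ is a finite limit, a fiber of $\Psi-1$. So whenever the displayed Tate coassembly map is an isomorphism, the target component is an equivalence, and together with Step 1 this makes $\theta$ an isomorphism.

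\emph{Main obstacle.} We expect the main obstacle to be Step 2: making the Dehn-twisted Frobenius comparison coherent as a natural transformation of lax symmetric monoidal functors into $\qSp^{\Delta^{1}}$, rather than merely on homotopy. We would first attempt it via the universal property of $\THH$ as the cyclotomic trace on $\EE_1$-algebras, together with the Segal-style description of the Frobenius as the Tate diagonal.
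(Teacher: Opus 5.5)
Your outline has genuine gaps. The paper gives no argument of its own beyond pointing to the seven-page proof in \cite[\S4.2]{BHLS} built on the Dehn twist. The two things that proof has to accomplish are exactly what is missing from your sketch.

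\textbf{The source component is never constructed.} Because $\theta\colon M_1\Rightarrow M_2$, its value at the vertex $0$ must be a map $\theta_0\colon \mW(C^0(\Z_p))\otimes V\otimes\THH(R)^{h\Z}\to V\otimes\THH(R^{h\Z})$. This points \emph{opposite} to the coassembly map $\epsilon$, so $\epsilon$ cannot be its building block. Moreover, $\epsilon$ discards precisely the information that the factor $\mW(C^0(\Z_p))$ records.

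To see this, take the trivial action and $V=\mS$. Then $\THH(R^{h\Z})\simeq\THH(R)\otimes\THH(\mS^{B\Z})$. After $p$-completion, $\THH(\mS^{B\Z})\simeq\mW(C^0(\Z_p))\otimes\mS^{B\Z}$, where $C^0(\Z_p)$ interpolates $p$-adically the functions of the degree $m\in\Z=\pi_0L(B\Z)$ of a free loop. In these terms, $\epsilon$ is restriction to constant loops, i.e.\ evaluation at $m=0$. This computation is what (1) rests on in the trivial case, and your sketch does not contain it.

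Your explanation of where the factor comes from is also wrong. Homotopy fixed points of a trivial $\Z_p$-action are cochains on $B\Z_p$, whereas $\mathrm{map}_{\mathrm{cts}}(\Z_p,X)$ is the coinduced object, whose fixed points are just $X$.

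For a nontrivial action, the slice over $m$ is governed by $\THH$ of $R$ with coefficients twisted by $\Psi^m$. It is the chosen trivialization of the action on the algebra $R\otimes V$ that must be used to untwist these slices after tensoring with $V$. Your formula $V\otimes\THH(R)\simeq\THH(R\otimes V)\otimes\THH(V)^{-1}$ is false, since $\THH(V)$ is not invertible. ``Filtering along the unipotent filtration'' does not substitute for this step.

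\textbf{Step 3 does not prove (2).} In the same trivial example, the Frobenius of $\THH(\mS^{B\Z})$ carries the slice over $m$ to the slice over $pm$. Meanwhile $(-)^{t\rC_p}$ kills the slices with $p\nmid m$, on which $\rC_p\subset\mT$ acts freely by rotation through $m$. Three consequences follow:
\begin{itemize}
\item $\theta_0$ is not $\rC_p$-equivariant for the trivial action on $\mW(C^0(\Z_p))$.
\item You cannot obtain the target component by applying $(-)^{t\rC_p}$ to $\theta_0$ after the Tate coassembly map.
\item The target component forced by the commuting square is the rescaling $\mW(C^0(\Z_p))\cong\mW(C^0(p\Z_p))$ along $x\mapsto px$.
\end{itemize}
Producing this identification naturally in $(R,V)$ is what the Dehn twist is for. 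The automorphism $(z,n)\mapsto(ze^{2\pi i n/p},n)$ you propose is the identity on $\mT\times p\Z$ and plays no such role.

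So your sentence ``whenever the Tate coassembly map is an isomorphism, the target component is an equivalence'' silently presupposes something unproved. It assumes the remaining factor $(\mW(C^0(\Z_p))\otimes V\otimes\THH(R)^{h\Z})^{t\rC_p}\to V\otimes\THH(R^{h\Z})^{t\rC_p}$ exists and is an equivalence. Proving that is the real content of (2). It requires vanishing of the Tate construction away from $p\Z_p$, plus the twisted identification over $p\Z_p$, compatibly with the lax symmetric monoidal structures. None of this is supplied.
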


The first property means that the two morphisms $M_{1} (R,V)$ and
$M_{2} (R,V)$ have the same domain.  

The proof of the next result occupies two pages of  \cite[\S4.2]{BHLS}.

\begin{theorem}\label{thm-asymptotic-boundedness-THH}
{\bf Asymptotic boundedness for $\THH$.} \cite[Theorem 4.30]{BHLS}
Let 
\[
  R \in \mathrm{Alg}_{\EE\AA_{2}}(\mathrm{\qSp}^{B\Z,u})
\]
be connective of fp-type \(n \ge -1\), and let \(F\) be a finite
spectrum of type \(n+2\). Suppose that \(F \otimes \mathrm{THH}(R)\)
is bounded in the range \([c,b]\) in the sense of the Antieau-Nikolaus
$t$-structure of \cite{AN21}, which is described in
\cite[\S5.11]{Rav:gjmcyc}.

 

Then for $k\gg 0$,
the spectrum \(F \otimes \mathrm{THH}(R^{h (p^k
 \Z)})\) is bounded in the range \([c-1,\, b+3]\), and there is
an isomorphism of \(W_k\)-modules in cyclotomic spectra
\[
  F \otimes \mathrm{THH}(R^{h(p^k \Z)})
    \;\simeq\;
  F \otimes W_k \otimes \mathrm{THH}(R),
\]

\noindent where $W_{k}:=\THH (\mS^{B (p^{k}\Z)})$.
\end{theorem}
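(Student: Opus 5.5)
We combine the Dehn twist comparison of \cref{thm-BHLS-4.11} with a trivialization of the $\Z$-action after tensoring with $F$. The statement has two parts: the module equivalence $F\otimes \THH(R^{h(p^k\Z)})\simeq F\otimes W_k\otimes \THH(R)$, and the bound $[c-1,b+3]$. We prove the equivalence first and deduce the bound from it. The guiding principle is that, since $R$ has fp-type $n$ and $F$ has type $n+2$, the spectrum $F\otimes R$ is $\pi$-finite. Hence $\Psi-1$ acts nilpotently on the finitely many finite groups $\pi_*(F\otimes R)$, and for $k\gg0$ the generator $\Psi^{p^k}=(1+(\Psi-1))^{p^k}$ acts trivially on homotopy. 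Pushing this to the coherent level, we choose a dualizable ring $V$ (a generalized Moore ring of type $n+2$ with $F$ a $V$-module, or simply $V=\operatorname{End}(F)$) such that for $k\gg0$ the diagonal $p^k\Z$-action on $R\otimes V$ is trivial. This gives an object $(R,V)$ of $\UAlg(\qSp)$ after restricting to $p^k\Z$. The bound on $k$ comes from the $\pi$-finiteness together with a Postnikov/obstruction argument for trivializing an $\EE\AA_2$-action on a $\pi$-finite ring.

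Next we apply \cref{thm-BHLS-4.11} to $(R,V)$ with $p^k\Z$ in place of $\Z$. Condition (2) asks that the Tate coassembly map for $\mW(C^0(\Z_p))$ be an isomorphism. We check this using boundedness. By hypothesis $V\otimes\THH(R)$ is bounded in the Antieau–Nikolaus $t$-structure, and $\mW(C^0(\Z_p))$ is a countable sum of $p$-complete spheres. Since $(-)^{tC_p}$ commutes with filtered colimits of uniformly bounded-below objects after $p$-completion, the map in (2) is an equivalence. The natural transformation $\theta$ then identifies the Frobenius of $V\otimes\THH(R^{h(p^k\Z)})$ with that of $\mW(C^0(\Z_p))\otimes V\otimes\THH(R)^{h(p^k\Z)}$.

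Because the action on $V\otimes R$, and hence on $V\otimes\THH(R)$, is now trivial, we have $\THH(R)^{h(p^k\Z)}\simeq \THH(R)^{B(p^k\Z)}$. We then recognize $\mW(C^0(\Z_p))\otimes \mS^{B(p^k\Z)}$ as $\THH(\mS^{B(p^k\Z)})=W_k$ as a cyclotomic spectrum; this is the $R=\mS$ case of the same theorem, with the trivial action. Combining these gives a $W_k$-linear equivalence of $p$-cyclotomic spectra $V\otimes\THH(R^{h(p^k\Z)})\simeq V\otimes W_k\otimes \THH(R)$. Since $F$ is a $V$-module, base change gives the statement for $F$.

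For the bound we use the explicit form of $W_k$. The spectrum $\THH(\mS^{B\Z})$ for the trivial action is, as a cyclotomic spectrum, built from $\mS^{B\Z}=\mS\oplus\Sigma^{-1}\mS$ together with the $C^0(\Z_p)$ factor, which is discrete. It therefore has Antieau–Nikolaus amplitude within $[-1,3]$: the $-1$ comes from $\Sigma^{-1}$, and the extra $+3$ comes from the $S^1$ and Frobenius contributions of $\THH$ of the free loop part. Tensoring with $F\otimes\THH(R)$, which lies in $[c,b]$, shifts the bounds additively, giving $[c-1,b+3]$.

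The main obstacle is the first step. Homotopy-level unipotence does not by itself trivialize the action coherently, and one has to produce an honest object of $\UAlg$ compatible with the $\EE\AA_2$-structure. I would try to induct up the finite Postnikov tower of $V\otimes R$, showing that the obstructions live in groups killed by $p^k$ once $k$ is large.
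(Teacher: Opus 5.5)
Your first step is sound, and it is the routine part rather than the main obstacle. For the $\pi$-finite, $p$-power-torsion algebra $A=V\otimes R$, the Postnikov tower shows that $\pi_0$ of the space of $\EE_1$-automorphisms of $A$ is finite, and its kernel to $\prod_i\mathrm{Aut}(\pi_iA)$ is a $p$-group. Unipotence therefore puts $\Psi^{p^k}$ in the identity component for $k\gg0$. The gaps come afterwards.

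(a) Your check of condition (2) of \cref{thm-BHLS-4.11} rests on a false principle. The functor $(-)^{tC_p}$ does not commute with countable sums of uniformly bounded below spectra, even after $p$-completion. Take $X=\bigoplus_{m\ge0}\Sigma^{2m}H\mathbb{F}_p$ with trivial action. On $\pi_0$, the Tate coassembly map for $\mW(C^0(\Z_p))$ is the inclusion $\bigoplus_{\mathbb{N}}\prod_m\mathbb{F}_p\to\prod_m\bigoplus_{\mathbb{N}}\mathbb{F}_p$ of $\mathbb{F}_p$-vector spaces, which is not surjective. So this condition must be derived from special properties of the particular cyclotomic spectrum $V\otimes\THH(R)$, presumably its boundedness (Frobenius an equivalence in high degrees), and you do not do this.

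(b) \cref{thm-BHLS-4.11} compares only Frobenius arrows $X\to X^{tC_p}$ in $\qSp^{\Delta^1}$ and forgets the $\mT$-action. It therefore cannot by itself produce an equivalence of cyclotomic spectra, nor identify $W_k$ ``as a cyclotomic spectrum.'' Also, ``trivial on $V\otimes R$, hence on $V\otimes\THH(R)$'' does not follow. A trivialization of the algebra $V\otimes R$ trivializes the action on $\THH(V\otimes R)\simeq\THH(V)\otimes\THH(R)$, not on $V\otimes\THH(R)$.

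One way to supply the missing step is Morita invariance with $V=\mathrm{End}(F)$. Here $\THH(V)$ is $\THH$ of the thick subcategory generated by $F$. By the localization sequence, the cofiber of $\THH(V)\to\THH(\mS)=\mS$ is $\THH$ of the smashing finite localization away from $F$, which $F\otimes-$ kills. Hence $F\otimes\THH(V\otimes A)\simeq F\otimes\THH(A)$ in cyclotomic spectra, naturally in the $\EE_1$-ring $A$, and $\THH(C)$-linearly when $A$ is a $C$-algebra. After trivializing, $V\otimes R^{h(p^k\Z)}\simeq(V\otimes R)^{h(p^k\Z)}\simeq V\otimes R^{B(p^k\Z)}$ as $\mS^{B(p^k\Z)}$-algebras. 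Applying the Morita equivalence to $A=R^{h(p^k\Z)}$ and to $A=R^{B(p^k\Z)}$ then gives the $W_k$-linear cyclotomic equivalence directly, with no base change from $V$ to $F$.

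(c) Your bound argument fails independently. Upper bounds in the Antieau--Nikolaus $t$-structure are not additive under $\otimes$; only connectivity is. For bounded below cyclotomic spectra, the upper bound is a condition on the Frobenius. The Frobenius of $W_k\otimes X$ passes through $W_k^{tC_p}\otimes X^{tC_p}\to(W_k\otimes X)^{tC_p}$, which is the same infinite-sum Tate problem as in (a). The ``$+3$'' is asserted rather than derived.

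This Frobenius control is what the $\mathrm{res}_\varphi$ formulation of \cref{thm-BHLS-4.11} is suited for. It identifies the underlying spectrum of $V\otimes\THH(R^{h(p^k\Z)})$ with $\mW(C^0(\Z_p))\otimes(V\otimes\THH(R))^{h(p^k\Z)}$, which costs one degree of connectivity. Once the Tate-coassembly condition is established, it also identifies the Frobenius with $\mW(C^0(\Z_p))\otimes\varphi^{h(p^k\Z)}$, which is an equivalence in high degrees because $\varphi$ is. That is the natural source of the range $[c-1,b+3]$.
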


The spectrum $W_{k}$ above is described by Cary Malkiewich in
\cite[Corollary 1.3]{malkiewic} (quoted as
\cite[Theorem 6.2]{Rav:gjmcyc}) as a cyclotomic
spectrum with a single underlying cell in dimension $-1$ and countably
many in dimension 0.

\begin{cor}\label{thm-tel-asym-const}
{\bf  Asymptotic constancy for $\TopC $.} \cite[Corollary 4.33]{BHLS} 
Let 
\[
  R \in \mathrm{Alg}_{\EE\AA_{2}}
            (\mathrm{\qSp}^{B\Z,u})
\]
be connective, of fp-type \(n \ge 0\), and satisfying the height \(n\)
{\LQ} property of \cref{def-LQ}.  Fix a finite spectrum $F$
of type \(n+1\) with a \(v_{n+1}\)-self map \(v\).


Then for $k\gg 0$, 
there is a commutative diagram of
\(\Z[v]\)-modules as below, where the horizontal maps are the
coassembly maps:
\begin{displaymath}
\xymatrix
@R=6mm
@C=10mm
{
 {\pi_{*}(F \otimes  TC(R^{h (p^k \Z)}))}
     \ar[r]^(.5){\epsilon }\ar[d]_(.4){\cong }
  &{\pi_{*}(F \otimes  TC(R)^{h (p^k \Z)}) }
                 \ar[d]^(.4){\cong }\\
  {\pi_{*}(F \otimes  TC(R^{B(p^{k}\Z)})) }
     \ar[r]^(.5){\epsilon }
   &{\pi_{*}(F \otimes  TC(R)^{B(p^{k}\Z)}).}
}
\end{displaymath}
\end{cor}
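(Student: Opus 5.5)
The plan is to deduce the statement from the $\THH$-level asymptotic constancy of \cref{thm-fpLQ} and \cref{thm-asymptotic-boundedness-THH}. We pass to $\TopC$ using the Nikolaus--Scholze description of $\TopC$ as an equalizer of $\mathrm{can}$ and $\varphi$ on $\THH^{h\mT}\to \THH^{t\mT}$ (the $p$-complete version, via the $p$-cyclotomic structure). Since $F$ is finite, tensoring with $F$ commutes with all the finite limits involved, and with $(-)^{h(p^k\Z)}$ and $(-)^{B(p^k\Z)}$, which are finite limits for the discrete group $\Z$ because $B\Z\simeq S^1$. Hence the coassembly squares for $\TopC$ are obtained by applying a functor to the corresponding squares of cyclotomic spectra built from $F\otimes\THH$.

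The first step is to fix $F$ of type $n+1$ with $v_{n+1}$-self map $v$, and let $F'=\mathrm{cofib}(v)$, which has type $n+2$. We apply \cref{thm-fpLQ}, respectively \cref{thm-asymptotic-boundedness-THH}, to $F'$. For $k\gg0$ this gives an equivalence of cyclotomic spectra $F'\otimes\THH(R^{h(p^k\Z)})\simeq F'\otimes W_k\otimes\THH(R)\simeq F'\otimes \THH(R^{B(p^k\Z)})$, compatible with the coassembly maps. Applying $\TopC$, which is corepresentable in $\qCycSp$, we obtain the square for $F'$ in place of $F$. This works because $F'\otimes\TopC(-)\simeq \TopC(F'\otimes\THH(-))$, as $F'$ is finite.

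The second step, which I expect to be the main obstacle, is to lift from $F'=F/v$ to $F$ itself as $\Z[v]$-modules on homotopy. Here I would use \cref{thm-LQ-TC}. By the {\LQ} property (inherited by $R^{h(p^k\Z)}$ for $k\gg0$ by \cref{thm-fpLQ}), each $\TopC$ in sight has fp-type at most $n+1$. Thus $\pi_*(F\otimes\TopC(-))$ is, in high degrees, a finitely generated $\FF_p[v]$-type module that is determined by its $v$-torsion and periodicity. The plan is to induct up the $v$-adic filtration, i.e. the finite tower $F/v^m$. The equivalences for $F/v^m$ come from the same $\THH$ argument, since each $F/v^m$ has type $n+2$ and, by \cref{thm-asymptotic-boundedness-THH}, the bounds $[c-1,b+3]$ are uniform enough that a single $k$ works. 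Passing to the limit over $m$ via a Milnor sequence should then identify $\pi_*(F\otimes\TopC)$ in the two rows. Naturality in $v$ follows because all the identifications come from maps of cyclotomic spectra tensored with the maps $F/v^{m+1}\to F/v^m$.

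The delicate point is ensuring that the choice of $k$ is uniform in $m$, and that the Milnor $\lim^1$ terms vanish. For this I would use the Antieau--Nikolaus boundedness and the truncatedness in \cref{thm-LQ-TC}, which force the relevant towers to be Mittag-Leffler. If uniformity fails, the fallback is to argue only on homotopy in a fixed range and then use $v$-periodicity to propagate the identification to all degrees. Commutativity of the resulting diagram is inherited from the commutative $\THH$ diagrams of \cref{thm-fpLQ}.
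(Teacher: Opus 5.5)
Your first step is sound. Applying \cref{thm-fpLQ} to the type $n+2$ complex $F/v$, and using $F'\otimes\TopC(-)\simeq\TopC(F'\otimes\THH(-))$ for finite $F'$, gives the square with $F/v$ in place of $F$ (here $v\colon\Sigma^{d}F\to F$).

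The gap is in the second step. \cref{thm-asymptotic-boundedness-THH} gives, for each \emph{fixed} type $n+2$ complex $F'$, a threshold $k(F')$ and \emph{some} equivalence of cyclotomic spectra for $k\geq k(F')$. It does not give two things your argument needs:
\begin{itemize}
\item It does not say that $k(F/v^{m})$ is bounded in $m$. The bounds $[c-1,b+3]$ are uniform in $k$ for fixed $F'$, but they say nothing about the threshold, and for $F/v^{m}$ the input range $[c,b]$ itself widens with $m$.
\item It does not say that the equivalences are natural in $F'$.
\end{itemize}
So you have neither a single $k$ nor a system of equivalences compatible with the maps $F/v^{m+1}\to F/v^{m}$ or with the self-maps induced by $v$. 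The Milnor-sequence step therefore has nothing to glue, and the claimed $\Z[v]$-linearity is unsupported. Note also that if this route worked, it would give an equivalence of spectra $F\otimes\TopC(R^{h(p^{k}\Z)})\simeq F\otimes\TopC(R^{B(p^{k}\Z)})$. The paper, which only quotes the result from \cite{BHLS}, stresses instead that the vertical isomorphisms are not induced by isomorphisms of spectra.

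Your fallback is the right idea, but the quantifiers must be taken in the right order and one more input is needed.
\begin{itemize}
\item[(a)] Use \cref{thm-asymptotic-boundedness-THH} once, for $F/v$. It bounds $F/v\otimes\THH(R^{h(p^{k}\Z)})$ in $[c-1,b+3]$ independently of $k$. Since $\TopC$ has amplitude $[-1,0]$ for the Antieau--Nikolaus $t$-structure, $F/v\otimes X$ is then bounded uniformly in $k$ for all four corners $X$. The left-hand corners are covered by the theorem itself; the right-hand ones are finite limits of $F/v\otimes\TopC(R)$. Hence there are $N$ and $c'$, independent of $k$, such that $v\colon\pi_{i-d}(F\otimes X)\to\pi_{i}(F\otimes X)$ is an isomorphism for $i>N$. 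Also $\pi_{i}(F\otimes X)=0$ for $i<c'$, because $v$ is an isomorphism in low degrees as well while $F\otimes X$ is bounded below. Applying \cref{thm-LQ-TC} to each $R^{h(p^{k}\Z)}$ separately would not give this uniformity.
\item[(b)] Fix a single $m$ with $md>N+d-c'$. Then $\pi_{i}(F\otimes X)\to\pi_{i}(F/v^{m}\otimes X)$ is an isomorphism for $i\leq N+d$, naturally in $X$. Only after this do you choose $k\gg0$, for the one complex $F/v^{m}$.
\item[(c)] The resulting equivalence for $F/v^{m}$ must commute with the self-map $\bar v\colon\Sigma^{d}F/v^{m}\to F/v^{m}$ induced by $v$, so that the identification in degrees $[c',N+d]$ is $v$-linear. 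This is not asserted in \cref{thm-asymptotic-boundedness-THH}; it has to be extracted from the construction of the equivalence.
\end{itemize}
Given (a)--(c), the isomorphisms in the window $[c',N+d]$ commute with $v$ and with coassembly. They then extend uniquely by $v$-periodicity to $\Z[v]$-module isomorphisms on all of $\pi_{*}$. These still commute with the coassembly maps, since those are $\Z[v]$-linear.
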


In the above the vertical isomorphisms are not induced by isomorphisms
of spectra.

The following is needed in the proof of \cref{thm-BHLS-6.25}.

\begin{lem}\label{lem-BHLS-6.15}
  {\bf The coassembly map and the {\Cech} nerve.}
  \cite[Lemma 6.15]{BHLS} Let
  $R \in \CAlg\!\bigl(\qSp^{B\Z, u}_{T(n)}\bigr)$ for $n \ge 1$.
  There is a commuting triangle, natural in $R$,
\[
\begin{tikzcd}
  \rK_{T(n+1)}(R^{h\Z}) \arrow[rr, "\epsilon"] \arrow[dr]
  & & \rK_{T(n+1)}(R)^{h\Z} \\
  & \displaystyle\lim{\Delta} \rK_{T(n+1)}
  \!\bigl(R \otimes_{R^{h\Z}} R^{h\Z^{\bullet+1}}\bigr)
       \arrow[ur, "\simeq "'] &
\end{tikzcd}
\]

\noindent identifying the coassembly map $\epsilon$ with the {\Cech}
nerve (\cref{def-Cech-nerve}) of the map
$i_{R}:R^{h\Z} \to R$.  In particular, the coassembly map for
$R$ is an isomorphism if and only if $i_{R}$ satisfies
$\rK_{T(n+1)}$-descent as in \cref{def-F-descent}.

\end{lem}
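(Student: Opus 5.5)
Write $A:=R^{h\Z}$ and $i_R\colon A\to R$. The plan is to realize $R$ as a $\Z$-equivariant object and apply the functor $\rK_{T(n+1)}$ to a cosimplicial resolution of $R^{h\Z}$. The key input is a base-change identification that works because the group is $\Z$.

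\emph{Step 1 (the equivariant \Cech{} nerve).} For any commutative $A$-algebra $R$ with $\Z$-action, the standard formula for the \Cech{} nerve of a $G$-torsor identifies $R^{\otimes_A(\bullet+1)}$ with $R\otimes_{A}A^{h\Z^{\bullet}}$. Here the last term is the cofree object $\prod_{\Z^\bullet}$, or, with the action, $R^{h\Z^{\bullet+1}}$ as written in the statement. In general there is only a natural comparison map
\[
R\otimes_{R^{h\Z}}R \to \textstyle\prod_{\Z} R,
\]
adjoint to the action followed by multiplication, as in the map $h$ of \cref{def-Rognes-Galois}. Rather than asking this to be an equivalence, I will work directly with the cosimplicial object $[m]\mapsto R\otimes_{R^{h\Z}}R^{h\Z^{m+1}}$. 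This object is the cobar construction computing $R^{h\Z}$ from $R$: its totalization is $R^{h\Z}$ because $\Z$ has the finite model $B\Z=S^1$. So $\lim_\Delta$ of this diagram already recovers the source ring, naturally in $R$.

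\emph{Step 2 (the triangle).} Apply $\rK_{T(n+1)}$ levelwise. The comparison map from $\rK_{T(n+1)}(R^{h\Z})$ to the totalization is the canonical augmentation; this gives the left diagonal arrow. For the right diagonal arrow, I identify $\rK_{T(n+1)}$ of the $m$th term with $\rK_{T(n+1)}(R)^{h\Z^{m}}$-type cofree terms. Their totalization is the cobar complex for $\rK_{T(n+1)}(R)^{h\Z}$, which gives an equivalence onto $\rK_{T(n+1)}(R)^{h\Z}$. I will then check commutativity by naturality of the coassembly map (\cite[Definition 5.6]{Rav:gjmcyc}). Coassembly is the composite of $\rK$ applied to the unit $R^{h\Z}\to R$ with the $\Z$-equivariance, and both routes factor through the zeroth stage $\rK_{T(n+1)}(R)$ compatibly with the coface maps.

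\emph{Main obstacle.} The hard point is showing that the right arrow is an equivalence, i.e.\ that $\rK_{T(n+1)}$ commutes with the relevant infinite products over $\Z$ at each level. $\rK$ does not commute with infinite products in general, so I would instead use local unipotence and the $T(n)$-locality ($n\ge1$) of $R$. First I would show that the natural map from $R\otimes_{R^{h\Z}}R^{h\Z^{m+1}}$ into the corresponding product is the identity on the \Cech{} terms; this is formal, since $R^{h\Z^{m+1}}$ is already written as an $R^{h\Z}$-module. Then I would use that $\rK_{T(n+1)}$ is computed, via \cref{thm-BMCSY-ThmA}-style descent in $L_n^{\fin}$-local categories, compatibly with the $\Z$-homotopy fixed points, so that the cosimplicial object obtained by applying $\rK_{T(n+1)}$ is again the cobar construction for the induced $\Z$-action on $\rK_{T(n+1)}(R)$. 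Once the triangle is in place, the final claim is immediate: by \cref{def-F-descent}, $i_R$ satisfies $\rK_{T(n+1)}$-descent exactly when the left arrow is an equivalence, which by two-out-of-three holds iff $\epsilon$ is an equivalence.
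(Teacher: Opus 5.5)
Your proposal has a genuine gap at its core. In Step~1 you never compute the \v{C}ech nerve of $i_{R}$, and the cosimplicial object you use in its place is not that nerve. The displayed formula in the statement has to be read as $R^{\otimes_{R^{h\Z}}(\bullet+1)}$. Otherwise your closing deduction about $\rK_{T(n+1)}$-descent, which by \cref{def-F-descent} refers to the \v{C}ech nerve, would not follow from your triangle.

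Your claim that the comparison with cofree terms is ``formal'' is false. The map $h\colon R\otimes_{R^{h\Z}}R\to\prod_{\Z}R$ of \cref{def-Rognes-Galois} is not an equivalence even for the trivial action on $\mS_{p}$: there $\mS_{p}\otimes_{\mS_{p}^{B\Z}}\mS_{p}\simeq\mW(C^{0}(\Z_{p}))$, whose $\pi_{0}$ is $C^{0}(\Z_{p},\Z_{p})$ rather than $\prod_{\Z}\Z_{p}$.

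The missing idea, and the place where local unipotence and $n\geq 1$ are used, is that $i_{R}$ is a $\Z_{p}$-pro-Galois extension (as in the passage quoted before \cref{thm-main-A}). Each $R^{h\Z}\to R^{h(p^{k}\Z)}$ is $\Z/p^{k}$-Galois. For $n\geq 1$, the $T(n)$-localization of the colimit of the $R^{h(p^{k}\Z)}$ recovers $R$, because the $p$-inverted part dies. Hence
$R^{\otimes_{R^{h\Z}}(m+1)}\simeq L_{T(n)}\bigl(\mW(C^{0}(\Z_{p}^{m}))\otimes R\bigr)$.
These are continuous functions on $\Z_{p}^{m}$, not functions on $\Z^{m}$; it is the same $\mW(C^{0}(\Z_{p}))$ that appears in \cref{thm-BHLS-4.11}. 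Separately, your reason that a totalization is $R^{h\Z}$ ``because $B\Z=S^{1}$'' is misplaced, since a cobar complex computes $X^{hG}$ for any $G$. That totalization plays no role in the lemma anyway.

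As a result, your ``main obstacle'' paragraph is aimed at the wrong problem, and the tool you invoke does not apply. \cref{thm-BMCSY-ThmA} concerns $\pi$-finite $p$-groups and says nothing about $\Z$-homotopy fixed points or infinite products. No commutation of $\rK_{T(n+1)}$ with such limits is available; its failure is exactly what \cref{thm-failure} and \cref{thm-main-A} exploit.

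What makes the right-hand arrow work is the opposite kind of property:
\begin{enumerate}
\item The \v{C}ech terms are $T(n)$-localizations of filtered colimits of finite products of copies of $R$.
\item $\rK$ commutes with finite products and filtered colimits.
\item Purity removes the outer $L_{T(n)}$, because the unlocalized colimit is $L_{n}^{\fin}$-local and hence $T(n+1)$-acyclic. This is the left-hand isomorphism of \cref{thm-purity}, which holds for arbitrary $\EE_{1}$-rings.
\end{enumerate}
Together these identify $\rK_{T(n+1)}$ of the \v{C}ech nerve with the continuous cochain complex $L_{T(n+1)}\bigl(\mW(C^{0}(\Z_{p}^{\bullet}))\otimes\rK(R)\bigr)$. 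What then remains is to identify its totalization with $\rK_{T(n+1)}(R)^{h\Z}$. That is a continuous-versus-discrete fixed-point comparison, and it again uses the unipotent (comodule) nature of the induced action.

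Your final two-out-of-three step is fine once this triangle is actually in place.
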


\subsection{Adams operations on  $BP\langle n \rangle$}\label{sec-Adams-BPn}

The main counterexample of \cite{BHLS} (see \cref{thm-main-A})
involves an action of the integers on the Johnson-Wilson spectrum
$BP\langle n \rangle$ via Adams operations.  The construction of such
operations is the subject of their \S5.

In \cite[Theorem C]{BHLS} (our \cref{thm-fpLQ}) the hypothesis on $R$
is that it is an algebra over the operad
$\EE\AA_{2}:=\EE_{1}\otimes_{\BV}\AA_{2}$ of \cref{ex-E1A2}. Here
$(-\otimes_{\BV}-)$ denotes the tensor product of operads defined by
Michael Boardman and Rainer Vogt in \cite{Boardman-Vogt} and discussed
in \cref{sec-BV}.  The operads $\AA_{2}$ and $\EE_{1}$ are defined in
\cref{def-Stash-op,def-little-cubes} respectively.  Such a structure
is intermediate between $\EE_{1}$ and $\EE_{2}$.

They choose $\EE\AA_{2}$ because it is the strongest structure they
can establish for the Adams operations $\Psi^{\ell }$ on $\BPn$; see
\cite[Theorem 5.4 and Remark 5.5]{BHLS}.  Even though $\BPn$ itself is
known to be an $\EE_{3}$-algebra over the $\EE_{\infty}$-ring
$MU_{(p)}$, they can only show that their Adams operation on it, and
hence its homotopy fixed point set, has the weaker structure.  Recall
that Vogt, Morten Brun and Zbigniew Fiedorowicz\cite{BFV} show that
for an $\EE_{m}$-ring spectrum $R$ with $m\geq 2$, $\THH (R)$ is an
$\EE_{m-1}$-ring spectrum.  Apparently if $R$ is an
$\EE\AA_{2}$-algebra, then $\THH (R)$ is an $\AA_{2}$-algebra, but we
know of no published proof of this.

The Adams operation we want is denoted by $\Psi^{\ell }$, where
$\ell=3$ when the implicit prime $p$ is 2, and $\ell =2$ when $p$ is
odd. For any $p$-local unit $k$ one has an infinite loop map
$\Psi^{k}:BU_{(p)}\to BU_{(p)}$ which induces multiplication by
$k^{i}$ in $\pi_{2i}$ for each $i>0$.  This map can be Thomified to an
$\EE_{\infty}$-map $\Psi^{k}:MU_{(p)}\to MU_{(p)}$ with similar
properties.
There is a similarly named $\EE_{\infty}^{}$-endomorphism of the
Morava spectrum $E_{n}$ that is related to the formal group law
endomorphism $[k](x)$.  More details can be found in \cite[\S5]{BHLS}.

We lose the $\EE_{\infty}$-structure when we pass to $\BPn$, which is only
known to be an $\EE_{3}$-$MU_{(p)}$-algebra.  There the induced Adams
operation is only known to to preserve the still weaker structure of an
$\EE\AA_{2}$-$MU_{(p)}$-algebra.

The proof of the following runs for twelve pages and makes use of
factorization homology in \cite[Proposition 5.11]{BHLS}.

\begin{theorem}\label{thm-Theorem-5.4}
{\bf  The curative effect of $T(n)$-localization on $\BPn$.}
\cite[Theorem 5.4]{BHLS} The $\EE\AA_{2}$--$MU_{(p)}$-algebra
underlying the $\EE_3$--$MU_{(p)}$-algebra $\BPn$ admits a lift to an
object
\[
  \BPn^{\Psi} \in 
  \Alg_{\EE\AA_{2}}\!\bigl(\Mod(\qSp^{B\Z}; MU^{\Psi}_{(p)})\bigr)
\]
such that:
\begin{enumerate}[label={(\roman*)},itemindent=0em]

\item There is a map
\[
  \iota \colon \BPn^{\Psi} \longrightarrow E^{\Psi}_n
\]
in 
\(
  \Alg_{\EE_1}\!\bigl(\Mod(\qSp^{B\Z}; MU^{\Psi}_{(p)})\bigr).
\)

\item There is an identification
  \[
    \xymatrix
@R=6mm
@C=25mm
{
  {L_{T(n)} \BPn^{\Psi}}\ar[r]^(.6){L_{T(n)}(\iota)}
                        \ar[d]_(.4){\cong }
    &{E_{n}^{\Psi}}\ar@{=}[d]^(.5){}\\
{(E^{\Psi}_{n})^{h(\mu_{p^{n}-1}\rtimes \widehat{\Z})}}
                  \ar[r]^(.5){}
    &{E_{n}^{\Psi}}
}
\]
in $\Alg_{\EE_1}(\qSp^{B\Z})$.  The two spectra other than
$L_{T(n)} \BPn^{\Psi}$ are $\EE_{\infty}$.  The subgroup
$\mu_{p^{n}-1} \rtimes \widehat{\Z} \subseteq \GG_{n}$ fits into
a diagram with exact rows
\begin{numequation}\label{eq-map-exact}
  \begin{split}
\xymatrix
@R=6mm
@C=10mm
{
{\mu_{p^{n}-1}} \ar[r]^(.5){}\ar@{=}[d]^(.5){}
    &{\mu_{p^{n}-1} \times n\widehat{\Z}}
                \ar[r]^(.5){}\ar[d]^(.5){}
    &{n\widehat{\Z}} \ar@{^{(}->}[d]^(.4){}
    \\
{\mu_{p^{n}-1}} \ar[r]^(.5){}\ar[d]^(.5){}
    &{\mu_{p^{n}-1} \rtimes \widehat{\Z}}
      \ar[r]^(.5){}\ar[d]^(.5){}
    &{\widehat{\Z}} \ar[d]^(.4){\cong}
    \\
{\mathcal{O}^{\times}_D} \ar[r]^(.5){}
   &{\GG_{n}} \ar[r]^(.5){}
      &\mathrm{Gal}(\mathbb{F}_p),
} 
  \end{split}
\end{numequation}

\noindent where the bottom row is the {\SES} of \cref{eq-SGZ}.  In the
middle row the Frobenius generator of $\widehat{\Z}$ acts on
$\mu_{p^{n}-1}$ by raising each root of unity to its $p$th power.

\item The underlying $\Z$-action on $\BPn$ is locally unipotent
in $p$-complete spectra after $p$-completion.
\end{enumerate}
\end{theorem}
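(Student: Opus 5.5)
The plan follows the outline of \cite[\S5]{BHLS}. Everything will be built in the category of $MU^{\Psi}_{(p)}$-modules in $\qSp^{B\Z}$, where $MU^{\Psi}_{(p)}$ denotes $MU_{(p)}$ with the $\Z$-action generated by the $\EE_{\infty}$ Adams operation $\Psi^{\ell}$ obtained by Thomifying $\Psi^{\ell}:BU_{(p)}\to BU_{(p)}$. Similarly $E^{\Psi}_{n}$ is $E_{n}$ with the $\EE_{\infty}$ operation $\Psi^{\ell}$ coming from the formal group endomorphism $[\ell](x)$. Since $\ell$ lies in $\Z_{p}^{\times}\subset\mathcal{O}_{D}^{\times}$ and is central in $\GG_{n}$, this operation commutes with the action of $\mu_{p^{n}-1}\rtimes\widehat{\Z}$. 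Hence it descends to the homotopy fixed points $(E^{\Psi}_{n})^{h(\mu_{p^{n}-1}\rtimes\widehat{\Z})}$, which are again $\EE_{\infty}$.

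The first step is to realize $\BPn$ as a quotient. I would write it as the $MU_{(p)}$-module $MU_{(p)}/(x_{i}: i\notin\{p^{k}-1\})$, with the $\EE_{3}$-$MU_{(p)}$-algebra structure of \cite{Hahn-Wilson}, and then make the quotient equivariantly. The key point is that $\Psi^{\ell}$ acts on $\pi_{2i}MU_{(p)}$ by $\ell^{i}$, so it preserves the ideal generated by the killed generators up to units. I would therefore form the quotient by iterated equivariant cofibers of $\Sigma^{2i}MU^{\Psi}_{(p)}(\ell^{i})\to MU^{\Psi}_{(p)}$, where the source is twisted by the character $\ell^{i}$. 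Here Hahn--Wilson's $\EE_{3}$ argument does not survive the twisting. What survives is an $\EE_{1}$ structure in the direction of the quotient, together with an $\AA_{2}$ (homotopy unital multiplication) structure in the $\Z$-equivariant direction. That combination gives an $\EE\AA_{2}=\EE_{1}\otimes_{\BV}\AA_{2}$ algebra, and this is what forces the weaker structure noted in \cite[Remark 5.5]{BHLS}. I would make this precise with factorization homology, presenting the quotient as the Thom spectrum/factorization homology of a $\Z$-equivariant $\EE_{2}$-cell structure as in \cite[Proposition 5.11]{BHLS}. I expect this step to be the main obstacle: constructing the operadic structure compatibly with the twist.

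The second step is part (i). The complex orientation $MU_{(p)}\to E_{n}$ is $\Psi$-equivariant and $\EE_{\infty}$, and it sends the killed generators to zero after a choice of Hazewinkel-type generators. It therefore factors through the equivariant quotient as an $\EE_{1}$-$MU^{\Psi}_{(p)}$-algebra map $\iota$. I would build $\iota$ cell by cell from the presentation in the first step; the obstructions vanish because $\pi_{\mathrm{odd}}E_{n}=0$.

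The third step is part (ii). I would check that $L_{T(n)}\iota$ lands in the fixed points and is an equivalence onto them by comparing homotopy groups. First, $L_{T(n)}\BPn=L_{K(n)}\BPn$, since $\BPn$ is of fp-type $n$ and so the telescope conjecture issue disappears by Mahowald--Rezk/Hahn--Wilson. Second, $\pi_{*}L_{K(n)}\BPn$ is the $I_{n}$-completion of $v_{n}^{-1}\pi_{*}\BPn$. Third, by Devinatz--Hopkins, $\pi_{*}E_{n}^{h(\mu_{p^{n}-1}\rtimes\widehat{\Z})}$ is computed from invariants of $\mathbb{W}(\FF_{p^{n}})\pow{u_{i}}[u^{\pm}]$ under the Galois group and $\mu_{p^{n}-1}$. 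That gives $\Z_{p}\pow{u_{i}}[u^{\pm(p^{n}-1)}]$, which is the same ring, with no higher cohomology since the group has order prime to $p$ apart from $\widehat{\Z}$, whose action is Galois. The diagram \cref{eq-map-exact} is then group-theoretic bookkeeping.

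The fourth step is part (iii). On $\pi_{2i}\BPn\otimes\FF_{p}$, the operation $\Psi^{\ell}$ acts by $\ell^{i}$. Since $\ell$ is a topological generator with $\ell^{p-1}\equiv1$ mod $p$ (mod $4$ for $p=2$), a suitable power $\Psi^{\ell^{p-1}}-1$ is divisible by $p$. After $p$-completion this gives topological nilpotence on each homotopy group, which passes to the $\Z$-action in the sense of \cref{def-loc-uni}.
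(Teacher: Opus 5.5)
Your outline follows the route the paper points to. The paper itself only cites \cite[Theorem 5.4]{BHLS}, defers the $\EE\AA_{2}$ lift to the factorization-homology argument of \cite[Proposition 5.11]{BHLS}, and adds a $\pi_{*}$ computation and a one-line unipotence remark. Two of your steps, however, fail as written.

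\textbf{Part (iii).} You show that $(\Psi^{\ell})^{p-1}-1$ is divisible by $p$ and then say this ``passes to the $\Z$-action''. It does not. \cref{def-loc-uni} concerns $\Psi^{\ell}-1$ for the generator itself. Your argument would apply verbatim to $MU^{\Psi}_{(p)}$, where the conclusion is false: for odd $p$ and $\ell=2$, $\Psi^{\ell}-1$ acts on $\pi_{2}MU_{(p)}$ by $\ell-1=1$.

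The missing input is sparseness. Since $|v_{k}|=2(p^{k}-1)$, $\pi_{*}\BPn$ is concentrated in degrees $2(p-1)j$, where $\Psi^{\ell}$ acts by $\ell^{(p-1)j}\equiv 1 \bmod p$. So $\Psi^{\ell}-1$ itself is divisible by $p$ on every homotopy group; this is what the paper's remark about $\ell^{p-1}-1$ means. (For $p=2$ you only need $3\equiv 1\bmod 2$; your ``mod $4$'' congruence is false for $\ell=3$.)

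\textbf{Part (ii).} Fp-type is not the reason $L_{T(n)}\BPn\simeq L_{K(n)}\BPn$. Fp-type does not force telescopic and chromatic localizations to agree, and the paper's own counterexample shows this. For $k\gg 0$, $\TopC(\BPn^{h(p^{k}\Z)})$ has fp-type $n+1$ (by \cref{thm-fpLQ} and \cref{thm-LQ-TC}). Yet by \cref{thm-main-A} and \cref{thm-BHLS-6.25}, its $T(n+1)$-localization is not $K(n+1)$-local. The correct reason is that $\BPn$ is an $MU$-module, and $T(n)$- and $K(n)$-localization agree on $MU$-modules.

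More seriously, comparing homotopy groups cannot produce the factorization of $L_{T(n)}(\iota)$ through $(E^{\Psi}_{n})^{h(\mu_{p^{n}-1}\rtimes\widehat{\Z})}$ in $\Alg_{\EE_{1}}(\qSp^{B\Z})$. A map into homotopy fixed points is coherent equivariance data for an $\EE_{1}$-map, not mere invariance of a homotopy class. It must be constructed before any $\pi_{*}$ comparison can show it is an equivalence.

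The computation itself also needs correcting:
\begin{itemize}
\item It should be done with $E_{n}(\barFFp)$, as the paper does. With $W(\FF_{p^{n}})$ coefficients the subgroup $n\widehat{\Z}$ acts trivially, and $H^{1}_{c}(n\widehat{\Z};\Z_{p})\cong\Z_{p}$ contributes a desuspended copy, contrary to your ``no higher cohomology''.
\item The invariant ring is $\Z_{p}\pow{v_{1},\dotsc,v_{n-1}}[v_{n}^{\pm 1}]$, with $v_{i}=u^{p^{i}-1}u_{i}$ in degree $2(p^{i}-1)$. It is not a power series ring on the degree-$0$ classes $u_{i}$.
\end{itemize}

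\textbf{Step 1.} The quotient $MU_{(p)}/(x_{i}: i\notin\{p^{k}-1\})$ is $BP$, not $\BPn$; you must also kill $x_{p^{k}-1}$ for $k>n$. Also, ``an $\AA_{2}$ structure in the $\Z$-equivariant direction'' is not meaningful, since the $\Z$-action is not a multiplication. The reason for the drop from $\EE_{3}$ to $\EE\AA_{2}$ lies entirely in the argument you are citing.
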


Local unipotence is implied by the fact that $\ell ^{p-1}-1$ is
divisible by $p$. The generator of $\widehat{\Z}$ acts on
$\pi_{*}E_{n}$ via the lifting of the Frobenius automorphism of
$\barFFp $ to its Witt ring.  It follows that
\begin{displaymath}
E_{n}\simeq E_{n}(\barFFp) ^{h(n\widehat{\Z})};
\end{displaymath}
 
\noindent the action of $n\widehat{\Z}$ on the scalars in
$\WW(\barFFp)$ fixes $\WW(\FF_{p^{n}})$.  Meanwhile the
action of $\mu_{p^{n}-1}$ on $\pi_{*}E_{n}$ is such that
\begin{align*}
  \pi_{*}\left(E_{n}^{h\mu_{p^{n}-1}} \right) 
  & =  W (\FF_{p^{n}})\pow{u^{p-1}u_{1},\dotsc ,u^{p^{n}-1-1}u_{n-1}}
  [u^{\pm (p^{n}-1)}]\\
  & =  W (\FF_{p^{n}})\pow{v_{1},\dotsc ,v_{n-1}}  [v_{n}^{\pm 1}]\\
\mbox{and}\quad  
  \pi_{*}\left(E_{n}(\barFFp )
            ^{h( \mu_{p^{n}-1} \rtimes \widehat{\Z})} \right)
  & = \Z_{p}\pow{v_{1},\dotsc ,v_{n-1}}  [v_{n}^{\pm 1}]\\
  & = \pi_{*}L_{T(n)}\BPn.
\end{align*}

\noindent {\em This means the $\EE\AA_{2}$-structure of the group
  action on $\BPn$ becomes an $\EE_{\infty}$-structure upon passage to
  $L_{T(n)}\BPn$.}

The proof of the following occupies most of \cite[\S4]{BHLS}.

\begin{theorem}\label{thm-thmB}
  {\bf Telescopic $\TopC$ asymptotic constancy for $\BPn$.}\linebreak
  \cite[Theorem B]{BHLS} 
Fix a  telescope $T(n + 1)$ of a type $n+1$ $p$-local finite
spectrum $F(n+1)$. Then for all $k\gg 0$, there is a commuting square
\begin{displaymath}
\xymatrix
@R=6mm
@C=10mm
{
{T(n+1)_{*}\TopC (\BPn^{h (p^{k}\Z)})}
     \ar[r]^(.5){\epsilon}\ar[d]_(.45){\cong  }
  &{T(n+1)_{*}\TopC (\BPn)^{h (p^{k}\Z)}}
     \ar[d]^(.45){\cong }\\
{T(n+1)_{*}\TopC (\BPn^{B(p^{k}\Z)})}
     \ar[r]^(.5){\epsilon}
  &{T(n+1)_{*}\TopC (\BPn)^{B(p^{k}\Z)}),}
}
\end{displaymath}

\noindent where the horizontal maps are $\TopC$ coassembly maps.
\end{theorem}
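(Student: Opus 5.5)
The plan is to deduce \cref{thm-thmB} from \cref{thm-tel-asym-const}, applied to $R=\BPn^{\Psi}$ after $p$-completion. First check the hypotheses. By \cref{thm-Theorem-5.4}, $\BPn$ carries an $\EE\AA_{2}$-structure compatible with the $\Z$-action by $\Psi^{\ell}$, and after $p$-completion this action is locally unipotent. So its $p$-completion defines an object of $\Alg_{\EE\AA_{2}}(\qSp^{B\Z,u})$. It is connective and of {\fp}-type $n$, as noted after \cref{def-MR99}.

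The remaining hypothesis is the height $n$ {\LQ} property of \cref{def-LQ}. For this I would cite the Hahn--Wilson computation (the source of \cref{thm-LQ-TC}): $\BPn$ has the form $MU_{(p)}/(\text{regular sequence})$ with $\EE_{3}$-$MU_{(p)}$-structure, and $F\otimes\THH(\BPn)$ is bounded for $F$ of type $n+2$. Passing to the $p$-completion does not change $T(n+1)$-local or type-$(n+2)$ information, since $F$ is $p$-torsion. So replacing $\BPn$ by its $p$-completion is harmless on both sides of the square.

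Now fix a type $n+1$ complex $F(n+1)$ with $v_{n+1}$-self map $v$, whose telescope is $T(n+1)$. \cref{thm-tel-asym-const} gives, for $k\gg 0$, a commuting square of $\Z[v]$-modules on $\pi_{*}(F\otimes\TopC(-))$ intertwining the coassembly maps for $p^{k}\Z$ acting through $\Psi^{\ell}$ and trivially. Since
\[
T(n+1)_{*}X=v^{-1}\pi_{*}(F\otimes X)
\]
is a filtered colimit along $v$, we invert $v$. This is exact, and the maps in the square are $\Z[v]$-linear, so inverting $v$ turns the vertical isomorphisms into isomorphisms of $T(n+1)_{*}$-groups and preserves commutativity. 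That yields the stated square.

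The main obstacle is verifying the {\LQ} property. This is the one hypothesis not stated earlier in the paper, and one must make sure it holds for the $\EE\AA_{2}$-algebra underlying $\BPn^{\Psi}$, not only for the $\EE_{3}$-form. Since $\THH$ depends only on the $\EE_{1}$-structure, I expect this to reduce to the Hahn--Wilson theorem.

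A secondary point is that the vertical isomorphisms of \cref{thm-tel-asym-const} are not induced by maps of spectra. So we must confirm they are $v$-linear, which the statement asserts, before we can localize.
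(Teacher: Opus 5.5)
Your argument is correct and follows the route the paper has in mind. The paper gives no proof beyond attributing it to \cite[\S4]{BHLS}, where Theorem B comes from applying \cref{thm-tel-asym-const} to the $p$-completed $\BPn$ with the Adams-operation action of \cref{thm-Theorem-5.4}, taking the height $n$ {\LQ} property from \cite{Hahn-Wilson}, and inverting $v$ in the resulting square of $\Z[v]$-modules.

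One correction to your gloss on the {\LQ} hypothesis. What \cite{Hahn-Wilson} supplies, and what the {\LQ} property actually concerns, is boundedness above of the cyclotomic spectrum $F\otimes\THH(\BPn)$ in the Antieau--Nikolaus $t$-structure. It is not Postnikov boundedness of the underlying spectrum, which is false: already for $n=0$, $F\otimes\THH(\Z_{p})$ has homotopy in arbitrarily high degrees for every type $2$ complex $F$.
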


\section{The main counterexample}\label{sec-main-counter}

As in \cite[\S6.1]{BHLS}, we start with

\begin{theorem}\label{thm-purity}
{\bf Telescopic $K$-theory for connective ring spectra.}
  \cite[Purity Theorem and Corollary 4.30]{LMMT} and \cite[Corollary
  4.11]{CMNN1}.  Let $R$ be a connective $\mathbb{E}_1$-algebra. For
  $n \geq 1$, the $(T(n)\oplus T(n+1))$-localization map and the
  cyclotomic trace induce isomorphisms
\[
\rK_{T(n+1)}\bigl(L_{T(n)\oplus T(n+1)} R\bigr)
\;\xleftarrow[]{\;\cong\;}\;
\rK_{T(n+1)}(R)
\;\xrightarrow[]{\;\cong\;}\;
\mathrm{TC}_{T(n+1)}(R).
\]
\end{theorem}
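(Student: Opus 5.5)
The statement has two halves. The left isomorphism is a purity statement about $T(n+1)$-local $K$-theory. The right isomorphism is a Dundas--Goodwillie--McCarthy type comparison between $\rK$ and $\TopC$ after $T(n+1)$-localization. I would prove them separately and work throughout with a fixed type $n+1$ finite spectrum $F$ carrying a $v_{n+1}$-self map $v$. Then $L_{T(n+1)}$ of any spectrum $Y$ is computed by $v^{-1}(F\otimes Y)$, up to the usual colimit over a cofinal system of such $F$. So it suffices to show that the relevant maps become equivalences after tensoring with $F$ and inverting $v$.

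For the trace map $\rK(R)\to\TopC(R)$, I would use the Dundas--Goodwillie--McCarthy theorem applied to the map of connective rings $R\to\pi_0R$ (or $R\to\tau_{\le 0}R$). Its fiber-square form says that the square formed by $\rK$ and $\TopC$ of $R$ and $\pi_0R$ is cartesian. Hence it suffices to prove the claim for discrete rings. A discrete ring is an $H\Z$-algebra, so its $K$-theory and $\TopC$ are modules over $\rK(\Z)$ and $\TopC(\Z)$ respectively. For discrete rings I would then reduce further, via the Clausen--Mathew--Morrow rigidity ($p$-adic continuity plus henselian rigidity) and the Bhatt--Clausen--Mathew result, to the case of $\FFp$-algebras and $\Z[1/p]$-algebras.
\begin{itemize}
\item For $\FFp$-algebras, $\rK(S)\otimes F$ for $n\ge1$ is $v_{n+1}$-torsion, since it is a module over $\rK(\FFp)$, whose $p$-completion is $H\Z_p$. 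The same holds for $\TopC(S)$, so both sides vanish $T(n+1)$-locally.
\item For $\Z[1/p]$-algebras, both $\TopC$ and (for $n+1\ge2$) $\rK_{T(n+1)}$ vanish, using that $\rK(\Z[1/p])$ has height $\le1$ telescopically by Thomason/Mitchell.
\end{itemize}
This gives the right isomorphism. This is essentially the content of \cite[Corollary 4.11]{CMNN1}: vanishing of $L_{T(m)}\rK$ for $m\ge2$ on $H\Z$-algebras, and then DGM.

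For the left isomorphism (purity), I would consider the localization map $R\to L_{T(n)\oplus T(n+1)}R$. The natural approach is to compare $\rK$ of $R$ with $\rK$ of the $L^{\fin}_{n+1}$-localization, and then with its $T(n)\oplus T(n+1)$-part, by iteratively killing lower heights. At each step one must show that the fiber of the relevant localization of module categories has $T(n+1)$-acyclic $K$-theory. The key input is the LMMT purity theorem, $L_{T(n+1)}\rK(A)\simeq L_{T(n+1)}\rK(L_{T(n)\oplus T(n+1)}A)$. It is proved by showing that $\rK$ of $L^{\fin}_{n-1}$-local categories is $T(n+1)$-acyclic, via nilpotence and descent-type arguments together with the chromatic vanishing of $\rK$ at heights above $m+1$ for $L_m^{\fin}$-local rings.

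I expect this last vanishing result, that $L_{T(n+1)}\rK(A)=0$ for $L^{\fin}_{n-1}$-local $A$, to be the main obstacle. I would first attempt it by induction on height, starting from the base case $\rK(\mathbb Q)$ and $\Z[1/p]$. I would treat the inductive step through the Nilpotence Theorem, together with the observation that $T(n+1)$-local $\rK$ of a ring is a ring whose unit map factors through something killed by $v_{n+1}$.
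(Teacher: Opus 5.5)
The paper gives no argument of its own for this statement; it quotes \cite{LMMT} and \cite{CMNN1}. Your treatment of the right-hand isomorphism is fine and is the standard argument. Dundas--Goodwillie--McCarthy for $R\to\pi_0R$ reduces to discrete rings. Their $\rK$ and $\TopC$ are modules over $\rK(\Z)$ and $\TopC(\Z)$, and both of these are $T(m)$-acyclic for $m\ge 2$. The detour through Clausen--Mathew--Morrow rigidity is one way to see this for $\TopC(\Z)$, but nothing more is needed.

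The gap is in the left-hand isomorphism. You invoke ``the LMMT purity theorem'' in the form $L_{T(n+1)}\rK(A)\simeq L_{T(n+1)}\rK(L_{T(n)\oplus T(n+1)}A)$, which is the statement to be proved. You then say it follows from the vanishing of $L_{T(n+1)}\rK$ on $L^{\fin}_{n-1}$-local categories.

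That vanishing is a theorem of \cite{CMNN1}, which you can cite rather than reprove. It only accounts for your second step, from $L^{\fin}_{n+1}R$ to $L_{T(n)\oplus T(n+1)}R$. There the $F(n)$-torsion subcategories of the two perfect module categories coincide, and both quotients are, up to idempotent completion, $L^{\fin}_{n-1}$-local.

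Your first step is $\rK_{T(n+1)}(R)\simeq\rK_{T(n+1)}(L^{\fin}_{n+1}R)$. Equivalently, $L_{T(n+1)}\rK$ vanishes on the subcategory of $F(n+2)$-torsion perfect $R$-modules, or on any $L^{\fin}_{n+1}$-acyclic stable category. You never argue this. It is a statement about heights above $n+1$, so no vanishing result for $L^{\fin}_{n-1}$-local categories bears on it. It is exactly the LMMT Purity Theorem proper: $L_{T(n+1)}\rK(R)$ depends only on $L_{T(0)\oplus\cdots\oplus T(n+1)}R$. Passing from there to $L_{T(n)\oplus T(n+1)}R$ using \cite{CMNN1} is the routine part.

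So the obstacle you single out is the half available off the shelf, while the genuinely new input is missing. Unless you cite \cite{LMMT} for that step, as the paper does, the left isomorphism is not established. Separately, your proposed induction via the Nilpotence Theorem is not an argument as stated; \cite{CMNN1} obtain their vanishing from descent for finite $p$-group actions.
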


With additional hypotheses we have

\begin{cor}\label{cor-BHLS-6.3}
{\bf Connective rings with $\Z$-action.} \cite[Corollary 6.3]{BHLS} 
For $n \ge 1$, let $R$ be a $T(n+1)$-acyclic, connective
$\mathbb{E}_1$-algebra with a $\Z$-action.  The coassembly
map, the $T(n)$-localization map, and the cyclotomic trace fit into a
commuting diagram
\[
\xymatrix
@C=10mm
@R=6mm
{
\rK_{T(n+1)}(L_{T(n)} R^{h\Z}) 
 \ar[r]^(.5){\epsilon }
  &
\rK_{T(n+1)}(L_{T(n)} R)^{h\Z} \\
\rK_{T(n+1)}( R^{h\Z}) \ar[r]^(.5){\epsilon }
\ar[d]_{\simeq} \ar[u]^{\simeq}   
   &
\rK_{T(n+1)}(R)^{h\Z}
   \ar[d]_{\simeq} \ar[u]^{\simeq}   \\
\TopC_{T(n+1)}(R^{h\Z}) \ar[r]^(.5){\epsilon }
   &
\TopC_{T(n+1)}(R)^{h\Z},
}
\]
\end{cor}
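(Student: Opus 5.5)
The plan is to apply \cref{thm-purity} twice, once to $R^{h\Z}$ and once to $R$, and then pass to homotopy fixed points in the right-hand column. The coassembly maps relate the two columns, and naturality of the localization map and of the cyclotomic trace makes the squares commute.

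First, check that $R^{h\Z}$ is again a connective $\EE_1$-algebra. Since $B\Z\simeq S^1$, the homotopy fixed points are the fiber of $\Psi-1:R\to R$. This gives a long exact sequence
\begin{displaymath}
0\to \pi_0 R^{h\Z}\to \pi_0R\to\pi_0 R\to \pi_{-1}R^{h\Z}\to 0 ,
\end{displaymath}
so connectivity can fail only through $\pi_{-1}R^{h\Z}=\mathrm{coker}(\Psi-1)$ on $\pi_0R$. I expect this to be the main obstacle. I would first try to argue that only the $T(n)\oplus T(n+1)$-localization of $R^{h\Z}$ matters, by applying \cref{thm-purity} to the connective cover $\tau_{\geq 0}R^{h\Z}$. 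The fiber of $\tau_{\geq0}R^{h\Z}\to R^{h\Z}$ is bounded above, hence $T(m)$-acyclic for $m\geq1$. So the two have the same $L_{T(n)\oplus T(n+1)}$, and, via the purity equivalences, the same $\rK_{T(n+1)}$. Matching $\TopC_{T(n+1)}$ uses the truncatedness phenomenon of \cref{thm-LQ-TC}, or directly the fact that purity identifies both with $\rK_{T(n+1)}$ of the common localization.

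Second, use the $T(n+1)$-acyclicity hypothesis to simplify $L_{T(n)\oplus T(n+1)}R$ to $L_{T(n)}R$. A $T(n+1)$-acyclic spectrum has trivial $T(n+1)$-localization, and the localization with respect to $T(n)\oplus T(n+1)$ then reduces to $L_{T(n)}$. For $R^{h\Z}$, note that $L_{T(n+1)}$ of a finite limit of $T(n+1)$-acyclics is acyclic, since the fiber of $\Psi-1$ is a finite limit and $L_{T(n+1)}$ is exact. Together with \cref{thm-purity}, this gives the vertical equivalences in both columns; in the right column one takes $h\Z$ of equivalences of $\Z$-objects.

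Finally, show commutativity. The coassembly map is natural in the functor $F$ as in \cite[Definition 5.6]{Rav:gjmcyc}, because it is built from the maps $F(R^{h\Z})\to F(R)$ induced by $R^{h\Z}\to R$ together with the universal property of $(-)^{h\Z}$. The natural transformations $\rK_{T(n+1)}\Rightarrow\rK_{T(n+1)}L_{T(n)}$ and $\rK_{T(n+1)}\Rightarrow \TopC_{T(n+1)}$ are $\Z$-equivariant. Hence both squares commute. Moreover, the top and bottom rows are identified with the middle row up to the vertical equivalences.
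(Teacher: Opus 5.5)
Your overall strategy is the one the paper intends. The paper gives no argument of its own; it presents the corollary, quoted from BHLS, as a consequence of \cref{thm-purity}. Several of your steps are fine:
\begin{itemize}
\item the right-hand column;
\item the $T(n+1)$-acyclicity of $R^{h\Z}=\mathrm{fib}(\Psi-1)$;
\item the identification $L_{T(n)\oplus T(n+1)}=L_{T(n)}$ on $T(n+1)$-acyclics;
\item commutativity via naturality of coassembly.
\end{itemize}

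The gap is the left-hand column. As you noticed, $R^{h\Z}$ need not be connective, and in the case of interest it never is. Since $\Psi$ is a ring map, it is the identity on $\pi_0\BPn=\Z_{(p)}$, so $\pi_{-1}(\BPn^{h\Z})\cong\Z_{(p)}$. Even the trivial action gives $R^{B\Z}\simeq R\oplus\Sigma^{-1}R$. (Your exact sequence should also begin with $\pi_1R\xrightarrow{\Psi-1}\pi_1R$, whose cokernel contributes to $\pi_0R^{h\Z}$, but that is harmless.)

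For the top left arrow, your detour through $\tau_{\geq0}R^{h\Z}$ does not suffice. Applying \cref{thm-purity} to $\tau_{\geq0}R^{h\Z}$ only shows that $\rK_{T(n+1)}(L_{T(n)}R^{h\Z})$ is a retract of $\rK_{T(n+1)}(R^{h\Z})$. What you need is that the purity theorem of \cite{LMMT} holds for arbitrary $\EE_1$-rings, which it does. Only the trace comparison needs connectivity, and with general purity the detour is unnecessary.

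The real missing step is the bottom left arrow: the cyclotomic trace $\rK_{T(n+1)}(R^{h\Z})\to\TopC_{T(n+1)}(R^{h\Z})$ for a nonconnective ring. The Dundas--Goodwillie--McCarthy input behind \cref{thm-purity} requires connectivity, and neither of your justifications replaces it.
\begin{itemize}
\item Purity is a statement about $\rK$, not $\TopC$. Saying it identifies $\TopC_{T(n+1)}(R^{h\Z})$ with $\rK_{T(n+1)}$ of the common localization is therefore circular.
\item \cref{thm-LQ-TC} concerns truncatedness of $\TopC(R)\to\TopC_{T(n+1)}(R)$. It says nothing about $\TopC(\tau_{\geq0}A)\to\TopC(A)$.
\end{itemize}
So you still need a genuine argument that the trace is a $T(n+1)$-equivalence for $R^{h\Z}$.

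Here is one way to supply it.
\begin{itemize}
\item Write $R^{h\Z}\simeq R\times_{R\times R}R$, the pullback of the diagonal along $(\mathrm{id},\Psi)$.
\item Let $E=\mathrm{fib}(\rK_{T(n+1)}\to\TopC_{T(n+1)})$. This is a localizing invariant, and it vanishes on connective $\EE_1$-rings by \cref{thm-purity}.
\item The Land--Tamme theorem on $K$-theory of pullbacks gives $E(R^{h\Z})\simeq E(R)\times_{E(R\odot R)}E(R)$. Here the $\odot$-ring has underlying spectrum $R\otimes_{R^{h\Z}}R$.
\item That spectrum is connective. The $s$th skeletal filtration quotient of the bar construction is a summand of $\Sigma^{s}R\otimes(R^{h\Z})^{\otimes s}\otimes R$, which is connective because $R^{h\Z}$ is $(-1)$-connective.
\end{itemize}
Hence $E(R\odot R)=0$ and $E(R)=0$, so $E(R^{h\Z})=0$.
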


The ring of interest is $\BPn$ with the $\Z$-action given by the
Adams operations of \cref{sec-Adams-BPn}. While its known
multiplicative structure ($\EE\AA_{2}$) is relatively weak, 
\cref{thm-Theorem-5.4} says that $T(n)$-localization elevates it to
$\EE_{\infty}$.

\begin{theorem}\label{thm-main-A}
{\bf The main counterexample.} \cite[Theorem A]{BHLS}
Let \( p \) be any prime and \( n  \geq 1 \). Then, for all \( k \geq 0 \),
\[
\rK_{T (n+1)}\left( BP\langle n \rangle^{h(p^{k}\Z)} \right)
\]
is  not \( K(n+1) \)-local. In particular,
\[
\mathrm{\qSp}_{K(n+1)} \neq \mathrm{\qSp}_{T(n+1)}.
\]
\end{theorem}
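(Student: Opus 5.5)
The plan is to compare two descriptions of the coassembly map
\[
\epsilon:\rK_{T(n+1)}(\BPn^{h(p^{k}\Z)})\to \rK_{T(n+1)}(\BPn)^{h(p^{k}\Z)}.
\]
We will show that its target is height $n+1$ cyclotomically complete, while $\epsilon$ itself is not an equivalence. We will then argue that $\epsilon$ is the cyclotomic completion of its source. Since $\qSp_{K(n+1)}\subseteq \qSp_{\Cyc(n+1)}$, a $K(n+1)$-local source would already be cyclotomically complete. Then $\epsilon$ would be an equivalence, which is a contradiction. In other words, if the source were $K(n+1)$-local, then $\epsilon$, being a cyclotomic completion of a $K(n+1)$-local spectrum, would be an equivalence.

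The first step is the reduction to $T(n)$-local rings. $\BPn$ is connective and $T(n+1)$-acyclic, since it has fp-type $n$. So \cref{cor-BHLS-6.3}, applied to the $p^{k}\Z$-action, identifies $\epsilon$ with the coassembly map for $R:=L_{T(n)}\BPn$. By \cref{thm-Theorem-5.4}, $R$ is an $\EE_\infty$-ring with a $\Z$-action, namely $(E_n^{\Psi})^{h(\mu_{p^n-1}\rtimes\widehat\Z)}$.

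The second step is the positive half. By \cref{lem-BHLS-6.15}, $\epsilon$ is an equivalence exactly when $R^{h p^k\Z}\to R$ satisfies $\rK_{T(n+1)}$-descent. After base change along $\mS_{T(n)}\to\mS_{T(n)}[\omega^{(n)}_{p^\infty}]$, the Adams operation $\Psi^\ell$ acts through $\Z_p^\times$ via the determinant. We expect this map to split there: $R[\omega^{(n)}_{p^\infty}]^{hp^k\Z}\to R[\omega^{(n)}_{p^\infty}]$ should acquire a section, because the action becomes trivial on the cyclotomic tower. A split map satisfies descent for any functor. By \cref{cor-6.19}, $\mS_{T(n)}\to\mS_{T(n)}[\omega^{(n)}_{p^\infty}]$ is a $\rK_{\Cyc(n+1)}$-cover, so descent passes back down. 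Hence $L_{\Cyc(n+1)}\epsilon$ is an equivalence. Alternatively, \cref{thm:6.18} describes the cyclotomic completion of $\rK_{T(n+1)}(R^{hp^k\Z})$ directly, and matching it with the target gives that the target is cyclotomically complete and $\epsilon$ is the completion map.

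The third step is the negative half. We take $k\gg0$ and apply \cref{thm-thmB}, which shows that after $T(n+1)_*$ the coassembly map agrees with the one for the trivial action $\BPn^{B(p^k\Z)}$. Via \cref{cor-BHLS-6.3}, the latter is the $\rK_{T(n+1)}$ coassembly map for $L_{T(n)}\BPn$ with trivial action. This is not an equivalence by \cref{thm-failure}, since $\rK_{T(n+1)}(L_{T(n)}\BPn)\neq0$ by redshift. Therefore $\epsilon$ is not an equivalence for $k\gg0$, and the source is not cyclotomically complete, hence not $K(n+1)$-local.

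The last step handles smaller $k$. For $k'\le k$, $\BPn^{hp^{k}\Z}$ is a finite Galois-type extension of $\BPn^{hp^{k'}\Z}$, so $\rK_{T(n+1)}$ of the former is a module over, and in fact a retract-compatible descent of, the latter. If the source for $k'$ were $K(n+1)$-local, so would be every module over it, including the $K$-theory for $k$, which is a contradiction.

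The main obstacle is the splitting in the second step: proving that the fixed-point map becomes split after adjoining $\omega^{(n)}_{p^\infty}$. This requires identifying the $\Z$-action on $R[\omega^{(n)}_{p^\infty}]$ with a pulled-back Galois action.
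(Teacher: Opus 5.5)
Your proposal is correct and follows essentially the same route as the paper. You reduce to $L_{T(n)}\BPn$ by \cref{cor-BHLS-6.3}, use the splitting-after-cyclotomic-base-change argument behind \cref{prop-BHLS6.24} and \cref{thm-BHLS-6.25} to identify coassembly with height $n+1$ cyclotomic completion, get non-equivalence for $k\gg0$ from \cref{thm-thmB} and \cref{thm-failure}, conclude via $\qSp_{K(n+1)}\subseteq\qSp_{\Cyc(n+1)}$, and treat smaller $k$ separately.

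Two points need tightening. First, $\rK_{\Cyc(n+1)}$-descent by itself yields the completion statement only once the target is known to be cyclotomically complete (equivalently, $\rK_{T(n+1)}$ of the \v{C}ech conerve terms); the paper gets this from the second clause of \cref{prop-BHLS6.24} via \cref{lem-}. Second, your module argument for smaller $k$ holds only for $T(n+1)$-local modules, because $L_{K(n+1)}$ is smashing on $\qSp_{T(n+1)}$; it also needs the $\EE_\infty$-structure that becomes available after $T(n)$-localization.
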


On \cite[page~5]{BHLS} they say (using our notation)
\begin{quote}
  Using cyclotomic redshift [see \cref{sec-cyclo-redshift}], and the
  fact that the $(p^{k}\Z_{p})$-pro-Galois extension
\[
L_{T(n)}{\BPn}^{h(p^{k}\Z)}
   \;\longrightarrow\;
L_{T(n)}{\BPn}
\]

\noindent is closely related to a [higher] cyclotomic extension [see
\cref{sec-height-n}], we deduce that there is an equivalence
\[
\rK_{T(n+1)}\!\left({\BPn}\right)^{h(p^{k}\Z)}
   \;\simeq\;
\rK_{\Cyc(n+1)}\!\left({\BPn}^{h(p^{k}\Z)}\right)
\]
for each $k \ge 0$. Thus, in order to prove \cref{thm-main-A}, it
suffices to show that the coassembly map
\[
\rK_{T(n+1)}\!\left({\BPn}^{h(p^{k}\Z)}\right)
   \;\longrightarrow\;
\rK_{T(n+1)}\!\left({\BPn}\right)^{h(p^{k}\Z)}
\]
is not an equivalence.
\end{quote}

\begin{lem}\label{lem-}
{\bf A cyclotomic completeness criterion.} \cite[Lemma 6.22]{BHLS}
For $n \geq 1$, if $R$ is a $T(n)$-local commutative ring spectrum
and there is a map in $\mathrm{CAlg}(\qSp_{T(n)})$
\[
  S[\omega^{(n)}_{p^\infty}]^{hT_p}
  \longrightarrow L_{T(n)}\bigl(W(\barFFp) \otimes R\bigr)
  \qquad \mbox{ for $T_{p}$ as in \cref{eq-dense-discrete},} 
\]

\noindent then $\rK_{T(n+1)}(R)$ is height $n+1$ cyclotomically complete.
\end{lem}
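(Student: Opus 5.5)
The plan is to reduce to \cref{thm:6.18}, which exhibits the height $n+1$ cyclotomic completion of $\rK_{T(n+1)}(R)$ as
\[
\rK_{T(n+1)}\!\left(\mS_{T(n)}[\omega^{(n)}_{p^\infty}]^{hT_p}\otimes R\right)^{h\Z}.
\]
It therefore suffices to show that the completion map from $\rK_{T(n+1)}(R)$ to this object is an equivalence. Write $A:=\mS_{T(n)}[\omega^{(n)}_{p^\infty}]^{hT_p}$ and $R':=L_{T(n)}(\mW(\barFFp)\otimes R)$. The hypothesis gives a map $A\to R'$ in $\CAlg(\qSp_{T(n)})$. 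The key idea is that $A\otimes R'$ then splits as a coinduced object: a commutative algebra map $A\to R'$ gives an $R'$-algebra map $A\otimes R'\to R'$. Combined with the Galois property of the cyclotomic extension (the $h$-map of \cref{def-Rognes-Galois}), this identifies
\[
A\otimes R'\simeq \mathrm{Map}_{\rm cts}(\Z_p,R')
\]
$\Z$-equivariantly, with $\Z\subseteq\Z_p$ acting by translation. This is the exact analogue of the statement that once a ring contains all $p$-power roots of unity, adjoining them again yields a product of copies indexed by the Galois group.

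First I would prove that $\rK_{T(n+1)}(R')$ is cyclotomically complete. By the splitting, $A\otimes R'$ is a filtered colimit of finite products $\prod_{\Z/p^j}R'$, compatible with the translation action. Since $\rK_{T(n+1)}$ commutes with finite products, and by \cref{thm-redshift-B} (the hypersheaf statement) commutes with the relevant colimit along the tower in the hypersheaf sense, we get
\[
\rK_{T(n+1)}(A\otimes R')^{h\Z}\simeq \bigl(\colim{j}\textstyle\prod_{\Z/p^j}\rK_{T(n+1)}(R')\bigr)^{h\Z}.
\]
By a Shapiro-type argument the $\Z$-homotopy fixed points of the continuous coinduced object recover $\rK_{T(n+1)}(R')$, so the completion map for $R'$ is an equivalence by \cref{thm:6.18}.

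Next I would descend from $R'$ to $R$. The map $R\to R'$ is the unramified part of the maximal abelian extension: it is a filtered colimit of finite Galois extensions $L_{T(n)}(\mW(\FF_{p^d})\otimes R)$ with groups $\rC_d$, i.e.\ a $\widehat{\Z}$-pro-Galois extension, and it is faithful because it is a colimit of Moore-type spectra of $p$-adic unramified extensions. For each finite stage, $\rK_{T(n+1)}(R)$ is the homotopy fixed points $\rK_{T(n+1)}(L_{T(n)}(\mW(\FF_{p^d})\otimes R))^{h\rC_d}$. For the $p$-part of $\rC_d$ this is Galois descent via \cref{cor-}; for the prime-to-$p$ part it holds since the group order is invertible. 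Cyclotomically complete spectra are closed under limits, being $R^{\fin}_{n+1}$-local within $T(n+1)$-local spectra. It therefore remains to pass to the colimit over $d$ by checking that the completion is compatible with the colimit $R'=\colim{d}$.

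I expect this last point, namely the compatibility of $\rK_{T(n+1)}$ and cyclotomic completion with the infinite unramified colimit (and likewise the continuity needed in the splitting step), to be the main obstacle. I would first try to avoid the colimit altogether. Since the map $A\to R'$ comes from $\CAlg$ with $A$ compact-like, I would aim to show that it factors through some finite stage $L_{T(n)}(\mW(\FF_{p^d})\otimes R)$. Failing that, I would use the smashing nature of cyclotomic completion from \cref{prop-BCSY24-6.19} to commute it with filtered colimits, and argue that $\rK_{T(n+1)}$ of a filtered colimit of $T(n)$-local rings agrees $T(n+1)$-locally with the colimit of the $\rK_{T(n+1)}$'s.
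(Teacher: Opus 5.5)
The genuine gap is the step you flag yourself: getting from $R'=L_{T(n)}(\mW(\barFFp)\otimes R)$ back to $R$. Neither of your suggested repairs closes it.

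Your first step, with the fixes noted below, shows that $\rK_{T(n+1)}(S)$ is cyclotomically complete for every $T(n)$-local commutative ring $S$ admitting a map from $A$, in particular for $S=R'$. The finite-stage identifications $\rK_{T(n+1)}(R)\simeq\rK_{T(n+1)}(R_d)^{h\rC_d}$ do not help, because the $\rK_{T(n+1)}(R_d)$ are not known to be complete. What you actually need is descent for $\rK_{T(n+1)}$ along the infinite tower $R\to R'$, i.e.\ that $\rK_{T(n+1)}(R)$ is recovered by a limit from $\rK_{T(n+1)}$ of the \v{C}ech nerve.

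That cannot come out of the tools you list (finite Galois descent via \cref{cor-}, continuity of $\rK_{T(n+1)}$, smashing of $L_{\Cyc(n+1)}$), because they apply verbatim to the cyclotomic tower $R\to A\otimes R$. Its finite stages are Galois with $p$-group quotients, and every term of its \v{C}ech nerve receives a map from $A$. So the same reasoning would show that \emph{every} $\rK_{T(n+1)}(R)$ is cyclotomically complete, contradicting \cref{thm-BHLS-6.25}.

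Concretely, smashing only tells you that the fiber $F$ of $\rK_{T(n+1)}(R)\to\rK_{\Cyc(n+1)}(R)$ satisfies $F\otimes_{\rK_{T(n+1)}(R)}\rK_{T(n+1)}(R')\simeq 0$. Concluding $F\simeq 0$ is exactly faithfulness of $\rK_{T(n+1)}(R)\to\rK_{T(n+1)}(R')$, which is the point at issue. Your other repair also fails: nothing in the hypothesis lets you factor $A\to R'$ through a finite stage, since $A$ is itself an infinite colimit of finite Galois extensions.

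So you need an input specific to the unramified extension, for instance Frobenius descent $\rK_{T(n+1)}(R)\simeq\rK_{T(n+1)}(R')^{h\Z}$, with the generator acting by Frobenius. Given that, $\rK_{T(n+1)}(R)$ is a finite limit of the complete spectrum $\rK_{T(n+1)}(R')$ and you are done.

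One possible route is as follows. Compare finite Galois extensions to exhibit $\rK_{T(n+1)}(R')$ as a base change of $\rK_{T(n+1)}(\mS_p)\to\rK_{T(n+1)}(\mathrm{colim}_d\,\mW(\FF_{p^d}))$. This reduces you to the connective case $\mS_p\to\mW(\barFFp)$. There \cref{thm-purity} lets you pass to $\TopC$, and $\THH(\mW(\barFFp))\simeq\mW(\barFFp)$ with Frobenius twisted by $F$. Lang's theorem $\mW(\barFFp)^{hF}\simeq\mS_p$ then gives the descent.

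Two smaller corrections to your first step:
\begin{itemize}
\item Commuting $\rK_{T(n+1)}$ with $L_{T(n)}\mathrm{colim}_j\prod_{\Z/p^j}R'$ follows from $\rK$ commuting with filtered colimits plus purity: an $L^{\fin}_n$-local ring has the same $\rK_{T(n+1)}$ as its $T(n)$-localization. It does not follow from the hypersheaf clause of \cref{thm-redshift-B}. Read literally alongside \cref{prop-BMCSY-5.11}, that clause would make every $\rK_{T(n+1)}(R)$ complete and the lemma's hypothesis superfluous, so it cannot be what you use.
\item Your Shapiro computation actually gives $\mathrm{colim}_j X^{B(p^j\Z)}\simeq X\oplus\Sigma^{-1}X[1/p]$. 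You need $T(n+1)$-localization to discard the second summand.
\end{itemize}
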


The proof of the following requires a full page and several lemmas in
\cite{BHLS}.  It uses the machinery sketched in \cref{sec-height-n}.
Recall that $\rK_{\Cyc(n+1)}$ (height $n+1$ cyclotomically complete algebraic
$K$-theory) is defined in \cref{def-KT-KK}.

\begin{prop}\label{prop-BHLS6.24}
  {\bf $\rK_{\Cyc(n+1)}$-descent and cyclotomic completeness.}
  \cite[Prop. 6.24]{BHLS} For $n \geq 1$, let $R = L_{T(n)} \BPn$
  with the $\EE_{\infty}$-$\Z$-action of
  \cref{thm-Theorem-5.4}.  Then for each $k \geq 0$, the map
\[
  f_k \colon R^{h(p^{k} \Z)} \longrightarrow R
\]

\noindent is a $\rK_{\Cyc(n+1)}$-cover as in \cref{def-F-descent} (also see
\cref{ex-cyclo-comp}), and $\rK_{T(n+1)}(R)$ is height $n+1$
cyclotomically complete as in \cref{def-cyc-comp}.
\end{prop}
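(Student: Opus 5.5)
The plan is to deduce both assertions from \cref{cor-6.19} together with the cyclotomic completeness criterion \cite[Lemma 6.22]{BHLS} quoted just above. First I would establish the second assertion. By \cref{thm-Theorem-5.4}(ii), $R = L_{T(n)}\BPn \simeq (E_n^{\Psi})^{h(\mu_{p^n-1}\rtimes\widehat{\Z})}$, and after base change to $\mW(\barFFp)$ the $\widehat{\Z}$ part becomes trivial. This gives $L_{T(n)}(\mW(\barFFp)\otimes R)\simeq E_n(\barFFp)^{h\mu_{p^n-1}}$, at least after $K(n)$-localization. Since $E_n(\barFFp)$ is $K(n)$-local and receives the maximal abelian extension $\SS^{\ab}_{K(n)}$, the determinant map restricted to $\mu_{p^n-1}$ factors through the torsion $T_p\subseteq \Z_p^\times$. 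From this I would produce a map $\SS_{T(n)}[\omega^{(n)}_{p^\infty}]^{hT_p}\to L_{T(n)}(\mW(\barFFp)\otimes R)$ in $\CAlg(\qSp_{T(n)})$, using \cref{thm-lifting-abelian} to lift the $K(n)$-local cyclotomic extension to its telescopic version. The lemma then gives that $\rK_{T(n+1)}(R)$ is height $n+1$ cyclotomically complete.

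For the cover statement, I would use the fact that $R^{h(p^k\Z)}\to R$ is, $K(n)$-locally, the $p^k\Z_p$-pro-Galois extension corresponding to the determinant quotient. The aim is a commutative square of commutative algebras
\[
\begin{array}{ccc}
\SS_{T(n)}[\omega^{(n)}_{p^\infty}]^{h(T_p\times p^k\Z)} & \to & \SS_{T(n)}[\omega^{(n)}_{p^\infty}]^{hT_p}\\
\downarrow && \downarrow\\
R^{h(p^k\Z)}\otimes \mW(\barFFp) & \to & R\otimes\mW(\barFFp)
\end{array}
\]
that becomes a pushout. In other words, I want to show that $R\simeq R^{h(p^k\Z)}\otimes_{A}B$, where $A\to B$ is the $p^k\Z$-fixed point stage of the cyclotomic extension. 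This is the ``splits after base change'' statement in the quoted passage of \cref{sec-BHLS-counter}. Once it holds, \cref{cor-6.19} shows that $\mS_{T(n)}\to\mS_{T(n)}[\omega^{(n)}_{p^\infty}]$ is a $\rK_{\Cyc(n+1)}$-cover. Covers are stable under base change and composition, and descendability passes along finite Galois and fixed-point stages. Using these facts I would conclude that $f_k$ is a $\rK_{\Cyc(n+1)}$-cover, handling the base change by the faithful étale extension $\mW(\barFFp)$ separately via descent.

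I expect the main obstacle to be the telescopic identification of the square above as a pushout. The formula of \cref{thm-Theorem-5.4} is stated for $E_n$, which is $K(n)$-local, while we need a statement in $\qSp_{T(n)}$. To get around this, I would first check the equivalence after tensoring with $\SS_{T(n)}[\omega^{(n)}_{p^\infty}]$. There, \cref{prop-BMCSY-5.11} expresses $T(n)$-local objects as modules over the cyclotomic sheaf, so the claim reduces to a stalkwise computation that can be checked on $K(n)$-homology using $\pi_*$ of $R$. Descent back to the original statement is legitimate here precisely because we only need a $\rK_{\Cyc(n+1)}$-statement, and cyclotomic completion is insensitive to this base change.
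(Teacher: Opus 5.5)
The paper gives no proof of its own here; it defers to \cite[Prop.~6.24]{BHLS}. BHLS's strategy, per the passage quoted in \cref{sec-BHLS-counter}, is to base change $f_k$ along a $\rK_{\Cyc(n+1)}$-cover coming from \cref{cor-6.19}, where $f_k$ splits, and to get completeness from \cref{lem-}.

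Your argument for the second assertion is in that spirit and is essentially fine, with two corrections. The reason $\det_{\pm}(\mu_{p^n-1})\subseteq T_p$ is simply that $\mu_{p^n-1}$ has order prime to $p$. And the map should be built by Galois theory into $K(n)$-local rings such as $\mS\WW(\FF_{p^d})\otimes R$, where maps out of the finite stages $\SS_{T(n)}[\omega^{(n)}_{p^j}]^{hT_p}$ are computed $K(n)$-locally, and then passed to the colimit; \cref{thm-lifting-abelian} plays no role.

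The genuine gap is in the first assertion, at exactly the step you flag, and your proposed fix does not work. After tensoring with $\SS_{T(n)}[\omega^{(n)}_{p^\infty}]$ everything is cyclotomically complete, but not known to be $K(n)$-local; that would give $\langle R_n\rangle=\langle R^{\fin}_n\rangle$, which is open. The stalks in \cref{prop-BMCSY-5.11} are again just $T(n)$-local spectra. A $K(n)_*$-isomorphism between such spectra is not known to be an equivalence, and for $T(n)$-local spectra in general it is not; that is precisely the difference between $\qSp_{K(n)}$, $\qSp_{\Cyc(n)}$ and $\qSp_{T(n)}$ at stake. So computing with $\pi_*R$ or $K(n)$-homology cannot establish your pushout in $\qSp_{T(n)}$. ``Only needing a $\rK_{\Cyc(n+1)}$-statement'' does not rescue this either. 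That is completion of $K$-theory at height $n+1$, not height $n$ completeness of the input rings, and nothing quoted says $\rK_{\Cyc(n+1)}$ of the \v{C}ech nerve is unchanged when $T(n)$-local rings are replaced by their $K(n)$-localizations.

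There is also a bookkeeping error in your square. $\Psi^{\ell}$ acts through the central element $\ell\in\Z_p^\times\subseteq\GG_n$, whose determinant is $\ell^{n}$. So the image of $p^k\Z$ in $\Z_p^\times/T_p$ is generated by $\ell^{np^k}$, not by $e(p^k)$. A $K(n)$-local Galois computation then shows $(R^{h(p^k\Z)}\otimes\mW)\otimes_A B$ is a product of at least $p^{v_p(n)}$ copies of $R\otimes\mW$, so it is not $R\otimes\mW$ when $p\mid n$. This is repairable: match the subgroups via the determinant, or note that a finite product of Galois-conjugate copies still yields a cover.

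What is really missing is a genuinely $T(n)$-local continuity input. For instance, you need that $L_{T(n)}\operatorname{colim}_j R^{h(p^j\Z)}\to R$ is an equivalence, i.e.\ that $f_k$ is a $T(n)$-local pro-Galois extension, as BHLS use. Here is how this can be checked:
\begin{itemize}
\item Tensor with a type $n$ generalized Moore spectrum $F$. This commutes with $(-)^{h\Z}$, which is a fiber, and with colimits.
\item On $\pi_{2i}(F\otimes R)$ the generator $\Psi^{\ell^{p^j}}$ acts by $\ell^{ip^j}$, which is $\equiv 1$ modulo a high power of $p$. So for $j\gg0$ it acts as the identity.
\item The restriction maps $(F\otimes R)^{h(p^j\Z)}\to (F\otimes R)^{h(p^{j+1}\Z)}$ then act by $p$ on the $\Sigma^{-1}(F\otimes R)$ term, and the colimit collapses to $F\otimes R$.
\end{itemize}
With this in hand, finite Galois descent at each finite stage of the cyclotomic tower, followed by passage to the colimit, gives the required $T(n)$-local identification (or the BHLS splitting after base change to $\SS^{\ab}_{T(n)}$). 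The cover statement then follows from \cref{cor-6.19} and stability of covers under base change. You also need the two-out-of-three property: if $A\to B\to C$ and $A\to C$ is a cover, then so is $A\to B$. Finally, you need that $A\otimes\SS_{T(n)}[\omega^{(n)}_{p^\infty}]$ is a finite product of copies of $\SS_{T(n)}[\omega^{(n)}_{p^\infty}]$. Your appeal to ``descendability'' does not supply these steps.
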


\cref{thm-main-A} for a given value of $k$ implies the same for
smaller values of $k$, so it suffices to prove it for $k\gg 0$.  It is
implied by the following, which is related to \cref{cor-BHLS-6.3} and
relies on \cref{lem-BHLS-6.15}.

\begin{theorem}\label{thm-BHLS-6.25}
{\bf Coassembly as cyclotomic completion.}
\cite[Theorem 6.25]{BHLS}
Let $BP\langle n \rangle$ be as in Theorem 5.4. 
For every prime $p$, height $n \geq 1$, and $k \geq 0$, there is a diagram
\[
\xymatrix
@C=10mm
@R=6mm
{
\rK_{T(n+1)}(L_{T(n)} BP\langle n \rangle^{h(p^{k}\Z)}) 
 \ar[r]^(.5){\epsilon }
  &
\rK_{T(n+1)}(L_{T(n)} BP\langle n \rangle)^{h(p^{k}\Z)} \\
\rK_{T(n+1)}( BP\langle n \rangle^{h(p^{k}\Z)}) \ar[r]^(.5){\epsilon }
\ar[d]_{\simeq} \ar[u]^{\simeq}   
   &
\rK_{T(n+1)}(BP\langle n \rangle)^{h(p^{k}\Z)}
   \ar[d]_{\simeq} \ar[u]^{\simeq}   \\
\TopC_{T(n+1)}(BP\langle n \rangle^{h(p^{k}\Z)}) \ar[r]^(.5){\epsilon }
   &
\TopC_{T(n+1)}(BP\langle n \rangle)^{h(p^{k}\Z)},
}
\]

\noindent where the horizontal maps are coassembly maps as in
\cite[Definition 5.6]{Rav:gjmcyc}. These maps are not
isomorphisms, but rather exhibit the target as the cyclotomic
completion of the source.  Hence the coassembly map on
\begin{displaymath}
\rK_{\Cyc(n+1)}(BP\langle n \rangle^{h(p^{k}\Z)})
\end{displaymath}
 
\noindent
is an equivalence, while the middle coassembly map above is not.  In
particular, this gives a counterexample to the height $n+1$ telescope
conjecture.
\end{theorem}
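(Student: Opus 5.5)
The plan is to assemble the diagram from \cref{cor-BHLS-6.3}, then identify the coassembly map with cyclotomic completion using \cref{lem-BHLS-6.15} and \cref{prop-BHLS6.24}, and finally show non-triviality with \cref{thm-thmB} and \cref{thm-failure}.

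First, $\BPn$ is connective and $T(n+1)$-acyclic, since $K(m)_{*}\BPn=0$ for $m>n$. The same holds for $\BPn^{h(p^{k}\Z)}$: the fiber sequence $\BPn^{h(p^k\Z)}\to\BPn\xrightarrow{\Psi-1}\BPn$ shows it is $(-1)$-connective and still of height $\le n$. If connectivity is an issue one can pass to connective covers, which do not change $T(n+1)$-local $K$-theory by \cref{thm-purity}. Applying \cref{cor-BHLS-6.3} to the $p^{k}\Z$-action then gives the diagram, with all vertical maps equivalences. It therefore suffices to analyze the top row. There, because $L_{T(n)}$ of a fixed point spectrum under a locally unipotent action agrees with $(L_{T(n)}\BPn)^{h(p^k\Z)}$, the top left term is $\rK_{T(n+1)}(R^{h(p^k\Z)})$ with $R=L_{T(n)}\BPn$ carrying the $\EE_\infty$-action of \cref{thm-Theorem-5.4}.

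Second, I identify the top coassembly map with cyclotomic completion. By \cref{lem-BHLS-6.15}, $\epsilon$ factors through the totalization of $\rK_{T(n+1)}$ applied to the \v{C}ech nerve of $f_k\colon R^{h(p^k\Z)}\to R$, and the second map in that factorization is an equivalence. By \cref{prop-BHLS6.24}, $f_k$ is a $\rK_{\Cyc(n+1)}$-cover. Hence applying $L_{\Cyc(n+1)}$ turns the totalization into $\rK_{\Cyc(n+1)}(R^{h(p^k\Z)})$, so $\epsilon$ becomes an equivalence after cyclotomic completion. The same proposition says $\rK_{T(n+1)}(R)$ is cyclotomically complete. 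Homotopy fixed points preserve limits and hence completeness, so the target $\rK_{T(n+1)}(R)^{h(p^k\Z)}$ is cyclotomically complete. Consequently $\epsilon$ is exactly the completion map of its source, and in particular the coassembly map for $\rK_{\Cyc(n+1)}$ is an equivalence.

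Third, I show that the middle $\epsilon$ is \emph{not} an equivalence; by the above this is equivalent to the source failing to be cyclotomically complete. Since the diagram is natural in $k$, it suffices to treat $k\gg0$. Use the bottom row and smash with a type $n+1$ complex $F(n+1)$ carrying a $v_{n+1}$-self map. \cref{thm-thmB} identifies the $T(n+1)_*$ coassembly map for the $p^k\Z$-action with that for the trivial action on $\BPn$, compatibly with the maps to the targets. For the trivial action, \cref{thm-failure} applied to the $T(n)$-local ring $L_{T(n)}\BPn$, transported back via \cref{thm-purity}, shows that coassembly is not an equivalence, provided $\rK_{T(n+1)}(L_{T(n)}\BPn)\neq0$. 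That nonvanishing follows from redshift, for example from \cref{thm-redshift-B} together with $\TopC_{T(n+1)}(\BPn)\ne0$ (Hahn--Wilson, \cref{thm-LQ-TC}). Hence $\rK_{T(n+1)}(\BPn^{h(p^k\Z)})$ is not cyclotomically complete, and so it is not $K(n+1)$-local, since $K(n+1)$-local spectra are complete. This gives $\qSp_{K(n+1)}\ne\qSp_{T(n+1)}$.

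I expect the main obstacle to be this last step: transporting the non-equivalence from the trivial action to the Adams action. \cref{thm-thmB} only gives an isomorphism on $T(n+1)_*$ homotopy, not a map of spectra, so I would argue at the level of homotopy groups. Non-invertibility of $\epsilon$ on $T(n+1)_*$ suffices, since an equivalence would induce an isomorphism there. I also need \cref{thm-failure}'s trivial-action statement to apply to $\BPn^{B(p^k\Z)}$; I would get this by matching $\TopC_{T(n+1)}$ with $\rK_{T(n+1)}$ via \cref{cor-BHLS-6.3}.
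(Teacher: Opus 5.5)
Your outline matches the one the paper sketches, following BHLS: \cref{cor-BHLS-6.3} gives the diagram, \cref{lem-BHLS-6.15} with \cref{prop-BHLS6.24} gives the completion statement, and \cref{thm-thmB} with \cref{thm-failure} gives non-invertibility. However, two steps do not go through as you justify them.

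\textbf{Your second step.} The phrase ``applying $L_{\Cyc(n+1)}$ turns the totalization into $\rK_{\Cyc(n+1)}(R^{h(p^k\Z)})$'' silently commutes a Bousfield localization with a totalization, which is not automatic. Let $C^m$ be the $(m+1)$-fold tensor power of $R$ over $R^{h(p^k\Z)}$. The cover property only gives $\rK_{\Cyc(n+1)}(R^{h(p^k\Z)})\simeq\mathrm{Tot}\,\rK_{\Cyc(n+1)}(C^\bullet)$. To identify this with $\mathrm{Tot}\,\rK_{T(n+1)}(C^\bullet)\simeq\rK_{T(n+1)}(R)^{h(p^k\Z)}$, you need every $\rK_{T(n+1)}(C^m)$ to be cyclotomically complete.

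This is true, but it must be argued. Each $C^m$ is a commutative $R$-algebra, so $\rK_{T(n+1)}(C^m)$ is a module over $\rK_{T(n+1)}(R)$, which is complete by \cref{prop-BHLS6.24}. Since $L^{\Cyc}_{n+1}$ is smashing on $\qSp_{T(n+1)}$ (\cref{prop-BCSY24-6.19}), complete objects form a $\otimes$-ideal. A module $M$ over a complete ring $A$ is a retract of $A\otimes M$, hence complete.

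\textbf{Your third step.} ``Natural in $k$'' does not reduce you to $k\gg0$. Naturality alone gives no implication from ``$\epsilon_{k'}$ is not invertible'' to ``$\epsilon_k$ is not invertible.'' Use the repaired second step instead: $\epsilon_k$ is invertible if and only if $\rK_{T(n+1)}(R^{h(p^k\Z)})$ is complete. For $k\le k'$, the ring map $R^{h(p^k\Z)}\to R^{h(p^{k'}\Z)}$ makes $\rK_{T(n+1)}(R^{h(p^{k'}\Z)})$ a module over $\rK_{T(n+1)}(R^{h(p^k\Z)})$. By the same $\otimes$-ideal argument, completeness at $k$ would propagate to every $k'\ge k$, contradicting what you prove for $k'\gg0$.

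\textbf{The nonvanishing hypothesis.} \cref{thm-failure} needs $\rK_{T(n+1)}(L_{T(n)}\BPn)\simeq\TopC_{T(n+1)}(\BPn)$ to be nonzero, and your citations do not supply this. \cref{thm-LQ-TC} only bounds the fp-type of $\TopC(\BPn)$ above by $n+1$, which is consistent with this spectrum being zero. \cref{thm-redshift-B} relates $\rK_{T(n+1)}(R)$ and $\rK_{T(n+1)}(R[\omega^{(n)}_{p^\infty}])$ without producing a nonzero object. You need a genuine redshift lower bound. One option is the theorem of Burklund--Schlank--Yuan that $L_{T(n+1)}\rK(A)\neq0$ for an $\EE_\infty$-ring $A$ with $L_{T(n)}A\neq0$, applied to the $\EE_\infty$-ring $L_{T(n)}\BPn$ of \cref{thm-Theorem-5.4}.

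\textbf{Smaller points.}
\begin{itemize}
\item $K(n+1)_*\BPn=0$ does not by itself give $T(n+1)$-acyclicity; for arbitrary spectra that implication is exactly what this theorem shows can fail. Use instead that $\BPn$ has fp-type $n$, or its ring structure together with nilpotence.
\item $L_{T(n)}$ commutes with $(-)^{h(p^k\Z)}$ because $B\Z\simeq S^1$ is finite, not because the action is unipotent.
\item \cref{cor-BHLS-6.3} needs only $\BPn$ itself to be connective, so the detour through connective covers is unnecessary.
\end{itemize}
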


\appendix

\section{Some {\qcatal} constructions}
  \label[appendix]{sec-qcat-defs}
  
  \subsection{Miscellaneous definitions}\label[appendix]{sec-basic-defs}

  We need the following definitions for future reference.  We suggest
  skipping this subsection initially and referring to specific items
  in it as needed.


\subsubsection{Augmented simplicial objects}\label{sec-aug-simp-obj}

  Recall that the {\bf  simplicial category} $\bdelt$ is that of finite
ordered sets and order preserving maps.  It has an object
$[n]=\left\{0, 1, \dots, n \right\} $ for each integer $n\geq 0$.
More details can be found in
\cite[\S2.4]{Rav:gjmcyc} and in \cite[Chapter
I]{Goerss-Jardine}.  Here we consider the {\bf  augmented simplicial
  category} $\bdelt_{+}$, which is obtained from $\bdelt$ by adding an
additional object, the empty set, denoted by $[-1]$. It is an initial
object, while $[0]$ is a terminal object.  There is a unique morphism
$[-1]\to [n]$ for each $n\geq 0$, but no morphisms going the other
way.  Hence the category $\bdelt_{+}$ is not pointed.

\begin{defin}\label{def-6.1.2.2}
  \cite[Definition 6.1.2.2]{Lurie:HTT} A {\bf simplicial object of an
    {\qcat} $\qmcC$} is a functor
\[
V_{\bullet} \colon \mathrm{N}(\bdelt)^{\op} \longrightarrow \qmcC.
\]
An {\bf augmented simplicial object of $\qmcC$} is an extension of $V$
to a map
\[
V^{+}_{\bullet} \colon \mathrm{N}(\bdelt_{+})^{\op} \longrightarrow \qmcC.
\]

We let
\[
\qmcC_{\bdelt} := \operatorname{Fun}\!\bigl(\mathrm{N}(\bdelt)^{\op},\,
\qmcC\bigr)
\]
denote the {\qcat} of simplicial objects of $\qmcC$.  
Similarly, we refer to
\[
  \qmcC_{\bdelt_{+}}
  := \operatorname{Fun}\!\bigl(\mathrm{N}(\bdelt_{+})^{\op},\,\qmcC\bigr)
\]
as the {\qcat} of augmented simplicial objects of $\qmcC$.

If $V_{\bullet}$ (or $V^{+}_{\bullet}$) is an (augmented) simplicial
object of $\qmcC$ and $n \ge 0$ (respectively $n \ge -1$), we write
\[
V_{n}  := V([n]) \in \qmcC.
\]

We denote by $d_{0}$ the map $V_{0}\to V_{-1}$ induced by the 
map  $[-1]\to [0]$.
\end{defin}

The simplicial sets $\Delta^{n}$ (the standard $n$-simplex) and
$\Lambda^{n}_{i}$ (its $i$th horn, meaning its boundary with the
interior of $i$th $(n-1)$-face removed) mentioned below are defined in
\cite[Definition 2.21]{Rav:gjmcyc} and on
\cite[page~6]{Goerss-Jardine}.

\begin{defin}\label{defin-fibrations}
  {\bf Various fibrations.}  \cite[Definition 2.0.0.3]{Lurie:HTT}
A morphism $f : X \to S$ of simplicial sets ({\eg} a functor of {\qcats}) is
\begin{itemize}
  \item a {\bf Kan fibration} if $f$ has the right lifting property with 
  respect to all horn inclusions $\Lambda^{n}_{i} \hookrightarrow \Delta^{n}$ 
  for $0 \le i \leq  n$;

  \item a {\bf left fibration} if the same holds 
  for $0 \le i < n$;

  \item a {\bf right fibration} if it holds
  for $0 < i \le n$;

\item an {\bf inner fibration} (or {\bf Joyal fibration}) if it holds 
  for $0 < i < n$.
\end{itemize}
\end{defin}  

We will use left fibrations to define left exact functors in
\cref{defin-left-exact}.

\subsubsection{Limits and colimits}\label{sec-lim-colim}

Recall that limits and colimits in {\qcats} of spaces or spectra
coincide with homotopy limits and colimits, as defined by Bousfield
and Dan Kan in \cite{BK1}, in the corresponding ordinary categories
with homotopy structure.

Limits and colimits in an {\qcat} are defined by Joyal in
\cite[Definition 4.5]{Joyal:QCKC}, which is quoted by Lurie as
\cite[Definition 1.2.13.4]{Lurie:HTT} and again as
\cite[Definition 5.5]{Rav:gjmcyc}.

\begin{defin}\label{def-cone}
  \cite[Dual of Remark 1.2.13.5]{Lurie:HTT}
Let $\qmcC$ be an $\infty$-category and let 
$p \colon K \to \qmcC$ be a diagram indexed by a simplicial set $K$.
An extension 
\[
  \bar{p} \colon K^{\triangleleft}
  \longrightarrow \qmcC
\]
is called a {\bf limit diagram} if the cone point 
$\bar{p}(-\infty)$ is a limit of $p$.

Here $K^{\triangleleft}$ is the {\bf left cone of $K$}, the simplicial
set obtained from $K$ by adding a vertex, denoted by $-\infty$, the cone point,
with an edge pointing to each vertex in $K$, a 2-simplex for each edge
in $K$ and so on.  See \cite[\S1.2.8 and Notation 1.2.8.4]{Lurie:HTT}
for more details.

Equivalently, $\bar{p}$ is a limit diagram if for every object 
$X \in \qmcC$, the induced map of mapping spaces
\[
\Map_{\qmcC}\!\bigl(X,\, \bar{p}(-\infty)\bigr)
\;\longrightarrow\;
\lim{k \in K} \Map_{\qmcC}\!\bigl(X,\, p(k)\bigr)
\]
is an equivalence.

There is a similarly defined right cone $K^{\triangleright}$ that is
relevant to colimits.
\end{defin}

We will see the right cone in \cref{theorem-topos-property}.

\subsubsection{Truncation and connectivity}\label{sec-trunc-conn}

\begin{defin}\label{defin-truncated-object}
  {\bf Truncated and connective objects and morphisms in an {\qcat}.}
  An object $D$ in an {\qcat} ${\qmcC}$ is {\bf $m$-truncated
    ($m$-connective)} of for each object $C$ in ${\qmcC}$ the space of
  maps $\Map_{\qmcC} (C, D)$ is $m$-truncated ($m$-connective). In
  particular it is {\bf discrete} if it is 0-truncated, {\bf
    connected} if it is 1-connective, and a final object if it is
  $(-2)$-truncated.  It is $(-1)$-truncated if each such mapping space
  is either empty or contractible.

  A morphism \(f : C \to D\) in \({\qmcC}\) is {\bf $m$-truncated
    ($m$-connective)} if, for every object \(E \in {\qmcC}\),
  composition with \(f\) induces an $m$-truncated ($m$-connective) map
  of spaces
\[
f_{*}:\Map_{{\qmcC}}(E, C) \longrightarrow \Map_{\qmcC}(E, D)
\]
which in the latter case is an effective epimorphism as in
\cref{def-effective-epi}.  It is {\bf $\infty$-connective} if each
such $f_{*}$ is a weak equivalence.

The full sub-categories $\tau_{\leq m}{\qmcC}$ and
$\tau_{\geq m}{\qmcC}$ of ${\qmcC}$ are those of $m$-truncated and
$m$-connective objects.  Hence we can regard $\tau_{\leq m}$ and
$\tau_{\geq m}$ as endofunctors in the {\qcat} of {\qcats}.
\end{defin}

\begin{defin}\label{def-truncated morphism}
  \cite[Def.~5.5.6.8]{Lurie:HTT}
Let $f : X \to Y$ be a map of Kan complexes.  
It  is {\bf $m$-truncated} if every homotopy fiber of $f$
(taken over any base point of $Y$) is $m$-truncated.

More generally, let $\qmcC$ be an $\infty$-category and let 
$f : C \to D$ be a morphism in $\qmcC$.  
It is {\bf $m$-truncated} if for every object 
$E \in \qmcC$, the induced map
\[
\Map_{\qmcC}(E, C) \longrightarrow \Map_{\qmcC}(E, D)
\]
is a $m$-truncated map of spaces.
\end{defin}

The following is immediate.

\begin{prop}\label{prop-same}
A morphism $f:C\to D$ in an {\qcat} $\qmcC$ is $m$-truncated iff the
same is true of $f$ as an object in $\qmcC_{/D}$.
\end{prop}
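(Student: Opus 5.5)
We argue directly from \cref{def-truncated morphism}, by comparing mapping spaces in $\qmcC$ with those in the slice $\qmcC_{/D}$. The key input is the standard description of mapping spaces in an overcategory (\cite[Lemma 5.5.5.12]{Lurie:HTT}). For objects $a\colon E\to D$ and $f\colon C\to D$ of $\qmcC_{/D}$ there is a homotopy fiber sequence
\[
\Map_{\qmcC_{/D}}(a,f)\longrightarrow \Map_{\qmcC}(E,C)\xrightarrow{\;f_{*}\;}\Map_{\qmcC}(E,D),
\]
in which the fiber is taken over the point $a$. In other words, $\Map_{\qmcC_{/D}}(a,f)$ is the homotopy fiber of $f_{*}$ over $a$.

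\emph{The forward direction.} Suppose $f$ is $m$-truncated in $\qmcC$. Then for every $E$ the map $f_{*}$ above is $m$-truncated. In particular, each homotopy fiber of $f_{*}$ is an $m$-truncated space. By the fiber sequence, this says exactly that $\Map_{\qmcC_{/D}}(a,f)$ is $m$-truncated for every object $a\colon E\to D$ of the slice. By \cref{defin-truncated-object}, $f$ is therefore an $m$-truncated object of $\qmcC_{/D}$.

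\emph{The reverse direction.} Suppose $f$ is an $m$-truncated object of $\qmcC_{/D}$. Fix $E$ and an arbitrary point $a\in\Map_{\qmcC}(E,D)$. The homotopy fiber of $f_{*}$ over $a$ is $\Map_{\qmcC_{/D}}(a,f)$, which is $m$-truncated by hypothesis. A map of spaces is $m$-truncated precisely when all its homotopy fibers, over all base points, are $m$-truncated. Hence $f_{*}$ is $m$-truncated for every $E$, which is the definition of $f$ being an $m$-truncated morphism.

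\emph{Degenerate cases and the main obstacle.} The cases $m=-2$ and $m=-1$ need no separate treatment, because the same fiberwise characterization holds there. A $(-2)$-truncated map is an equivalence, i.e.\ all fibers are contractible. A $(-1)$-truncated map has every fiber empty or contractible. The only genuine input is therefore the overcategory mapping-space fiber sequence, and that is the one step I would cite rather than reprove. Once it is in hand, everything else is unwinding definitions, which is why the paper calls the result immediate.
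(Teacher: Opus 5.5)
Your proof is correct, and it is the definition-unwinding the paper has in mind when it calls the statement immediate (the paper gives no further argument). The fiber sequence identifies $\Map_{\qmcC_{/D}}(a,f)$ with the homotopy fiber of $f_{*}\colon \Map_{\qmcC}(E,C)\to\Map_{\qmcC}(E,D)$ over $a$, so both conditions reduce to the single requirement that all these homotopy fibers be $m$-truncated, including in the cases $m=-2,-1$.
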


\begin{remark}\label{remark-m-category}
  An {\bf $m$-category}, defined formally in \cite[Definition
  2.3.4.1]{Lurie:HTT}, is an {\qcat} in which each object is
  $m$-truncated. It can also be defined by induction on $m$ as a
  category enriched over $(m-1)$-categories.

  Thus a $(-1)$-category has mapping spaces that are contractible,
  making all objects equivalent and the structure uninteresting.  A
  $0$-category has mapping spaces that are contractible or empty,
  making it equivalent to a set of equivalence classes of objects, so
  a $0$-category is essentially a set.  A $1$-category has mapping
  spaces that are equivalent to sets, making it equivalent to an
  ordinary category.
\end{remark}

\subsubsection{Accessible and presentable {\qcats}}\label{sec-access-presentable}

\begin{defin}\label{defin-ind-completion}
  \cite[Definition 5.3.5.1]{Lurie:HTT} Let ${\qmcC}$ be a small
  {\qcat} and let $\kappa$ be a regular cardinal. We let
  $\operatorname{Ind}_{\kappa}({\qmcC})$, the {\bf $\kappa$-filtered
    colimit category of $\qmcC$}, denote the full subcategory of
  $\mathcal{P}({\qmcC})$ spanned by those functors
  $f : {\qmcC}^{\op} \to {\qmcS}$ which classify right
  fibrations $\widetilde{{\qmcC}} \to {\qmcC}$, where the {\qcat}
  $\widetilde{{\qmcC}}$ has $\kappa$-filtered colimits. In the case
  where $\kappa = \omega$, we simply write
  $\operatorname{Ind}({\qmcC})$ for
  $\operatorname{Ind}_{\kappa}({\qmcC})$.
\end{defin}

Accessibility is the subject of \cite[\S5.4.2]{Lurie:HTT}. 

\begin{defin}\label{defin-kappa-accessible}
\cite[Definition 5.4.2.1]{Lurie:HTT} 
Let $\kappa$ be a regular cardinal. An {\qcat} ${\qmcC}$ is 
{\bf  $\kappa$-accessible} if there exists a small {\qcat} 
${\qmcC}^{0}$ and an equivalence
\[
  \operatorname{Ind}_{\kappa}({\qmcC}^{0}) \;\longrightarrow\; {\qmcC}.
\]
We say that ${\qmcC}$ is {\bf accessible} if it is $\kappa$-accessible for 
some regular cardinal $\kappa$.
\end{defin}

The presheaf category $\mcP (\qmcC)$ of \cref{defin-presheaves} is
accessible because ${\qmcS}$ is.

\begin{prop}\label{prop-accessible-qcat-props}
{\bf Properties of accessible {\qcats}.} \cite[Proposition 5.4.2.2]{Lurie:HTT} 
Let ${\qmcC}$ be an {\qcat} and $\kappa$ a regular cardinal.  
The following conditions are equivalent:

\begin{enumerate}[label={(\roman*)},itemindent=0em]
  \item The {\qcat} ${\qmcC}$ is $\kappa$-accessible.

  \item The {\qcat} ${\qmcC}$ is locally small and admits 
  $\kappa$-filtered colimits, the full subcategory 
  ${\qmcC}^{\kappa} \subseteq {\qmcC}$ of $\kappa$-compact objects is 
  essentially small, and ${\qmcC}^{\kappa}$ generates ${\qmcC}$ under 
  small $\kappa$-filtered colimits.

  \item The {\qcat} ${\qmcC}$ admits small $\kappa$-filtered 
  colimits and contains an essentially small full subcategory 
  ${\qmcC}'' \subseteq {\qmcC}$ consisting of $\kappa$-compact objects 
  which generates ${\qmcC}$ under small $\kappa$-filtered colimits.
\end{enumerate}
\end{prop}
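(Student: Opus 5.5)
This is Lurie's \cite[Proposition 5.4.2.2]{Lurie:HTT}. I would prove it by the cycle of implications (i)$\Rightarrow$(ii)$\Rightarrow$(iii)$\Rightarrow$(i), using only the basic properties of $\operatorname{Ind}_{\kappa}$ from \cref{defin-ind-completion}. Replacing an essentially small subcategory by a small equivalent one (a skeleton) throughout, I may assume that the relevant subcategories are honestly small.

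For (i)$\Rightarrow$(ii), I would fix an equivalence $\operatorname{Ind}_{\kappa}(\qmcC^{0})\simeq\qmcC$ with $\qmcC^{0}$ small and verify the properties for $\operatorname{Ind}_{\kappa}(\qmcC^{0})$.
\begin{itemize}
\item It is a full subcategory of the locally small presheaf category $\mcP(\qmcC^{0})$, hence locally small.
\item It is closed in $\mcP(\qmcC^{0})$ under $\kappa$-filtered colimits, because $\kappa$-filtered colimits of right fibrations with $\kappa$-filtered total spaces again have this property. So it admits small $\kappa$-filtered colimits.
\item By the Yoneda lemma, each representable $j(c)$ is $\kappa$-compact, since $\kappa$-filtered colimits in presheaves are computed pointwise.
\item Every object is a $\kappa$-filtered colimit of representables, indexed by its category of elements.
\item Any $\kappa$-compact object $X=\colim{\alpha} j(c_\alpha)$ has identity factoring through some $j(c_\alpha)$, so $X$ is a retract of a representable.
\end{itemize}
Since the idempotent completion of a small $\infty$-category is essentially small, $\qmcC^{\kappa}$ is essentially small, and it generates because it contains the representables. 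The implication (ii)$\Rightarrow$(iii) is immediate: take $\qmcC''=\qmcC^{\kappa}$.

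For (iii)$\Rightarrow$(i), I would use the universal property of $\operatorname{Ind}_{\kappa}$ (\cite[Proposition 5.3.5.10]{Lurie:HTT}). It extends the inclusion $\qmcC''\hookrightarrow\qmcC$ to a functor $F:\operatorname{Ind}_{\kappa}(\qmcC'')\to\qmcC$ that preserves $\kappa$-filtered colimits. Full faithfulness of $F$ would be checked on objects $X=\colim{a}j(c_a)$ and $Y=\colim{b}j(d_b)$ written as $\kappa$-filtered colimits of representables. On both sides the mapping space becomes
\[
\lim{a}\,\colim{b}\,\Map(c_a,d_b).
\]
In the source this holds by Yoneda and $\kappa$-compactness of representables. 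In the target it holds because $F$ preserves the colimits and each $c_a$ is $\kappa$-compact in $\qmcC$ by hypothesis. Finally, the essential image of $F$ is a full subcategory of $\qmcC$ containing $\qmcC''$ and closed under $\kappa$-filtered colimits, because $F$ is fully faithful and preserves them. Since $\qmcC''$ generates $\qmcC$ under such colimits, $F$ is essentially surjective, and hence an equivalence.

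I expect the main obstacle to be the full faithfulness step, specifically justifying the interchange of the colimit in $\Map_{\qmcC}(c_a,\colim{b}d_b)$ with the outer limit coherently, rather than just on $\pi_0$. I would handle this by working with the left Kan extension description of $F$ and reducing to the corepresentable functors $\Map(c,-)$, which commute with $\kappa$-filtered colimits. A secondary point is that the word ``generates'' must be read as ``the smallest full subcategory closed under $\kappa$-filtered colimits containing $\qmcC''$ is all of $\qmcC$.'' With that reading, the essential-image argument above applies verbatim.
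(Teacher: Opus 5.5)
The paper gives no proof of this proposition; it only quotes \cite[Proposition 5.4.2.2]{Lurie:HTT}. Your argument is correct and is essentially Lurie's own proof there. You get (i)$\Rightarrow$(ii) from the basic properties of $\operatorname{Ind}_{\kappa}$: closure under $\kappa$-filtered colimits, $\kappa$-compactness of representables, and $\kappa$-compact objects being retracts of representables. (ii)$\Rightarrow$(iii) is trivial. For (iii)$\Rightarrow$(i) you use the universal property together with the full-faithfulness criterion \cite[Proposition 5.3.5.11]{Lurie:HTT}. Your proposed resolution of the coherence worry is exactly Lurie's: the class of sources (resp.\ targets) on which $F$ induces equivalences of mapping spaces is closed under $\kappa$-filtered colimits, so one reduces one variable at a time to representables, where the claim is full faithfulness of ${\qmcC}''\subseteq{\qmcC}$.
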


Presentable {\qcats} are the subject of \cite[\S5.5]{Lurie:HTT}. 

\begin{defin}\label{def-presentable}
  \cite[Definition 5.5.0.1]{Lurie:HTT} An $\infty$-category $\qmcC$ is
  {\bf presentable} if it is accessible and admits small colimits.  We
  denote by $\qPrL$ (see \cite[Definition 5.5.3.1]{Lurie:HTT}) the
  {\qcat} of presentable {\qcats} and colimit preserving functors. The
  {\qcat} of such stable {\qcats} is denoted by $\qPrLst$.
\end{defin}

\begin{defin}\label{defin-compact-objects}
  {\bf Continuous functors, compact objects and and compact fibrations.}
  \cite[Definition 5.3.4.5]{Lurie:HTT} Let \({\qmcC}\) be an
  \(\infty\)-category which admits small \(\kappa\)-filtered colimits.
  A functor \(f : {\qmcC} \to {\qmcD}\) is {\bf \(\kappa\)-continuous}
  if it preserves \(\kappa\)-filtered colimits.

  For an object $C$ in \({\qmcC}\), let
  \(j_C : {\qmcC} \to \mathcal{S}\) denote the functor corepresented
  by \(C\).  If \({\qmcC}\) admits \(\kappa\)-filtered colimits, then
  \(C\) is {\bf \(\kappa\)-compact} if \(j_C\) is
  \(\kappa\)-continuous. We will say that \(C\) is {\bf compact} if it
  is \(\omega\)-compact (and \({\qmcC}\) admits filtered colimits).

A left fibration (\cref{defin-fibrations}) 
\({\qmcC}' \to {\qmcC}\) is {\bf   \(\kappa\)-compact} if it is
classified by a \(\kappa\)-continuous functor
\({\qmcC} \to \mathcal{S}\).
\end{defin}

\begin{defin}\label{defin-accessible-functor}
{\bf Accessible functors.}  \cite[Definition 5.4.2.5]{Lurie:HTT}
If ${\qmcC}$ is an accessible {\qcat}, then a functor
$F : {\qmcC} \to {\qmcC}'$ is {\bf accessible} if it is
$\kappa$-continuous for some regular cardinal $\kappa$ (and therefore
for all regular cardinals $\tau \ge \kappa$).
\end{defin}

\begin{defin}\label{defin-left-exact}
{\bf Left exact functors.}  \cite[Definition 5.3.2.1]{Lurie:HTT}
Let\linebreak $F : {\qmcC} \to {\qmcD}$ be a functor between {\qcats}
and let $\kappa$ be a regular cardinal. We say that $F$ is {\bf
  $\kappa$-left exact} if, for any left fibration
${\qmcD}' \to {\qmcD}$ where ${\qmcD}'$ is $\kappa$-filtered, the
{\qcat}
\[
{\qmcC}' = {\qmcC} \times_{{\qmcD}} {\qmcD}'
\]
is also $\kappa$-filtered. We say that $F$ is {\bf  left exact} if it is 
$\omega$-left exact.
\end{defin}

\subsubsection{The coskeleton functor}\label{sec-coskeleton}
Next we define the coskeleton functor, which figures in the
definitions of hypercoverings (\cref{def-HTT-6.5.3.2}) and
hypersheaves (\cref{defin-hypersheaf}), which are needed for
\cref{def-cyclo-sheaf}.  Before doing so, here is an observation.

For CW-complexes $X$ and $Y$, let $X^{n}$ denote the $n$-skeleton of $X$ and
$P^{n}Y$ the $n$th Postnikov section of $Y$.  Then we know that
\begin{numequation}\label{eq-skel-Post}
\begin{split}
\Map_{}(X^{n}, Y)\simeq \Map_{}(X,P^{n}Y),
\end{split}
\end{numequation}
 
\noindent so we have a pair of adjoint endofunctors in the {\qcat} of
Kan complexes $\qmcS$.

  \begin{defin}\label{def-coskeleton}
    \cite[Notation 6.5.3.1]{Lurie:HTT} For each $n \ge 0$, let
    $\bdelt^{\le n}$ denote the full subcategory of
    $\bdelt$ spanned by the set of objects $\{[0],\dots,[n]\}$, and we
    denote the inclusion functor by $i_{n}$.  

    Similarly let $\bdelt^{\le n}_{+}\subseteq \bdelt _{+}$ for
    $n\geq -1$ denote the full subcategory of $\bdelt$ spanned by the
    set of objects $\{[-1],\dots,[n]\}$.

    For a presentable $\infty$-category ${\qmcC}$ as in 
    \cref{def-presentable}, let
    \begin{align*}
  {\qmcC}_{\bdelt}
      & := \Fun\!\left( N(\bdelt)^{\op},\, {\qmcC} \right),&
  {\qmcC}_{\bdelt_{+}}
      & := \Fun\!\left( N(\bdelt_{+})^{\op},\, {\qmcC} \right),\\
  {\qmcC}_{\bdelt^{\leq n}}
      & := \Fun\!\left( N(\bdelt^{\le n})^{\op},\, {\qmcC} \right),&
\aand            
  {\qmcC}_{\bdelt_{+}^{\leq n}}
      & := \Fun\!\left( N(\bdelt_{+}^{\le n})^{\op},\, {\qmcC} \right).
\end{align*}

Let $\bdelt^{\leq -1} $ be the empty category, making
${\qmcC}_{\bdelt^{\leq -1}}$ the trivial {\qcat} whose single object
is the empty diagram.  We define $\bdelt^{\leq -1}_{+} $ to be the
trivial category with object the empty set, making 
${\qmcC}_{\bdelt^{\leq -1}_{+}}$ the discrete {\qcat} associated with
$\qmcC$.

The restriction functor
\[
  \skel_n:=(N(i_{n})^{\op})^{*} : {\qmcC}_{\bdelt}
  \longrightarrow
   {\qmcC}_{\bdelt^{\leq n}}
\]
has left and right adjoints $i^{*}_{n}$ and $i^{!}_{n}$ given by
left and right Kan extensions along the inclusion
\[
N(i_{n})^{\op }:
  N(\bdelt^{\le n})^{\op} \hookrightarrow N(\bdelt)^{\op},
\]

\noindent and similarly for the augmented case.

(Goerss and Jardine \cite[IV.3.2]{Goerss-Jardine} denote our
$\bdelt ^{\leq n}$ by $\bdelt _{n}$ and call a contravariant
$\Set $-valued functor on functor on it an {\bf $n$-truncated
  simplicial set}.  They call our $\skel_{n}$ the {\bf $n$-truncation
  functor}.  These notions are also discussed 30 years earlier by Mike
Artin and Barry Mazur in \cite[\S1]{Artin-Mazur}.)

For an object $X$ in $\qmcC_{\bdelt }$,
\begin{displaymath}
  (i_n^* X)_m = \colim{[m] \to [k]} X_k
  \qquad \aand
  (i_n^! X)_m = \lim{[k] \to [m]} X_k,
\end{displaymath}
 
\noindent
where the indicated morphisms in both cases are in $\bdelt $ with
$k\leq n$.  For $m\leq n$, each object is $X_{n}$.  For $m>n$, each
map $[k]\to [m]$ factors though $[n]$, so $(i_n^! X)_m$ is a certain
subobject of the $\binom{m+1}{n+1}$-fold product of $X_{n}$.
There are maps
\begin{displaymath}
  (i_n^! X)_m \to X_{n}^{\binom{m+1}{n+1}}
  \qquad \aand
 \coprod _{\binom{m+1}{n+1}}X_{n}\to  (i_n^* X)_m,
\end{displaymath}
 
\noindent which are respectively 
a monomorphism (meaning a $(-1)$-truncated morphism)
 and an effective epimorphism as in \cref{def-effective-epi}. 

The {\bf $n$-coskeleton functor} $\coskel_n
: {\qmcC}_{\bdelt} \to {\qmcC}_{\bdelt}$ is the composition
\begin{displaymath}
\xymatrix
@R=4mm
@C=15mm
{
{\qmcC_{\bdelt}}\ar[r]^(.5){\skel_{n}}
  &{\qmcC_{\bdelt_{\leq n}}}\ar[r]^(.5){i^{!}_{n}}
    &{\qmcC_{\bdelt}.}
}
\end{displaymath}
\end{defin}
In particular we have
\begin{numequation}\label{eq-coskel+1}
\begin{split}
  (\coskel_{n-1}(X)  )_{n}
 & = \left\{
    (x_0,\dots,x_{n}) \in (X_{n-1})^{n+1}
   \;\middle|\;\right.\\
 &\phantom{(X_{n})^{n}}
   \left.
        d_i(x_j) = d_{j-1}(x_i)
        \text{ for all }0\leq  i<j\leq n
   \right\} \\
  &=:M_n(X)\qquad \mbox{the {\bf $n$th matching object} of $X$,} 
\end{split}
\end{numequation}

\noindent where $d_{i}:X_{n-1}\to X_{n-2}$ is the $i$th face map.

\begin{ex}\label{eq-M0}
  In the augmented case for small $n$, we have
  \begin{align*}
  M_{0}(X)&= (\coskel_{-1}(X))_{0}
  =X_{-1}\\
M _{1}(X)   &  = (\coskel_{0}(X))_{1}  
  = \left\{
    (x_{0},x_{1}) \in (X_{0})^{2}
   \;\middle|        d_{0}(x_{1}) = d_{0}(x_{0})   \right\} \\                          & = X_{0}\times _{X_{-1}}X_{0}\\
M _{2}(X)   &  = (\coskel_{1}(X))_{2}\\
            &    = \left\{
    (x_0,  x_{1},x_{2}) \in (X_{1})^{3}
   \;\middle|\;\right.\\
          &            \qquad 
   \left.
      d_{0}(x_1)=d_0(x_0),\, d_0(x_2)=d_1(x_0),\, d_1(x_2)=d_1(x_1)       
   \right\}.
\end{align*}

\end{ex}

We have an adjunction
\begin{numequation}\label{eq-skel-rn}
\begin{split}
\xymatrix
@R=4mm
@C=20mm
{
  {\qmcC_{\bdelt }}
  \ar@<1.2ex>[r]^(.5){\skel_{n}}_(.5){\perp  }
  &{\qmcC_{\bdelt^{\leq n} }}\ar@<1.2ex>[l]^(.5){i^{!}_{n}}
}
\end{split}
\end{numequation}

\noindent with a unit
\begin{numequation}\label{eq-coskel}
\begin{split}
\eta_{X,n}:X_{\bullet}\to i^{!}_{n}\skel_{n}X_{\bullet}=:\coskel_{n}X_{\bullet}
\end{split}
\end{numequation}
 
\noindent for each object $X_{\bullet}$ in $\qmcC_{\bdelt }$.  The
target of $\skel_{-1}$ is the trivial category
$\qmcC_{\bdelt^{\leq -1} }$.  The image of its one object under
$i^{!}_{-1}$ is a constant functor of $\bdelt $ whose value is a terminal
object in $\qmcC$, which exists because $\qmcC$ is presentable as in
\cref{def-presentable}.

The unit map $(\eta_{X,n-1})_{n}:X_{n}\to M_{n}(X)$ of \cref{eq-coskel} sends an
$n$-simplex $K$ in $X_{n}$ to its $(n+1)$-tuple of $(n-1)$-faces.  It
is required to be onto in \cref{def-HTT-6.5.3.2}.

\begin{ex}\label{ex-C=Set}
  {\bf The case $\qmcC=\Set $.}
  For a simplicial set $X$ with geometric realization $|X|$, the
  adjunctions of \cref{eq-skel-Post} and \cref{eq-skel-rn} imply that
  \begin{displaymath}
|\coskel_{n}X|\simeq P^{n}|X|. 
\end{displaymath}

For $n=m-1$, we have
\begin{numequation}\label{eq-coskel+1-set}
\begin{split}
  (\coskel_{n-1}(X)  )_{n}
 & = \Map_{\qmcC_{\bdelt }}(\partial \bdelt^n, X_{\bullet})  
    \simeq   \Omega ^{n-1}X .
\end{split}
\end{numequation}

\end{ex}

\subsubsection{Localization}\label{sec-localization}
Localization functors are discussed in \cite[\S5.2.7]{Lurie:HTT}. 

\begin{defin}\label{defin-lozn}
\cite[Definition 5.2.7.2]{Lurie:HTT} 
A functor $f : {\qmcC} \to {\qmcD}$ between {\qcats} is a
{\bf  localization} if $f$ has a fully faithful right adjoint.
\end{defin}

The following definition specializes to one of Bousfield in the
ordinary category of spaces.

\begin{defin}\label{defin-S-local}
  \cite[Definition 5.5.4.1]{Lurie:HTT} Let ${\qmcC}$ be an
  $\infty$-category and let $S$ be a collection of morphisms of
  ${\qmcC}$.  We say that an object $Z \in {\qmcC}$ is {\bf $S$-local}
  if, for every morphism $s \colon X \to Y$ belonging to $S$,
  composition with $s$ induces a homotopy equivalence.
\[
  \Map_{{\qmcC}}(Y, Z) \;\longrightarrow\; \Map_{{\qmcC}}(X, Z).
\]

A morphism $f \colon X \to Y$ of ${\qmcC}$ is an {\bf $S$-equivalence}
if, for every $S$-local object $Z$, composition with $f$ induces a homotopy
equivalence
\[
  \Map_{{\qmcC}}(Y, Z) \;\longrightarrow\; \Map_{{\qmcC}}(X, Z).
\]
\end{defin}

\begin{defin}\label{defin-top-lozn}
  {\bf Topological localizations.} \cite[Definition 6.2.1.4]{Lurie:HTT}
  Let ${\qmcC}$ be a presentable {\qcat} and let $S$ be a strongly
  saturated class of morphisms of ${\qmcC}$; see \cite[Definition
  5.5.4.5]{Lurie:HTT}. We will say that $S$ is {\bf topological} if
  the following conditions are satisfied:
\begin{enumerate}[label={(\roman*)},itemindent=0em]
  \item There exists a subclass $S_0 \subseteq S$ consisting of monomorphisms
  such that $S_0$ generates $S$ as a strongly saturated class of morphisms.
  \item Given a pullback diagram
  \[
  \begin{tikzcd}
  X' \arrow[r,"f'"] \arrow[d] & X \arrow[d,"f"] \\
  Y' \arrow[r] & Y
  \end{tikzcd}
  \]
  in ${\qmcC}$ for which $f$ belongs to $S$, the morphism $f'$ also
  belongs to $S$.
\end{enumerate}

We will say that a localization $L \colon {\qmcC} \to {\qmcD}$ is
{\bf topological} if the collection $S$ of all morphisms $f \colon X \to Y$
in ${\qmcC}$ such that $Lf$ is an equivalence is topological.
\end{defin}

\subsection{Groupoids and the {\Cech} nerve}\label[appendix]{sec-groupoids-cech}

The material in this subsection is taken from
\cite[\S6.1.2]{Lurie:HTT} unless otherwise indicated.

\begin{defin}\label{def-gpoid-cat}\cite[\S6.1.2]{Lurie:HTT}.
Let $\mathcal{C}$ be an ordinary category which admits finite limits.  
A {\bf groupoid object} of $\mathcal{C}$ is a functor
\[
F \colon \mathcal{C} \longrightarrow \mathbf{Gpd}
\]
from $\mathcal{C}$ to the category $\mathbf{Gpd}$ of small groupoids, which
has the following properties:
\begin{enumerate}[label={(\roman*)},itemindent=0em]
  \item There exists an object $X_{0} \in \mathcal{C}$ and a natural (in $C$)
  identification of $\operatorname{Hom}_{\mathcal{C}}(C, X_{0})$ with the set
  of objects of the groupoid $F(C)$ for each $C \in \mathcal{C}$.

\item There exists an object $X_{1} \in \mathcal{C}$ and a natural
  identification of\linebreak
  $\operatorname{Hom}_{\mathcal{C}}(C, X_{1})$ with the set of
  morphisms in the groupoid $F(C)$ for each $C \in \mathcal{C}$.
\end{enumerate}
\end{defin}

For example, a groupoid object  in  $\Set$ is simply a groupoid.

In Lurie's words,
\begin{quote}
Giving a groupoid object of a category $\mathcal{C}$ is equivalent to giving
a pair of objects $X_{0} \in \mathcal{C}$ (the “object classifier”) and
$X_{1} \in \mathcal{C}$ (the “morphism classifier”), together with a
collection of maps which relate $X_{0}$ to $X_{1}$ and satisfy appropriate
identities, imitating the usual axiomatics of category theory.

These identities can be very efficiently encoded using the formalism of
simplicial objects. For every $n \ge 0$, let $[n]$ denote the category
associated to the linearly ordered set $\{0,\dots,n\}$, and consider the
functor
\[
F_{n} \colon \mathcal{C} \longrightarrow \mathbf{Set}
\]
defined by
\[
F_{n}(C) = \operatorname{Hom}_{\mathbf{Cat}}([n], F(C)).
\]

By assumption, $F_{0}$ and $F_{1}$ are representable by objects
$X_{0}, X_{1} \in \mathcal{C}$. Since $\mathcal{C}$ is stable under finite
limits, it follows that
\[
  F_{n} = F_{1} \times_{F_{0}} \cdots \times_{F_{0}} F_{1}
  \qquad\mbox{with $n$ factors}  \]
is representable by an object
\[
X_{n} = X_{1} \times_{X_{0}} \cdots \times_{X_{0}} X_{1}.
\]
The objects $X_{n}$ assemble into a simplicial object $X_{\bullet}$ of
$\mathcal{C}$. We can think of this construction as a generalization of the
process which associates to every groupoid $D$ its nerve $N(D)$ (a simplicial
set). Moreover, as in the classical case, the association
$F \mapsto X_{\bullet}$ is fully faithful. In other words, we can identify
groupoid objects of $\mathcal{C}$ with the corresponding simplicial objects.

Of course, not every simplicial object $X_{\bullet}$ of $\mathcal{C}$ arises
via this construction. This is true if and only if certain additional
conditions are met; for instance, the diagram
\[
\begin{tikzcd}
X_{2} \arrow[r,"d_{2}"] \arrow[d,"d_{0}"'] &
X_{1} \arrow[d,"d_{0}"] \\
X_{1} \arrow[r,"d_{1}"'] &
X_{0}
\end{tikzcd}
\]
must be Cartesian.
\end{quote}

We now describe a similar construction in {\qcats}.

\begin{defin}\label{def-gpoid-qcat}
  \cite[Definition 6.1.2.7]{Lurie:HTT} A {\bf groupoid object in an
    {\qcat} $\qmcC$} is a simplicial object
  $V_{\bullet}:\mathrm{N}(\bdelt)^{\op}\to \qmcC$ satisfying any of eight {\eqt}
  conditions listed in \cite[Proposition 6.1.2.6]{Lurie:HTT}.
\end{defin}

Of Lurie's conditions,  the last and easiest to verify is the following.
For every $n \ge 0$ and every partition 
\([n] = S \cup S'\) such that \(S \cap S'\)
consists of a single element \(s\), the diagram
\[
\begin{tikzcd}
V_{n}=V([n]) \arrow[r] \arrow[d] & V(S)=V_{|S|-1} \arrow[d] \\
V_{|S'|-1}=V(S') \arrow[r] & V(\{s\})=V_{0}
\end{tikzcd}
\]
is a pullback square in $\qmcC$, where the morphisms are induced by
the evident inclusions of subsets of $[n]$.

Let $\bdelt^{\leq n}\subseteq \bdelt$ and
$\bdelt_{+}^{\leq n}\subseteq \bdelt_{+}$ denote the full
subcategories of \cref{def-coskeleton}.  Hence 
$\bdelt_{+}^{\leq 0}\subseteq \bdelt_{+}$
and $\bdelt_{+}^{\leq 1}\subseteq \bdelt_{+}$ are the categories
\begin{displaymath}
\xymatrix
@R=4mm
@C=10mm
{
{[-1]}\ar[r]
  &{[0]}
}
\qquad \aand
\xymatrix
@R=4mm
@C=10mm
{
{[-1]}\ar[r]^(.5){}
&{[0]}\ar@<-1ex>[r]^(.5){}
      \ar@<1ex>[r]^(.5){}
      &{[1].}\ar[l]_(.5){}
}
\end{displaymath}

\noindent 
$\mathrm{N}(\bdelt_{+}^{\le 0})^{\op}$ is the simplicial 1-simplex
$\bdelt^{1}$ of \cite[Definition 2.21]{Rav:gjmcyc},
which is self-dual.  This means a $\qmcC$-valued functor
$V$ on it is {\eqt } to a morphism $u:V_{0}\to V_{-1}$ in $\qmcC$.

\begin{prop}\label{prop-6.1.2.11}
  \cite[Proposition 6.1.2.11]{Lurie:HTT}
  Let $\qmcC$ be an $\infty$-category and let
\[
  V_{\bullet}
  \colon \mathrm{N}(\bdelt_{+})^{\op} \to \qmcC
\]
be an augmented simplicial object of $\qmcC$ as in \cref{def-6.1.2.2}.

The following conditions are equivalent:
\begin{enumerate}[label={(\roman*)},itemindent=0em]
\item The augmented simplicial object $V_{\bullet}$ is a right Kan
  extension of its restriction to
  $\mathrm{N}(\bdelt_{+}^{\le 0})^{\op}$.  The latter is determined by
  the morphism $u:V_{0}\to V_{-1}$.
    
  \item The underlying simplicial object $V_\bullet$ is a groupoid object of
  $\qmcC$ as in \cref{def-gpoid-qcat}, and the diagram
  \[
    V_{\bullet}\vert \mathrm{N}(\bdelt_{+}^{\le 1}) ^{\op}
  \]
  has a subdiagram
   \[
  \begin{tikzcd}
    V_1 \arrow[r,"d_{1}"] \arrow[d,"d_{0}"'] & V_0 \arrow[d, "u"] \\
    V_0 \arrow[r,"u"] & V_{-1}
  \end{tikzcd}
   \]
   which is a pullback square in $\qmcC$, where the $d_{i}$s are the
   face maps of \cite[Definition 2.19]{Rav:gjmcyc}.
\end{enumerate}
\end{prop}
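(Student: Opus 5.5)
We prove (i)$\Rightarrow$(ii) and (ii)$\Rightarrow$(i) directly from the universal property of right Kan extensions. The key observation is combinatorial. For each $[n]\in\bdelt_{+}$ with $n\geq 0$, the comma category $[n]\downarrow \bdelt_{+}^{\le 0}$ has a simple form (computed in $\bdelt_{+}^{\op}$, i.e.\ maps $[k]\to[n]$ with $k\le 0$). Its objects are the $n+1$ vertex inclusions $[0]\to[n]$ together with the unique map $[-1]\to[n]$. Each vertex inclusion has a single morphism to $[-1]$. This category is the cone on a discrete set of $n+1$ points, with all arrows landing in $[-1]$.

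Consequently the pointwise formula for the right Kan extension gives
\[
(\mathrm{Ran}\,V)_{n}\simeq V_{0}\times_{V_{-1}}V_{0}\times_{V_{-1}}\cdots\times_{V_{-1}}V_{0},
\]
the $(n+1)$-fold iterated fiber product along $u$. Condition (i) then says precisely that the canonical maps $V_{n}\to V_{0}^{\times_{V_{-1}}(n+1)}$, induced by the $n+1$ vertex maps, are equivalences for all $n\ge 0$. In other words, $V_{\bullet}$ is the \Cech\ nerve of $u$.

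For (i)$\Rightarrow$(ii), first specialize to $n=1$. This gives that the square with $d_{0},d_{1}$ and $u$ is a pullback. Next verify the partition condition quoted after \cref{def-gpoid-qcat}. For $[n]=S\cup S'$ with $S\cap S'=\{s\}$, both $V(S)$ and $V(S')$ are iterated fiber products of copies of $V_{0}$ over $V_{-1}$. Pasting gives $V(S)\times_{V_{0}}V(S')\simeq V_{0}^{\times_{V_{-1}}(n+1)}\simeq V_{n}$, because the factor indexed by $s$ is shared. Hence $V_{\bullet}$ is a groupoid object.

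For (ii)$\Rightarrow$(i), we must show the vertex map $V_{n}\to V_{0}^{\times_{V_{-1}}(n+1)}$ is an equivalence, and we argue by induction on $n$. The cases $n=0$ and $n=1$ are the identity and the hypothesized pullback square respectively. For the inductive step, apply the groupoid partition condition with $S=\{0,\dots,n-1\}$ and $S'=\{n-1,n\}$. This gives $V_{n}\simeq V_{n-1}\times_{V_{0}}V_{1}$. Substituting the inductive hypothesis for $V_{n-1}$ and the $n=1$ case for $V_{1}$, we get
\[
V_{n}\simeq V_{0}^{\times_{V_{-1}}n}\times_{V_{0}}(V_{0}\times_{V_{-1}}V_{0})\simeq V_{0}^{\times_{V_{-1}}(n+1)}.
\]
We must also check that this composite equivalence is the canonical vertex map. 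This holds because all maps involved are induced by the inclusions of subsets of $[n]$, which are compatible with the vertex inclusions.

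The main obstacle is the identification of the comma category and the claim that its limit is the iterated fiber product. This requires the cofinality/pointwise Kan extension statement \cite[Lemma 4.3.2.13]{Lurie:HTT}, together with care about the opposite-category conventions. Everything after that is formal pasting of pullback squares in $\qmcC$.
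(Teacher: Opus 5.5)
Your proof is correct; the paper gives no argument of its own (it simply quotes \cite[Proposition 6.1.2.11]{Lurie:HTT}), and your route is essentially the standard argument behind that citation: read (i) via the pointwise formula as the {\Cech} condition $V_n\simeq V_0\times_{V_{-1}}\cdots\times_{V_{-1}}V_0$, then paste pullbacks in both directions, with the partition $\{0,\dots,n-1\}\cup\{n-1,n\}$ driving the induction. The step you flag as the main obstacle is in fact immediate: along a full subcategory inclusion such as $\mathrm{N}(\bdelt_{+}^{\le 0})^{\op}\subseteq\mathrm{N}(\bdelt_{+})^{\op}$, the pointwise limit condition is Lurie's definition of a right Kan extension, and the relevant undercategory of a nerve is just the nerve of the ordinary comma category you computed, so no cofinality argument is needed.
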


This means that in the homotopy category $h\qmcC$ we have a groupoid
object as in \cref{def-gpoid-cat} in which $X_{n}=V_{n}$ for each
$n\geq 0$.

\begin{defin}\label{def-Cech-nerve}
An augmented simplicial object
\[
V_{\bullet} \in \qmcC_{\bdelt _{+}}
\]
for an  {\qcat} $\qmcC$ is a {\bf { \Cech} nerve} if it satisfies the
equivalent conditions of \cref{prop-6.1.2.11}.  In this case,
$V_{\bullet}$ is determined up to equivalence by the map
\[
u \colon V_{0} \to V_{-1},
\]
and we will also say that $V_{\bullet}$ is the {\bf  {\Cech} nerve of
$u$}.
\end{defin}

\begin{ex}\label{ex-classic-Cech}
{\bf The classical {\Cech} nerve}. \cite{Alexandroff1927}
  Let 
 \begin{displaymath}
\msU    = \left\{U_{\alpha}\subseteq X \right\} ,
\end{displaymath}

\noindent 
be an open covering of a space $X$.  Let $V_{\bullet}$  be the simplicial
space with
\begin{align*}
  V_{n}
  &:=\coprod _{\alpha_{0},\dots,\alpha_{n}}
    U_{\alpha_{0}}\cap\cdots \cap U_{\alpha_{n}}\\
  & \phantom{:}= V_{0}\times _{X}V_{0}\times_{X} \cdots \times_{X}V_{0}
    \qquad \mbox{with $n+1$ factors.} 
\end{align*}

\noindent The nondegenerate subspace $V'_{n}\subseteq V_{n}$ for $n>0$
is the disjoint union of $(n+1)$-fold intersections in which no two
adjacent indices $\alpha_{i}$ are the same.

The relevant map $u$ for \cref{def-Cech-nerve} is
\begin{align*}
  V_{0}
  &:=\coprod _{\alpha}U_{\alpha}\to X = :V_{-1} .
\end{align*}
\end{ex}

\subsection{Sheaves and presheaves}\label[appendix]{sec-sheaves-presheaves}

Recall that a presheaf $\mcalF$ on an ordinary category $\mcC $ is a
contravariant functor with values in the category of sets or some
variant of it.  It is a sheaf if it converts certain colimits in the domain
category to limits (or homotopy limits) in the codomain.

  \begin{ex}\label{ex-poset-open}
{\bf Sheaves on the poset category of open subsets.}
Suppose
the domain of the presheaf $\mcalF$ is the poset category $\mcU(X) $ of
open subsets of a topological space $X$. Then the colimit ({\ie} pushout)
of the diagram
\begin{displaymath}
U_{i} \leftarrow U_{i}\cap U_{j} \rightarrow U_{j}
\end{displaymath}
 
\noindent is $U_{i} \cup U_{j}$, and the sheaf condition is that
${\mcalF}(U_{i} \cup U_{j})$ is the limit (pullback) of
\begin{displaymath}
  {\mcalF}(U_{i}) \rightarrow {\mcalF}(U_{i}\cap U_{j})
                  \leftarrow {\mcalF}(U_{j}) .
\end{displaymath}

{\Eqvt}ly ${\mcalF}(X)$ is the equalizer of
\[
{\mcalF}(X) \longrightarrow \prod_{\alpha} {\mcalF}(U_{\alpha})
\;\rightrightarrows\;
\prod_{\alpha,\beta} {\mcalF}(U_{\alpha} \cap U_{\beta})
\]
whenever \(\{U_{\alpha}\}\) is an open cover of \(X\). ${\mcalF}(X)$
is also the limit of the cosimplicial diagram ${\mcalF}(V_{\bullet})$
for $V_{\bullet}$ as in \cref{ex-classic-Cech}. 
\end{ex}

We will look at this again in \cref{ex-revisited}.

The following is the subject of  \cite[\S5.1]{Lurie:HTT}. 

\begin{defin}\label{defin-presheaves}
  For a small {\qcat} $\qmcC$ the {\bf {\qcat} of presheaves}
  $\mcP (\qmcC)$ is that of space valued contravariant functors
  ${\qmcC}^{\op}\to {\qmcS}$.
\end{defin}

\begin{defin}\label{defin-Yoneda-embedding}
  For a small \(\infty\)-category \({\qmcC}\), the {\bf Yoneda
    embedding} is the functor
\begin{displaymath}
\yo:{\qmcC}\to \mcP(\qmcC)   
\end{displaymath}
 
\noindent that sends an object $C$ to the functor
$\Map_{{\qmcC}}(-, C)$.  We call $\yo(C)$ the {\bf Yoneda presheaf} of
the object $C$.
\end{defin}

The symbol $\yo$ above is the Japanese hiragana character ``yo,'' the
first syllable of Yoneda’s name.

The {\qcatal} definition of a sheaf is more complicated than the one
in ordinary category theory.  Before giving it in
\cref{defin-sheaves}, we must introduce $\infty$-topoi in
\cref{sec-qtops} and Grothendieck topologies in \cref{sec-Groth-top}.

\subsection{$\infty$-topoi}\label[appendix]{sec-qtops}
In classical category theory a topos is a category that ``looks like''
the category of sets.  A good reference is the book
\cite{MacLane-Moerdijk} by Moerdijk and Saunders Mac Lane. (You may
notice that we are spelling Mac Lane's name in two different ways.
This is because he did so himself in \cite{SESM2} and
\cite{MacLane-Moerdijk}.)  See also the discussion in
\cite[\S6.1]{Lurie:HTT}.

For any category $\mcC$ one has the presheaf category $\mcP (\mcC)$ of
contravariant $\Set $-valued functors on $\mcC  $.  It is a short step
away from being a topos, and every topos is derived from a presheaf
category by a certain construction.  Such constructions are related to
Grothendieck topologies; see \cref{def-Grothendieck-topology}.  

In \cite[page~xii]{Lurie:HTT} Lurie says
\begin{quote} 
  Roughly speaking, an $\infty$-topos is an {\qcat} which ``looks
  like'' the $\infty$-category of [spaces $\qmcS$]. We will show that
  this intuition is justified in the sense that it is possible to
  reconstruct a large portion of classical homotopy theory inside an
  arbitrary $\infty$-topos. In other words, an $\infty$-topos is a
  world in which one can ``do'' homotopy theory (much as an ordinary
  topos can be regarded as a world in which one can ``do'' other types
  of mathematics).
\end{quote}

\begin{defin}\label{defin-infinity-topos}
  \cite[Definition 6.1.0.4]{Lurie:HTT} An {\qcat} ${{\qmcX}}$ is an
  {\bf $\infty$-topos} if there exists a small {\qcat} ${\qmcC}$ with
  $\qmcS$-valued presheaf category $\mcP(\qmcC)$ as in
  \cref{defin-presheaves}, and an accessible left exact localization
  functor (see
  \cref{defin-accessible-functor,defin-left-exact,defin-lozn})
  $\mcP ({\qmcC}) \to {{\qmcX}}$.

  It is {\bf quasi-compact} \cite[Definition A.2.0.12]{Lurie:SAG} if every
covering of ${\qmcX}$ has a finite subcovering: that is, for every
effective epimorphism $\coprod_{i \in I} U_i \to 1$ in ${\qmcX}$
(where $1$ is the final object of ${\qmcX}$), there exists a finite
subset $I_0 \subseteq I$ such that the map
\[
\coprod_{i \in I_0} U_i \longrightarrow 1
\]
is also an effective epimorphism. We say that an object
$X \in {\qmcX}$ is quasi-compact if the $\infty$-topos
${\qmcX}_{/X}$ is quasi-compact.
\end{defin}

In Lurie's words,
\begin{quote}
the class of \(\infty\)-topoi is defined to be the smallest collection of
\(\infty\)-categories which contains ${\qmcS}$ and is stable under
certain constructions (left exact localizations and the formation of
functor categories).
\end{quote}

The terminology in the following is explained in \cite[\S6.1]{Lurie:HTT}.

\begin{theorem}\label{theorem-topos-property}
\cite[Theorem 6.1.0.6]{Lurie:HTT} 
Let \({\qmcX}\) be an \(\infty\)-category. The following conditions are
equivalent:
\begin{enumerate}
  \item[(i)] The \(\infty\)-category \({\qmcX}\) is an \(\infty\)-topos.
  \item[(ii)] The \(\infty\)-category \({\qmcX}\) is presentable as in
    \cref{def-presentable}, and for every small simplicial set \(K\)
    and every natural transformation \(\alpha : p \to q\) of
    diagrams \(p, q : K^{\triangleright} \to {\qmcX}\), the
    following condition is satisfied: If \(q\) is a colimit diagram
    and \(\alpha|_{K}\) is a Cartesian transformation, then \(p\) is a
    colimit diagram if and only if \(\alpha\) is a Cartesian
    transformation.
  
  \item[(iii)] The \(\infty\)-category \({\qmcX}\) satisfies the following
  \(\infty\)-categorical analogues of Giraud's axioms:
  \begin{enumerate}
    \item \({\qmcX}\) is presentable.
    \item Colimits in \({\qmcX}\) are universal.
    \item Coproducts in \({\qmcX}\) are disjoint.
    \item Every groupoid object of \({\qmcX}\) is effective.
  \end{enumerate}
\end{enumerate}
\end{theorem}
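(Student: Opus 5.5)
The plan is to prove the cycle of implications (i)$\Rightarrow$(ii)$\Rightarrow$(iii)$\Rightarrow$(i), working throughout with the descent formulation (ii) as the central notion.

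\textbf{(i)$\Rightarrow$(ii).} First I would verify (ii) for $\qmcX=\qmcS$. Straightening identifies $\qmcS_{/Y}$ with $\Fun(Y,\qmcS)$ for a Kan complex $Y$. A colimit $Y=\colim{K} q$ then gives an equivalence
\[
\qmcS_{/Y}\simeq \lim{K}\,\qmcS_{/q(k)}.
\]
This says exactly that, over a colimit diagram $q$, a diagram $p$ with $\alpha|_{K}$ Cartesian is a colimit iff $\alpha$ is Cartesian. Next I would pass to a presheaf category $\mcP(\qmcC)$ as in \cref{defin-presheaves}. Limits and colimits there are computed objectwise, so the descent condition holds objectwise and hence holds in $\mcP(\qmcC)$. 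Finally, let $L:\mcP(\qmcC)\to\qmcX$ be an accessible left exact localization as in \cref{defin-infinity-topos}, with fully faithful right adjoint $\iota$. Presentability of $\qmcX$ follows from \cite[\S5.5]{Lurie:HTT}. Colimits in $\qmcX$ are $L$ applied to colimits computed in $\mcP(\qmcC)$. Pullbacks in $\qmcX$ agree with those computed in $\mcP(\qmcC)$, because $\iota$ preserves limits and $L$ preserves finite limits. Given $\alpha:p\to q$ in $\qmcX$, I would apply $\iota$, form the colimit in $\mcP(\qmcC)$, use descent there, and then apply $L$. Left exactness means $L$ preserves the relevant pullback squares, and this transfers both directions of the descent condition.

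\textbf{(ii)$\Rightarrow$(iii).} Presentability is assumed.
\begin{itemize}
\item \emph{Universality of colimits} is the ``if'' half of (ii). Pull a colimit diagram $q$ back along a map $Y'\to q(\infty)$. This produces a Cartesian transformation $p\to q$, so $p$ is again a colimit diagram.
\item \emph{Disjointness of coproducts} comes from applying the ``only if'' half with $K$ discrete. Take $q$ to be the coproduct diagram $X\sqcup Y$, and take $p$ to have $p|_{K}$ equal to $X$ (respectively $Y$) and cone point $X$. The transformation $\alpha$ is then Cartesian, which forces $X\times_{X\sqcup Y}Y$ to be initial.
\item \emph{Effectiveness of groupoids}: let $V_{\bullet}$ be a groupoid object as in \cref{def-gpoid-qcat} with $U=|V_{\bullet}|$. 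Shifting gives a simplicial object $V_{\bullet+1}$ with a map $V_{\bullet+1}\to V_{\bullet}$. By the pullback conditions of \cref{def-gpoid-qcat}, this map is a Cartesian transformation. The colimit of $V_{\bullet+1}$ is $V_{0}$, since the shift is split augmented. Descent then says that the square with corners $V_{1},V_{0},V_{0},U$ is Cartesian. By \cref{prop-6.1.2.11}, $V_{\bullet}$ is therefore the \Cech{} nerve of $V_{0}\to U$.
\end{itemize}

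\textbf{(iii)$\Rightarrow$(i).} This is the main obstacle. I would choose a regular cardinal $\kappa$ such that $\qmcX$ is $\kappa$-accessible, and let $\qmcC=\qmcX^{\kappa}$ be the small subcategory of $\kappa$-compact objects, enlarged if needed to be closed under finite limits. This gives a localization $L:\mcP(\qmcC)\to\qmcX$, namely the left Kan extension of the inclusion, with fully faithful right adjoint $\iota$. The real content is showing that $L$ is left exact. My first attempt would be the following. Every presheaf is a colimit of representables, and colimits in $\mcP(\qmcC)$ are universal. Since colimits in $\qmcX$ are universal by Giraud's axioms, preservation of pullbacks reduces to the case of pullbacks of representables. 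For representables, $\qmcC$ is closed under finite limits in $\qmcX$, so these pullbacks are preserved automatically.

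The reduction step is delicate. To say that $L$ of a general fiber product is computed termwise, I need $L$ to commute with colimits of Cartesian diagrams. Here I would use disjoint coproducts and effective groupoids. Any map in $\qmcX$ factors through the geometric realization of the \Cech{} nerve of an effective epimorphism from a coproduct of objects of $\qmcC$. Effectiveness lets me identify that geometric realization with the original object. This allows me to write each object of $\qmcX$ as the colimit of a groupoid built from objects of $\qmcC$, compatibly with pullbacks, and then to compare with the corresponding presheaf colimit.

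If this direct route stalls, I would fall back on Lurie's strategy from \cite[\S6.1]{Lurie:HTT}. That route first proves that (iii) implies the descent property (ii), and then shows that any presentable $\qmcX$ satisfying descent is a left exact localization of $\mcP(\qmcC)$. The latter is done by checking that the class of $L$-equivalences is stable under pullback, using (ii), and that it is generated by a small set.
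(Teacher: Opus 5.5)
The paper does not prove this statement. It only quotes \cite[Theorem 6.1.0.6]{Lurie:HTT}, so the comparison has to be with Lurie's argument.

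Your implications (i)$\Rightarrow$(ii) and (ii)$\Rightarrow$(iii) are essentially sound and follow the standard route. These steps are: descent in spaces via straightening, objectwise descent in presheaves, transfer through a left exact localization, then universality, disjointness with $K$ discrete, and effectiveness via the shifted groupoid $V_{\bullet+1}\to V_\bullet$, which is Cartesian and split over $V_0$.

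One slip concerns disjointness. The auxiliary diagram should be $p(a)=X$, $p(b)=\emptyset$, $p(\infty)=X$, with $\alpha_a=\mathrm{id}_X$ and $\alpha_b\colon\emptyset\to Y$. As you describe it, $p$ is not a colimit diagram. With the corrected choice, Cartesianness of $\alpha$ also shows that $X\to X\sqcup Y$ is a monomorphism.

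\textbf{The gap is (iii)$\Rightarrow$(i), so your cycle never closes.} Your first attempt uses only universality of colimits in $\mathcal{P}(\mathcal{C})$ and $\mathcal{X}$, and that cannot be enough. Universality lets you decompose the legs of a cospan $A\to Z\leftarrow B$ into representables, but not the base $Z$. You are left comparing $L(h_c\times_Z h_d)$ with $c\times_{LZ}d$ for an arbitrary presheaf $Z$, and that is not a pullback of representables.

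Here is a concrete failure. Take $\mathcal{X}=\mathcal{C}=\{0<1\}$, the poset of $(-1)$-truncated spaces. It is presentable, has universal colimits, and is trivially closed under finite limits in itself. Coproducts there are not disjoint, and your argument never used disjointness. The functor $L\colon\mathcal{P}(\mathcal{C})\to\mathcal{X}$ sends $h_1\times_{h_1\sqcup h_1}h_1\simeq\emptyset$, formed with the two different coprojections, to $0$, whereas $1\times_{1\sqcup 1}1=1$.

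To treat a general base you must write $Z=\operatorname{colim}_k Z_k$ and know that $LA\times_{LZ}Z_k\simeq L(A\times_Z Z_k)$. That says the colimit in $\mathcal{X}$ of a Cartesian family is a Cartesian cone, which is precisely the ``only if'' half of (ii). With that input your reduction proves (ii)$\Rightarrow$(i), not (iii)$\Rightarrow$(i).

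The \v{C}ech-nerve patch does not supply this input, for three reasons:
\begin{itemize}
\item the terms $C_i\times_Z C_j$ need not lie in $\mathcal{C}$;
\item you have not shown that $\coprod C_i\to Z$ is an effective epimorphism;
\item comparing pullbacks along the realization again requires descent.
\end{itemize}
Checking pullback-stability of $L$-equivalences instead runs into the same comparison.

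Your fallback only names the missing step, Giraud $\Rightarrow$ descent. Coproducts follow from disjointness plus universality. Realizations of groupoid objects follow from effectiveness plus universality by a shift argument like yours. But a general pushout or coequalizer is not the realization of a groupoid object, so axiom (iv) does not apply to it directly. Bridging this is the substantive step, which Lurie handles with free groupoid objects in \cite[\S\S6.1.4--6.1.5]{Lurie:HTT}. Your proposal has nothing in its place.
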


Our next job is to define effective epimorphisms in an $\infty$-topos,
which we will do in \cref{def-effective-epi}.  The following
discussion from \cite[\S6.1.1]{Lurie:HTT} about effective epimorphisms
in an ordinary category is helpful.

\begin{quote}
Recall that if $X$ is an object in an (ordinary) category $\mcC$, 
then an {\bf equivalence relation} on $X$ is an object $R$ of $\mcC$ 
equipped with a map 
\[
p \colon R \longrightarrow X \times X
\]
such that for any object $S$, the induced map
\[
\operatorname{Hom}_{\mcC}(S,R) \longrightarrow 
\operatorname{Hom}_{\mcC}(S,X) \times \operatorname{Hom}_{\mcC}(S,X)
\]
exhibits $\operatorname{Hom}_{\mcC}(S,R)$ as an equivalence relation 
on $\operatorname{Hom}_{\mcC}(S,X)$.

If $\mcC$ admits finite limits, then it is easy to construct 
equivalence relations in $\mcC$: given any map 
$f \colon X \to Y$ in $\mcC$, the induced map
\[
X \times_Y X \longrightarrow X \times X
\]
is an equivalence relation on $X$.

If the category $\mcC$ admits finite colimits, then one can attempt 
to invert this process: given an equivalence relation $R$ on $X$, one can 
form the coequalizer of the two projections $R \rightrightarrows X$ to obtain 
an object which we denote by $X/R$. In the category of sets, one can recover 
$R$ as the fiber product $X \times_{X/R} X$. In general, this need not occur: 
one always has 
\[
R \subseteq X \times_{X/R} X,
\]
but the inclusion may be strict (as subobjects of $X \times X$). If equality 
holds, then $R$ is said to be an {\bf effective equivalence relation}, and 
the map $X \to X/R$ is said to be an {\bf effective epimorphism}.

\end{quote}

\begin{defin}\label{def-simp-res}
A {\bf simplicial resolution in an $\infty$-topos $\qmcX$} is an
augmented simplicial object (as in \cref{def-6.1.2.2})
\[
U^{+}_{\bullet} : N(\bdelt_{+})^{op} \longrightarrow \qmcX
\]
in which $U^{+}_{-1}$ is  a colimit of its underlying simplicial object 
\[
U_{\bullet} = U^{+}_{\bullet}\big|_{N(\bdelt)^{op}}.
\]

\end{defin}

\begin{ex}\label{ex-Cech-surjection}
  For a surjection of sets $u:V_{0}\to V_{-1}$, in
  the {\Cech} nerve $V_{\bullet}$ (see \cref{ex-classic-Cech}) we have
  \begin{align*}
V_{n}
    & = V_{0}\times _{V_{-1}}\times \cdots  \times _{V_{-1}} V_{0}
    \qquad\mbox{with $n+1$ factors}  \\
    & =\left\{(v_{0},\dots,v_{n})\in V_{0}^{n+1}:
      u(v_{0})=\cdots = u(v_{n})   \right\} \\
    & = \coprod_{x\in V_{-1} }(u^{-1}(x) )^{n+1}.
  \end{align*}

  \noindent This is a simplicial resolution of $V_{-1}$.
\end{ex}

This should not be confused with the simplicial object of
\cref{def-gpoid-qcat}, which is a groupoid object and is controlled by
two different maps $V_{1}\to V_{0}$.

\begin{prop}\label{prop-HTT-6.2.3.5}
  \cite[Corollary 6.2.3.5]{Lurie:HTT} Let $f : U \to X$ be a morphism
  in an $\infty$-topos $\qmcX$.  The following conditions are
  equivalent:
\begin{enumerate}[label={(\roman*)},itemindent=0em]
\item \label{prop-HTT-6.2.3.5i}
  Viewing $f$ as an object of the $\infty$-category $\qmcX_{/X}$,
  the truncation $\tau_{\le -1}(f)$ is a final object of $\qmcX_{/X}$.
\item \label{prop-HTT-6.2.3.5ii}
  The Čech nerve $\check{C}(f)$ of \cref{def-Cech-nerve} is a
    simplicial resolution of $X$.
\end{enumerate}
\end{prop}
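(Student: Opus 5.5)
This is Lurie's Corollary 6.2.3.5, and I would prove it by translating both conditions into statements about the Čech nerve and its geometric realization. First I reformulate (i). By \cref{prop-same}, $f$ is $m$-truncated as a morphism exactly when it is $m$-truncated as an object of $\qmcX_{/X}$, and $\qmcX_{/X}$ is again an $\infty$-topos. Hence $\tau_{\le -1}(f)$ is the $(-1)$-truncation of $f$ taken in the slice. In an $\infty$-topos this $(-1)$-truncation can be computed as the geometric realization $|\check{C}(f)_{\bullet}|$ of the Čech nerve, with its canonical map to $X$. So the argument reduces to showing that this geometric realization is a monomorphism into $X$ and is the $(-1)$-truncation of $f$. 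Condition (i) then says exactly that $|\check{C}(f)_{\bullet}| \to X$ is an equivalence, which is the definition of $\check{C}(f)$ being a simplicial resolution of $X$ (\cref{def-simp-res}).

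To justify that the realization is the $(-1)$-truncation, I would proceed as follows.
\begin{enumerate}[label={(\alph*)},itemindent=0em]
\item Let $V$ be the colimit of the simplicial object $\check{C}(f)_{\bullet}$. By \cref{prop-6.1.2.11} and \cref{def-Cech-nerve}, this simplicial object is a groupoid object. By Giraud's axiom in \cref{theorem-topos-property} (every groupoid object is effective), the groupoid is effective. Concretely, $U\times_{V}U \simeq U\times_{X}U$, and the map $U\to V$ is an effective epimorphism.
\item Show that $V\to X$ is a monomorphism, meaning its diagonal $V\to V\times_X V$ is an equivalence. Colimits in $\qmcX$ are universal, so both sides can be tested after pulling back along the effective epimorphism $U\times U\to V\times V$. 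There the comparison becomes $U\times_V U\to U\times_X U$, which is an equivalence by step (a).
\item Show that any monomorphism $W\to X$ through which $f$ factors receives a map from $V$ over $X$, and that this map is essentially unique. Every term $\check{C}(f)_n = U^{\times_X (n+1)}$ maps compatibly to $W$, because $W\to X$ is $(-1)$-truncated and so the mapping spaces over $X$ into $W$ are empty or contractible. Passing to the colimit gives the map $V\to W$.
\end{enumerate}
Together these show that $V\to X$ is the universal monomorphism through which $f$ factors, so $V\simeq\tau_{\le -1}(f)$.

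With that identification, (i)$\Rightarrow$(ii) follows because $\tau_{\le -1}(f)$ being final in $\qmcX_{/X}$ means $V\to X$ is an equivalence, so $X$ is the colimit of $\check{C}(f)_{\bullet}$. Conversely, (ii)$\Rightarrow$(i) holds because if $X\simeq V$, then $\tau_{\le -1}(f)\simeq (X\xrightarrow{=}X)$, which is the final object of the slice.

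I expect step (b) to be the main obstacle. It needs care with universality of colimits: I must check that $V\times_X V$ is the realization of the bisimplicial object $\check{C}(f)_p\times_X\check{C}(f)_q$, and then use the standard fact that a levelwise-equivalence comparison of augmented nerves gives an equivalence. If this bookkeeping becomes unwieldy, my fallback is to cite Lurie's characterization (HTT Proposition 7.2.1.14) that effective epimorphisms are exactly the $0$-connective morphisms, together with the existence of the $(-1)$-connective/$(-1)$-truncated factorization system.
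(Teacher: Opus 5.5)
Your proposal is correct. The paper gives no argument of its own beyond citing HTT, and your outline is essentially Lurie's proof in HTT \S6.2.3: effectivity of the Čech groupoid plus universality of colimits makes $|\check{C}(f)_\bullet|\to X$ a monomorphism, and the universal property in your step (c) identifies it with $\tau_{\le -1}(f)$.

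In step (b), the bisimplicial computation suffices on its own. Alternatively, siftedness of $\Delta^{\mathrm{op}}$ shows directly that $U\times U\to V\times V$ is an effective epimorphism. You should drop the fallback, however. Saying that effective epimorphisms are exactly the left class of the factorization system whose right class is the monomorphisms is precisely (i)$\Leftrightarrow$(ii), so it assumes the statement. That left class is the $0$-connective maps, not the $(-1)$-connective ones.
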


\begin{defin}\label{def-effective-epi}
  An {\bf effective epimorphism} is a morphism in an $\infty$-topos
  satisfying the conditions of \cref{prop-HTT-6.2.3.5}.  An {\bf
    effective monomorphism} is one that is $(-1)$-truncated as in
\cref{def-truncated morphism}.
\end{defin}

Note that any morphism to a final object satisfies these conditions and
is therefore an effective epimorphism.

In the category $\Set $, which is the simplest example of an
$\infty$-topos, every morphism $f:U\to X$ is 0-truncated as in
\cref{def-truncated morphism}. Its $(-1)$-truncation, in which the
preimage of each element of $X$ is replaced by a singleton if it is
nonempty, is the inclusion of its image into $X$.  The map $f$ is onto
iff this is the identity map and a final object in $\Set_{/X} $, hence
condition \cref{prop-HTT-6.2.3.5i} above.  The identification of
{\Cech} nerve of such an $f$ as a simplicial resolution is the subject of
\cref{ex-Cech-surjection}, hence condition \cref{prop-HTT-6.2.3.5ii}.

\subsection{Grothendieck topologies and
  $\infty$-sheaves}\label[appendix]{sec-Groth-top}

\begin{defin}\label{defin-sieves}
{\bf Sieves.}
  \cite[Definition 6.2.2.1]{Lurie:HTT}  A
  {\bf sieve on an {\qcat} ${\qmcC}$} is a full subcategory
  ${\qmcC}^{(0)} \subseteq {\qmcC}$ having the property that if\linebreak
  $f : C \to D$ is a morphism in ${\qmcC}$ and $D$ belongs
  to ${\qmcC}^{(0)}$, then $C$ also belongs to ${\qmcC}^{(0)}$. In
  other words the sub-{\qcat} ${\qmcC}^{(0)} $ is closed under
  precomposition with morphisms in $\qmcC$.

  A {\bf sieve on  an object $C \in {\qmcC}$} is a
  sieve on the {\qcat} ${\qmcC}_{/C}$ of morphisms to $C$. Given a
  morphism $f : D \to C$ and a sieve ${\qmcC}^{(0)}_{/C}$ on $C$, we
  let $f^{*}{\qmcC}^{(0)}_{/C}$ denote the unique sieve on $D$ such
  that $f^{*}{\qmcC}^{(0)}_{/C} \subseteq {\qmcC}_{/D}$ and
  ${\qmcC}^{(0)}_{/C}$ determine the same sieve on ${\qmcC}_{/f}$.
\end{defin}

Hence a sieve ${\qmcC}^{(0)}$ is the full subcategory spanned by some
collection of objects along with all objects mapping to them.  A sieve
${\qmcC}^{(0)}_{/C}$ on an object $C$ consists of a collection of
morphisms to $C$ (which need not include the identity) along with all
morphisms to their domains.

Observe that if $F: {\qmcC} \to {\qmcD}$ is a functor between 
$\infty$-categories and ${\qmcD}^{(0)} \subseteq {\qmcD}$ is a sieve on 
${\qmcD}$, then 
\[
  F^{-1}{\qmcD}^{(0)} = {\qmcC} \times_{\qmcD} {\qmcD}^{(0)}
\]
is a sieve on ${\qmcC}$. Moreover, if $F$ is an equivalence, then 
$F^{-1}$ induces a bijection between sieves on ${\qmcD}$ and sieves on 
${\qmcC}$.

\begin{defin}\label{def-Grothendieck-topology}
  \cite[more of Definition
  6.2.2.1]{Lurie:HTT} A {\bf Grothendieck topology $\tau$} on an {\qcat}
  ${\qmcC}$ consists of a specification, for each object $C$ of
  ${\qmcC}$, of a collection of sieves on $C$ which we refer to as
  {\bf covering sieves}. These collections are required to satisfy the
  following properties:

\begin{enumerate}
\item [(i)] For every object $C$ of ${\qmcC}$, the sieve
 $ {\qmcC}_{/C}$
  is a covering sieve.

  \item [(ii)]If $f : C \to D$ is a morphism in ${\qmcC}$ and 
  ${\qmcC}^{(0)}_{/D}$ is a covering sieve on $D$, then 
  $f^{*}{\qmcC}^{(0)}_{/D}$ is a covering sieve on $C$.

  \item [(iii)] Let $C$ be an object of ${\qmcC}$, let ${\qmcC}^{(0)}_{/C}$ be a 
  covering sieve on $C$, and let ${\qmcC}^{(1)}_{/C}$ be an arbitrary sieve 
  on $C$. Suppose that for each morphism $f : D \to C$ belonging to 
  ${\qmcC}^{(0)}_{/C}$, the pullback $f^{*}{\qmcC}^{(1)}_{/C}$ is a 
  covering sieve on $D$. Then ${\qmcC}^{(1)}_{/C}$ is a covering sieve on 
  $C$.
\end{enumerate}

Such a topology is {\bf  finitary} \cite[Definition A.3.1.1]{Lurie:SAG} if
for every object $C \in {\qmcC}$ and every covering sieve
${\qmcC}^{(0)}_{/C}$ on $C$, there exists a finite collection of
morphisms
\[
\{\, C_i \to C \mid 1 \le i \le n \,\} \subseteq {\qmcC}^{(0)}_{/C}
\]
which generate a covering sieve of $C$ (in other words, the smallest sieve
${\qmcC}^{(1)}_{/C}$ containing each $C_i \to C$ is also a covering
sieve on $C$).

An {\bf $\infty$-site} $({\qmcC}, \tau)$ is an {\qcat} equipped with a
Grothendieck topology.
\end{defin}

\begin{remark}\label{remark-site}
  One can define a Grothendieck topology $\tau$ on an ordinary
  category $\mcC $ in the same way.  Such a pair $(\mcC, \tau )$ is a
  called a {\bf site}.
\end{remark}

\begin{ex}\label{ex-6.2.2.2}
Any {\qcat} ${\qmcC}$ may be equipped with the {\bf  trivial Grothendieck
topology} in which a sieve ${\qmcC}^{(0)}_{/C}$ on an object $C$ of 
${\qmcC}$ is covering if and only if 
\[
{\qmcC}^{(0)}_{/C} = {\qmcC}_{/C}.
\]
In other words, the covering sieve for each object $C$ consists of {\em  all}
morphisms to it.
\end{ex}

A presheaf $\mcalF  \in \mcP(\qmcC) $ defines a collection of objects on
which it is nonempty, and we denote the corresponding sieve
by ${\qmcC}^{(0)}({\mcalF})$.  Conversely, given a sieve
\({\qmcC}^{(0)} \subseteq {\qmcC}\), there is a unique map of simplicial sets
\(f : {\qmcC} \to \Delta^{1}\) such that \({\qmcC}^{(0)}\) is the
preimage of \(\{0\}\).  This construction determines a bijection
between sieves on \({\qmcC}\) and functors
\(f : {\qmcC} \to \Delta^{1}\), and we may identify \(\Delta^{1}\)
with the full subcategory of \(\qmcS^{op}\) spanned by the
objects $\emptyset $ and $\Delta^{0}$ in the category of Kan
complexes.  Since every \((-1)\)-truncated Kan complex is equivalent
to either \(\emptyset\) or \(\Delta^{0}\), we conclude:

\begin{lem}\label{lem-6.2.2.4}
  \cite[Lemma 6.2.2.4]{Lurie:HTT}
For every small \(\infty\)-category \({\qmcC}\), the construction
\[
{\mcalF} \longmapsto {\qmcC}^{(0)}({\mcalF})
\]
determines a bijection between the set of equivalence classes of
\((-1)\)-truncated objects (\cref{defin-truncated-object}) of
\(\mathcal{P}({\qmcC})\) and the set of all sieves on \({\qmcC}\).
\end{lem}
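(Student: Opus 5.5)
The plan is to construct the bijection in both directions and check that they are inverse, working objectwise since \((-1)\)-truncation in \(\mcP(\qmcC)\) is detected pointwise. First, because limits in \(\mcP(\qmcC)=\Fun(\qmcC^{\op},\qmcS)\) are computed pointwise, the mapping spaces into a presheaf are \((-1)\)-truncated precisely when each value is. Concretely, I would show that \({\mcalF}\) is \((-1)\)-truncated in the sense of \cref{defin-truncated-object} iff the diagonal \({\mcalF}\to{\mcalF}\times{\mcalF}\) is an equivalence, iff each \({\mcalF}(C)\) is either empty or contractible. For the direction from \((-1)\)-truncated presheaves to values, I would test against the Yoneda presheaves \(\yo(C)\) of \cref{defin-Yoneda-embedding}: the Yoneda lemma gives \(\Map(\yo(C),{\mcalF})\simeq{\mcalF}(C)\). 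For the converse, I would use that a limit of \((-1)\)-truncated spaces is again \((-1)\)-truncated. Every presheaf is a colimit of representables, so \(\Map(G,{\mcalF})\) is a limit of values of \({\mcalF}\).

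Next I would verify that \({\qmcC}^{(0)}({\mcalF})=\{C:{\mcalF}(C)\neq\emptyset\}\) is a sieve. If \(f:C\to D\) and \({\mcalF}(D)\) is nonempty, then \({\mcalF}(f):{\mcalF}(D)\to{\mcalF}(C)\) shows that \({\mcalF}(C)\) is nonempty. This is well defined on equivalence classes, since equivalent presheaves have equivalent values.

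For the inverse, given a sieve \({\qmcC}^{(0)}\), I would use the classifying map \(f:{\qmcC}\to\Delta^{1}\) described just before the statement. I would compose it with the identification of \(\Delta^{1}\) with the full subcategory of \(\qmcS^{\op}\) on \(\emptyset\) and \(\Delta^{0}\). This yields a functor \({\qmcC}\to\qmcS^{\op}\), i.e.\ a presheaf \({\mcalF}_{{\qmcC}^{(0)}}\), whose value is \(\Delta^{0}\) on \({\qmcC}^{(0)}\) and \(\emptyset\) elsewhere. One must check the orientation: the sieve is the preimage of \(0\), and \(0\) must go to \(\Delta^{0}\). This works because in \(\qmcS^{\op}\) there is a morphism \(\Delta^{0}\to\emptyset\), namely the map \(\emptyset\to\Delta^{0}\) of spaces, and none the other way. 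The presheaf is \((-1)\)-truncated by the first step, and plainly \({\qmcC}^{(0)}({\mcalF}_{{\qmcC}^{(0)}})={\qmcC}^{(0)}\).

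The main obstacle is the remaining composite: showing that any \((-1)\)-truncated \({\mcalF}\) is equivalent to \({\mcalF}_{{\qmcC}^{(0)}({\mcalF})}\), so that a presheaf carries no data beyond its support. Since \({\mcalF}\) takes values in the full subcategory \(\tau_{\le -1}\qmcS\), which is equivalent to the poset \(\{\emptyset\to\Delta^{0}\}\cong\Delta^{1}\), I would argue as follows. The inclusion of this subcategory is fully faithful, and its mapping spaces are empty or contractible. Hence \(\Fun({\qmcC}^{\op},\tau_{\le -1}\qmcS)\simeq\Fun({\qmcC}^{\op},\Delta^{1})\). Maps of simplicial sets into the nerve of a poset are determined by their values on vertices, subject only to the order condition along edges. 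So \({\mcalF}\) is determined up to equivalence by which objects it sends to \(\Delta^{0}\), which is exactly its support sieve. This gives the bijection.
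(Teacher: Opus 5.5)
Your proposal is correct and is essentially the paper's argument, which follows Lurie. Sieves correspond to maps $\qmcC\to\Delta^{1}$, $\Delta^{1}$ is identified with the full subcategory of $\qmcS^{\op}$ on $\emptyset$ and $\Delta^{0}$, and every $(-1)$-truncated space is equivalent to one of these. You also make explicit two points the paper leaves implicit: that $(-1)$-truncatedness in $\mathcal{P}(\qmcC)$ is detected objectwise (by Yoneda and the closure of $(-1)$-truncated spaces under limits), and that functors into the nerve of a poset are determined by their values on vertices. Both are handled soundly, as is the orientation check.
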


\begin{remark}\label{remark-minus1-truncation}
 The $(-1)$-truncation of a presheaf
  $\mcalF \in \mcP({\qmcC})$ is defined by
  \begin{displaymath}
    (\tau_{\leq -1}\mcalF)    (C)=\mycases{
 \emptyset     
       &\mbox{for }\mcalF (C):= \emptyset\\
       \Delta^{0}
       &\mbox{otherwise.}
}
\end{displaymath}
\end{remark}

Following \cite[\S6.2.2]{Lurie:HTT}, we now introduce a relative
version of the above construction.

Let \(C \in {\qmcC}\) be an object and let \(i : {\mcalF}_{C} \to \yo(C)\)
be a monomorphism in \(\mathcal{P}({\qmcC})\), meaning a natural
transformation of presheaves inducing a monomorphism ({\ie}
$(-1)$-truncated map) of Kan complexes
${\mcalF}_{C}(D)\to\Map_{{\qmcC}}(D, C)$ for each object $D \in {\qmcC}$.
We will replace the the set of \((-1)\)-truncated presheaves in
\cref{lem-6.2.2.4} by the set of such monomorphisms. This means we
need a replacement for the set of sieves on the right.

Let \({\qmcC}_{/C}({\mcalF}_{C})\) denote the full
subcategory of \({\qmcC}_{/C}\) spanned by those objects
\(f : D \to C\) of \({\qmcC}_{/C}\) such that there exists a
commutative triangle
\[
\begin{tikzcd}
& {\mcalF}_{C} \arrow[dr,"i"] &\\
  \yo(D) \arrow[rr,"\yo(f)"'] \arrow[ur] & & \yo(C) 
\end{tikzcd}
\]
in \(\mathcal{P}({\qmcC})\).  Applying the three functors in the
diagram above to an object $X$ in ${\qmcC}$ gives a diagram of spaces
\begin{displaymath}
\xymatrix
@R=7mm
@C=4mm
{
  &{\mcalF_{C}(X) }\ar@{^{(}->}[dr]^(.5){i_{X}}\\
{\Map_{\qmcC}(X,D)}  \ar[rr]_(.5){f_{*}}
    \ar@{-->}[ur]^(.5){}
  &{}
    &{\Map_{\qmcC}(X,C).}
}
\end{displaymath}
 
\noindent The morphism $f$ must be chosen to that the indicated
lifting exists and is natural in $X$.  As usual, both diagrams need
commute only up to homotopy.

It is easy to see that \({\qmcC}_{/C}({\mcalF}_{C})\) is a sieve on
$C$ and that equivalent subobjects of \(\yo(C)\) lead to the same
sieve.

\begin{prop}\label{prop-6.2.2.5}
  \cite[Proposition 6.2.2.5]{Lurie:HTT}
Let \({\qmcC}\) be a small \(\infty\)-category containing an object \(C\).
The construction described above yields a bijection
\[
(i : {\mcalF}_{C} \to \yo(C)) \longmapsto {\qmcC}_{/C}({\mcalF}_{C})
\]
from the set of monomorphisms to $\yo(C)$ to the set of sieves on $C$.
\end{prop}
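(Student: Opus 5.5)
We need to construct maps in both directions and show they are mutually inverse. The strategy is the relative version of \cref{lem-6.2.2.4}: pass to the overcategory and use the identification $\mathcal{P}(\qmcC)_{/\yo(C)}\simeq \mathcal{P}(\qmcC_{/C})$. Under this equivalence, monomorphisms $\mcalF_C\to\yo(C)$ correspond to $(-1)$-truncated objects of $\mathcal{P}(\qmcC)_{/\yo(C)}$, by \cref{prop-same}. These in turn correspond to $(-1)$-truncated presheaves on $\qmcC_{/C}$. \cref{lem-6.2.2.4}, applied to the small $\infty$-category $\qmcC_{/C}$, then puts these in bijection with sieves on $\qmcC_{/C}$, which are by definition sieves on $C$.

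The key steps are as follows.

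(1) Establish the equivalence $\mathcal{P}(\qmcC)_{/\yo(C)}\simeq\mathcal{P}(\qmcC_{/C})$. This is the standard fact that slicing a presheaf category over a representable gives presheaves on the slice, i.e. the straightening of right fibrations over $\qmcC_{/C}$ versus right fibrations over $\qmcC$ mapping to the one represented by $C$. Concretely, an object $\mcalF\to\yo(C)$ goes to the presheaf $(f:D\to C)\mapsto$ the fiber of $\mcalF(D)\to\Map_{\qmcC}(D,C)$ over $f$.

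(2) Check that under this equivalence, the object $\mcalF_C\to\yo(C)$ is $(-1)$-truncated in the slice exactly when each map $\mcalF_C(D)\to\Map(D,C)$ has $(-1)$-truncated fibers. That is the definition of a monomorphism, and it matches the corresponding presheaf on $\qmcC_{/C}$ being $(-1)$-truncated.

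(3) Apply \cref{lem-6.2.2.4} to $\qmcC_{/C}$. Then verify that the resulting sieve is the one described in the statement: $f:D\to C$ lies in it iff the fiber over $f$ is nonempty (hence contractible). By Yoneda, a nonempty fiber amounts to a lift $\yo(D)\to\mcalF_C$ of $\yo(f)$, which is exactly the definition of $\qmcC_{/C}(\mcalF_C)$.

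Equivalent monomorphisms give equivalent truncated objects, so the bijection descends to equivalence classes as stated.

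The main obstacle is step (1). Making the slice equivalence precise at the $\infty$-level requires the straightening/unstraightening machinery, or Lurie's Corollary 5.1.6.12 in \cite{Lurie:HTT}, which I would simply cite. The remaining steps are formal: truncation levels are detected on fibers, and the Yoneda identification of $\Map(\yo(D),\mcalF_C)$ with $\mcalF_C(D)$ supplies the lifts.
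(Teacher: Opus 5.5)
Your argument is correct and is essentially the proof in the cited source. The paper gives no argument of its own beyond citing \cite[Proposition 6.2.2.5]{Lurie:HTT}. Lurie's proof likewise uses \cite[Corollary 5.1.6.12]{Lurie:HTT} to identify $\mathcal{P}(\qmcC)_{/\yo(C)}$ with $\mathcal{P}(\qmcC_{/C})$, which turns subobjects of $\yo(C)$ into $(-1)$-truncated presheaves on $\qmcC_{/C}$, and then applies \cref{lem-6.2.2.4}; your Yoneda check that the resulting sieve is exactly $\qmcC_{/C}(\mcalF_{C})$ spells out a compatibility Lurie leaves to the reader.
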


\begin{defin}\label{defin-sheaves}
  {\bf {\qsheaves}.}
  \cite[Definition 6.2.2.6]{Lurie:HTT} 
Let $({\qmcC}, \tau)$ be an $\infty$-site and let $S$ be the collection
of all monomorphisms $i:{\mcalF}_{C} \to \yo(C)$ which correspond (as in
\cref{prop-6.2.2.5}) to covering sieves
\begin{displaymath}
{\qmcC}^{(0)}_{/C} \subseteq {\qmcC}_{/C}
\end{displaymath}
 
\noindent for all objects $C$ in ${\qmcC}$.  An object
$\mcalF \in \mathcal{P}({\qmcC})$ (\cref{defin-presheaves}) is an {\bf
  {\qsheaf}} or {\bf  sheaf} if it is $S$-local.  We let
$\operatorname{Shv}({\qmcC}, \tau)$ denote the full subcategory of
$\mathcal{P}({\qmcC})$ spanned by the $S$-local objects.  We will
often omit $\tau$ from the notation.

For a presentable category $\qmcD$ (\cref{def-presentable}) we let
\[
  \Shv(\qmcC; \qmcD) := \Shv(\qmcC) \otimes \qmcD
  \in \qPrL
\]
denote the category of $\qmcD$-valued sheaves on $\qmcC$. This can
alternatively be defined as the full subcategory of presheaves that satisfy
descent, namely the sheaf condition (see \cite[Remark~1.3.1.6]{Lurie:SAG}).
\end{defin}

Lurie omits the $\tau$ from his notation for this subcategory.

$\mcalF$ is $S$-local if for each monomorphism
$i:{\mcalF}_{C} \to \yo(C)$ corresponding to a covering sieve, the map
\begin{displaymath}
i^{*}:\Map_{\mathcal{P}({\qmcC})}(\yo(C), \mcalF)\to
\Map_{\mathcal{P}({\qmcC})}(\mcalF_{C}, \mcalF)
\end{displaymath}
 
\noindent is an equivalence.

\begin{ex}\label{ex-revisited}
  {\bf Sheaves on the poset category of  a topological space revisited.}
  We will show how this definition plays out in
  \cref{ex-poset-open}.  Let ${\mcC}$ be the ordinary category
  $\mcU(X) $. We give it the Grothendieck topology in which the
  covering sieves on $U$ are those sieves
  $\left\{U_{\alpha}\subseteq U \right\} $ for which
  $U=\bigcup_{\alpha}U_{\alpha} $. 

  The Yoneda presheaf $\yo(U)$ of \cref{defin-Yoneda-embedding}
  assigns to each open subset $U'$ its set of embeddings into $U$.
  This set is a singleton if $U'\subseteq U$ and the empty set otherwise,
  so $\yo(U)$ is $(-1)$-truncated.

  There is a presheaf monomorphism $i:\mcalF_{U}\to \yo(U) $ if
  $\mcalF_{U}$ is supported by, and is a singleton on, a collection of
  open subsets $U_{\alpha} \subseteq U$ that is closed under
  inclusion. Hence each such presheaf $\mcalF_{U}$ is also
  $(-1)$-truncated.  $\mcalF_{U}$ corresponds as in
  \cref{prop-6.2.2.5} to a covering sieve if the union (colimit) of the
  $U_{\alpha}$ is $U$ itself.  This means that
  \begin{displaymath}
\mcalF_{U} = \colim{\alpha} \yo(U_{\alpha}).
\end{displaymath}
 
Let $S$ be the set of all such inclusions $\mcalF_{U}\to \yo(U)$ for all
open subsets $U$.  {\em What does it mean for a presheaf $\mcalF$ to
  be $S$-local?}  It means that for each such $i$, the map
\begin{displaymath}
i^{*}:\Map_{\mathcal{P}(\mcU(X) )}(\yo(U), \mcalF)\to
\Map_{\mathcal{P}(\mcU(X) )}(\mcalF_{U}, \mcalF)
\end{displaymath}

\noindent is an equivalence, meaning a bijection of sets.  The Yoneda
lemma identifies the domain with $\mcalF(U)$, since a morphism in the
presheaf category is a natural transformation of $\Set$-valued
functors on $\mcU(X)$.  Hence the set on the right must be the same.
Thus we must have
\begin{align*}  
  \mcalF(\colim{\alpha}U_{\alpha})
  & = \mcalF(U) = \Map_{\mathcal{P}(\mcU(X))}(\mcalF_{U}, \mcalF)\\
  & = \Map_{\mathcal{P}(\mcU(X))}(\colim{\alpha} \yo(U_{\alpha})
        , \mcalF)\\
  & = \lim{\alpha}\Map_{\mathcal{P}(\mcU(X))}( \yo(U_{\alpha})
    , \mcalF)\\
  & = \lim{\alpha} \mcalF(U_{\alpha})
       \qquad \mbox{also by the Yoneda lemma.}
\end{align*}

\noindent Thus being $S$-local means the presheaf $\mcalF $ converts
colimits of open subsets (such as pushouts) to limits. {\em This is
  the classical sheaf condition.}
\end{ex}

\begin{prop}\label{prop-top-loc-groth-top}
{\bf Topological localizations and Grothendieck
topologies.} \cite[Proposition 6.2.2.17]{Lurie:HTT} 
Let \({\qmcC}\) be a small \(\infty\)-category. Then Grothendieck
topologies on \({\qmcC}\) are in bijective correspondence with
equivalence classes of topological localizations
(\cref{defin-top-lozn}) of the presheaf \(\infty\)-category
\(\mathcal{P}({\qmcC})\).
\end{prop}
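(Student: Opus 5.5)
The plan is to construct maps in both directions and then check that they are mutually inverse. The main tools are universality of colimits in $\mathcal{P}(\qmcC)$, the correspondence between sieves and monomorphisms into representables of \cref{prop-6.2.2.5}, and the description of $\Shv(\qmcC,\tau)$ as the $S$-local objects in \cref{defin-sheaves}.

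\emph{From topologies to localizations.} Start with a Grothendieck topology $\tau$. Let $S$ be the set of monomorphisms $i\colon \mcalF_C\to \yo(C)$ corresponding to covering sieves, and let $\overline{S}$ be the strongly saturated class it generates. Then $\Shv(\qmcC,\tau)$ is the accessible localization of $\mathcal{P}(\qmcC)$ at $\overline{S}$, and condition (i) of \cref{defin-top-lozn} holds by construction.

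For condition (ii), I will show that $\overline{S}$ is stable under pullback. Consider the class $T$ of morphisms $g\colon X\to Y$ such that every pullback of $g$ lies in $\overline{S}$. Because colimits in $\mathcal{P}(\qmcC)$ are universal, $T$ is again strongly saturated, so it suffices to prove $S\subseteq T$. Any map $X\to\yo(C)$ exhibits $X$ as a colimit of representables $\yo(D)\to\yo(C)$. By universality, the pullback of $i$ along $X\to \yo(C)$ is the colimit of the pullbacks along the maps $\yo(f)\colon\yo(D)\to \yo(C)$. Each of these pullbacks is the monomorphism associated to the sieve $f^*\qmcC^{(0)}_{/C}$. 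That sieve is covering by axiom (ii) of \cref{def-Grothendieck-topology}, so each pullback lies in $S$. A colimit of maps in $\overline{S}$ lies in $\overline{S}$, so the pullback of $i$ lies in $\overline{S}$, and hence $S\subseteq T$. Therefore the localization is topological, and in particular left exact.

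\emph{From localizations to topologies.} Now start with a topological localization $L$. Declare a sieve on $C$ to be covering when its monomorphism $\mcalF_C\to\yo(C)$ is an $L$-equivalence. I check the three axioms in turn. Axiom (i) is immediate, since the sieve of all maps to $C$ corresponds to the identity of $\yo(C)$. Axiom (ii) follows from pullback stability of $L$-equivalences, using the identification of the pullback along $\yo(f)$ with $f^*$ made above.

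For axiom (iii), let $\mcalF^0\subseteq\yo(C)$ be covering and $\mcalF^1\subseteq \yo(C)$ be arbitrary. Write $\mcalF^0=\colim{}\yo(D)$ over the maps $D\to C$ in the covering sieve. By hypothesis each pullback $\mcalF^1\times_{\yo(C)}\yo(D)\to\yo(D)$ is an $L$-equivalence. Universality of colimits then shows that $\mcalF^0\cap\mcalF^1\to\mcalF^0$ is an $L$-equivalence. Composing with $\mcalF^0\to\yo(C)$ and applying two-out-of-three to $\mcalF^0\cap\mcalF^1\to\mcalF^1\to\yo(C)$ shows that $\mcalF^1\to\yo(C)$ is an $L$-equivalence, so $\mcalF^1$ is covering.

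\emph{Bijectivity.} Two facts are needed, and the second is the main obstacle.
\begin{itemize}
\item[(a)] A topological localization is recovered from the monomorphisms into representables that it inverts. The localization is generated by a class $S_0$ of monomorphisms by (i) of \cref{defin-top-lozn}. Any monomorphism $U\to X$ in $S_0$ is a colimit of its pullbacks along the maps $\yo(D)\to X$, and each such pullback is an inverted monomorphism into a representable by (ii) of \cref{defin-top-lozn}. Hence the inverted monomorphisms into representables generate the whole class.
\item[(b)] Starting from $\tau$, a monomorphism $\mcalF_C\to\yo(C)$ that lies in $\overline{S}$ already comes from a $\tau$-covering sieve. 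This is the step I expect to be hardest.
\end{itemize}

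For (b), I would introduce the subpresheaf $G\subseteq\yo(C)$ defined by
\[
G(D)=\{\,f\colon D\to C \;:\; f^*\mcalF_C \text{ is a covering sieve}\,\}.
\]
It contains $\mcalF_C$, because the pullback of $\mcalF_C$ along a map already in the sieve is the maximal sieve, which is covering by axiom (i). The key claim is that $G$ is $S$-local, and axiom (iii) of \cref{def-Grothendieck-topology} is exactly what should give this.

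Granting the claim, the inclusion $\mcalF_C\to G$ extends along the $\overline{S}$-map $\mcalF_C\to\yo(C)$, since $G$ is $S$-local. Such an extension $\yo(C)\to G$ is a point of $G(C)$, so $\mathrm{id}_C\in G(C)$. This says precisely that $\mcalF_C$ itself is covering, which is what (b) requires.
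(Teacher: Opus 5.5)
Your construction of the two maps, your verification of the axioms, and your step (a) follow the standard argument (Lurie's proofs of HTT 6.2.2.7 and 6.2.2.17, which the paper only cites). Step (b), however, has a genuine gap.

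\textbf{The gap.} The key claim that $G$ is $S$-local is false in general.

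\begin{itemize}
\item If $\mcalF_C$ is covering, then $f^*\mcalF_C$ is covering for every $f$ by axiom (ii), so $G=\yo(C)$. This happens in particular for $\mcalF_C=\yo(C)$, which certainly lies in $\overline{S}$.
\item But $\yo(C)$ is $S$-local only if it is a $\tau$-sheaf, and this fails for any topology that is not subcanonical. For example, take the discrete category with objects $C,D$ and declare the empty sieve on $D$ covering. Then $\Map(\yo(D),\yo(C))$ is empty, while there is a map from the empty presheaf to $\yo(C)$.
\end{itemize}

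What axiom (iii) actually gives is a relative statement: $f\colon D\to C$ lies in $G$ as soon as $f\circ h\in G$ for all $h$ in a covering sieve on $D$. In other words, $G\hookrightarrow\yo(C)$ is right orthogonal to $S$. This says nothing about extending an arbitrary map $\mcalF_D\to G$ to $\yo(D)$, or about uniqueness of such extensions, because $\yo(C)$ itself need not be a sheaf.

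Your last inference is also unjustified. An extension $e\colon\yo(C)\to G$ only yields some $g\colon C\to C$ with $g^*\mcalF_C$ covering. To get $\mathrm{id}_C\in G(C)$ you would need $e$ to be a section of $G\hookrightarrow\yo(C)$, i.e.\ a lift over $\yo(C)$. Nor can you simply transport the relative orthogonality from $S$ to $\overline{S}$. The class of maps left orthogonal to a fixed morphism is not closed under left cancellation (if $g\circ f$ and $g$ are in it, $f$ need not be). Surjections of sets, which are exactly the maps left orthogonal to $\{0\}\hookrightarrow\{0,1\}$, already fail this.

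\textbf{The repair.} Use uniqueness of extensions into a genuine sheaf rather than existence.

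\begin{itemize}
\item Call a sieve $R$ on $D$ \emph{closed} if every $f\colon E\to D$ with $f^*R$ covering lies in $R$. Your $G$ is the closure $\overline{\mcalF_C}=\{f : f^*\mcalF_C\text{ covering}\}$; it is closed by (iii), and closure commutes with pullback.
\item Closed sieves form a discrete presheaf $\Omega_\tau$. Since sieves depend only on $h\qmcC$ and $\Omega_\tau$ is $0$-truncated, $S$-locality of $\Omega_\tau$ is just the classical sheaf condition.
\item Separation: suppose closed $\sigma,\sigma'$ agree after pullback along every $h$ in a covering sieve $R$. For $k\in\sigma$, every $l\in k^*R$ has $l^*k^*\sigma'=(kl)^*\sigma$ maximal. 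So $k^*\sigma'$ is covering by (ii) and (iii), hence $k\in\sigma'$ by closedness.
\item Gluing: a matching family $(\sigma_h)_{h\in R}$ glues to the closed sieve $\{k : \sigma_{kl}\text{ is maximal whenever } kl\in R\}$.
\end{itemize}

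Now conclude as follows. The maps $s$ for which $\Map(s,\Omega_\tau)$ is an equivalence form a strongly saturated class containing $S$. Hence restriction $\Omega_\tau(C)\to\Map(\mcalF_C,\Omega_\tau)$ is injective. The elements $\overline{\mcalF_C}$ and the maximal sieve have the same image, because $f^*\overline{\mcalF_C}$ is maximal for every $f\in\mcalF_C$. Therefore $\overline{\mcalF_C}$ is maximal, so $\mathrm{id}_C\in G(C)$ and $\mcalF_C$ is covering.

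A minor point elsewhere: in your check of axiom (iii), you also need that $\mcalF^0\cap\mcalF^1\to\mcalF^1$ is an $L$-equivalence before two-out-of-three applies. This holds because it is a pullback of $\mcalF^0\to\yo(C)$.
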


The following takes Lurie four pages to prove.

\begin{prop}\label{prop-source-topoi} 
  {\bf A source of $\infty$-topoi.}  \cite[Proposition 6.2.2.7]{Lurie:HTT}
  For an $\infty$-site $({\qmcC}, \tau)$,
  $\operatorname{Shv}({\qmcC}, \tau)$ is a topological localization
  (\cref{defin-top-lozn}) of $\mathcal{P}({\qmcC})$. In particular,
  $\operatorname{Shv}({\qmcC}, \tau)$ is an $\infty$-topos.
\end{prop}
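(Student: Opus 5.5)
The plan is to check directly the two conditions of \cref{defin-top-lozn} for the strongly saturated class $\overline{S}$ generated by the set $S$ of \cref{defin-sheaves}. Recall that $S$ consists of the monomorphisms $i:\mcalF_{C}\to \yo(C)$ that correspond, as in \cref{prop-6.2.2.5}, to covering sieves. Three preliminary observations come first.
\begin{itemize}
\item[$\bullet$] $\qmcC$ is small, so $S$ is (up to equivalence) a small set of morphisms.
\item[$\bullet$] $\mathcal{P}(\qmcC)$ is presentable. Hence the full subcategory of $S$-local objects is an accessible localization of $\mathcal{P}(\qmcC)$ (\cite[Proposition 5.5.4.15]{Lurie:HTT}), and the class of morphisms it inverts is exactly $\overline{S}$.
\item[$\bullet$] Condition (i) of \cref{defin-top-lozn} holds by construction, since every element of $S$ is a monomorphism and $S$ generates $\overline{S}$.
\end{itemize}
So the real content is condition (ii): stability of $\overline{S}$ under pullback.

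For (ii) I would use the standard bootstrapping trick. Let $\overline{S}'$ be the class of morphisms $f:X\to Y$ in $\mathcal{P}(\qmcC)$ such that every pullback of $f$ along an arbitrary map $Y'\to Y$ lies in $\overline{S}$. Clearly $\overline{S}'\subseteq \overline{S}$. It suffices to show two things: that $\overline{S}'$ contains $S$, and that $\overline{S}'$ is strongly saturated. Then $\overline{S}\subseteq \overline{S}'$, so the two classes agree, and this equality is exactly pullback stability.

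To see $S\subseteq \overline{S}'$, take $i:\mcalF_{C}\to \yo(C)$ in $S$ and a map $Y'\to \yo(C)$. Write $Y'$ as a colimit of representables $\yo(D)$, each mapping to $\yo(C)$ via some $f:D\to C$. Colimits in the presheaf $\infty$-category are universal, so the pullback of $i$ to $Y'$ is the colimit of its pullbacks along the maps $\yo(f)$. Each of these pullbacks is the monomorphism $\mcalF_{D}\to\yo(D)$ classifying the sieve $f^{*}\qmcC^{(0)}_{/C}$. By axiom (ii) of \cref{def-Grothendieck-topology} that sieve is covering, so these maps lie in $S$. A colimit of morphisms in $\overline{S}$ is again in $\overline{S}$, by strong saturation. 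Hence the pullback of $i$ lies in $\overline{S}$.

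Showing that $\overline{S}'$ is strongly saturated is the step I expect to be the main obstacle. There are three closure properties to check, and each uses universality of colimits in $\mathcal{P}(\qmcC)$ together with pasting of pullback squares.
\begin{itemize}
\item[$\bullet$] \emph{Pushouts.} Pulling back a pushout square along a map to its target gives a pushout square, whose legs are pullbacks of the original legs.
\item[$\bullet$] \emph{Colimits in $\Fun(\Delta^{1},\mathcal{P}(\qmcC))$.} Pulling back along a map to $\colim Y_{\alpha}$ decomposes, by universality, into pullbacks along the induced maps to each $Y_{\alpha}$.
\item[$\bullet$] \emph{Two-out-of-three.} Given composable $f:X\to Y$ and $g:Y\to Z$, pull everything back along $Z'\to Z$. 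Pullbacks of $g\circ f$ are composites of pullbacks of $f$ and of $g$, and I then apply two-out-of-three in $\overline{S}$. The delicate case is $g\circ f\in\overline{S}'$ and $f\in\overline{S}'$ implying $g\in\overline{S}'$. Here one must also pull back along maps $Y'\to Y$ that do not factor through $Z'$, which requires tracking the relevant pullback squares carefully.
\end{itemize}

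Once $\overline{S}$ is known to be pullback stable, the localization is topological in the sense of \cref{defin-top-lozn}. A topological localization is left exact (\cite[Proposition 6.2.1.1]{Lurie:HTT}), and it is accessible because it is generated by the small set $S$. By \cref{defin-infinity-topos}, $\operatorname{Shv}(\qmcC,\tau)$ is therefore an $\infty$-topos.
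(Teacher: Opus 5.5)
The paper gives no argument of its own here; it only cites Lurie's four-page proof. So the question is whether your outline is complete, and it is not.

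Several steps are fine: the reduction to pullback stability of $\overline{S}$, the proof that $S\subseteq\overline{S}'$, closure of $\overline{S}'$ under pushouts, colimits and composition, and the final deduction that $\operatorname{Shv}(\qmcC,\tau)$ is an $\infty$-topos. The break is in two-out-of-three, but not in the case you single out.

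That case is easy. Suppose $g\circ f$ and $f$ lie in $\overline{S}'$ and $Z'\to Z$ is given, and put $Y'=Y\times_Z Z'$. Then $X\times_Y Y'\simeq X\times_Z Z'$, so the base change $g'$ satisfies $g'\circ f'=(g\circ f)'$ with $f'$ a base change of $f$. Two-out-of-three in $\overline{S}$ then gives $g'\in\overline{S}$.

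The hard case is right cancellation: $g\circ f\in\overline{S}'$ and $g\in\overline{S}'$ should give $f\in\overline{S}'$. Here you must pull $f$ back along an arbitrary $h\colon Y'\to Y$, which is not a base change of anything over $Z$. The base change $f'$ factors as $X\times_Y Y'\to X\times_Z Y'\to Y'$. The second map is a base change of $g\circ f$. The first is a base change of the diagonal $\Delta_g\colon Y\to Y\times_Z Y$. So what you actually need is that base changes of diagonals of maps in $\overline{S}'$ lie in $\overline{S}$, and pasting pullback squares does not produce that.

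In fact the formal argument cannot work. It uses that the generators $\mcalF_C\to\yo(C)$ are monomorphisms only for condition (i), and without that the conclusion is false. In spaces, take $S=\{S^{m+1}\to\ast\}$ for some $m\geq 0$.
\begin{itemize}
\item[$\bullet$] Every base change $W\times S^{m+1}\to W$ is a colimit of copies of this map, so $S\subseteq\overline{S}'$ exactly as in your argument.
\item[$\bullet$] The $S$-local spaces are the $m$-truncated ones, so $\overline{S}$ is the class of maps inverted by $\tau_{\le m}$.
\item[$\bullet$] For $f\colon\ast\to S^{m+1}$, both $g\colon S^{m+1}\to\ast$ and $g\circ f=\mathrm{id}_\ast$ lie in $\overline{S}'$.
\item[$\bullet$] But the base change of $f$ along itself is $\Omega S^{m+1}\to\ast$, which $\tau_{\le m}$ does not invert.
\end{itemize}
So here $\overline{S}'$ is not strongly saturated and $\overline{S}$ is not pullback stable.

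Monicity is what must enter, since for a monomorphism $i$ the diagonal $\Delta_i$ is an equivalence. However, maps in $\overline{S}$ are not monomorphisms in general, and diagonals do not commute with colimits in $\Fun(\Delta^1,\mathcal{P}(\qmcC))$. So a further idea is needed.

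One way to supply it is to construct the sheafification as a transfinite iterate of the plus construction
\[
\mathcal{F}^{+}(C)=\operatorname{colim}_{U}\Map_{\mathcal{P}(\qmcC)}(U,\mathcal{F}),
\]
with the colimit taken over covering sieves $U\subseteq\yo(C)$. Axioms (ii) and (iii) of \cref{def-Grothendieck-topology} show that covering sieves on $C$ are closed under intersection, so the indexing poset is filtered. Each stage is then left exact, because filtered colimits of spaces commute with finite limits. You would still need to check that $\mathcal{F}\to\mathcal{F}^{+}$ lies in $\overline{S}$ and that a long enough iterate is a sheaf. Once that is done, the localization is left exact, which gives pullback stability of $\overline{S}$.
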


\begin{defin}\cite[Definition 6.3.1.1]{Lurie:HTT} 
Let ${\qmcX}$ and ${\qmcY}$ be {\qtops}.
A {\bf geometric morphism} from ${\qmcX}$ to ${\qmcY}$ is a functor
\[
  F_{*} : {\qmcX} \to {\qmcY}
\]
which admits a left adjoint $F^{*}$ that is left exact as in
\cref{defin-left-exact}.
\end{defin}

Such functors are studied in \cite[\S6.3.1]{Lurie:HTT}, where they are
typically denoted by $f_{*}$ and $f^{*}$.

\begin{prop}\label{prop-pushforward-leftexact}\cite[Proposition 6.3.1.9]{Lurie:HTT}.
Let
\[
  F_* \colon {\qmcX} \to {\qmcY}
\]
be a geometric morphism between $\infty$-topoi having a left adjoint
\[
  F^* \colon {\qmcY} \to {\qmcX}.
\]
Then $F^*$ and $F_*$ carry $m$-truncated objects
(\cref{defin-truncated-object}) to $m$-truncated objects and
$m$-truncated morphisms to $m$-truncated morphisms, for any integer
$m \geq -2$. Moreover, there is a canonical  equivalence of functors
\[
  \xymatrix
@R=7mm
@C=10mm
{
{\qmcY }\ar[r]^(.5){F^{*}}
        \ar[d]_(.5){\tau_{\leq m}}
  &{\qmcX}\ar[d]^(.5){\tau_{\leq m}}\\
 {\tau_{\leq m}\qmcY }\ar[r]^(.5){F^{*}}
   &{\tau_{\leq m}{\qmcX}}
}
\]
where $\tau_{\leq m}$ is the truncation functor of
\cref{defin-truncated-object}.
\end{prop}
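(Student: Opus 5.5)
The plan is to reduce everything to the behaviour of finite limits and adjunctions. The key input is the inductive characterization of truncated morphisms \cite[Lemma 5.5.6.15]{Lurie:HTT}. A morphism $f:C\to D$ in an {\qcat} with finite limits is $(-2)$-truncated iff it is an equivalence. For $m\geq -1$, $f$ is $m$-truncated iff its diagonal $\delta_f:C\to C\times_D C$ is $(m-1)$-truncated. This follows from \cref{def-truncated morphism}: $\Map(E,-)$ carries the diagonal to the diagonal of $f_*$, and for spaces the fibers of the diagonal are the path spaces of the fibers of $f_*$. An object $C$ is $m$-truncated iff $C\to 1$ is, with $1$ the final object, so the statement about objects is the special case of morphisms to $1$ (compare \cref{prop-same}).

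With this in hand, I would first treat both functors at once. $F_*$ is a right adjoint, so it preserves all limits. $F^*$ is left exact, so it preserves finite limits. In both cases the functor preserves the final object, fiber products, diagonals of morphisms, and equivalences. Induction on $m$ starting at $m=-2$ then shows that each functor carries $m$-truncated morphisms to $m$-truncated morphisms. Applying this to $C\to 1$ gives the statement for objects. The only point to check is that ``left exact'' in the sense of \cref{defin-left-exact} really does mean preservation of finite limits between {\qcats} with finite limits. This is \cite[Proposition 5.3.2.9]{Lurie:HTT}, and an {\qtop} has all small limits by presentability.

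For the commutation with truncation, I would use the universal property of $\tau_{\leq m}$: the unit $X\to\tau_{\leq m}X$ is initial among maps to $m$-truncated objects. Fix $Y\in\qmcY$ and an $m$-truncated $Z\in\qmcX$. The adjunction, together with the first part applied to $F_*Z$, gives a chain of equivalences natural in $Z$:
\[
\Map_{\qmcX}(F^*\tau_{\leq m}Y,Z)\simeq\Map_{\qmcY}(\tau_{\leq m}Y,F_*Z)\simeq\Map_{\qmcY}(Y,F_*Z)\simeq\Map_{\qmcX}(F^*Y,Z).
\]
The middle equivalence holds because $F_*Z$ is $m$-truncated. Moreover $F^*\tau_{\leq m}Y$ is itself $m$-truncated by the first part applied to $F^*$. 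Hence $F^*(Y\to\tau_{\leq m}Y)$ exhibits $F^*\tau_{\leq m}Y$ as $\tau_{\leq m}F^*Y$. Naturality of the construction gives the asserted equivalence of functors $\tau_{\leq m}\circ F^*\simeq F^*\circ\tau_{\leq m}$, which is the commutative square in the statement.

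I expect the main obstacle to be the diagonal criterion itself, in the $\infty$-categorical setting, especially identifying the mapping-space fibers of $\delta_f$ with loop spaces of the fibers of $f_*$. I would prove it first for Kan complexes via the long exact sequence of homotopy groups, then transport it to general $\qmcC$ through the limit-preserving functors $\Map_{\qmcC}(E,-)$. The rest is formal.
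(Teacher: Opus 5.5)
Your proposal is correct and follows essentially the argument behind the citation; the paper itself gives no proof and simply quotes \cite[Proposition 6.3.1.9]{Lurie:HTT}. There, preservation of truncated objects and morphisms follows from the diagonal criterion (Lemma 5.5.6.15 and Proposition 5.5.6.16) applied to the left exact functors $F^*$ and $F_*$. The equivalence $F^*\circ\tau_{\leq m}\simeq\tau_{\leq m}\circ F^*$ comes from your adjunction argument (Proposition 5.5.6.28), which likewise relies on $F_*$ preserving $m$-truncated objects.
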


\subsection{All things hyper: hypercompletion, hypercoverings,
  hyperdescent and hypersheaves }\label[appendix]{sec-hypercompletion}

\begin{defin}\cite[\S6.5]{Lurie:HTT} 
  \label{defin-hypercomplete-topos}
  An $\infty$-topos ${\qmcX}$ (\cref{defin-infinity-topos}) is {\bf
    hypercomplete} if every $\infty$-connective morphism
  (\cref{defin-truncated-object}) in it is an equivalence.
\end{defin}

Recall that a morphism $f:X\to Y$ in ${\qmcX}$ is $\infty$-connective
iff for each object $W$, the induced map
\begin{displaymath}
f_{*}:\Map_{\qmcX} (W, X) \to \Map_{\qmcX} (W, Y)
\end{displaymath}
 
\noindent
is a weak equivalence of Kan complexes.  Thus each
such map has an inverse, but in general it need not be induced by a
map from $Y$ to $X$.  Hence the condition above is nontrivial.

\begin{defin}\cite[\S6.5]{Lurie:HTT}
  \label{defin-hypercomplete}
  Let $S$ be the set of $\infty$-connective morphisms in an {\qtop}
  ${\qmcX}$.  An object $Z$ is {\bf hypercomplete} if it is {\bf
    $S$-local} as in \cref{defin-S-local}.  The {\bf hypercompletion}
  ${\whqmcX} \subseteq {\qmcX}$ (which Lurie denotes by
  $\mcX^{\wedge}$) is the full subcategory of such objects.  A
  morphism $f:X\to Y$ is {\bf hypercomplete} if it is hypercomplete as
  in object in the {\qtop} ${\qmcX}_{/Y}$.
\end{defin}

${\whqmcX}$ is known to be hypercomplete in the sense of
\cref{defin-hypercomplete-topos} by \cite[Lemma 6.5.2.12]{Lurie:HTT}.
It is known to contain the full subcategory $\tau_{\leq m}{\qmcX}$ of
$m$-truncated objects (as in \cref{defin-truncated-object}) by
\cite[Lemma 6.5.2.9]{Lurie:HTT}.  The {\qcat} of functors to it from a
hypercomplete {\qtop} ${\qmcY}$ is known to be isomorphic to that of
functors from  ${\qmcY}$  to ${\qmcX}$ itself by \cite[Proposition
6.5.2.13]{Lurie:HTT}.

\begin{defin}\label{def-HTT-6.5.3.2}
  \cite[Definition 6.5.3.2]{Lurie:HTT} and \cite[Definition
  (8.4)]{Artin-Mazur}.  Let ${\qmcX}$ be an $\infty$-topos. A
  simplicial object $V_{\bullet} \in {\qmcX}_{\bdelt}$ is a {\bf
    hypercovering} (sometimes called a {\bf  hypercover}) of ${\qmcX}$ if for
  each $n> 0$, the unit map
\[
  (\eta_{V_{\bullet},n-1})_{n}:V_n
        \longrightarrow M_{n}(V)=\bigl(\coskel_{n-1} V_{\bullet}\bigr)_n
  \qquad \mbox{as in \cref{eq-coskel}  } 
\]
is an effective epimorphism as in \cref{def-effective-epi}. Here
$\coskel_{n-1}$ is the functor of \cref{def-coskeleton}, and the
codomain is the $n$th matching object.

We say that $V_{\bullet}$ is an {\bf effective hypercovering} of
${\qmcX}$ if the colimit of $V_{\bullet}$ is a final object of
${\qmcX}$.

An augmented simplicial object $V_{\bullet} \in {\qmcX}_{\bdelt_{+}}$
with $V_{-1}=X$ is an {\bf augmented hypercovering}, or a {\bf
  hypercover of $X$} if the associated simplicial
object is a hypercovering in ${\qmcX}_{/X}$.
\end{defin}

For an object $X$ in $\qmcX$, the {\qcat} $\qmcX _{/X}$ is again an
{\qtop}, by \cite[Proposition 6.3.5.1]{Lurie:HTT}, in which the
identity map on $X$ is a final object.  An effective hypercovering in
$\qmcX _{/X}$ is equivalent to a hypercovering in $\qmcX$ with colimit
$X$.

For each $n>0$, the map $ V_{n} \to V_{0}^{n+1}$ that sends an $n$-face to
its $(n+1)$-tuple of 0-faces is an effective epimorphism as in
\cref{def-effective-epi}.

The {\Cech} nerve $V_{\bullet}$ of \cref{ex-classic-Cech} is known to
be a hypercovering in which each map $\eta_{V,n}:V_{n}\to M_{n}(V)$ is
an isomorphism; see \cite[Enlightenment (8.5)(b)]{Artin-Mazur}.  A
finite skeleton of $V_{\bullet}$ is usually not a hypercovering.

\begin{defin}\label{def-hyperdescent}
  A presheaf $\mcalF$ on a site $(\qmcC,\tau)$ satisfies
  {\bf  hyperdescent} if for every augmented hypercovering
\[
  U_{\bullet} \longrightarrow X,
\]
the canonical map
\[
  \lim{\bdelt} \mcalF(U_{\bullet}) \longrightarrow \; 
  \mcalF(X)\; 
\]
is an equivalence.
\end{defin}

\begin{theorem}\cite[Theorem 6.5.3.12]{Lurie:HTT}
Let ${\qmcX}$ be an {\qtop}. The following conditions are
equivalent:
\begin{enumerate}
    \item[(i)] For every $X \in {\qmcX}$, every hypercovering 
    $U_{\bullet}$ of ${\qmcX}_{/X}$ is effective.
    \item[(ii)] The {\qtop} ${\qmcX}$ is hypercomplete.
\end{enumerate}
\end{theorem}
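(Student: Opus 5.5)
The statement is Lurie's \cite[Theorem 6.5.3.12]{Lurie:HTT}: in an \qtop\ $\qmcX$, every hypercovering of every slice $\qmcX_{/X}$ is effective if and only if $\qmcX$ is hypercomplete. The plan is to reduce both directions to one comparison between a hypercovering and its geometric realization. The key claim is this: if $U_{\bullet}$ is a hypercovering in $\qmcX_{/X}$, then the map $|U_{\bullet}|\to X$ is $\infty$-connective.

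To prove the key claim, fix $m\geq 0$ and compare $U_{\bullet}$ with its coskeleta $\coskel_{n}U_{\bullet}$ from \cref{def-coskeleton}. Define $U^{(n)}_{\bullet}:=\coskel_{n}U_{\bullet}$. The hypercovering condition in \cref{def-HTT-6.5.3.2} says that each map $U_{n}\to M_{n}(U)$ is an effective epimorphism. Using universality of colimits, which is Giraud's axiom (b) of \cref{theorem-topos-property}, and effectivity of groupoid objects, axiom (d), I would show by induction on $n$ that $|U^{(n)}_{\bullet}|\to |U^{(n-1)}_{\bullet}|$ is $(n-1)$-connective. The base case is $n=0$. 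There $\coskel_{0}U_{\bullet}$ is the \Cech\ nerve of $U_{0}\to X$, an effective epimorphism, so by \cref{prop-HTT-6.2.3.5} its realization is $X$.

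Since $U_{\bullet}\to\coskel_{n}U_{\bullet}$ is an isomorphism in degrees $\leq n$, the map $|U_{\bullet}|\to|\coskel_{n}U_{\bullet}|$ is $n$-connective. Composing these facts shows that $|U_{\bullet}|\to X$ is $n$-connective for every $n$, hence $\infty$-connective. This inductive connectivity estimate is the step I expect to be the main obstacle. It requires carefully identifying the realization of $\coskel_{n}$ relative to $\coskel_{n-1}$ as a sequence of pushouts along maps whose fibers are $(n-1)$-connective. I would first try to reduce it to $\qmcX=\qmcS$ via presentations $\mcP(\qmcC)\to\qmcX$, where it is classical. This reduction works because left exact localizations preserve connectivity and effective epimorphisms, by \cref{prop-pushforward-leftexact}.

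Given the key claim, the direction (ii)$\Rightarrow$(i) is immediate. If $\qmcX$ is hypercomplete, so is each slice $\qmcX_{/X}$, so the $\infty$-connective map $|U_{\bullet}|\to X$ is an equivalence, i.e.\ $U_{\bullet}$ is effective.

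For (i)$\Rightarrow$(ii), let $f\colon Y\to X$ be $\infty$-connective. I would build an augmented hypercovering $U_{\bullet}\to X$ that factors through $Y$. Take $U_{n}:=$ the $n$th term of the \Cech-type object with $U_{n}\simeq Y\times_{X}\cdots\times_{X}Y$; the matching maps are pullbacks of diagonals of $f$. These diagonals are again $\infty$-connective, hence effective epimorphisms, so $U_{\bullet}$ is a hypercovering. By (i), $|U_{\bullet}|\simeq X$. This realization factors as $|U_{\bullet}|\to Y\to X$, so $f$ admits a section $s$ with $fs\simeq \mathrm{id}$. Applying the same argument to the $\infty$-connective map $s$ shows that $s$ also has a right inverse. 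Therefore $f$ is an equivalence, and $\qmcX$ is hypercomplete as in \cref{defin-hypercomplete-topos}.
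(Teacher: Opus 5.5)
The paper gives no proof of this statement; it only quotes \cite[Theorem 6.5.3.12]{Lurie:HTT}. On its own terms, your plan for (ii)$\Rightarrow$(i) has the right shape: show $|U_\bullet|\to X$ is $\infty$-connective, then use hypercompleteness. However, the proof of that key claim and your argument for (i)$\Rightarrow$(ii) both have genuine gaps.

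\textbf{The key claim.} Your connectivity estimates do not compose to what you need. Every factorization
$|U_\bullet|\to|\coskel_N U_\bullet|\to|\coskel_{N-1}U_\bullet|\to\cdots\to|\coskel_0U_\bullet|=X$
contains the factor $|\coskel_1U_\bullet|\to|\coskel_0U_\bullet|$. Your inductive bound makes that factor only $0$-connective, so the composite is only $0$-connective. You already knew that from $U_0\to X$ being an effective epimorphism.

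Your observation that $|U_\bullet|\to|\coskel_nU_\bullet|$ is $n$-connective is correct. Given it, what you actually need is that $|\coskel_nU_\bullet|\to X$ is $n$-connective. In fact it is an equivalence: bounded ($n$-coskeletal) hypercoverings are effective in every $\infty$-topos. That statement is the real content here, and your proposal does not supply it. It needs its own induction on $n$. For instance, in each degree $m$ the map $\coskel_nU_\bullet\to\coskel_{n-1}U_\bullet$ is a pullback of a finite product of copies of $U_n\to M_n(U)$, hence a degreewise effective epimorphism; one then analyses its degreewise \v{C}ech nerve.

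Your fallback reduction to $\qmcS$ via a presentation $L\colon\mathcal{P}(\mathcal{C})\to\qmcX$ does not work. The inclusion $\qmcX\subseteq\mathcal{P}(\mathcal{C})$ preserves matching objects but not effective epimorphisms, so a hypercovering of $\qmcX$ need not come from a hypercovering of presheaves. The fact that $L$ preserves connectivity runs in the wrong direction. Also, \cref{prop-pushforward-leftexact} is about truncated morphisms, not connective ones.

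\textbf{(i)$\Rightarrow$(ii).} The \v{C}ech nerve of $f$ is the wrong simplicial object.
In $\qmcX_{/X}$ it is $0$-coskeletal, so its matching maps in degrees $\geq 1$ are equivalences, not diagonals of $f$. By \cref{prop-HTT-6.2.3.5} its colimit is $X$ for every effective epimorphism in every $\infty$-topos, so hypothesis (i) tells you nothing. There is also no map $|\check{C}(f)|\to Y$ over $X$: such a map would give a section of $f$, and effective epimorphisms need not split (already for open covers in sheaves on a space).

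Use instead the constant simplicial object $c(Y)$ in $\qmcX_{/X}$.
Its $n$th matching object is the cotensor $Y^{\partial\Delta^n}$ computed in $\qmcX_{/X}$ (for example $Y\times_XY$ when $n=1$). The matching map is the iterated diagonal of $f$; these are the diagonals you had in mind. Since $Y$ and $Y^{\partial\Delta^n}$ are $\infty$-connective objects of $\qmcX_{/X}$, two-out-of-three makes each matching map $\infty$-connective, hence an effective epimorphism, so $c(Y)$ is a hypercovering. Because $\Delta^{\mathrm{op}}$ is weakly contractible, $|c(Y)|\simeq Y$. Hypothesis (i) then says precisely that $f\colon Y\to X$ is an equivalence, and the section-and-right-inverse argument is unnecessary.
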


\begin{defin}\label{defin-hypersheaf}
  A {\bf hypersheaf} on an $\infty$-site $(\qmcC, \tau)$ with values
  in $\qmcS$ is a sheaf as in \cref{defin-sheaves} which is
  hypercomplete (as in \cref{defin-hypercomplete}) as an object in the
  presheaf category $\mcP(\qmcC)$.  We denote the category of such by
  $\Shv^{\mathrm{hyp}} (\qmcC)$.  The {\bf category of hypersheaves on
    $\qmcC$ with values in a presentable {\qcat} $\qmcD$} is
  \begin{displaymath}
    \Shv^{\mathrm{hyp}} (\qmcC; \qmcD)
    := \Shv^{\mathrm{hyp}} (\qmcC)\otimes \qmcD  \in \qPrL,
  \end{displaymath}

  \noindent where the tensor product of presentable {\qcats} above is
  as in \cite[\S4.8.1]{Lurie:HA}.
\end{defin}

This can alternatively be defined as the full subcategory of
presheaves that satisfy hyperdescent as in \cref{def-hyperdescent},
the analogue of the sheaf condition for hypercoverings. This follows
from \cite[Corollary~6.5.3.13]{Lurie:HTT} together with the formula
for the Lurie tensor product \cite[Proposition~4.8.1.17]{Lurie:HA}.

\begin{defin}\label{defin-hypersheafification}\cite[page~567]{BMCSY23} 
  {\bf Hypersheafification} $(-)^{\mathrm{hyp}}$ is the left adjoint
  of the inclusion
  \begin{displaymath}
\Shv^{\mathrm{hyp}} (\qmcC; \qmcD) \to \Shv (\qmcC; \qmcD)
\end{displaymath}
 
\noindent for $\qmcC$ and $\qmcD$ as above. 
\end{defin}

\begin{prop}\label{prop-BMCSY-5.1}
  {\bf Deligne completeness for hypersheaves.} \cite[Proposition 5.1]{BMCSY23},
  \cite[Theorem A.4.0.5]{Lurie:SAG}, and \cite[Proposition
  VI.9.0]{SGA4-II}.  Let $\qmcX$ be an {\qtop} which is locally
  coherent and hypercomplete. Then $\qmcX$ has enough points. In other
  words, given a morphism $\alpha : X \to Y \in \qmcX$ which is not an
  equivalence, there exists a geometric morphism
  $f^{*} : \qmcX \to \qmcS$ such that $f^{*}(\alpha)$ is not an
  equivalence.
  
\end{prop}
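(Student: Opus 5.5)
The plan is to reduce to stalks and then read off stalks from Morava $K$-theory-type conditions. By hypothesis $\qmcX$ is locally coherent and hypercomplete, so I will use the coherent objects to build enough points.

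\emph{Reduction to the coherent case.} Since $\qmcX$ is hypercomplete, a morphism $\alpha:X\to Y$ is an equivalence iff it is $\infty$-connective. It therefore suffices to detect $m$-connectivity of $\alpha$ for each $m$ by points. Connectivity can be tested after pulling back along coherent objects $U\to Y$, because local coherence gives a generating family of coherent objects. So I reduce to checking that $\alpha_U:X\times_Y U\to U$ is $m$-connective for each coherent $U$. Each slice $\qmcX_{/U}$ is again a coherent topos, so it suffices to prove the following: in a coherent $\infty$-topos, any morphism that fails to be $m$-connective is detected by a point.

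\emph{Truncation to a 1-topos / ordinary topos statement.} $m$-connectivity of $\alpha$ is a statement about the homotopy sheaves $\pi_k$ of the relative truncations, for $k\le m$. These homotopy sheaves are discrete objects, that is, sheaves of sets or groups in the underlying $1$-topos $\tau_{\le 0}\qmcX$. Geometric morphisms commute with truncations (\cref{prop-pushforward-leftexact}), so a point $f^*$ sends homotopy sheaves to the homotopy groups of the stalk. Non-connectivity means some homotopy sheaf, or some $\pi_0$-surjectivity, fails. That failure is then a failure of a morphism of sheaves of sets on the coherent $1$-topos $\tau_{\le0}\qmcX_{/U}$, in the sense that it is not an isomorphism or not an epimorphism. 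At this point I invoke Deligne's completeness theorem for coherent $1$-topoi (\cite[Proposition VI.9.0]{SGA4-II}): such a topos has enough points.

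\emph{Lifting points.} A point of the $1$-topos $\tau_{\le0}\qmcX_{/U}$ extends to a geometric morphism $\qmcX_{/U}\to\qmcS$, because for $0$-localic (or, more generally, coherent) topoi points are determined on discrete objects. I then compose with the étale geometric morphism $\qmcX_{/U}\to\qmcX$ to obtain a point of $\qmcX$. That point sends $\alpha$ to a non-$m$-connective map, and hence to a non-equivalence.

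\emph{Main obstacle.} The delicate step is this extension of $1$-topos points to $\infty$-topos points, which also requires controlling homotopy sheaves over varying basepoints. For this I would follow \cite[Theorem A.4.0.5]{Lurie:SAG}. That argument constructs points directly for locally coherent $\infty$-topoi via coherent ultraproduct/filtered-colimit arguments on the coherent site rather than by extending from the $1$-topos, and I would adopt that route if the extension argument resists.
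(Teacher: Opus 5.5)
The paper gives no argument of its own for this proposition; it only cites \cite[Theorem A.4.0.5]{Lurie:SAG} and \cite{SGA4-II}. Your proposal therefore has to stand on its own, and it breaks at the step ``Lifting points''.

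A point of the $1$-topos $\tau_{\le 0}\qmcX_{/U}$ is a point of the $1$-localic reflection of $\qmcX_{/U}$. It is formally a point of $\qmcX_{/U}$ when $\qmcX_{/U}$ is $1$-localic, and that is exactly what ``points are determined on discrete objects'' means. (``$0$-localic'' refers to $(-1)$-truncated objects, not discrete ones.) Neither local coherence nor hypercompleteness gives $1$-localicity, and your slicing step makes this worse. For example, $\qmcS$ is $1$-localic, but for $K=K(\Z/2,2)$ the slice $\qmcS_{/K}\simeq\operatorname{Fun}(K,\qmcS)$ behaves differently:
\begin{itemize}
\item it is coherent and hypercomplete;
\item its discrete objects are just sets, so its $1$-localic reflection is $\qmcS$;
\item its space of points is $K$, so it is not $1$-localic.
\end{itemize}
In this example a lift happens to exist, but not for the reason you give.

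In general, lifting a $1$-point $p$ means finding a point of the fibre of $\qmcX_{/U}$ over $p$, taken in $\infty$-topoi. That is a question of the same kind as the theorem itself, about structure that $\tau_{\le 0}$ does not see, and \cite[Proposition VI.9.0]{SGA4-II} says nothing about it. Your fallback, following Lurie's direct construction, is not a step of the proof; it is the statement being proved. The missing idea is a construction of points of the $\infty$-topos itself. One option is to realize them as left exact functors on coherent objects that carry finite coverings to jointly surjective families, produced by a compactness argument run in $\qmcX$ rather than in $\tau_{\le 0}\qmcX$.

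Your second step is also imprecise, but it can be repaired. The homotopy sheaves of $\alpha$ are discrete objects of $\qmcX_{/X}$, not of $\tau_{\le 0}\qmcX$. Instead, use two facts:
\begin{itemize}
\item $\alpha$ is $m$-connective iff $\alpha$ and finitely many of its iterated diagonals are effective epimorphisms;
\item a map $A\to B$ is an effective epimorphism iff $\tau_{\le 0}A\to\tau_{\le 0}B$ is an epimorphism in the $1$-topos $\tau_{\le 0}\qmcX$, because subobjects of $B$ and of $\tau_{\le 0}B$ correspond.
\end{itemize}
One also checks that $\tau_{\le 0}$ of a coherent object is coherent in $\tau_{\le 0}\qmcX$. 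Hence $\tau_{\le 0}\qmcX$ is locally coherent and SGA4 applies directly, with no slicing needed. So everything up to producing points is fine; what remains unproved is the passage from points of $\tau_{\le 0}\qmcX$ to points of $\qmcX$. Separately, the opening reference to Morava $K$-theory plays no role and should be dropped.
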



The following is proved in the discussion in \cite[pages 567--568]{BMCSY23}.

\begin{prop}\label{prop-BMCSY-5.1+}
{\bf Sheaves on  continuous G-sets.}
Let $G$ be a profinite group and $\qmcC$ a compactly generated
presentable {\qcat}.  Then
\begin{align*}
\Shv(\Fin_{G}; \qmcC)
 & \simeq
  \colim{U \trianglelefteq G}
  \qmcC^{B(G/U)}
  \in \qPrL,
\end{align*}

\noindent where $U$ ranges over the open normal subgroups of $G$.

When $\qmcC=\qmcS$ we also have
\begin{displaymath}
\Shv(\Fin_{G}; \qmcS)
 \simeq
  \lim{U } 
  \qmcS^{B(G/U)}
  \simeq \lim{U } {\color{cyan}B(G/U) }.
\end{displaymath}
 
\noindent This filtered limit is the {\qcat} associated with a path
connected space $BG$, which has a canonical basepoint leading to a
{\bf stalk} given by
\begin{numequation}\label{eq-stalk}
\begin{split}
  \mcalF_{e}
  := \colim{U \trianglelefteq G} \mcalF(G/U).
\end{split}
\end{numequation}

Moreover each such space valued sheaf is a hypersheaf.
\end{prop}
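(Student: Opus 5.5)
The approach is to reduce everything to the finite quotients $G/U$ and then pass to the (co)limit over $U$. Give $\Fin_{G}$, the category of finite continuous $G$-sets, the Grothendieck topology in which a sieve on $T$ is covering if it contains a finite jointly surjective family. This topology is finitary in the sense of \cref{def-Grothendieck-topology}.

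\textbf{Step 1: the site is a filtered union of finite sites.} Every finite continuous $G$-set is a finite disjoint union of orbits $G/H$ with $H$ open, and each such $H$ contains an open normal subgroup $U$. Hence
\[
\Fin_{G}=\bigcup_{U\trianglelefteq G}\Fin_{G/U},
\]
a filtered union of full subcategories along the inflation functors. Since the topology is finitary, every covering sieve is generated by finitely many maps. Those maps, and all their fiber products, already lie in some $\Fin_{G/U}$. So the topology on $\Fin_{G}$ is the union of the analogous topologies on the $\Fin_{G/U}$.

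\textbf{Step 2: the limit over $U$.} Step 1 gives an equivalence
\[
\Shv(\Fin_{G};\qmcS)\simeq\lim{U}\Shv(\Fin_{G/U};\qmcS),
\]
with transition maps given by restriction. These restrictions are right adjoints of geometric morphisms.

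\textbf{Step 3: finite groups.} For a finite group $K=G/U$ I will show $\Shv(\Fin_{K};\qmcS)\simeq\qmcS^{BK}$. The sheaf condition for the covers $T_{1}\amalg T_{2}\to T_{1}\amalg T_{2}$ (the disjoint union of the two inclusions) makes a sheaf $\mcalF$ carry disjoint unions to products. The Čech condition for the cover $K\to K/H$ gives
\[
\mcalF(K/H)\simeq \mcalF(K)^{hH}.
\]
So $\mcalF\mapsto\mcalF(K)$, equipped with its residual $K$-action, is an equivalence. Its inverse sends $X$ to the sheaf $T\mapsto\Map_{K}(T,X)$. Hence $\Shv(\Fin_{G};\qmcS)\simeq\lim{U}\qmcS^{B(G/U)}$. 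Because each transition functor is a right adjoint in $\qPrL$, this limit is also the colimit in $\qPrL$ along the left adjoints, namely left Kan extension along $B(G/U')\to B(G/U)$.

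\textbf{Step 4: general coefficients.} For a compactly generated presentable $\qmcC$, tensor the colimit description with $\qmcC$. The Lurie tensor product preserves colimits in each variable, and $\qmcS^{BK}\otimes\qmcC\simeq\qmcC^{BK}$ for finite $K$. This gives
\[
\Shv(\Fin_{G};\qmcC)\simeq\colim{U}\qmcC^{B(G/U)}.
\]

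\textbf{Step 5: the stalk.} The basepoint of each $B(G/U)$ gives compatible points, namely evaluation at $G/U$ with its action forgotten. On $\Shv(\Fin_{G};\qmcS)$ these assemble to the functor $\mcalF\mapsto\colim{U}\mcalF(G/U)$, which is left exact because it is a filtered colimit of left exact functors. This is the stalk \cref{eq-stalk}.

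\textbf{Step 6: hypercompleteness.} Let $f$ be an $\infty$-connective map of sheaves. Pulling back along the geometric morphism to $\Shv(\Fin_{G/U})\simeq\qmcS^{B(G/U)}$ preserves $\infty$-connectivity, by \cref{prop-pushforward-leftexact}. Each $\qmcS^{B(G/U)}$ is hypercomplete, since equivalences there are detected on underlying spaces and Whitehead's theorem applies. So $f$ is an equivalence on every $G/U$, hence on every object of $\Fin_{G}$, and therefore every sheaf is a hypersheaf. Alternatively, one can invoke \cref{prop-BMCSY-5.1}, since the topos is locally coherent by finitarity.

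\textbf{Main obstacle.} I expect the hardest step to be Step 2: justifying rigorously that sheaves on a filtered union of sites, whose topologies are compatible, form the limit of the individual sheaf categories. The reason this should work is that a presheaf on the union is the limit of its restrictions, and the sheaf condition only involves finite covers, which live at a finite stage. I would make this precise via the $S$-local description in \cref{defin-sheaves}.
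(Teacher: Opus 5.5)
Step~6 has a genuine gap, and it cannot be repaired. For general profinite $G$, the last sentence of the proposition is false when ``hypersheaf'' has its standard meaning, namely hypercomplete in the $\infty$-topos $\Shv(\Fin_G;\qmcS)$.

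The functor you use to test $f$ is $\mathcal{F}\mapsto\mathcal{F}|_{\Fin_{G/U}}$. That is the \emph{pushforward} $\pi_{U*}$ of the geometric morphism $\pi_U\colon\Shv(\Fin_G;\qmcS)\to\qmcS^{B(G/U)}$; the pullback $\pi_U^{*}$ goes the other way. Pullbacks preserve $\infty$-connective morphisms; pushforwards do not. Moreover, \cref{prop-pushforward-leftexact} is about $m$-truncated objects and morphisms, not connectivity. Your fallback through \cref{prop-BMCSY-5.1} is circular, since that result assumes hypercompleteness. Finitarity gives local coherence, hence enough points for the hypercompletion, but not hypercompleteness of $\Shv(\Fin_G;\qmcS)$ itself.

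A counterexample is $G=\Z_p^{\times}$. By \cref{thm-BHLS-6.25}, $Z=\rK_{T(n+1)}(\BPn^{h(p^k\Z)})$ is not height $n+1$ cyclotomically complete. So by \cref{prop-BMCSY-5.11}, its cyclotomic sheaf is a $\qSp_{T(n+1)}$-valued sheaf on $\Fin_{\Z_p^\times}$ that is not a hypersheaf. Concretely, the fiber $W$ of $Z\to L_{\Cyc(n+1)}Z$ is nonzero with $W\otimes R^{\fin}_{n+1}\simeq 0$, so its cyclotomic sheaf is nonzero with zero stalk. This could not happen if $\Shv(\Fin_{\Z_p^\times};\qmcS)$ were hypercomplete, since then $\Shv^{\mathrm{hyp}}(\Fin_{\Z_p^\times};\qmcD)=\Shv(\Fin_{\Z_p^\times};\qmcD)$ for every presentable $\qmcD$.

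So the hypersheaf claim holds for finite $G$, where $\Shv(\Fin_G;\qmcS)\simeq\qmcS^{BG}$ is a presheaf $\infty$-topos, but not in general. The literal wording of \cref{defin-hypersheaf} (hypercomplete in $\mathcal{P}(\qmcC)$) would make the claim vacuous, but that reading is incompatible with the same two results. The paper gives no argument of its own, only a citation to BMCSY, whose \cite[Proposition~5.6]{BMCSY23} is about sheaves that are \emph{not} automatically hypersheaves. For the same reason, the description via the limit of the spaces $B(G/U)$ cannot mean a functor category on an honest space $BG$: that would again be a presheaf $\infty$-topos, hence hypercomplete.

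Steps~1--4 are the standard reduction and are essentially sound. Step~2 is not the obstacle you expect. Presheaves on a filtered union of full subcategories form the limit of the presheaf categories. The sheaf condition of \cref{prop-sheaf-cond-FinG} involves only finitely many objects and their fiber products, all of which lie in some $\Fin_{G/U}$.

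Two local corrections are still needed.
\begin{enumerate}
\item In Step~3 the transition functor in the limit is $f_{*}=(-)^{h(U/U')}$ for $f\colon B(G/U')\to B(G/U)$. So the transition maps in the $\qPrL$-colimit are its left adjoint $f^{*}$ (inflation). They are not left Kan extension $f_{!}$, which is left adjoint to $f^{*}$ and points the other way.
\item In Step~5, left exactness alone does not make $\mathcal{F}\mapsto\colim{U}\mathcal{F}(G/U)$ a point; you also need it to preserve colimits of sheaves. One way is to obtain the point from the universal property of the cofiltered limit of $\infty$-topoi. Another is to observe that every covering sieve on $G/U$ contains a projection $G/U'\to G/U$ for some smaller open normal $U'$, so evaluation on the pro-object $\{G/U\}$ is unchanged by sheafification.
\end{enumerate}
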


\begin{defin}\label{def-BMCSY-5.2}
  \cite[Definition 5.2]{BMCSY23}
  Let $G$ be a profinite group, let $\qmcC \in \CAlg(\qPrL)$ be
  $\infty$-semiadditive as in \cref{def-semiadd}, and let
  $\mcR \in \CAlg\bigl(\Shv(\Fin_{G}; \qmcC)\bigr)$.  We say that
  $\mcR$ is a {\bf continuous $G$-Galois extension of $\mcR(G/G)$} if
  for every open normal subgroup $U \trianglelefteq G$, the object
\begin{displaymath}
\mcR(G/U) \in \CAlg(\qmcC)^{B(G/U)}
\end{displaymath}
 
\noindent
  is a faithful $G$-Galois
  extension of $\mcR(G/G)$.
\end{defin}

\begin{prop}\label{prop-BMCSY-5.3}
  \cite[Proposition 5.3]{BMCSY23}
  Let $\mcalF \colon \Fin_{G}^{\op} \to \CAlg(\qmcC)$ be a finite
  product preserving functor such that $\mcalF(G/U)$ is a faithful
  $G/U$-Galois extension of $\mcalF(G/G)$ for every open normal
  subgroup $U \trianglelefteq G$. Then $\mcalF$ satisfies the sheaf
  condition, and hence is a continuous $G$-Galois extension.
\end{prop}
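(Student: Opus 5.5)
The plan is to check the sheaf condition on $\Fin_G$ directly from the Galois hypotheses, using the standard description of covers in the site of finite continuous $G$-sets. Every finite continuous $G$-set is a finite disjoint union of orbits $G/H$ with $H$ open. Covering sieves are generated by finite jointly surjective families. Since $\mcalF$ preserves finite products, $\mcalF$ carries disjoint unions to products. This reduces the sheaf condition to two checks.

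\begin{enumerate}[label={(\roman*)},itemindent=0em]
\item $\mcalF(\emptyset)$ is terminal.
\item For each surjection of transitive $G$-sets $G/H\to G/K$, the map $\mcalF(G/K)\to\lim_{\bdelt}\mcalF(\check C_\bullet)$ is an equivalence. Here $\check C_\bullet$ is the \v{C}ech nerve, with $\check C_m=G/H\times_{G/K}\cdots\times_{G/K}G/H$.
\end{enumerate}

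Item (i) holds because the empty set is the empty coproduct. For item (ii), choose an open normal $U\trianglelefteq G$ contained in $H$. Then $G/U$ refines $G/H$, and I use the standard fact that the sheaf condition may be checked on a refining cofinal family of covers, namely those of the form $G/U\to G/K$. So it suffices to treat covers $G/U\to G/K$ with $U$ open normal and $U\subseteq K$. The set $G/U\times_{G/K}G/U$ decomposes as $\coprod_{K/U}G/U$, and more generally $\check C_m\cong\coprod_{(K/U)^{m}}G/U$. Applying $\mcalF$ therefore gives the cosimplicial object $m\mapsto\prod_{(K/U)^m}\mcalF(G/U)=\mcalF(G/U)^{(K/U)^m}$. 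Its totalization is the homotopy fixed points $\mcalF(G/U)^{h(K/U)}$, computed by the standard bar resolution of the action. Here I must check that the action of $K/U$ on $\mcalF(G/U)$ induced by functoriality in $\Fin_G$ agrees with the Galois action; this is part of the setup.

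The task is now to show $\mcalF(G/K)\simeq\mcalF(G/U)^{h(K/U)}$. For $K=G$ this is exactly the map $i$ of \cref{def-Rognes-Galois}, an equivalence by hypothesis. For general $K$, I would use \cref{thm-Rognes-main}\cref{thm-Rognes-maini}. Applied to the faithful $G/U$-Galois extension $\mcalF(G/G)\to\mcalF(G/U)$, it shows that $\mcalF(G/U)^{h(K/U)}\to\mcalF(G/U)$ is a faithful $K/U$-Galois extension. It remains to identify $\mcalF(G/K)$ with $\mcalF(G/U)^{h(K/U)}$.

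I expect this identification to be the main obstacle. I would use the finite product preservation of $\mcalF$ together with the Galois condition $h$. The latter gives $\mcalF(G/U)\otimes_{\mcalF(G/G)}\mcalF(G/U)\simeq\prod_{G/U}\mcalF(G/U)$, which is $\mcalF$ applied to $G/U\times G/U\cong\coprod_{G/U}G/U$. Base changing along the faithful map $\mcalF(G/G)\to\mcalF(G/U)$ splits everything. After base change, $\mcalF(G/K)$ becomes $\mcalF$ of the split $G$-set $G/U\times G/K\cong\coprod_{G/K}G/U$, namely $\prod_{G/K}\mcalF(G/U)$. The base-changed fixed points likewise become $\bigl(\prod_{G/U}\mcalF(G/U)\bigr)^{h(K/U)}\simeq\prod_{G/K}\mcalF(G/U)$, since $K/U$ acts freely on the index set. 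These agree compatibly, and faithfulness then descends the equivalence back to $\mcalF(G/G)$-modules. The step requiring care is that $(-)^{h(K/U)}$ commutes with $-\otimes_{\mcalF(G/G)}\mcalF(G/U)$. This holds because $K/U$ is finite and the base change is dualizable, since a faithful finite Galois extension is dualizable by Rognes. Equivalently, one can note that semiadditivity makes finite homotopy fixed points also colimits.

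Once the sheaf condition is established, the conclusion that $\mcalF$ is a continuous $G$-Galois extension is immediate. The faithful Galois property at each open normal $U$ is a hypothesis, and $\mcalF$ is now known to be a sheaf, which is exactly \cref{def-BMCSY-5.2}.
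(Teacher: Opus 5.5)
Your proof breaks at the sentence ``after base change, $\mathcal{F}(G/K)$ becomes $\mathcal{F}$ of the split $G$-set $G/U\times G/K$''. That step needs an equivalence $\mathcal{F}(X)\otimes_{A}\mathcal{F}(Y)\simeq\mathcal{F}(X\times Y)$, where $A=\mathcal{F}(G/G)$, and nothing you invoke supplies it. Finite product preservation only turns disjoint unions into products; it says nothing about Cartesian products of $G$-sets. The Galois map $h$ for $B=\mathcal{F}(G/U)$ gives the equivalence only for $X=Y=G/U$.

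For a non-normal open $K$, no argument can fill this step. The hypotheses only involve the restriction of $\mathcal{F}$ to the orbits $G/U$ with $U$ open normal, so they say nothing about $\mathcal{F}(G/K)$, and the required equivalence $\mathcal{F}(G/K)\simeq\mathcal{F}(G/U)^{h(K/U)}$ can fail. Here is a counterexample.
\begin{enumerate}[label={(\roman*)},itemindent=0em]
\item Take $G=\Sigma_3$, let $\mathcal{C}$ be rational spectra, and let $F(X)=\prod_X\mathbb{Q}$, which is a sheaf.
\item On orbits put $\mathcal{F}(x)=\prod_{S(x)}F(x)$, extended to all finite $G$-sets by finite products.
\item Here $S$ is the functor from the orbit category to finite sets with value $\{1,2\}$ on the three orbits $G/H$ with $|H|=2$ and a point elsewhere. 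Maps out of a point land on $1$.
\item $S$ is a functor because there is no chain $G/H\to G/H'\to G/H''$ with $|H|=|H''|=2\neq|H'|$.
\item $\mathcal{F}$ agrees with $F$ on $G/e$, $G/A_3$ and $G/G$, including the group actions and the maps between them. So every hypothesis holds.
\item But $\mathcal{F}(G/H)=\mathbb{Q}^6$ while $\mathcal{F}(G/e)^{hH}=\mathbb{Q}^3$, so $\mathcal{F}$ is not a sheaf.
\end{enumerate}
So the statement, and any proof of it, only works for open $K$ normal in $G$. That covers all open subgroups when $G$ is abelian, as for $\mathbb{Z}_p^\times$ in \cref{prop-BMCSY-5.11}.

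For normal $K$, the missing input is a hypothesis you never use: $C:=\mathcal{F}(G/K)$ is itself a faithful $G/K$-Galois extension of $A$.
\begin{enumerate}[label={(\roman*)},itemindent=0em]
\item With it, $C\otimes_A B\simeq(C\otimes_A C)\otimes_C B\simeq\prod_{G/K}B$.
\item Your computation $B^{h(K/U)}\otimes_A B\simeq\prod_{G/K}B$ is fine.
\item The comparison becomes a $G/K$-equivariant map of $B$-algebras $\prod_{G/K}B\to\prod_{G/K}B$ between split extensions. Such a map is an equivalence, but this needs an argument, not just ``these agree compatibly''.
\item Faithfulness of $A\to B$ then finishes the proof.
\end{enumerate}
A more direct route uses \cref{thm-Rognes-main}(ii), which needs $K/U$ normal in $G/U$. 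It shows that $B^{h(K/U)}$ is a faithful $G/K$-Galois extension of $A$. Then $C\to B^{h(K/U)}$ is a $G/K$-equivariant map of faithful $G/K$-Galois extensions of $A$, hence an equivalence. Part (i), which you cite, is about $B^{h(K/U)}\to B$ rather than $A\to B^{h(K/U)}$, so it does not help here. Your \v{C}ech nerve identification and your treatment of the case $K=G$ are correct as written.
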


\begin{prop}\label{prop-BMCSY-5.4}
  \cite[Proposition~5.4]{BMCSY23} 
  Let $G$ be a profinite group, let $\qmcC \in \CAlg(\qPrL)$ be
  semiadditive, and let $\mcR$ be a continuous $G$--Galois
  extension. Then there is a symmetric monoidal equivalence
  \begin{displaymath}
\xymatrix
@R=4mm
@C=20mm
{
{\qmcC}\ar@<1.2ex>[r]^(.3){-\otimes \mcR}_(.3){\perp }
&{\Mod _{\mcR}(\Shv (\Fin_{G};\qmcC)).}
        \ar@<1.2ex>[l]^(.7){(-)(G/G)}
}
\end{displaymath}
\end{prop}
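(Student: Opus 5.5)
We prove that base change along $\mcR(G/G)\to\mcR$, with right adjoint evaluation at $G/G$, is a symmetric monoidal equivalence. The plan is to reduce to the finite-level statement for each finite quotient $G/U$ and then pass to the colimit using \cref{prop-BMCSY-5.1+}.

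First we check that we have an adjunction. The functor $-\otimes\mcR$ is symmetric monoidal and colimit preserving, since $\mcR$ is a commutative algebra and the unit of $\Shv(\Fin_G;\qmcC)$ is the constant sheaf. Its right adjoint is $M\mapsto M(G/G)$, because $\Map_{\mcR}(X\otimes\mcR,M)\simeq\Map(X,M)$ and maps out of the constant sheaf on $X$ into $M$ are computed by evaluation at the terminal object $G/G$. The goal is to show that the unit and counit are equivalences. Since $\mcR(G/G)$ is regarded as the unit of $\qmcC$ here, or we work in $\Mod_{\mcR(G/G)}(\qmcC)$, the unit $X\to(X\otimes\mcR)(G/G)$ is the map $X\to X\otimes\mcR(G/G)$. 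This is an equivalence as soon as evaluation at $G/G$ commutes with $X\otimes-$, which holds because evaluation is a finite limit in a stable (semiadditive) setting; in general it follows from the sheaf description below.

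Second, we use \cref{prop-BMCSY-5.1+} to write $\Shv(\Fin_G;\qmcC)\simeq\colim{U}\qmcC^{B(G/U)}$ in $\qPrL$, with $\mcR$ corresponding to the compatible system $\mcR(G/U)$. Module categories commute with this filtered colimit of presentable categories, so
\[
\Mod_{\mcR}(\Shv(\Fin_G;\qmcC))\simeq\colim{U}\Mod_{\mcR(G/U)}(\qmcC^{B(G/U)}).
\]
The claim is then reduced to the finite Galois statement at each level $U$: for a faithful $H$-Galois extension $A\to B$ in a semiadditive $\qmcC$, with $H=G/U$ finite, the functor $\Mod_A(\qmcC)\to\Mod_B(\qmcC^{BH})$ is an equivalence with inverse $(-)^{hH}$. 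This is Galois descent. The Galois condition $B\otimes_AB\simeq\prod_HB$ identifies $\Mod_B(\qmcC^{BH})$ with comodules over the descent comonad of $A\to B$. Faithfulness, together with semiadditivity (which makes $(-)^{hH}$ agree with $(-)_{hH}$ and therefore commute with colimits and with tensoring), allows the Barr--Beck--Lurie theorem to apply. We then check that the transition functors in the colimit are compatible with these equivalences, so that the colimit of equivalences is the stated equivalence with $\qmcC$ (or $\Mod_{\mcR(G/G)}\qmcC$).

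The main obstacle is the finite-level descent step, specifically verifying the Barr--Beck hypotheses: that $B\otimes_A-$ is conservative and preserves the relevant totalizations. Conservativity is exactly faithfulness. For the limits, I would first try to use that the cobar complex of a Galois extension splits after base change to $B$, so it is $B$-split and hence preserved. A secondary subtlety is justifying that $\Mod$ commutes with the $\qPrL$-colimit, which I expect to follow from compact generation of $\qmcC$.
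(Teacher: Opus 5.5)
The paper gives no proof here; it only quotes \cite[Proposition~5.4]{BMCSY23}. So I am judging your outline on its own terms. Finite Galois descent is the right input, but the step that carries the profinite content is not established.

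\textbf{The gap.} You assert $\Mod_{\mcR}(\Shv(\Fin_G;\qmcC))\simeq\mathop{\mathrm{colim}}_U\Mod_{\mcR(G/U)}(\qmcC^{B(G/U)})$ on the grounds that module categories commute with colimits of presentable categories. That principle applies to an algebra pushed forward along the structure functors $\qmcC^{B(G/U)}\to\Shv(\Fin_G;\qmcC)$, which are the left adjoints. Here $\mcR$ is instead given by its values $\mcR(G/U)$, which are its images under the right adjoints. Moreover, once finite descent identifies each $\Mod_{\mcR(G/U)}(\qmcC^{B(G/U)})$ compatibly with $\qmcC$, that colimit is just $\qmcC$, and the comparison functor to $\Mod_{\mcR}(\Shv(\Fin_G;\qmcC))$ is $X\mapsto\underline{X}\otimes\mcR$. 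So your ``secondary subtlety'' is the proposition itself. Compact generation does not settle it: it is not a hypothesis here, and it is not what such a comparison depends on. What would make the comparison go through is that the right adjoints $(-)^{h(U/U')}$ preserve colimits, which follows from semiadditivity.

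\textbf{A direct route.} It is simpler to avoid the colimit altogether.

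For full faithfulness, your stated reason is off. The constant presheaf is not a sheaf, since for the trivial action $X^{hK}$ is in general not $X$. Also, $\mathcal{F}(G/G)\simeq\mathcal{F}(G/U)^{h(G/U)}$ is a $\pi$-finite limit, and stability alone does not let $X\otimes-$ commute with it. The correct point is that the presheaf $G/V\mapsto X\otimes\mcR(G/V)$ is already a sheaf, because the norm equivalence $(-)_{hK}\simeq(-)^{hK}$ lets $X\otimes-$ commute with $(-)^{h(V/V')}$. Hence it equals $\underline{X}\otimes\mcR$, its value at $G/G$ is $X$ (here $\mcR(G/G)$ is the unit), and the unit of the adjunction is an equivalence.

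For the counit, let $M$ be an $\mcR$-module. Then $M(G/U)$ is an $\mcR(G/U)$-module in $\qmcC^{B(G/U)}$ with $M(G/U)^{h(G/U)}\simeq M(G/G)$. Finite descent therefore gives $M(G/U)\simeq M(G/G)\otimes\mcR(G/U)$, naturally in $M$. Equivalences of sheaves are detected at the orbits $G/U$, so evaluation at $G/G$ is conservative. A fully faithful left adjoint with a conservative right adjoint is an equivalence, and it is symmetric monoidal because $-\otimes\mcR$ is.

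\textbf{The finite-level step.} Your Barr--Beck--Lurie argument runs in the wrong direction. You must show that $B\otimes_A-$ preserves totalizations of every $B\otimes_A-$-split cosimplicial object; observing that one particular cobar complex splits does not do this. The standard reason is that a finite Galois extension $B$ is dualizable over $A$, so $B\otimes_A-$ preserves all limits. Alternatively, in the semiadditive setting you can argue exactly as above. The unit is an equivalence by the norm, and conservativity holds after the faithful base change to $B$, where $B\otimes_AB\simeq\prod_{H}B$ splits the extension.
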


\begin{prop}\cite[Proposition~5.6]{BMCSY23}
  Let $G$ be a profinite group of finite virtual $p$-cohomological
  dimension, and let $\qmcC \in \CAlg(\qPrLst)$ be $p$-local. Let
  $\mcR$ be a continuous $G$-Galois extension with stalk
  $R := \mcR_{e}$ as in \cref{eq-stalk}, and let $L_{R}$ denote
  Bousfield localization with respect to $R$ in $\qmcC$. For every
  $M \in \Mod_{\mcR}(\Shv(\Fin_{G}; \qmcC))$, the presheaf $L_{R}M$ is
  a hypersheaf, and the map
\[
  M \longrightarrow L_{R}M
\]
exhibits the target as the hypersheafification of the source.
\end{prop}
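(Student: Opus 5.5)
We prove the statement (BMCSY Proposition 5.6) in two stages: first reduce it to the case $M=\mcR$, then show that $L_{R}$-localization of a module sheaf is computed stalkwise. By \cref{prop-BMCSY-5.4} every $M\in\Mod_{\mcR}(\Shv(\Fin_G;\qmcC))$ has the form $N\otimes\mcR$ with $N=M(G/G)\in\qmcC$. Hence $M(G/U)=N\otimes\mcR(G/U)$ and $M_e=N\otimes R$. The first step is to identify the hypersheafification of $M$. We intend to show that it is the sheaf
\begin{displaymath}
G/U\longmapsto \bigl(N\otimes R\bigr)^{hU},
\end{displaymath}
where the homotopy fixed points are taken in the continuous sense for the residual $G$-action on the stalk. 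To justify this we use \cref{prop-BMCSY-5.1+}: a hypersheaf on $\Fin_G$ is determined by its stalk together with its continuous $G$-action. The finite virtual $p$-cohomological dimension hypothesis is what makes the continuous homotopy fixed point functor well behaved, since it gives a uniform bound on the descent spectral sequences. Concretely, this hypothesis ensures that the limit over the Čech/hypercover tower commutes with the relevant colimits in the $p$-local stable setting.

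The second step is to show $L_RM(G/U)\simeq (N\otimes R)^{hU}$. Since $M\to L_RM$ is an $R$-equivalence, and $R$-equivalences between objects are detected after tensoring with the stalk, the two sheaves have equivalent stalks. For an open normal $U$, the extension $\mcR(G/U)\to R$ is a continuous $U$-Galois extension. The limit $(N\otimes R)^{hU}$ is a totalization of $N\otimes R^{\otimes_{\mcR(G/U)}(\bullet+1)}\simeq N\otimes C(U^{\bullet},R)$, and each term is $R$-local, being an $R$-module. By finite cohomological dimension the tower stabilizes in a nilpotent way. It follows that this limit lies in the thick subcategory generated by $R$-modules, so it is $R$-local. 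We conclude that the map $N\otimes\mcR(G/U)\to(N\otimes R)^{hU}$ is an $R$-equivalence to an $R$-local target, and is therefore $L_R$.

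Combining the two steps, $L_RM$ agrees with the hypersheaf of the first step, which proves both claims. The main obstacle is the nilpotence/convergence argument. We must show that the continuous homotopy fixed points of an $R$-module are $R$-local and that the canonical map is an $R$-equivalence. Here we would first try to pass to an open pro-$p$ subgroup of finite cohomological dimension $d$. For such a subgroup we would use the fact that the Adams-type tower for $\mcR(G/U)\to R$ has a horizontal vanishing line at $d$, which yields $\otimes$-nilpotence. For the remaining finite quotient we would use $p$-locality together with transfer arguments.
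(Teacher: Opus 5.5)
The paper does not prove this proposition; it only quotes \cite[Proposition~5.6]{BMCSY23}. So I am judging your outline on its own terms. The overall shape is reasonable, but the two claims that carry the proof are asserted rather than proved, so there is a genuine gap.

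\textbf{Step 1.} This step, $M^{\mathrm{hyp}}(G/U)\simeq (N\otimes R)^{hU}$, carries all of the hypersheaf content of the statement, and \cref{prop-BMCSY-5.1+} does not supply it. That result describes $\Shv(\Fin_G;\qmcC)$ as a colimit of the categories $\qmcC^{B(G/U)}$. It says nothing about recovering $\qmcC$-valued hypersheaves from their stalks. Also, for a general stable $\qmcC$ you never define continuous homotopy fixed points.

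You also cannot lean on the final clause of that proposition. If every space-valued sheaf on $\Fin_G$ were a hypersheaf, then every $\qmcC$-valued sheaf would be one. The statement you are proving would then force $L_R=\mathrm{id}$, which for the cyclotomic extension of $\mS_{T(n)}$ with $n\geq 2$ contradicts \cref{thm-BHLS-6.25}.

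To make Step 1 an argument you must show two things: that your presheaf is a hypersheaf, and that the map from $M$ to it is a hyperequivalence. One way is to write it as the totalization of the cobar construction $e_*(e^*e_*)^{\bullet}e^*M$ for the stalk point $e$. Each term is a hypersheaf, because $e^*$ inverts $\infty$-connective maps. You would then use a uniform vanishing line to commute the filtered colimit $e^*$ with the totalization, so that the map from $M$ is a stalk equivalence. None of this appears in the proposal.

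\textbf{Step 2.} $R$-locality of the totalization is automatic, since it is a limit of $R$-modules. The real content is that $N\otimes\mcR(G/U)\to (N\otimes R)^{hU}$ is an $R$-equivalence, and your ``We conclude'' does not follow from what precedes it. Pro-constancy of the Tot tower is what would give it: $R\otimes-$ then commutes with Tot, and after applying $R\otimes-$ the cobar complex splits.

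Your last paragraph's sketch of this has two concrete problems.
\begin{itemize}
\item Identifying $E_2$ with $H^s(U;\pi_t(N\otimes R))$ requires $\pi_t$ to commute with the filtered colimit defining $R$. That fails when the unit is not compact, e.g.\ in $\qSp_{T(n)}$, so you must test against compact generators.
\item Transfer arguments cannot handle a finite quotient of order divisible by $p$. For example, $\Z_2^\times\to\{\pm1\}$ at $p=2$, where group cohomology with $2$-local coefficients is unbounded. The virtual case should instead be handled by locality of hyperdescent along $G/U_0\to G/G$ (the slice site is $\Fin_{U_0}$), together with finite Galois descent.
\end{itemize}

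Separately, the sentence deducing equal stalks from objectwise $R$-equivalences is a non sequitur as written, since the stalk of $L_RM$ is a filtered colimit of $R$-local objects. It becomes correct once you note that $L_RM\simeq (L_RN)\otimes\mcR$. This holds because each $\mcR(G/U)$ is dualizable and $R$-local objects are closed under tensoring with dualizable ones.

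\textbf{A shorter route.} That last observation gives a much shorter proof, at least when $\qmcC$ is compactly generated (as for $\qSp_{T(n)}$).
\begin{enumerate}
\item Hypersheafification is symmetric monoidal, so $M^{\mathrm{hyp}}$ is again an $\mcR$-module. By \cref{prop-BMCSY-5.4}, $M^{\mathrm{hyp}}\simeq Z\otimes\mcR$ with $Z=M^{\mathrm{hyp}}(G/G)$.
\item Hypersheafification does not change stalks, so $N\otimes R\simeq Z\otimes R$. That is, $N\to Z$ is an $R$-equivalence.
\item $Z$ is $R$-local. For $R$-acyclic $A$ one has $\Map(A,Z)\simeq \Map_{\mcR}(A\otimes\mcR,Z\otimes\mcR)$. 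Now $A\otimes\mcR$ has zero stalk, so its hypersheafification vanishes by Deligne completeness (\cref{prop-BMCSY-5.1}, applied to $\Map(c,-)$ for compact $c$). Since $Z\otimes\mcR$ is a hypersheaf, this mapping space is contractible.
\item Hence $Z\simeq L_RN$, and $M\to M^{\mathrm{hyp}}$ is identified with $M\to (L_RN)\otimes\mcR\simeq L_RM$.
\end{enumerate}
This route needs neither continuous fixed points nor any spectral sequence.
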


Following \cite[Notation~5.7]{BMCSY23}, for
$X \in \Shv(\Fin_{G};\qmcC)$, we shall abuse notation and also denote
the stalk $e^{*}X$ by $X$, and for every open $U \leq G$, denote by
$X^{hU}$ the value of $X$ at $G/U \in \Fin_{G}$.

\subsection{$F$-descent and $F$-covers}\label[appendix]{sec-F-descent}

\begin{defin}\label{def-F-descent}
  {\bf $F$-descent and $F$-covers.} \cite[Definition 3.1.1]{LZ17}
  and \cite[Definition 6.6]{BHLS}.  Let $\qmcC$ be an {\qcat}
  admitting pullbacks,\linebreak $F \colon \qmcC^{\op} \to \qmcD$ a
  functor, and $u \colon V_{0} \to V_{-1}$ a morphism in
  $\qmcC$.  We say that {\bf $u$ is of $F$-descent} if
\[
  F \circ (V_{\bullet})^{\op} \colon \mathrm{N}(\bdelt_{+})
       \longrightarrow \qmcD
\]
is a limit diagram in $\qmcD$, where
\[
  V_{\bullet} \colon \mathrm{N}(\bdelt_{+})^{\op}
       \longrightarrow \qmcC
\]
is the {\Cech} nerve of $u$ as in \cref{def-Cech-nerve}.  We say that
{\bf $u$ is an $F$-cover} if every pullback of $u$ in $\qmcC$ is of
$F$-descent.
\end{defin}

We illustrate with three examples.

\begin{ex}\label{ex-ordinary-sheaves}
{\bf Ordinary sheaves on a topological space.}
Let $\qmcC$ be the poset category $\mcU(X) $ of open subsets of a
topological space $X$ as in \cref{ex-poset-open}, in which pullbacks
are intersections. Let $u$ be the map of \cref{ex-classic-Cech}
associated with an open covering $\left\{U_{\alpha} \right\} $ of $X$,
and let $F$ be an ordinary sheaf on $\mcU(X)$.  Then $u$ is of
$F$-descent.

The pullback of $u$ along an open inclusion $A\to X$ is the
corresponding map for the open cover of $A$ by its intersections with
the $U_{\alpha}$s.  This is also of $F$-descent, so $u$ is of
universal $F$-descent.
\end{ex}

\begin{ex}\label{ex-cech-nerve}
  The {\Cech} nerve for a map $f:X\to Y$ of spaces or spectra is the
  simplicial object $V_{\bullet}$ given by
\begin{displaymath}
\xymatrix
@R=-2mm
@C=15mm
{
{V_{0}}\ar@{=}[ddd]^(.5){}
 &{V_{1}}\ar@{=}[ddd]^(.5){}
   &{V_{2}}\ar@{=}[ddd]^(.5){}\\
   {\phantom{V}}\\
   {\phantom{V}}\\
{X}\ar[r]^(.35){}
 &{X\times _{Y}X}\ar@<.5ex>[r]^(.5){}\ar@<-.5ex>[r]^(.5){}
                \ar@<.5ex>[l]^(.5){} \ar@<-.5ex>[l]^(.5){}
   &{X\times _{Y}X\times _{Y}X}
                 \ar@<1ex>[r]^(.5){}\ar[r]^(.5){}\ar@<-1ex>[r]^(.5){}
                 \ar@<1ex>[l]^(.5){}\ar[l]^(.5){}\ar@<-1ex>[l]^(.5){}
      &{\cdots } \ar@<1.5ex>[l]^(.5){}\ar@<.5ex>[l]^(.5){}
                 \ar@<-.5ex>[l]^(.5){}\ar@<-1.5ex>[l]^(.5){}\\
{x_{0}}\ar@{|->}[r]^(.5){}
  &{(x_{0},x_{0})}\\     
{x_{0}}
  &  &{(x_{0},x_{0},x_{1})}\\
  &{(x_{0},x_{1})}\ar@{|->}[ur]^(.5){}\ar@{|->}[dr]^(.5){}
                  \ar@{|->}[ul]^(.5){}\ar@{|->}[dl]^(.5){}\\                  
{x_{1}}
  &{(x_{1},x_{2})}
     &{(x_{0},x_{1},x_{1})}\\
  &{(x_{0},x_{2})}
     &{(x_{0},x_{1},x_{2}),}\ar@{|->}[ul]^(.5){}\ar@{|->}[l]^(.5){}
                            \ar@{|->}[dl]^(.5){}  \\
  &{(x_{0},x_{1})}
}
\end{displaymath}
 
\noindent where the coordinates $x_{i}\in X $  all have the
same image under $f$.

Applying a contravariant functor $F$ leads to  maps
\begin{numequation}\label{eq-F-lim-F}
\begin{split}
  F(Y)
  \to \lim{\bdelt }F(V_{\bullet})
  \to F(X), 
\end{split}
\end{numequation}
 
\noindent whose composite is $F(f)$, where the limit is the
totalization of the cosimplicial object $F(V_{\bullet})$.  The first
map is an equivalence if $f$ is of $F$-descent.  If the same is true
for any pullback of $f$, then $f$ is an $F$-cover.
\end{ex}

\begin{ex}\label{ex-cyclo-comp}
  {\bf Cyclotomically completed algebraic $K$-theory.}
  If the functor of interest (such as algebraic $K$-theory) is
  covariant, then we have to start in the opposite category of its
  domain.  Let
\begin{displaymath}
\qmcC = \mathrm{CAlg} \left({\qSp}_{T(n)}\right)^{\op}.
\end{displaymath}
 
\noindent Pushouts in $\mathrm{CAlg}\!\left({\qSp}_{T(n)}\right)$ are
smash products over $\mS_{T(n)}$, so the opposite category has
pullbacks as required in \cref{def-F-descent}.

Let \(F\) be the functor $\rK_{\Cyc(n+1)}$ of \cref{def-KT-KK} (which has
values in $\qSp_{T(n+1)}$), and let $f$ be opposite of the map
\[
  \mS_{T(n)} \longrightarrow \mS_{T(n)}[\omega^{(n)}_{p^\infty}]
\]

\noindent Then \cref{eq-F-lim-F} becomes
\begin{displaymath}
  \rK_{\Cyc(n+1)}(\mS_{T(n)})
  \to \lim{\bdelt } \rK_{\Cyc(n+1)}(V_{\bullet})
  \to \rK_{\Cyc(n+1)}(\mS_{T(n)}[\omega^{(n)}_{p^\infty}])
\end{displaymath}
 
\noindent where $V_{\bullet}$ is a cosimplicial spectrum built from
$\mS_{T(n)}[\omega^{(n)}_{p^\infty}]$.  The first map is an
equivalence by \cref{cor-6.19}.  \cref{prop-BHLS6.24} is a similar
statement about the map
$f_k \colon R^{h(p^{k} \Z)} \longrightarrow R$, where 
$R = L_{T(n)} \BPn$ equipped with the $\Z$-action of
  \cref{thm-Theorem-5.4}.
\end{ex}

\subsection{Finite sets acted on by a profinite group}
\label[appendix]{sec-finite-profinite}

\begin{defin} \label{defin-TG} {\bf The Grothendieck site of
    continuous finite $G$-sets} \cite[\S III.9]{MacLane-Moerdijk} and
  \cite[Definition 4.1]{CM21}.  For a profinite group $G$, $\Fin_{G}$
  (denoted by $\mcT_{G}$ in \cite{CM21} and by $\msT_{G}$ in
  \cite{BMCSY23}) is the Grothendieck site defined as follows:
\begin{enumerate}
\item The underlying category of $\Fin_{G}$ is the category of
  continuous finite $G$-sets.  Continuity means that each point is
  fixed by an open (meaning of finite index) subgroup of $G$.
    \item A family of maps $\{ S_i \to S \}_{i \in I}$ forms a covering
    sieve if it is jointly surjective.
  \end{enumerate}

  Given an $\infty$-category $\qmcD$ with all limits, we let
  $\mathrm{Sh}(\Fin_{G},\qmcD)$ denote the $\infty$-category of
  $\qmcD$-valued sheaves on $\Fin_{G}$, as usual. We also write
  $\mcP_{\amalg }  (\Fin_{G},\qmcD)$ for the $\infty$-category of
  presheaves on $\Fin_{G}$ with values in $\qmcD$ which carry finite
  coproducts in $\Fin_{G}$ to finite products; equivalently, these are
  $\qmcD$-valued presheaves on the orbit category of $G$.

  If $\qmcD$ is also presentable as in \cref{def-presentable}, we
  define the {\bf stalk} of a sheaf $F$ by
  \begin{displaymath}
F_{e}:=\ colim_{H\subseteq G}F(G/H) ,
\end{displaymath}
 
\noindent where the colimit is over all open subgroups $H$.
\end{defin}

This Grothendieck topology is finitary as in
\cref{def-Grothendieck-topology}.  The category of sheaves of sets on
$\Fin_{G} $ is the category of continuous (discrete) $G$-sets.

\begin{prop}\label{prop-sheaf-cond-FinG}
  {\bf The sheaf condition for presehaves on $\Fin_{G}$.}
  \cite[Prop. A.3.3.1]{Lurie:SAG}. 
Let $G$ be a profinite group, and let $\Fin_{G}$ be the site as above.
A presheaf $F$ on $\Fin_{G}$ with values in an $\infty$-category
$\qmcD$ with limits is a sheaf if and only if:
\begin{enumerate}
  \item For $X, Y \in \Fin_{G}$, the natural map induces an equivalence
  \[
    F(X \sqcup Y) \simeq F(X) \times F(Y).
  \]
  That is, $F \in \mcP_{\amalg }  (\Fin_{G},\qmcD)$ is coproduct-preserving.
\item For every surjective map of $G$-sets $u:T \twoheadrightarrow S$,
   $F(X)$ is the limit of the {\Cech} nerve of $u$ as
  in \cref{def-Cech-nerve}.
\end{enumerate}
\end{prop}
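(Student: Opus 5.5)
The plan is to work directly from \cref{defin-sheaves}. By that definition, $F$ is a sheaf exactly when it is $S$-local for the monomorphisms $i:\mcalF_{C}\to\yo(C)$ that correspond, via \cref{prop-6.2.2.5}, to covering sieves. For $\qmcD$-valued presheaves I read this through mapping spaces: after applying $\Map_{\qmcD}(d,-)$ for each object $d$, the condition becomes one about space-valued presheaves. So it suffices to treat $\qmcD=\qmcS$.

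\textbf{Step 1: reduce to finitely generated covering sieves.} The topology on $\Fin_{G}$ is finitary (remark after \cref{defin-TG}), so every covering sieve $\qmcC^{(1)}_{/S}$ contains a covering sieve $\qmcC^{(0)}_{/S}$ generated by a finite jointly surjective family $\{T_i\to S\}$. The local objects for a class of monomorphisms form a strongly saturated situation. Pulling $\qmcC^{(1)}$ back along each $T_i\to S$ gives the maximal sieve, by axiom (i) of \cref{def-Grothendieck-topology}. A two-out-of-three argument, comparing $\mcalF^{(0)}_S\to\mcalF^{(1)}_S\to\yo(S)$ and using universality of colimits in $\mcP(\Fin_G)$, should then show that locality for all finitely generated covering sieves implies locality for $\qmcC^{(1)}$. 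I expect this bookkeeping to be the main obstacle. I would model it on the proof of \cite[Prop.~A.3.3.1]{Lurie:SAG}, working in the topos $\mcP(\Fin_G)$.

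\textbf{Step 2: identify the sieve generated by a single surjection.} Let $u:T\to S$ be a surjection in $\Fin_G$. The Yoneda embedding preserves the fiber products of \cref{ex-cech-nerve}, so the Čech nerve of $\yo(u)$ is $\yo$ of the Čech nerve $V_\bullet$ of $u$. By \cref{prop-HTT-6.2.3.5}, the geometric realization $|\yo(V_\bullet)|$ is the $(-1)$-truncation of $\yo(T)\to\yo(S)$, that is, its image. By \cref{prop-6.2.2.5}, this image is precisely the monomorphism $\mcalF_S\to\yo(S)$ attached to the sieve generated by $u$. Mapping into $F$ and applying Yoneda turns the statement ``$F$ is local for this monomorphism'' into ``$F(S)\to\lim_{\bdelt}F(V_\bullet)$ is an equivalence''. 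This is condition (2).

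\textbf{Step 3: handle finite families and coproducts.} For the forward direction, suppose $F$ is a sheaf. The empty family covers $\emptyset$, so $F(\emptyset)\simeq *$. Next consider the cover $\{X\to X\sqcup Y,\ Y\to X\sqcup Y\}$. Since $X\times_{X\sqcup Y}Y=\emptyset$, the associated sieve object is $\yo(X)\sqcup\yo(Y)$ computed in $\mcP(\Fin_G)$, and locality gives $F(X\sqcup Y)\simeq F(X)\times F(Y)$, which is (1). A single surjection generates a covering sieve, so Step 2 gives (2).

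For the converse, assume (1) and (2), and take a finite covering family $\{T_i\to S\}$. Replace it by $u:\coprod T_i\to S$, which is surjective. Its generated sieve is the union of the sieves generated by the $T_i\to S$, which is the same sieve. Step 2 and (2) then give locality for every finitely generated covering sieve. The case of an empty covering family, which covers only $S=\emptyset$, is handled by $F(\emptyset)\simeq *$, which follows from (1) applied to $\emptyset\sqcup\emptyset$. Step 1 then upgrades this to all covering sieves, so $F$ is a sheaf.
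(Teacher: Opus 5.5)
Your reduction to $\qmcS$ and Step 2 are sound (the paper itself gives no argument, only the citation to Lurie). The converse in Step 3, however, has a genuine gap.

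You assert that the sieve generated by $u\colon\coprod T_i\to S$ ``is the same sieve'' as the one generated by the family $\{T_i\to S\}$. This is false in $\Fin_{G}$. A map $D\to S$ with $D$ disconnected can lift to $\coprod T_i$, with different orbits of $D$ landing in different $T_i$, without factoring through any single $T_i$. For instance, take $S=X\sqcup Y$ with $X,Y\neq\emptyset$ and the family $\{X\to S,\ Y\to S\}$. Then $u$ is $\mathrm{id}_S$ and generates the maximal sieve, whereas the family sieve does not contain $\mathrm{id}_S$.

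As a result, your converse uses hypothesis (1) only to get $F(\emptyset)\simeq *$. But (2) together with $F(\emptyset)\simeq *$ cannot imply the sheaf condition. Put $F(S)=K$ for $S\neq\emptyset$ (identity restriction maps) and $F(\emptyset)=*$, with $K$ a two-point space. The \v{C}ech nerve of any surjection onto a nonempty $S$ is levelwise nonempty, so (2) holds. Yet $F(X\sqcup Y)=K\not\simeq K\times K$, so $F$ is not a sheaf by your own forward direction.

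The missing step is precisely where (1) does its work. Let $R\subseteq R'$ be the family sieve and the sieve of $u$. By (2) and Step 2, $F$ is local for $R'$ and all its pullbacks. By universality of colimits, the inclusion of sieve objects $R\to R'$ is the colimit, over $f\colon D\to S$ in $R'$, of $f^{*}R\to\yo(D)$. Choosing a lift $g\colon D\to\coprod T_i$ shows that $f^{*}R$ contains the sieve of the coproduct decomposition $D=\coprod_i g^{-1}(T_i)$. Given $F(\emptyset)\simeq *$, locality for that sieve says exactly $F(D)\simeq\prod_i F(g^{-1}(T_i))$.

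Single surjections and coproduct decompositions are both stable under pullback in $\Fin_{G}$. So you can finish with the saturation lemma: ``if $R_0\subseteq R$ and $F$ is local for every pullback of $R_0$, then $F$ is local for every pullback of $R$''. This lemma is also what Step 1 needs, and it is proved by pulling the \emph{smaller} sieve back along morphisms of the larger one. The fact you invoke, that the larger sieve pulls back to a maximal sieve along the $T_i$, does not feed the two-out-of-three argument.

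There are two related slips.
\begin{enumerate}
\item $\yo(\emptyset)$ is not the initial presheaf; it is a point at $D=\emptyset$. So the sieve object of $\{X\to X\sqcup Y,\ Y\to X\sqcup Y\}$ is $\yo(X)\sqcup_{\yo(\emptyset)}\yo(Y)$, not $\yo(X)\sqcup\yo(Y)$. This gives $F(X\sqcup Y)\simeq F(X)\times_{F(\emptyset)}F(Y)$, which yields (1) only after you know $F(\emptyset)\simeq*$.
\item Applying (1) to $\emptyset\sqcup\emptyset$ only makes $F(\emptyset)$ subterminal. The constant presheaf with empty value satisfies binary (1) and all of (2) without being a sheaf. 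So the empty coproduct has to be part of hypothesis (1), as $F\in\mcP_{\amalg}(\Fin_{G},\qmcD)$ indicates.
\end{enumerate}
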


Details of the following can be found in \cite[\S4.1]{CM21}.  For a
finite group $G$, $\Fin_{G}$ is the category of finite $G$-sets.  A
$\qmcD $-valued sheaf (for $\qmcD$ as above) on it is {\eqvt} to
$\qmcD $-valued functor on $BG$, which amounts to an object in $\qmcD$
equipped with a $G$-action.

A profinite group $G$ is the limit of its finite quotient groups $G/N$
for open normal subgroups $N$.  Its $\qmcD$-valued sheaf category is
\begin{displaymath}
  \mathrm{Sh}(\Fin_{G},\qmcD)
  \simeq \lim{N\subseteq G}\Fun(B(G/N), \qmcD). 
\end{displaymath}
 
\noindent where for $N' \leq N$, the functor
$\Fun\bigl(B(G/N'), \qmcD\bigr) \longrightarrow \Fun\bigl(B(G/N),
\qmcD\bigr)$ is given by $(\cdot)^{h(N/N')}$.

\begin{prop}\label{prop-CM21-4.12}
{\bf Sheafification.} \cite[Proposition 4.12]{CM21} 
Let $\qmcD$ be a presentable {\qcat} (\cref{def-presentable}) and let
$G$ be a profinite group. Suppose that for every open normal subgroup
$N \leq G$ and each subgroup $K \leq G/N$, the limit functor
\[
(\,\cdot\,)^{hK} : \Fun(BK, \qmcD) \longrightarrow \qmcD
\]
commutes with filtered colimits of $K$-objects in $\qmcD$. Let
$F \in \Psh(\Fin_{G}, \qmcD)$ be a product-preserving presheaf on
$\Fin_{G}$. Then the sheafification $F^{\mathrm{sh}}$ of $F$ is given
by the formula
\[
F^{\mathrm{sh}}(G/H)
  = \mathop{\mathrm{colim}}_{H' \leq H}
     F(G/H')^{h(H/H')},
\]
as $H' \leq H$ ranges over all open normal subgroups.
\end{prop}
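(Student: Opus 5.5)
The plan is to reduce to the case $H=G$ by replacing $G$ with $H$: the restriction of a product-preserving presheaf on $\Fin_{G}$ to $\Fin_{H}$ is again product preserving, and sheafification commutes with this restriction. It then suffices to show two things about the candidate presheaf
\[
F'(G/H):=\colim{H'\leq H}F(G/H')^{h(H/H')}.
\]
First, $F'$ is a sheaf, i.e.\ it satisfies the two conditions of \cref{prop-sheaf-cond-FinG}. Second, the evident map $F\to F'$ (inclusion of the $H'=H$ term) is initial among maps from $F$ to sheaves.

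Product preservation of $F'$ holds termwise and passes to the filtered colimit, since finite products commute with filtered colimits in a presentable {\qcat}. For the \v{C}ech condition, I would first reduce to covers of the form $u\colon G/K\to G/H$ with $K\leq H$ open. A general surjection of finite $G$-sets decomposes into orbits, and a jointly surjective family of orbits refines such a cover. Choose an open normal $N\leq K$. The \v{C}ech nerve of $u$ is built from finite $G/N$-sets, and in $\Fun(B(G/N),\qmcD)\simeq\Shv(\Fin_{G/N},\qmcD)$ descent along $G/K\to G/H$ is just the statement
\[
X^{h(H/N)}\simeq \lim{\bdelt}\,\bigl(X^{h(K/N)}\text{-\v{C}ech terms}\bigr),
\]
which is formal for finite groups, since homotopy fixed points compose. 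So each term $H'\mapsto F(G/H')^{h(-/H')}$ with $H'\leq N$ normal gives a sheaf on $\Fin_{G/H'}$, restricted along $G\to G/H'$.

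The main obstacle is passing this descent through the filtered colimit over $H'$. The \v{C}ech totalization is an infinite limit, so commuting it with a filtered colimit is not automatic. This is exactly where the hypothesis enters. For finite $G/N$, the \v{C}ech limit for $G/K\to G/H$ in $\Fun(B(G/N),\qmcD)$ is identified with a homotopy fixed point functor $(\cdot)^{hK_0}$ for a subgroup $K_0=H/N$ of $G/N$. By assumption that functor commutes with filtered colimits of $K_0$-objects, so the colimit over $H'$ commutes with descent. Concretely, for each fixed $H'\leq N$ the descent is an equivalence, and for the diagram one rewrites $\lim{\bdelt}\colim{H'}$ as $(\colim{H'}\,\cdot\,)^{h(H/N)}$ and applies the hypothesis.

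For universality, take a map $F\to S$ with $S$ a sheaf. Each $F(G/H')^{h(H/H')}\to S(G/H')^{h(H/H')}\simeq S(G/H)$ is determined, with the last equivalence coming from Galois descent for $S$ along $G/H'\to G/H$. These maps are compatible in $H'$, so they assemble into a factorization $F'\to S$. Uniqueness comes from the fact that $F\to F'$ becomes an equivalence after applying any sheaf $S$, which one checks via the cofinality of the $H'=H$ term after taking fixed points. Equivalently, $F\to F'$ is a local equivalence because it is an equivalence on stalks, both sides having stalk $\colim{H'}F(G/H')$.
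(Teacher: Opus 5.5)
Your proof that $F'$ is a sheaf is essentially right. Applied to every subgroup of $G/N$ (not only $H/N$), the hypothesis says that an objectwise filtered colimit of sheaves on $\Fin_{G/N}$ is again a sheaf there. Here a sheaf on $\Fin_{G/N}$ means a $G/N$-object $Y$ evaluated as $Y^{hL}$ on $G/L$. Since every cover lives in some $\Fin_{G/N}$, this gives descent for $F'$.

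The genuine gap is in the universality half. The sentence ``$F\to F'$ becomes an equivalence after applying any sheaf $S$'' just restates what has to be proved. Your fallback, that $F\to F'$ is a local equivalence because it is an equivalence on stalks, is not valid. Stalks cannot tell a sheaf from its hypersheafification, and $\Shv(\Fin_{G};\qmcD)$ need not be hypercomplete.

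This failure occurs in the paper's own main example. Take $\qmcD=\qSp_{T(n)}$: the hypothesis of the proposition holds for every finite group $K$, because the norm identifies $(-)^{hK}$ with $L_{T(n)}(-)_{hK}$ (\cref{thm-HL002}). Yet for $G=\Z_{p}^{\times}$ and a $T(n)$-local $X$ that is not cyclotomically complete (such $X$ exist by \cref{thm-BHLS-6.25}), consider the map from the cyclotomic sheaf $X[\omega^{(n)}_{p^{(-)}}]$ to its hypersheafification. It is a stalkwise equivalence between sheaves that is not an equivalence (\cref{prop-BMCSY-5.11}), hence not a local equivalence.

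The missing ingredient is idempotence of your construction. First make $F\mapsto F'$ an honest functor $L$ with a natural unit $\alpha\colon F\to F'$. For example, set $F'=\operatorname{colim}_{N}F'_{N}$ with $F'_{N}(T)=F(G/N\times T)^{h(G/N)}$, which recovers your formula on orbits. This also replaces your hand-assembled maps that are ``compatible in $H'$'', which in an $\infty$-category would need coherence data.

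Then check that both $\alpha_{F'}$ and $L(\alpha_F)$ are equivalences. Use the hypothesis to move $(-)^{h(H/H')}$ inside the colimit defining $F'(G/H')$. This gives
\[
F''(G/H)\simeq \operatorname{colim}_{H''\leq H'\leq H} F(G/H'')^{h(H/H'')},
\]
where $H''\leq H'$ range over open normal subgroups. The terms depend only on $H''$, so this colimit is $\operatorname{colim}_{H''}F(G/H'')^{h(H/H'')}$. Under this identification, $\alpha_{F'}$ and $L(\alpha_F)$ are induced by the sections $H''\mapsto(H'',H)$ and $H'\mapsto(H',H')$ of the projection $(H'',H')\mapsto H''$. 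Hence both are equivalences.

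By \cite[Proposition 5.2.7.4]{Lurie:HTT}, $L$ is then a localization onto its essential image. Your first half shows the image consists of sheaves. Galois descent $S(G/H)\simeq S(G/H')^{h(H/H')}$ shows that $\alpha_S$ is an equivalence for every sheaf $S$, so the image is exactly the sheaves. Therefore $L$ is sheafification on product preserving presheaves, which contain all sheaves. This handles existence and uniqueness of the factorization $F'\to S$ at the level of mapping spaces, without appealing to stalks.
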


\section{Operads}
\label[appendix]{sec-BV-tensor}

\subsection{The early work of May and Stasheff.}\label[appendix]{sec-May-Stasheff} Operads
are originally defined by May in \cite{May:loop}.  Light introductions
to the topic are given by Eva Belmont \cite{Belmont-quick} and by Jim
Stasheff \cite{Stasheff-what}.  More comprehensive treatments are
given by Martin Markl, Steven Shnider and Stasheff in \cite{MSS02}, by
Murray Bremner and Vladimir Dotsenko in \cite{Bremner-Dotsenko}, and
by Jean-Louis Loday and Bruno Vallette in \cite{Loday-Vallette}.

\begin{defin}\label{def-operad}
\cite[Definitions 1.1 and 3.12]{May:loop}

\begin{enumerate}[label={(\roman*)},itemindent=0em]
\item \label{def-operadi}
A {\bf non-$\Sigma$ (or nonsymmetric) operad} $(\mcO,\gamma ) $ is a
collection of spaces $\left\{\mcO (j):j\geq 0 \right\}$ where $\mcO
(0)$ is a single point, $\mcO (1)$ has a special point $1_{\mcO}$
related to the identity map, and for each $j\geq 0$ and each $j$-tuple
$(k_{1}, \dotsc , k_{j})$ of nonnegative integers, there are structure
maps
\begin{numequation}\label{eq-gamma-K}
\begin{split}
\gamma_{(k_{1},\dotsc ,k_{j})} :
\mcO(j)\times \mcO(k_{1})\times\dotsb \times \mcO(k_{j})
    \to \mcO(k_{1} + \dotsb  + k_{j})
\end{split}
\end{numequation}%

\noindent for which the associativity diagram
\cref{eq-operad-commutes} below commutes.  We will sometimes omit the
subscript $K$ on the structure map and the structure map $\gamma $ in
the operad.  The integer $j$ is the {\bf arity}, and a point in $\mcO
(j)$ is an {\bf operation of arity $j$}. The collection $\left\{\mcO
(j):j\geq 0 \right\}$ is called an {\bf operadic sequence}.

\item \label{def-operadii} $\mcO$ is simply an {\bf operad} (sometimes
called a {\bf symmetric operad}) if in addition each space $\mcO (j)$
comes equipped with an action of the symmetric group on $j$ letters,
$\Sigma_{j}$ satisfying the equivariance condition of
\cref{eq-symm-action} below.  The collection $\left\{\mcO (j):j\geq 0
\right\}$ is called a {\bf symmetric sequence}.  In \cite{MSS02} and
\cite{Loday-Vallette} it is called a $\Sigma$-module and an
$\SS$-module respectively.

\item \label{def-operadiii} An operad $\mcO $ is {\bf $\Sigma $-free} if
each $\mcO(j)$ for $j>0$ is acted on freely by $\Sigma_{j }$.

\item \label{def-operadiv} It is {\bf discrete} if each $\mcO(j)$ is
discrete.  For any operad $\mcO$, $\pi_{0}\mcO$ is a discrete operad,
and the evident map $\epsilon :\mcO\to \pi_{0}\mcO$ is the {\bf
augmentation} of $\mcO$.

\item \label{def-operadv} 
The {\bf $m$th truncation $\mcO_{\leq m}$ of
$\mcO$} for $m>0$ is given by
\begin{displaymath}
\mcO_{\leq m} (j)=\mycases{    
\mcO(j)
       &\mbox{for }0\leq j\leq m\\
\emptyset 
       &\mbox{for }j>m.
}
\end{displaymath}


\end{enumerate}
\end{defin}

We now spell out May's associativity condition.
Given a sequence of positive integers $K= (k_{1},\dotsc
,k_{j})$, let
\begin{numequation}\label{eq-K-notation}
\begin{split}
|K|:=j,\, ||K||:=k_{1}+\dotsb +k_{j}\mbox{ and } 
\mcO (K):=\mcO(k_{1})\times\dotsb \times \mcO(k_{j}),
\end{split}
\end{numequation}%

\noindent so the structure map of \cref{eq-gamma-K} is $\gamma_{K}
:\mcO (|K|)\times \mcO (K)\to \mcO (||K||)$.

Now suppose we replace each $k_{i}$ by a sequence of positive
integers
\begin{displaymath}
M_{i} = (m_{i,1},\dotsc m_{i,k_{i}}),
\qquad \mbox{with } \widetilde{M}= (M_{1},\dotsc ,M_{j}),
\end{displaymath}

\noindent 
and we define
\begin{displaymath}
|\widetilde{M}| := |K|,\qquad 
||\widetilde{M}||:=\sum_{1\leq i\leq j}||M_{i}||
\qquad \aand \mcO (\widetilde{M}):=\prod_{1\leq i\leq j}\mcO (M_{i}).
\end{displaymath}

\noindent It follows that
\begin{align*}
\mcO (|\widetilde{M}|)
 & = \mcO(k_{1})\times\dotsb \times \mcO(k_{j})\\
\mcO (K)\times \mcO (\widetilde{M})
 & = (\mcO(k_{1})\times\dotsb \times \mcO(k_{j}) )
       \times (\mcO (M_{1})\times \dotsb \times \mcO (M_{j}))  \\
 & =  (\mcO(k_{1})\times \mcO (M_{1}))\times \dotsb \times 
         (\mcO(k_{j})\times \mcO (M_{j})),  
\end{align*}

\noindent from which we have the map $\gamma
'_{\widetilde{M}}:=\gamma_{M_{1}}\times \dotsb \times \gamma_{M_{j}} $
to
\begin{displaymath}
\mcO (||M_{1}||)\times \dotsb \times \mcO (||M_{j}||).
\end{displaymath}

\noindent We also have a map 
\begin{displaymath}
\gamma ''_{\widetilde{M}}:=\gamma_{(||M_{1}||,\dotsc ,||M_{j}||)}
:\mcO (j)\times \mcO (||M_{1}||)\times \dotsb \times \mcO (||M_{j}||)
 \to \mcO(||\widetilde{M}||).
\end{displaymath}

Then we have a diagram
\begin{numequation}\label{eq-operad-commutes}
\begin{split}
\xymatrix
@R=8mm
@C=15mm
{
{\mcO (|K|)\times \mcO (K)\times \mcO (\widetilde{M})}
      \ar[d]_(.5){\mcO (|K|)\times \gamma '_{\widetilde{M}} }
      \ar[r]^(.55){\gamma_{K}\times\mcO (\widetilde{M}) }
  &{\mcO (||K||)\times \mcO (\widetilde{M})}
       \ar[d]^(.5){\gamma_{\widetilde{M}}}\\
{\mcO (|K|)\times \mcO (||M_{1}||)\times \dotsb \times \mcO (||M_{j}||)}
       \ar[r]^(.65){\gamma''_{\widetilde{M}}}
    &{\mcO (||\widetilde{M}||),}
}
\end{split}
\end{numequation}%

\noindent {\em which is required to commute}.

May's equivariance condition on the structure map $\gamma$ in the
symmetric case is as follows. There is a right action of the symmetric
group $\Sigma_j$ on $\mcO(j)$ such that the following formulas hold
for all $c \in \mcO(k)$, $d_s \in \mcO(j_s)$, $\sigma \in \Sigma_k$,
and $\tau_s \in \Sigma_{j_s}$:
\begin{numequation}\label{eq-symm-action}
\begin{split}
\gamma(c\sigma;\, d_1,\dots,d_k)
  &=
\gamma\bigl(c;\, d_{\sigma^{-1}(1)},\dots,d_{\sigma^{-1}(k)}\bigr)\,
      \sigma(j_1,\dots,j_k)\\
\mbox{and}\quad  
\gamma(c;\, d_1\tau_1,\dots,d_k\tau_k)
  &=
\gamma(c;\, d_1,\dots,d_k)\,
      (\tau_1 \oplus \cdots \oplus \tau_k),
\end{split}
\end{numequation}%

\noindent where $\sigma(j_1,\dots,j_k)$ denotes the permutation of $j
= j_1+\cdots+j_k$ letters that permutes the $k$ blocks of sizes
$j_1,\dots,j_k$ according to $\sigma$, and where $\tau_1 \oplus \cdots
\oplus \tau_k$ is the block sum permutation in $\Sigma_j$.  More
precisely, $\sigma(j_1,\dots,j_k) \in \Sigma_{j_1+\cdots+j_k}$ is the
block permutation sending the $s$th block of size $j_s$ to the
$\sigma(s)$th block, and
\[
\tau_1 \oplus \cdots \oplus \tau_k
  \;\in\; \Sigma_{j_1+\cdots+j_k}
\]
is the block sum permutation acting as $\tau_s$ on the $s$th block.

\bigskip
The following example motivates the associativity condition of
\cref{eq-operad-commutes}.

\begin{defin}\label{def-endo-op}
\cite[Definition 1.2]{May:loop} For an object $X$ in a topologically
enriched symmetric monoidal category $\mcC $ (such as that of pointed
spaces), the {\bf endomorphism operad} $\End_{X}$ has
\begin{displaymath}
\End_{X} (j):=\mcC (X^{j}, X),
\end{displaymath}

\noindent the space of maps from $X^{j}$ to $X$.  Here
the action of $\Sigma_{j}$ is induced by its action on $X^{j}$ by
permuting coordinates.  Then given maps $g_{i}:X^{k_{i}}\to X$ for
$1\leq i\leq j$, and $f:X^{j}\to X$, we define 
$\gamma_{K} (f, g_{1}, \dotsc ,g_{j})$
to be the composite
\begin{displaymath}
\xymatrix
@R=8mm
@C=10mm
{
{X^{k_{1}}\times X^{k_{2}}\times \dotsb \times X^{k_{j}}
      =X^{k_{1}+\dotsb +k_{j}}}
      \ar[d]_(.5){(g_{1}, g_{2},\dotsc ,g_{j})}\\
{X\times X\times \dotsb \times X=X^{j}}
      \ar[d]_(.5){f}\\
{X.}
}
\end{displaymath}

Let
\[
\circ _i \colon \mcC(X^n, X) \times \mcC(X^m, X)
\longrightarrow \mcC\!\bigl(X^{\,n+m-1}, X\bigr)
\]
be given, for \(1 \leq  i \leq  n\), by
\begin{align*}
\lefteqn{(f \,\circ_i\, g)(x_1, \dots, x_{m+n-1})}\qquad\qquad\\
 & = f\bigl(x_1, \dots, x_{i-1},\, g(x_i, \dots, x_{i+m-1}),\,
x_{i+m}, \dots, x_{n+m-1}\bigr).
\end{align*}
\end{defin}

The reader can verify that the diagram corresponding to
\cref{eq-operad-commutes} commutes for this operad.

\begin{defin}\label{def-operad morphism}
\cite[\S1]{May:loop} 
A {\bf morphism of operads }
\[
\phi : (\mcO ,\gamma )\to (\mcP, \delta)
\]
is a sequence of $\Sigma_j$-equivariant maps
\[
\phi_j : \mcO(j) \to \mcP(j)
\]
such that $\phi_1(1_{\mcO}) = 1_{\mcP }$ and the following diagram
commutes, with notation as in \cref{eq-K-notation}:

\[
\begin{tikzcd}
\mcO(j) \times \mcO(k_1) \times \cdots \times \mcO(k_{j}) 
  \arrow[r, "\gamma_{K}"] 
  \arrow[d, "\phi_j \times \phi_{k_1} \times \cdots \times \phi_{k_{j}}"'] 
& \mcO(||K||) \arrow[d, "\phi_j"] \\
\mcP(j) \times \mcP(k_1) \times \cdots \times \mcP(k_j) 
  \arrow[r, "\delta_{K}"] 
& \mcP(||K||).
\end{tikzcd}
\]
\end{defin}

\begin{defin}\label{def-O-alg}
Given an operad $\mcO$, an object $X$ in $\mcC $ as in
\cref{def-endo-op} is an {\bf $\mcO$-algebra}, or {\bf $\mcO$ acts on
$X$}, if there is a operad morphism (\cref{def-operad morphism})
$\theta :\mcO \to \End_{X} $.  Such a map assigns to each
point in $\mcO (j)$ a map $X^{j}\to X$.  The point 
$1_{\mcO}\in \mcO(1)$ gets sent to the identity map on $X$.  The
$\Sigma_{j}$-{\eqvr} map $\mcO (j)\to \Map_{*} (X^{j}, X)$ is adjoint
to structure maps
\begin{numequation}\label{eq-theta-O}
\begin{split}
\xymatrix
@R=4mm
@C=10mm
{
{\mcO (j)\times_{\Sigma_{j}}X^{j}}
     \ar[r]^(.7){\theta^{\mcO,X}_{j}}
  &{X,}
}
\quad \mbox{and}\quad  
\xymatrix
@R=4mm
@C=10mm
{
{X^{j}}\times_{\Sigma_{j}}\mcO (j)
     \ar[r]^(.7){\theta^{X,\mcO}_{j}}
  &{X.}
}
\end{split}
\end{numequation}%

\noindent For $p\in \mcO (j)$ and $x_{i}\in X$ for $1\leq i\leq j$, we
will sometimes write
\begin{numequation}\label{eq-pxj}
\begin{split}
p (x_{1},\dotsc ,x_{j}):=\theta^{\mcO,X}_{j} (p,x_{1},\dotsc x_{j}).
\end{split}
\end{numequation}%

\noindent These maps define the structure of $X$ as an $\mcO$-algebra.
We will denote the category of such algebras by $\mcO[\mcC]$.

We can make a similar definition in the nonsymmetric case by omitting
the symmetric group actions in \cref{eq-theta-O}.
\end{defin}

\begin{defin}\label{def-operation}
{\bf $\mcO$-operations.}  Let $X$ be an $\mcO$-algebra
as in \cref{def-O-alg}.  An {\bf $\mcO$-operation on $X$} is a map $a
:X^{n}\to X^{m}$ for $n>m$ obtained as follows.  Let 
\begin{displaymath}
n=n_{1}+n_{2}+\dotsb +n_{m}\qquad \mbox{with $n_{i}>0$ for $1\leq i\leq m$}, 
\end{displaymath}

\noindent and let $a_{i}:X^{n_{i}}\to X$ be a map in 
$\theta (\mcO (n_{i}))\subseteq \mcC (X^{n_{i}},X)$.  Then 
\begin{displaymath}
a :=\prod_{1\leq i\leq m} \left(a_{i}:X^{n_{i}}\to X \right).
\end{displaymath}
\end{defin}

\begin{defin}\label{def-interchange}
  Suppose $X$ is both a $\mcP$-algebra and a $\mcQ$-algebra for
  operads $\mcP$ and $\mcQ$.  Then the two structures {\bf
    interchange} if for each $k,\ell >0$, the following diagram
  commutes.
\begin{numequation}\label{eq-interchange}
\begin{split}
\xymatrix
@R=8mm
@C=2mm
{
{(\mcP (k)\times_{\Sigma_{k}}X^{k\ell  })\times_{\Sigma_{\ell }}\mcQ (\ell )}
    \ar[d]_(.5){\theta^{\mcP ,X^{\ell }}_{k} 
                 \times_{\Sigma_{\ell }}\mcQ (\ell )} 
    \ar@{=}[rr]_(.5){}
  & &{\mcP (k)\times_{\Sigma_{k}} (X^{k\ell }
              \times_{\Sigma_{\ell }}\mcQ (\ell ))}
    \ar[d]^(.5){\mcP (k)\times_{\Sigma_{k}}\theta^{X^{k },\mcQ}_{\ell } } \\
{X^{\ell}\times_{\Sigma_{\ell}}\mcQ (\ell)}\ar[r]_(.65){\theta_{\ell}^{X,\mcQ}}
  &{X}
    &{\mcP (k)\times_{\Sigma_{k}}X^{k}}\ar[l]^(.65){\theta_{k}^{\mcP,X}}
}
\end{split}
\end{numequation}%

\noindent The structure maps $\theta $ are those of
\cref{eq-theta-O}. The left and right actions of $\Sigma_{k}$ and
$\Sigma_{\ell }$ respectively on $X^{kl}=X^{\langle k \rangle\times
\langle \ell \rangle}$ are induced by their actions on the indexing
set
\begin{displaymath}
\langle k \rangle\times \langle \ell
\rangle := \left\{1,2,\dotsc k \right\}\times \left\{1,2,\dotsc \ell  \right\}.
\end{displaymath}

We can make a similar definition for nonsymmetric operads by dropping
the action of either or both symmetric groups.
\end{defin}

\begin{defin}\label{def-MN}
\cite[Definition 3.1]{May:loop} {\bf Operads for topological monoids
and commutative topological monoids.}
\begin{enumerate}[label={(\roman*)},itemindent=0em]

\item \label{def-MNi} Let $\mcM $, also known as $\Assoc$, be the
discrete operad with $\mcM (j)=\Sigma_{j}$ with the evident structure
maps.  Hence an $\mcM$-space is a topological monoid.

\item \label{def-MNii} Let $\mcN$, also known as $\Comm$,  be the
discrete operad with $\mcN (j)=\pt$. with the evident structure
maps. Hence an $\mcN$-space is a commutative topological monoid. $\mcN
$ is a terminal object in the category of operads, {\ie } any operad
$\mcO$ admits a unique operad morphism to it.  It follows that
commutative topological monoid is also an $\mcO$-space for any operad
$\mcO$.
\end{enumerate}
\end{defin}

\begin{defin}\label{def-Stash-op}
\cite{Stasheff-assoc}  The {\bf Stasheff non-$\Sigma $ operad $\mcK
$}, also known as $\AA_{\infty }$, has $\mcK (1)=\pt$ and $\mcK (k)$
for $k\geq 2$ is a certain contractible $(k-2)$-dimensional polytope
called an {\bf associahedron}.  $\mcK (2)$ is a point, $\mcK (3)$ is a
line segment, $\mcK (4)$ is a pentagon, and $\mcK (5)$ is an {\em
enneahedron} 
(image  from Wikipedia) 
with nine faces 
(three disjoint quadrilaterals and six
pentagons) and fourteen vertices.  Each pentagonal face is a copy of
$\mcK (4)$.  
\begin{center}
\includegraphics[height=5cm]{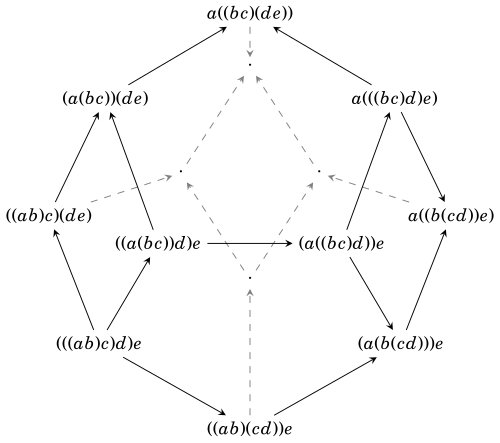}
\includegraphics[height=5cm]{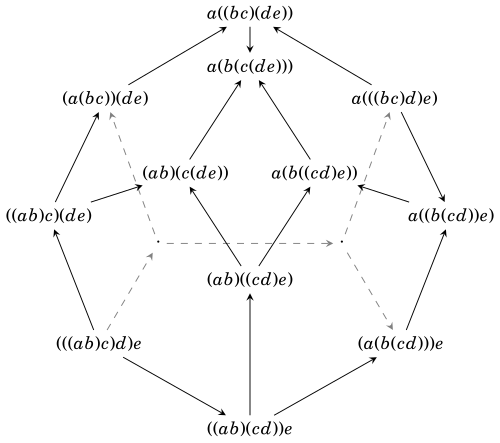}
\end{center}

\noindent A space is a $\mcK$-algebra if it has a multiplication which
is associative up to all higher homotopies.  Each edge in $\mcK (5)$
corresponds to a homotopy between the two indicated ways to multiply
five points.

The {\bf $m$th Stasheff non-$\Sigma$ operad $\AA_{m} $} is the
truncation $\mcK_{\leq m}$ as in \cref{def-operad}\cref{def-operadv}.

Hence every pointed space is an $\AA_{1}$-algebra. An
$\AA_{2}$-algebra is a pointed space equipped with a unital multiplication
with no associativity condition, and an $\AA_{3}$-algebra is a
homotopy associative $H$-space with no higher homotopy associativity.
\end{defin}

More information about associahedra can be found in \cite{CSZ} and
\cite{Loday04}.

\begin{ex}\label{ex-interchange-A2}
{\bf The interchange condition for $\mcQ=\AA_{2}$.}  Since $\AA_{2}
(\ell )$ is a point for $\ell \leq 2$ and empty for $\ell >2$, We need
consider the diagram of \cref{eq-interchange} only for $\ell =1$ and
2, and we see that it commutes trivially for $\ell =1$. For $\ell =2$,
it reads
\begin{displaymath}
\xymatrix
@R=8mm
@C=2mm
{
{(\mcP (k)\times_{\Sigma_{k}}X^{2k  })\times\AA_{2} (2 )}
    \ar[d]_(.5){\theta^{\mcP ,X^{2 }}_{k} 
                 \times\AA_{2} (2 )} 
    \ar@{=}[rr]_(.5){}
  & &{\mcP (k)\times_{\Sigma_{k}} (X^{2k }
              \times\AA_{2} (2 ))}
    \ar[d]^(.5){\mcP (k)\times_{\Sigma_{k}}\theta^{X^{k },\AA_{2}}_{2 } } \\
{X^{2}\times\AA_{2} (2)}\ar[r]_(.65){\theta_{2}^{X,\AA_{2}}}
  &{X}
    &{\mcP (k)\times_{\Sigma_{k}}X^{k}}\ar[l]^(.65){\theta_{k}^{\mcP,X}}
}
\end{displaymath}

\noindent Since $\AA_{2}$ is not symmetric, there is no
$\Sigma_{2}$-action as in \cref{eq-interchange}.  The diagram means
that for $p\in \mcP (k)$ and $x_{i}\in X$ for $1\leq i\leq 2k$, we
have
\begin{displaymath}
p (x_{1},\dotsc ,x_{k})\cdot p (x_{k+1},\dotsc ,x_{2k})
= p (x_{1}\cdot x_{k+1},\dotsc ,x_{k}\cdot x_{2k}),
\end{displaymath}

\noindent with notation as in \cref{eq-pxj}, where $(-\cdot -)$ is the
monoidal structure associated with $\AA_{2}$.
\end{ex}

\begin{defin}\label{def-little-cubes}
\cite[Definition 4.1]{May:loop} The {\bf little $n$-cubes operad}
$\EE_{n}$, also known as $\mcC_{n}$, has $\EE_{n} (j)$ being the space
of $j$-tuples of rectilinear embeddings of $I^{n}$ into $I^{n}$, the
$n$-cube $[0,1]^{n}$, such that the $j$ images of the interior are
disjoint.  More precisely, a point in $\EE_{n} (j)$ is a collection of
maps $e_{i}:I^{n}\to I^{n}$ for $1\leq i\leq j$ of the form
\begin{displaymath}
e_{i} (x_{1},\dotsc .x_{n})
 := (a_{i,1}+b_{i,1}x_{1},\dotsc, a_{i,n}+b_{i,n}x_{n}),
\end{displaymath}

\noindent where the coefficients $a_{i,k}$ and $b_{i,k}$ satisfy
\begin{displaymath}
a_{i,k}\geq 0,\qquad  b_{i,k}>0,
\qquad 
 a_{i,k}+b_{i,k}\leq 1,
\end{displaymath}

\noindent and the images of the interiors of the $I^{n}$s under the maps
$e_{k}$ for $1\leq k\leq j$ are disjoint.
\end{defin}

This operad is of interest because it acts on $\Omega^{n}X$ for any
pointed space $X$.  The space $\EE_{n} (j)$ has a free action of
$\Sigma_{j}$.  $\EE_{1} (j)$ is homotopy equivalent to $\Sigma_{j}$,
and $\EE_{n} (j)$ is $(n-2)$-connected for $n\geq 2$.

It is known that an $\EE_{1}$-structure is {\eqt} to an
$\AA_{\infty }$-structure; see \cite[Example 5.1.0.7]{Lurie:HA}. 

\begin{defin}\label{defin-En-restriction}
Every $\EE_{n+1}$-algebra is also an $\EE_{n}$-algebra.
\end{defin}

\proof
For each map 
\begin{displaymath}
e=e_{1}\amalg \dotsb \amalg e_{k}:\coprod_{1\leq i\leq k}I^{n}
\to I^{n} 
\end{displaymath}

\noindent as in \cref{def-little-cubes}, we can form a diagram
\begin{displaymath}
\xymatrix
@R=6mm
@C=10mm
{
{\displaystyle{\coprod_{1\leq i\leq j}I^{n}}}
     \ar[r]^(.58){e}
     \ar[d]^(.5){}
  &{I^{n}}\ar[d]^(.5){}\\
{\displaystyle{\coprod_{1\leq i\leq j}I^{n+1}}}
      \ar[r]^(.6){\widetilde{e}}
  &{I^{n+1},}
}
\end{displaymath}

\noindent where the vertical maps send each $n$-cube to the face of
the corresponding $(n+1)$-cube with $x_{n+1}=0$, and the last
coordinate of each map $\widetilde{e}_{i}$ is $x_{n+1}$. Thus we get
a map $\EE_{n} (j)\to \EE_{n+1} (j)$ and the result follows.  \qed

\begin{defin}\label{def-E-inf}
The {\bf $\EE_{\infty }$-operad} has
\begin{displaymath}
\EE_{\infty } (j) :=   \colim{n}\EE_{n} (j),
\end{displaymath}

\noindent (which is a contractible free $\Sigma_{j}$-space) where the
maps $\EE_{n} (j)\to \EE_{n+1} (j)$ are those constructed above.
\end{defin}

\subsection{Some generalizations}\label[appendix]{sec-generalizations} 

Returning to \cref{def-operad}, the space $\mcO (1)$ can be regarded
as the space of morphisms in a topological category with a single
object.  The special point in it is the identity morphism and
\cref{eq-operad-commutes} says that composition of morphisms is
associative.  Thus an operad as defined by May is a one object
topological category with additional structure.

This suggests three ways of generalizing \cref{def-operad} so as to
accommodate more categories as special cases:
\begin{enumerate}[label={(\roman*)},itemindent=0em]
\item The spaces $\mcO (j)$ of \cref{def-operad} could be replaced by
objects in a bicomplete closed symmetric monoidal category.  This was
studied by Max Kelly in \cite{Kelly-operads}, but he did not treat the
Boardman-Vogt tensor product there.  It is also treated in \cite[Part
II, Chapter 1]{MSS02}.

\item The point $\mcO (0)$ could be replaced by a set $C$ of {\bf
colors}.  We require $\mcO (0)$ to be a set no matter which symmetric
monoidal category the $\mcO (j)$ are allowed to belong to. The latter
are replaced by spaces (or whatever)
\begin{displaymath}
\mcO (x_{1},\dotsc ,x_{j};y) \qquad \mbox{for }x_{i},y\in C,
\end{displaymath}

\noindent with the associativity diagram \cref{eq-operad-commutes}
suitably modified.  The resulting object is a {\bf colored operad},
also known as a {\bf symmetric multicategory}.  The term
``multicategory'' refers to the fact that morphisms are generalized to
functions of several variables. See Donald Yau's \cite{Yau}.

\item Associated to a colored operad $\mcO$ is a small category
$\mcC_{\mcO}$ with object set $\mcO (0)$ and morphism sets $\mcO
(x;y)$ for $x,y\in \mcO (0)$.  The definition can be modified so that
$\mcC_{\mcO}$ gets replaced by an {\qcat}.  Lurie calls such an object 
an {\bf $\infty $-operad} in \cite[Definition 2.1.1.10]{Lurie:HA}.

\end{enumerate}

{\em We will only need the first of these.}

\subsection{Trees}\label[appendix]{sec-trees}

\begin{defin}\label{def-tree}
\cite[\S1.5]{MSS02},
A {\bf tree} is a finite connected contractible graph. We will modify the
standard convention that all edges in a graph have two
adjacent vertices and delete the vertices with only one adjacent
edge. This means that some edges will have only one adjacent vertex
and we call these edges {\bf  external edges}. The edges which are
adjacent to two vertices will be called {\bf internal edges}. Occasionally
it will be convenient to use the standard convention with two vertices
adjacent to every edge, in which case we call a vertex adjacent to
just one edge an {\bf external vertex}.  The remaining vertices will be
called {\bf internal vertices}.  

All trees are assumed to have at least one edge; the tree with just
one edge (and no vertices) is called the {\bf trivial tree}. A rooted
tree is a tree with a distinguished external edge, called the root.
The remaining external edges are called leaves. An external vertex
adjacent to a leaf will be called a {\bf leaf vertex}, and the
external vertex adjacent to the root, the {\bf root vertex}. A rooted
tree has a natural orientation with each edge oriented in the
direction of the vertex closest to the root. The root edge is oriented
toward the root vertex, but in the case of the trivial tree, this is
ambiguous so we have to choose an orientation. In any rooted tree,
every vertex is adjacent to a single outgoing edge.  

The {\bf valence} of an internal vertex is the number of incoming
edges.  A tree with no vertices of valence one is called {\bf
reduced}. In such a tree the {\bf distance from the root} of an
internal vertex is one more than the number of internal vertices
between it and the root.

A {\bf corolla} is a tree with no internal edges and a single internal
vertex of valence at least 2. We denote the $n$-leafed corolla by
$T_{n}$.  A {\bf binary tree} is one in which each internal vertex has
valence 2. A {\bf planar tree} is a tree equipped with an embedding in
the plane.  A tree that is not so equipped is said to be {\bf
nonplanar}, even though it can be embedded in the plane, unlike a
nonplanar graph.

Unless otherwise indicated, we will assume all trees are reduced,
rooted and nonplanar.  \end{defin}

\begin{defin}\label{def-tree-operad}
{\bf The tree operad.} \cite[\S1.5]{MSS02} Let $Tree(n)$ for $n>0$ be
the set of isomorphism classes of reduced nonplanar trees with one
root and $n$ leaves, with distinct labels, usually the integers 1
through $n$.  The symmetric group acts by permuting the labels. The
sequence
\[
Tree = \{Tree(n):n \geq 1\}
\]
forms an operad in the category of sets.
Given trees $S \in Tree(k)$ and $T \in Tree(j)$, 
for each $1 \leq i \leq k$, let 
\begin{numequation}\label{eq-circ-i}
\begin{split}
S \circ_i T
\end{split}
\end{numequation}%

\noindent
be the tree obtained by grafting the root of $T$ to the leaf of $S$
labeled $i$.  

There is a structure map $\gamma_{K}$ as in
\cref{eq-gamma-K}, 
\begin{numequation}\label{eq-gamma-K-tree}
\begin{split}
\gamma_{K} :
Tree(j)\times Tree(k_{1})\times\dotsb \times Tree(k_{j})
    \to Tree(k_{1} + \dotsb  + k_{j})
\end{split}
\end{numequation}%

\noindent in which, for $1\leq i\leq j$, the root of the tree
with $k_{i}$ leaves is grafted onto the $i$th leaf of the
first tree.
\end{defin}

The reader should consult \cite[\S1.4 and \S2.2]{Boardman-Vogt} for
more information, including the definitions of the terms {\bf twig},
{\bf stump}, {\bf cherry}, {\bf cherry tree}, {\bf fully grown cherry
tree}, {\bf planted cherry tree} (with no reference to George
Washington), and {\bf copse}.

\begin{remark}\label{rem-how-to-draw-trees}
{\bf How to draw trees.} Linguists draw trees with the root at
the top, while botanists draw them with the root at the bottom. We
will follow the botanical convention.
\end{remark}

The grafting process of \cref{eq-circ-i} is illustrated in
\cref{eq-grafting}. In each tree the leaves (top vertices) are
numerically labeled, and the root is the bottom edge. The two trees on
the left each have a single internal vertex. Their {\bf valences}, the
number of edges coming in from above, are 5 and 3.  The tree on the
right has two, with the same valences as the corresponding internal
vertices on the left.
\begin{numequation}\label{eq-grafting}
\begin{split}
T_{5}\circ_{4}T_{3}=
\vcenter{\xymatrix
@R=3mm
@C=2mm
{
{}\\
{1}\ar@{-}[drr]^(.5){}
  &{2}\ar@{-}[dr]^(.5){}
    &{4}\ar@{-}[d]^(.5){}
      &{4}\ar@{-}[dl]^(.5){}
        &{5}\ar@{-}[dll]^(.5){}\\
  & &*=0{\bullet}\ar@{-}[d]^(.5){}\\
  & &*=0{}
}}\circ_{4}
\vcenter{\xymatrix
@R=3mm
@C=2mm
{
{6}\ar@{-}[dr]^(.5){}
  &{7}\ar@{-}[d]^(.5){}
    &{8}\ar@{-}[dl]^(.5){}\\
  &*=0{\bullet}\ar@{-}[d]^(.5){}\\
  &{}
}}:=
\vcenter{\xymatrix
@R=3mm
@C=2mm
{
{}
  &{}
    &{6}\ar@{-}[dr]^(.5){}
      &{7}\ar@{-}[d]^(.5){}
        &{8}\ar@{-}[dl]^(.5){}\\
{1}\ar@{-}[drr]^(.5){}
  &{2}\ar@{-}[dr]^(.5){}
    &{3}\ar@{-}[d]^(.5){}
      &*=0{\bullet}\ar@{-}[dl]^(.5){}
        &{5}\ar@{-}[dll]^(.5){}  \\
  & &*=0{\bullet}\ar@{-}[d]^(.5){}\\
  & &{}
}}
\end{split}
\end{numequation}%

An instance of the structure map of \cref{eq-gamma-K-tree},
\begin{displaymath}
\gamma_{(2,3)}: Tree(2)\times (Tree(2)\times Tree(3))\to Tree(5)
\end{displaymath}

\noindent is shown here.  A similar picture is on 
\cite[page~14]{Boardman-Vogt}.

\begin{displaymath}
\vcenter{\xymatrix
@R=3mm
@C=2mm
{
{1}\ar@{-}[dr]^(.5){}
  &{}
    &{2}\ar@{-}[dl]^(.5){}\\
  &*=0{\bullet}\ar@{-}[d]^(.5){}\\
  &{}
}}
\circ
\left(\vcenter{
\xymatrix
@R=3mm
@C=1mm
{
{3}\ar@{-}[dr]^(.5){}
  &{}
    &{4}\ar@{-}[dl]^(.5){}\\
  &*=0{\bullet}\ar@{-}[d]^(.5){}\\
  &{}
}}, \vcenter{
\xymatrix
@R=3mm
@C=1mm
{
{5}\ar@{-}[dr]^(.5){}
  &{6}\ar@{-}[d]^(.5){}
    &{7}\ar@{-}[dl]^(.5){}\\
  &*=0{\bullet}\ar@{-}[d]^(.5){}\\
  &{}
}
} \right)
\vcenter{\xymatrix
@R=3mm
@C=1mm
{
&&{3}\ar@{-}[dr]^(.5){}
  &{4}\ar@{-}[d]^(.5){}
    &{5}\ar@{-}[dr]^(.5){}
      &{6}\ar@{-}[d]^(.5){}
        &{7}\ar@{-}[dl]^(.5){}\\
\ar@{|->}[rr]^(.5){} 
&& &*=0{\bullet}\ar@{-}[dr]^(.5){}  
    & &*=0{\bullet}\ar@{-}[dl]^(.5){}\\
&&  & &*=0{\bullet}\ar@{-}[d]^(.5){}\\
&&  & & &{}   
}} = (T_{2}\circ_{2}T_{2})\circ_{3}T_{3}
\end{displaymath}

\begin{ex}\label{ex-small-trees}
{\bf Some isomorphism classes of trees with few leaves.}  

There is one 2-leafed tree, $T_{2}$.  

There are two 3-leafed trees, $T_{3}$ and $T_{2}\circ_{1}T_{2}\cong
T_{2}\circ_{2}T_{2}$ shown here.
\begin{numequation}\label{eq-3-leaves}
\begin{split}
\vcenter{\xymatrix
@R=3mm
@C=2mm
{
{1}\ar@{-}[dr]^(.5){}
  &{2}\ar@{-}[d]^(.5){}
    &{3}\ar@{-}[dl]^(.5){}\\
  &*=0{\bullet}\ar@{-}[d]^(.5){}\\
  &{}
}}
\mbox{ and } 
\vcenter{\xymatrix
@R=2mm
@C=1mm
{
{1}\ar@{-}[dr]^(.5){}
  & &{2}\ar@{-}[dl]^(.5){}\\
  &*=0{\bullet}\ar@{-}[dr]^(.5){}
    & &{3}\ar@{-}[dl]^(.5){}\\
  & &*=0{\bullet}\ar@{-}[d]^(.5){}\\
  & &{}
}}
\cong 
\vcenter{\xymatrix
@R=2mm
@C=1mm
{
  &{2}\ar@{-}[dr]^(.5){}
    & &{3}\ar@{-}[dl]^(.5){}\\
{1}\ar@{-}[dr]^(.5){}
  & &*=0{\bullet}\ar@{-}[dl]^(.5){}\\
  &*=0{\bullet}\ar@{-}[d]^(.5){}\\
  &{}
}}
\end{split}
\end{numequation}%

There are five 4-leafed trees, $T_{4}$, $T_{2}\circ_{1}T_{3}$,
$(T_{2}\circ_{1}T_{2})\circ_{3}T_{2}$, 
$(T_{2}\circ_{1}T_{2})\circ_{1}T_{2}$ and $T_{3}\circ_{1}T_{2}$.
\begin{numequation}\label{eq-4-leaves}
\begin{split}
\vcenter{\xymatrix
@R=3mm
@C=0mm
{
{1}\ar@{-}[dr]^(.5){}
  &{2}\ar@{-}[d]^(.5){}
     &{3}\ar@{-}[dl]^(.5){}
        &{4}\ar@{-}[dll]^(.5){}\\
  &*=0{\bullet}\ar@{-}[d]^(.5){}\\
  &{}
}}\qquad 
\vcenter{\xymatrix
@R=2mm
@C=0mm
{
{1}\ar@{-}[dr]^(.5){}
  &{2}\ar@{-}[d]^(.5){}
    &{3}\ar@{-}[dl]^(.5){}\\
  &*=0{\bullet}\ar@{-}[d]^(.5){}
    &{4}\ar@{-}[dl]^(.5){}\\
  &*=0{\bullet}\ar@{-}[d]^(.5){}\\
  &{}
}}\qquad 
\vcenter{\xymatrix
@R=3mm
@C=0mm
{
{1}\ar@{-}[dr]^(.5){}
  &{2}\ar@{-}[d]^(.5){}
    &{3}\ar@{-}[d]^(.5){}
      &{4}\ar@{-}[dl]^(.5){}\\
  &*=0{\bullet}\ar@{-}[d]^(.5){}
    &*=0{\bullet}\ar@{-}[dl]^(.5){}\\
  &*=0{\bullet}\ar@{-}[d]^(.5){}\\
  &{}
}}\qquad 
\vcenter{\xymatrix
@R=2mm
@C=0mm
{
{1}\ar@{-}[d]^(.5){}
  &{2}\ar@{-}[dl]^(.5){}\\
*=0{\bullet}\ar@{-}[d]^(.5){}
  &{3}\ar@{-}[dl]^(.5){}\\
*=0{\bullet}\ar@{-}[d]^(.5){}
  &{4}\ar@{-}[dl]^(.5){}\\
*=0{\bullet}\ar@{-}[d]^(.5){}\\
{}
}}\qquad 
\vcenter{\xymatrix
@R=2mm
@C=0mm
{
{1}\ar@{-}[d]^(.5){}
  &{2}\ar@{-}[dl]^(.5){}\\
*=0{\bullet}\ar@{-}[dr]^(.5){}
  &{3}\ar@{-}[d]^(.5){}
    &{4}\ar@{-}[dl]^(.5){}\\
  &*=0{\bullet}\ar@{-}[d]^(.5){}\\
  &{}
}}
\end{split}
\end{numequation}%

\end{ex}

\subsection{Free operads and coproduct operads}\label[appendix]{sec-free-coprod}

\begin{defin}\label{def-free-operad}
{\bf The free operad on a symmetric sequence.} \cite[\S II.1.9]{MSS02}
and \cite[Theorem 5.5.1]{Loday-Vallette}.  For any symmetric sequence
$\mcS$ as in \cref{def-operad}\cref{def-operadii}, the free operad
$\mathbb{F}(\mcS)$ has
\[
\mathbb{F}(\mcS)(n)
\;\cong\;
\coprod_{[T]\in Tree(n)}
\left(
\prod_{v \in V(T)} \mcS(\operatorname{val}(v))
\right)\Big/\!\!\operatorname{Aut}(T),
\]
where:
\begin{itemize}
    \item $Tree (n)$ is as in \cref{def-tree-operad},

    \item $V(T)$ is the set of internal vertices of $T$,

    \item the valence $\operatorname{val}(v)$ is the number of
incoming edges at the internal vertex $v$,

    \item $\operatorname{Aut}(T)$ is the group of automorphisms of $T$
preserving the root and leaf labels.
\end{itemize}
\end{defin}

For $\mcS = \mcP \amalg \mcQ$, we have
\begin{numequation}\label{eq-free-operad}
\begin{split}
\mathbb{F}(\mcP \amalg \mcQ)(n)
& \;\cong\;
\coprod_{[T]}
\left(
\prod_{v \in V(T)} \bigl(\mcP(\operatorname{val}(v))
            \amalg \mcQ(\operatorname{val}(v))\bigr)
\right)\Big/\!\!\operatorname{Aut}(T).
\end{split}
\end{numequation}%

\noindent where the coproduct is over all $n$-leafed trees $T$.
Thus each internal vertex of valence $k$ in $T$ is labelled by a point in
$\mcP$ or $\mcQ$ of arity $k$.

\begin{defin}\label{def-coproduct-operad}
{\bf The coproduct of two operads} $\mcP$ and $\mcQ$ (either of which
may or may not be symmetric) is given by
\[
(\mcP \amalg \mcQ) (n)
\;\cong\;
\left(
\coprod_{[T]}
\prod_{v \in V(T)}
\bigl(\mcP(\operatorname{val}(v)) \amalg \mcQ(\operatorname{val}(v))\bigr)
\right)\Big/\!\!\sim,
\]
where $\sim$ is the equivalence relation generated by replacing any
$\mcP$-only subtree by its composite in $\mcP$ (meaning the corolla
with the the same number of leaves), any $\mcQ$-only subtree by its
composite in $\mcQ$, and imposing the standard operad relations.
\end{defin}

\begin{ex}\label{ex-A1}
{\bf The coproduct with $\AA_{1}$.}
For $\mcQ=\AA_{1}$ as in
\cref{def-Stash-op}, recall that $\AA_{1} (1)$ is a single point and
$\AA_{1} (n)$ is empty for $n>1$.  Since the valence of each internal
vertex exceeds 1, the space $\mcQ(\operatorname{val}(v))$ in
\cref{eq-free-operad} is always empty.  It follows that $\mcP \amalg
\AA_{1}=\mcP $.
\end{ex}

\subsection{The {\BoV} tensor product}\label[appendix]{sec-BV} 
Now suppose we have two operads $(\mcP ,\gamma )$ and $(\mcQ, \delta
)$ as in \cref{def-operad}.  Then we have the category $\mcP
[\mcQ[\mcT]]$ whose objects are $\mcP $-algebras in the category of
$\mcQ$-spaces. It is known that there is an operad $\mcR $ such that
this category is {\eqt} to $\mcR [\mcT]$, which is also {\eqt} to
$\mcQ [\mcP[\mcT]]$, the category of $\mcQ $-algebras in the category
of $\mcP$-spaces.  An object in $\mcP [\mcQ[\mcT]]\simeq \mcQ
[\mcP[\mcT]]$ is both a $\mcP $-algebra and a $\mcQ$-algebra in which
the two structure interchange as in \cref{def-interchange}.

This leads to a symmetric monoidal structure, the {\bf Boardman-Vogt
tensor product} (BV product for short) of \cite[Definition 2.14, page
41]{Boardman-Vogt}, {\em on the category of operads}, which we will
write as
\begin{displaymath}
\mcR = \mcP \otimes_{\BV} \mcQ.
\end{displaymath}

Roughly speaking, $\mcP \otimes_{\BV} \mcQ$ is the quotient of the
coproduct $\mcP \amalg \mcQ$ of \cref{def-coproduct-operad} imposed by
the interchange requirement. 

\begin{ex}\label{ex-BV-not-homotopy-inv}
{\bf The BV product does not preserve homotopy equivalence.}  We know
that the operads $\Assoc$ of \cref{def-MN}, $\AA_{\infty }$ of
\cref{def-Stash-op} and $\EE_{1}$ of \cref{def-little-cubes} are
homotopy equivalent.  We also know that $\Assoc\otimes_{BV}\Assoc\cong
\Comm$ by an argument originally due to Beno Eckmann and Peter Hilton
\cite{Eckmann-Hilton}.  They showed that if a set is equipped with two
associative multiplication maps that interchange in the sense of
\cref{def-interchange}, then they must coincide and be commutative.
On the other hand, Gerald Dunn \cite{Dunn} shows that
$\EE_{1}\otimes_{BV} \EE_{1}\simeq \EE_{2}$, but $\EE_{2}$ is not
equivalent to $\Comm$.  In \cite{Moerdijk23} Ieke Moerdijk suggests a
solution involving dendroidal sets, which are introduced by him
and Ittay Weiss in \cite{Moerdijk-Weiss07}, but that is a story for
another day.
\end{ex}

We learned the following description of the BV product from the paper
\cite[Definition 1.2]{Dwyer-Hess} by Bill Dwyer and Kathryn Hess.
Like every published definition that we know of, theirs is written
with symmetric operads in mind, but it can be modified to include the
nonsymmetric case.

\begin{defin}\label{def-composite}
{\bf The composite of two symmetric sequences.}
\cite[Notation 1.1]{Dwyer-Hess}
For any two symmetric sequences (see \cref{def-operad}\cref{def-operadii})
\[
X = \{X(n)\}_{n \geq 0}, \quad 
Y = \{Y(n)\}_{n \geq 0}
\]

\noindent of simplicial sets, a representative of a typical element of
arity $j$ in the {\bf composition product} $X \circ Y$ of the two
sequences is denoted by 
\[
(x; y_1, \ldots, y_k; \tau),
\]

\noindent where
\begin{align*}
x & \in  X (k) &
y_{s} & \in Y (j_{s})   
\mbox{ with }  \sum_{s=1}^k j_{s} = j&
\mbox{and } \tau & \in \Sigma_{j}. 
\end{align*}

\noindent
The right action of $\nu \in \Sigma_{j}$ on such an element is given by
\[
(x; y_1, \ldots, y_k; \tau ) \cdot \nu 
= (x; y_1, \ldots, y_k; \tau\nu ).
\]




The equivalence relation on representatives of elements of $X \circ Y$
satisfies
\[
(x; y_1, \ldots, y_k; \tau_1 \oplus \cdots \oplus \tau_k) 
\sim (x; y_1 \cdot \tau_1^{-1}, \ldots, y_k \cdot \tau_k^{-1}; \mathrm{Id}),
\]

\noindent where $\tau_1 \oplus \cdots \oplus \tau_k$ is the block sum
of \cref{eq-symm-action},
 and
\[
(x \cdot \sigma^{-1}; y_1, \ldots, y_j; \mathrm{Id}) 
\sim (x; y_{\sigma(1)}, \ldots, y_{\sigma(j)}; \mathrm{Id}),
\]

\noindent for all $\sigma \in \Sigma_{j}$ and $\tau_s \in \Sigma
_{j_{s}}$ as above, where $x$ and $y_{s}$ are as above.

If $\mcP$ is an operad, it has an {\eqvr} multiplication map 
\[
\gamma  : \mcP \circ \mcP \to \mcP
\]

\noindent as in \cref{eq-gamma-K}. For $p \in \mcP(k)$ and $p_s \in
\mcP(j_{s})$ for $j_{s}$ as above,
we write
\begin{numequation}\label{eq-gamma-p}
\begin{split}
p(p_1, \ldots, p_k)
 := \gamma (p; p_1, \ldots, p_k; \mathrm{Id}) \in \mcP(j).
\end{split}
\end{numequation}%

\noindent Note that since $\gamma $ is equivariant as in
\cref{eq-symm-action}, it is specified by its values on elements of
$\mcP \circ \mcP$ with representatives of the form
\[
(p; p_1, \ldots, p_k; \mathrm{Id}).
\]

For {\bf operadic sequences} as in \cref{def-operad}\cref{def-operadi} a
typical element of arity $n$ in the composition product is denoted by
simply
\[
(x; y_1, \ldots, y_k),
\]

\noindent with no symmetric group element.
\end{defin}

\begin{defin}\label{def-BV-prod}
The {\bf Boardman-Vogt tensor product} of operads $\mcP$
and $\mcQ$ is the operad 
\[
\mcP \otimes_{BV} \mcQ
\]

\noindent that is the quotient of the coproduct $\mcP \amalg \mcQ$ of
operads (see \cref{def-coproduct-operad}) by the equivalence relation
generated by
\[
\gamma (p; \underbrace{q, \ldots, q}_{k}; \mathrm{Id})
 \;\sim\; 
\gamma (q; \underbrace{p, \ldots, p}_{\ell }; \xi_{k,\ell})
\]
for all $p \in P(k)$ and $q \in Q(\ell)$, where $\xi_{k,\ell} \in
\Sigma _{k\ell}$ is the transpose permutation that ``exchanges rows
and columns.'' That is, for each $m$ with $1\leq m\leq k\ell $, there
are unique integers $i$ and $j$ with $1 \leq i \leq k$ and $1 \leq j
\leq \ell$ for which $m = (i-1)\ell + j \leq k\ell$, and we have
\[
\xi_{k,\ell}(m) = (j-1)k + i.
\]

In the nonsymmetric case the  equivalence relation
is generated by
\begin{displaymath}
\gamma (p; \underbrace{q, \ldots, q}_{k})
 \;\sim\; 
\gamma (q; \underbrace{p, \ldots, p}_{\ell }).
\end{displaymath}
\end{defin}

The only BV product we will need is $\EE\AA_{2}:=\EE_{1}\otimes_{\BV}
\AA_{2}$, which appears in \cite[Theorem C]{BHLS} and is described in
\cref{ex-E1A2}.

\begin{ex}\label{ex-BV-unit}
{\bf The Boardman-Vogt unit.} Let $\mcQ $ be the Stasheff operad
$\AA_{1}$ of \cref{def-Stash-op}.  In \cref{ex-A1} we saw that $\mcP
\amalg \AA_{1}=\mcP $ for any operad $\mcP$.  This implies that $\mcP
\otimes_{BV} \AA_{1}=\mcP $.  A similar argument can be made for
$\AA_{1}\otimes_{BV}\mcQ$ for any $\mcQ$. Thus $\AA_{1}$ is the
Boardman-Vogt unit.
\end{ex}

\begin{ex}\label{ex-E1A2}
 The operad $\EE\AA_{2}:=\EE_{1}\otimes_{\BV}\AA_{2}$ (where
$\AA_{2}$ and $\EE_{1}$ are as in
\cref{def-Stash-op,def-little-cubes}) is of interest because it
appears in \cite[Theorem C]{BHLS}. In Section 5 of that paper, the
authors construct Adams operations on $BP\langle n \rangle$ (which is
known to have an $\EE_{3}$-structure) as $\EE\AA_{2}$-algebra
automorphisms, that being the strongest structure for which their
proof works.  It is not known to be the strongest structure preserved
by their operations.

An $\AA_{2}$-structure on a spectrum $X$ is a map
\begin{displaymath}
X\vee (X\wedge X) \to X
\end{displaymath}

\noindent which is the identity on the first summand and a unital
multiplication $\theta^{X,\AA_{2}}_{2} $ as in \cref{eq-theta-O}
on the second one.  

For an $\EE_{1}$-ring spectrum $Y$ we have the map
\begin{displaymath}
\theta^{\EE_{1},Y}_{j}
:\EE_{1} (j)_{+}\wedge_{\Sigma_{j}}Y^{\otimes j}\to Y
\end{displaymath}

\noindent of \cref{eq-theta-O}. The space $\EE_{1} (j)$ is homotopy {\eqt } to
$\Sigma_{j}$.

Then for an $\EE\AA_{2}$-spectrum $Z$, the following is required to
commute as in \cref{def-interchange}.
\begin{displaymath}
\xymatrix
@R=2mm
@C=20mm
{
{(\EE_{1} (j)_{+}\times_{\Sigma_{j}}\Sigma_{2j}/\Sigma_{j})
     \wedge_{\Sigma_{2j}}Z^{\otimes 2j}}
  \ar@{=}[d]^(.5){}\\
{\EE_{1} (j)_{+}\wedge_{\Sigma_{j}}(Z\wedge Z)^{\otimes j}}
    \ar[r]^(.625){\theta^{\EE_{1},Z\wedge Z}_{j}}
    \ar[dd]_(.5){\EE_{1} (j)_{+}\wedge_{\Sigma_{j}}
             (\theta^{Z,\AA_{2}}_{2})^{\otimes j} }
  &{Z\wedge Z}\ar[dd]^(.5){\theta^{Z,\AA_{2}}_{2} }\\
{}\\
{\EE_{1} (j)_{+}\wedge_{\Sigma_{j}}Z^{\otimes j}}
    \ar[r]^(.6){\theta^{\EE_{1},Z}_{j}}
  &{Z}
}
\end{displaymath}

Hence an $\EE\AA_{2}$-structure is slightly stronger than an
$\EE_{1}$-structure but weaker than an $\EE_{2}$-structure.  It is
known that for an $\EE\AA_{2}$-ring $R$, $\THH (R)$ has an
$\AA_{2}$-structure, meaning a unital multiplication with no
associativity or commutativity condition.
\end{ex}

\bibliography{../math}
\bibliographystyle{alpha}

\end{document}